\newcommand\Id{\mathrm{Id}}
\newcommand\lb{\mathrm{lb}}
\newcommand\rb{\mathrm{rb}}
\newcommand\diag{\mathrm{diag}}
\renewcommand\Re{\operatorname{Re}}
\renewcommand\Im{\operatorname{Im}}
\newcommand\lf{\mathrm{lf}}
\newcommand\rf{\mathrm{rf}}
\newcommand\mf{\mathrm{mf}}
\newcommand\bfz{\mathrm{bf}_0}
\newtheorem{theorem}{Theorem}
\newtheorem{lemma}[theorem]{Lemma}
\newtheorem{proposition}[theorem]{Proposition}
\newtheorem{definition}[theorem]{Definition} 
\newtheorem{remark}[theorem]{Remark}
\begin{document}

\title{\textbf{
The Semiclassical Resolvent  on Conic Manifolds and Application to Schr\"odinger Equations} }
\author[X. Chen]{Xi Chen}
\address{ Shanghai Center for Mathematical Sciences, Fudan University, Shanghai 200438, China.  {\it E-mail address: \bf \tt xi\_chen@fudan.edu.cn}}
\address{ Department of Pure Mathematics and Mathematical Statistics, University of Cambridge, Cambridge CB3 0WB, UK.  {\it E-mail address: \bf \tt xi.chen@dpmms.cam.ac.uk}}

\begin{abstract}In this article, we shall construct the resolvent of the Laplacian at high energies near the spectrum on non-product conic manifolds with a single cone tip. Microlocally, the resolvent kernel is the sum of $b$-pseudodifferential operators, scattering pseudodifferential operators and intersecting Legendrian distributions. As an application, we shall establish Strichartz estimates for Schr\"odinger equations on non-compact manifolds with multiple non-product conic singularities.

\end{abstract}\maketitle
\tableofcontents

\section{Introduction}

Let $\mathcal{C} = [0, 1) \times \mathcal{Y}$ be a  conic $n$-manifold with a single cone tip and endowed with a non-product conic metric $g$,  where
\begin{itemize}
\item $\dim \mathcal{C} = n \geq 3$,	
\item the submanifold $\mathcal{Y}$ is a compact boundaryless $(n-1)$-manifold,
\item $x$ is the defining function of the cone tip, i.e. $\partial \mathcal{C} = \{x = 0\} = 0 \times \mathcal{Y}$,
\item $g$ is a smooth Riemannian metric outside the vicinity of the cone tip,
\item $g$, in a compact neighbourhood $\mathcal{K}$ of $\partial \mathcal{C}$, has the expression, $$g = dx^2 + x^2 h(x, y, dy),\footnote{This can be slightly more general, i.e. we can assume $h$ is also dependent of $dx$. In fact, one can   reduce the case $h(x, y, dx, dy)$ to the case $h(x, y, dy)$, by choosing appropriate coordinates in a collar neighbourhood of $\partial \mathcal{C}$. The latter metric is called the normal form of the former metric. See \cite[Theorem 1.2]{Melrose-Wunsch}.}$$ where $h(x, y, dy) = h_{ij}(x, y) dy^idy^j$ is a family of smooth metrics on $\mathcal{Y}$ with a smooth parameter $x$, 
\item  $\mathcal{K}$ is taken to be a sufficiently small neighbourhood of $\partial \mathcal{C}$ such that $h$ is stable in the sense \begin{equation}\label{eqn : stability}  \mathbf{e} =   \sup_{(x, y) \in \mathcal{K}} \frac{x  \partial_x \det (h_{ij}(x, y))_{ij} }{2\det (h_{ij}(x, y))_{ij}} << n-1, \end{equation}\item the non-productness of $g$ refers to the dependence of $h$ in $x$, 
\item all geodesics in $\mathcal{C}$ are simple, i.e. there are no conjugate points. \footnote{This assumption is virtually unnecessary to understand the effect of conic singularities. But conjugate points cause extra technicalities away from the cone tip, despite being tractable, which are beyond the scope of the present paper.}
\end{itemize}  

The Friedrichs extension of the Laplacian on $\mathcal{C}$, near the cone tip (i.e. in $\mathcal{K}$), reads
$$\Delta_{\mathcal{C}} = - \partial_x^2 - \frac{(n - 1) + xe}{x} \partial_x + \frac{\Delta_h}{x^2},$$
where $\Delta_h$ is the Laplacian with respect to $(\mathcal{Y}, h)$ with a parameter $x$. The non-productness of the metric in $x$ creates the  error function $e$, \begin{equation}\label{def : e}e(x, y) = \frac{1}{2} \frac{\partial \log \det (h_{ij}(x, y))_{ij}}{\partial x} = \frac{1}{2} \mathrm{tr} \bigg(h^{ik}(x, y) \frac{\partial h_{kj}(x, y)}{\partial x}\bigg)_{ij},\end{equation} where we denote the inverse matrix $(h_{ij})^{-1}_{ij} = (h^{ij})_{ij}$. We view $\Delta_\mathcal{C}$ as an operator acting on smooth sections of the Riemannian half density bundle of the conic metric $g$, which takes the form, $$f |dg|^{1/2} = f x^{(n - 1)/2}|dx dy|^{1/2} \in C^\infty(\mathcal{C}; |dg|^{1/2}).$$






\subsection{The semiclassical resolvent}

The main problem we want to solve in this paper is to describe the asymptotic microlocal structure of the resolvent kernel of $$(\Delta_{\mathcal{C}} - (\lambda \pm \imath 0)^2)^{-1},$$ at high energies near the spectrum, i.e. $|\lambda| \rightarrow \infty$.

In the product case i.e. $\partial_x h \equiv 0$,     the resolvent kernel can be computed by the method of separation of variables and expressed explicitly in terms of Bessel functions. See for example \cite[(8.30)]{Taylor2}. However, this strategy can not be extended to the non-product case, as the inhomogeneity of $\Delta_{\mathcal{C}}$ in $x$ (the presence of the error term $e(x, y)$) makes an essential challenge. To overcome this, we shall employ microlocal analysis to symbolically construct the resolvent kernel. 

A natural microlocal approach  to tackle this type of high energy problem is to convert the Helmholtz operator into a semiclassical operator by denoting $\hbar = |\lambda|^{-1}$. This conversion changes the high energy asymptotic of the resolvent of $\Delta_{\mathcal{C}}$ to the semiclassical resolvent of \begin{equation}\label{eqn : semiclassical conversion}P_\hbar(\pm \imath 0) = - \hbar^2\partial^2_x - \frac{(n - 1) + xe}{x} \hbar^2\partial_x + \hbar^2\frac{\Delta_h}{x^2} - (1 \pm \imath 0).\end{equation} 
To compare the conic singularity when $x \rightarrow 0$ and the semiclassical singularity when $\hbar \rightarrow 0$, we further introduce $r = x/\hbar$ and adopt coordinates $\{(r, \hbar)\}$ instead of $\{(x, \hbar)\}$. The semiclassical operator $P_\hbar(\pm \imath 0)$ will lift to
$$ \tilde{P}_\hbar(\pm\imath 0) = - \partial^2_r - \frac{(n - 1) + \tilde{e}r}{r} \partial_r + \frac{\Delta_h}{r^2} - (1 \pm \imath 0),$$
where the lifted error term is $$\tilde{e}(\hbar, r, y) = \hbar e(\hbar r, y).$$ We consider the incoming operator \begin{equation}\label{eqn : blownup-semiclassical-operator} \tilde{P}_\hbar = - \partial^2_r - \frac{(n - 1) + \tilde{e}r}{r} \partial_r + \frac{\Delta_h}{r^2} - (1 + \imath 0).\end{equation}

 The operator $\tilde{P}_\hbar$ now lives in a space with an artificial infinity and exhibits distinct behaviours around the two endpoints $\{r = 0\}$ and $\{r = \infty\}$ : \begin{itemize}
\item For small $r$, $\tilde{P}_\hbar$ reduces to a $b$-operator,  $P_b = r \tilde{P}_\hbar r$, where \begin{equation}\label{eqn : b-operator}  P_b  =  -(r\partial_r)^2 - r\tilde{e} (r\partial_r) + \Delta_h -  r \tilde{e}(1 - n/2) - r^2 (1 \pm \imath 0) +  (1 - n/2)^2.\end{equation} To see this, one has to note that $\tilde{P}_\hbar$ acts on the conic half densities, whilst the $b$-operator $P_b$ acts on the $b$-half densities $f |drdy/r|^{1/2} \in C^\infty({}^b\Omega^{1/2})$. By changing half densities, we have 
\begin{eqnarray*}
\lefteqn{P_b \bigg(f \Big|\frac{drdy}{r}\Big|^{1/2}\bigg)}\\
&=& r^{-n/2} \bigg( r^{1+n/2} \Big( - \partial_r^2 - \frac{(n-1) + \tilde{e}r}{r} \partial_r + \frac{\Delta_h}{r^2} - (1 + \imath 0) \Big) r^{1-n/2} \bigg) \big(f  |dg|^{1/2}\big)\\
&=& \bigg( \Big(-(r\partial_r)^2 - r\tilde{e} (r\partial_r) + \Delta_h -  r \tilde{e}(1 - n/2) - r^2 (1 + \imath 0) +  (1 - n/2)^2\Big) f \bigg) \Big|\frac{drdy}{r}\Big|^{1/2}.
\end{eqnarray*}

\item For large $r$, $\tilde{P}_\hbar$ turns out to be a (scattering) $sc$-operator. To see this, we change coordinates by making $\rho = r^{-1}$. Since $\partial_r$ lifts to $-\rho^2\partial_\rho$, this transformation yields  \begin{equation}\label{eqn : scattering-operator}P_{sc} = - (\rho^2\partial_\rho)^2 +\Big((n - 1)\rho  -  \tilde{\tilde{e}}(x, \rho, y)\Big) \rho^2\partial_\rho + \rho^{2}\Delta_h - (1 + \imath 0), \quad \mbox{for $\rho \rightarrow 0$,}\end{equation} where the error term reads $$ \tilde{\tilde{e}}(\rho, x, y) = \tilde{e}(\hbar, r, y)|_{\hbar = \rho x, \,r = 1/\rho} = \rho x e(x, y).$$
This operator acts on the $sc$-half densities $f|d\rho dy/\rho^{n+1}|^{1/2} \in C^\infty({}^{sc}\Omega^{1/2}).$
    \end{itemize}
    
The operators $P_b$ and $P_{sc}$ fit into Melrose's framework of differential analysis on manifolds with corners. Specifically, the resolvent of $b$-operators is a full $b$-pseudodifferential operator \cite[Chapter 4-6]{Melrose-APS}, whilst the resolvent of $sc$-operators is a $sc$-pseudodifferential operator \cite{Melrose-SC} plus Legendrian distributions \cite{Melrose-Zworski, Hassell-Vasy-JAM, Hassell-Vasy-AnnFourier}.

These observations suggest that the resolvent $\tilde{P}_\hbar^{-1}$ can be constructed by the following scheme: 
\begin{itemize}
\item use the full $b$-calculus to construct a local parametrix $G_b$ for $\tilde{P}_\hbar = r^{-1} P_b r^{-1}$ with a compact remainder $E_b$ for small $r$; 
\item use the $sc$-calculus and Legendrian distributions to construct a local parametrix $G_{sc}$ for $P_{sc}$ with a compact remainder $E_{sc}$ for large $r$;
\item improve the parametrices by understanding the transition between the $b$-regime (small $r$ region) and the $sc$-regime (large $r$ region);
\item use Fredholm theory to get the true resolvent from these local parametrices.  
\end{itemize}

Geometrically, the resolvent kernel  $\tilde{P}_\hbar^{-1}$ has to live in a multi-stretched version of $\mathcal{C}^2 \times [0,1)$ to incorporate these distinct microlocal structures. It is well-known that Melrose's framework of differential analysis on manifolds with corners, including $b$-calculus and $sc$-calculus, rests on some stretched spaces to desingularize the corners and express the resolvent kernel smoothly. However a simple combination of the $sc$-stretched and $b$-stretched spaces is insufficient to address the transition between the two regimes. Motivated by an unpublished idea of Melrose-S\'a Barreto, previously used in \cite{Guillarmou-Hassell-MathAnn2008, Guillarmou-Hassell-AnnFourier2009, Guillarmou-Hassell-Sikora-TAMS2013, Hintz},  we perform additional blow-ups on $\mathcal{C}^2 \times [0,1)$ and end up with the multi-stretched spaces $M_{b, 0}$ and $M_{sc, b, 0}$ defined in \eqref{eqn : M_{b, 0}} and \eqref{fig : M_{sc, b, 0}} respectively. We divide such spaces into \begin{itemize}
	\item 
 the $b$-regime $\{r < 1\} \cap \{r' < 1\}$, \item the $sc$-regime $\{\rho < 1\} \cap \{\rho' < 1\}$, \item the $(b, sc)$-regime $\{r < 1\} \cap \{\rho' < 1\}$, \item the $(sc,b)$-regime $\{\rho < 1\} \cap \{r' < 1\}$, \end{itemize}where we adopt the following projective coordinates on $\mathcal{C}$, $M_{b, 0}$ and $M_{sc, b, 0}$, $$r = x/\hbar, \quad r' = x'/\hbar, \quad \rho = \hbar/x, \quad \rho' = \hbar/x',$$ and use $\cdot'$ to denote the coordinates for the right subspace of the double space throughout the paper. The regime borders at the boundaries are the dashed curves in Figure \ref{blown-upspace(new)}.

We shall construct the semiclassical resolvent as follows.

\begin{theorem}\label{thm : resolvent}Let $\chi_b$ and $\chi_{sc}$ be smooth bump functions on $\mathcal{C}$ supported near $\{r < 1\}$ and $\{\rho < 1\}$ respectively, but also a partition of unity $\chi_b + \chi_{sc} = 1$. The resolvent kernel $R$ of $\tilde{P}_\hbar$ is a distribution on $M_{b, 0}$ consisting of
\begin{itemize}
\item $R_b = \chi_b R \chi_b$ is a distribution supported near the $b$-regime such that
\begin{itemize}
\item $R_b$ is  a  $b$-pseudodifferential operator near the diagonal,
\item $R_b$ is polyhomogeneous conormal to $\lb$ and $\rb$,
\item $R_b$ vanishes to order $2$ at $\mathrm{bf}$, to order $n/2$ at $\lb$ and $\rb$;
\end{itemize}

\item $R_{(b,sc)} = \chi_{b} R \chi_{sc}$ is a distribution supported near the $(b, sc)$-regime such that \begin{itemize}
	\item $R_{(b,sc)}$ vanishes to order $n/2$ at $\lb$ and to order $(n-1)/2$ at $\lb_0$;
\end{itemize}

\item $R_{sc} = \chi_{sc} R \chi_{sc}$ is a distribution supported near the $sc$-regime such that
\begin{itemize}
\item $R_{sc}$ is a $sc$-pseudodifferential operator near the diagonal of $M_{sc, b, 0}$,\footnote{The space $M_{sc, b, 0}$ is only needed for this part of the resolvent.}
\item $R_{sc}$ is intersecting Legendrian distributions associated with geometric Legendrians $L$ and diffractive Legendrians $L^\sharp$ over $\mf \cup \lf \cup \rf$, 
\item the principal symbol of $R_{sc}$, in the non-product case i.e. $e \not\equiv 0$, exhibits additional oscillations near $\lf$, $\rf$, and $L^\sharp$ respectively, determined by the error function $e$, 
\item $R_{sc}$ vanishes to order $(n-1-\mathbf{e})/2$  at both $\lf$ and $\rf$;
\end{itemize}

\item $R_{(sc,b)} = \chi_{sc} R \chi_{b}$ is a distribution supported near the $(sc, b)$-regime such that
\begin{itemize}
	\item $R_{(sc,b)}$ vanishes to order $n/2$ at $\rb$ and to order $(n-1-\mathbf{e})/2$ at  $\rb_0$.\end{itemize}
\end{itemize}
 \end{theorem}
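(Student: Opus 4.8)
The plan is to execute the four-step scheme described above: build a parametrix $G$ for $\tilde{P}_\hbar$ on the multi-stretched space $M_{b,0}$ (refined to $M_{sc,b,0}$ for the scattering part) whose remainder is compact and of small operator norm for $\hbar$ small, then invert by a Neumann series and read off the structure of $R = G(\Id - E)^{-1}$.

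\emph{The $b$-regime.} Using $\tilde{P}_\hbar = r^{-1}P_b r^{-1}$ with $P_b$ from \eqref{eqn : b-operator}, I would run the standard symbolic iteration in the full $b$-calculus on the $b$-double space: first remove the interior symbol modulo $\Psi_b^{-\infty}$, then solve away the error at $\mathrm{bf}$ via the $b$-normal operator of $P_b$, and finally solve away the errors at $\lb$ and $\rb$ by inverting the indicial family, whose poles are the indicial roots of $\Delta_h + (1-n/2)^2$ at the cone tip (perturbed by the $r\tilde e$-terms, all kept on the correct side by \eqref{eqn : stability}). The conversion to $b$-half-densities fixes the weights so that the leading index at $\lb,\rb$ is $n/2$ and the model at $\mathrm{bf}$ gives leading order $2$; polyhomogeneity at $\lb,\rb$ is inherited from the meromorphic structure of the inverse indicial family. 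This produces $\tilde{P}_\hbar G_b = \chi_b - E_b$ with $E_b$ compactly supported and smoothing for small $r$.

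\emph{The $sc$-regime.} For $P_{sc}$ in \eqref{eqn : scattering-operator} I would take the usual $sc$-pseudodifferential parametrix near the scattering diagonal of $M_{sc,b,0}$, and, since the energy-$1$ null-bicharacteristic flow exits through $\mf\cup\lf\cup\rf$, propagate along it to obtain intersecting Legendrian distributions associated with the geometric Legendrian $L$ (the flowout) and the diffractive Legendrian $L^\sharp$ (carrying the singularities of geodesics limiting onto or passing through the cone tip); the no-conjugate-points hypothesis makes $L$ a smooth projectable Legendrian away from $L^\sharp$. The new feature in the non-product case is that the transport equation along the flow carries, in addition to the half-density term, the first-order coefficient $\tilde{\tilde{e}}$ descending from $r\tilde e$; solving it introduces an amplitude factor equal to the exponential of an integral of $e$ along the flow, which is exactly the source of the extra oscillations near $\lf$, $\rf$ and $L^\sharp$. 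The free-space decay $(n-1)/2$ at $\lf,\rf$ is shifted to $(n-1-\mathbf{e})/2$ by the indicial correction $\mathbf{e}$ of \eqref{eqn : stability}. This produces $P_{sc}G_{sc} = \chi_{sc} - E_{sc}$ with $E_{sc}$ compact.

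\emph{The transition and Fredholm step --- the main obstacle.} A crude sum $G_b + G_{sc}$ is not yet a good parametrix because the two pieces need not agree to infinite order across the transition faces $\lb_0$ and $\rb_0$ (where $r\sim 1$, resp. $r'\sim 1$), and it is precisely to accommodate this matching that the extra blow-ups defining $M_{b,0}$ and $M_{sc,b,0}$ are performed. In the overlap both constructions invert the same non-semiclassical, energy-$1$ model operator to leading order, so the difference of the kernels is annihilated by $\tilde{P}_\hbar$ modulo $\dot{C}^\infty$; using uniqueness of the outgoing solution it can be removed by an order-improving iteration, which produces the interpolating off-diagonal blocks $G_{(b,sc)}$ and $G_{(sc,b)}$ carrying the stated orders ($n/2$ at $\lb$, resp. $\rb$; $(n-1)/2$ at $\lb_0$ and $(n-1-\mathbf{e})/2$ at $\rb_0$). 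The genuinely delicate part is tracking the index sets through the blow-downs and checking that the Legendrian/conic structure on the $sc$-side degenerates consistently into the conormal/$b$-structure as $r$ crosses $1$. With $G = G_b + G_{(b,sc)} + G_{sc} + G_{(sc,b)}$ one gets $\tilde{P}_\hbar G = \Id - E$ with $E$ compact and of small norm for $\hbar$ small, hence $\Id - E$ invertible; composition in the calculi together with stability of the index sets under $(\Id - E)^{-1}$ (which contributes only $\dot{C}^\infty$ tails) then show $R = G(\Id - E)^{-1}$ has all the stated structure, while the limiting absorption principle identifies it with $(\tilde{P}_\hbar)^{-1}$, i.e. with the $+\imath 0$ boundary value of the high-energy resolvent of $\Delta_{\mathcal{C}}$.
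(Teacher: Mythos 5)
Your outline captures the broad four-regime strategy, but there are two substantive gaps.

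\textbf{The Fredholm step does not go by smallness.} You assert that $\tilde P_\hbar G = \Id - E$ with $E$ of small norm for $\hbar$ small, hence $\Id - E$ is invertible by Neumann series. The paper does not (and cannot) argue this way: it explicitly notes that $\Id - \tilde E$ need \emph{not} be invertible on the weighted $L^2$ space of the $sc$-regime even though $\tilde E$ is compact. The fix is the Hassell--Vasy device of adding a finite-rank perturbation $G_{sc,p} = \sum_i \phi_i \langle \rho^{2l}\psi_i, \cdot\rangle$ to $G_{sc}$ so that $\Id - E$ becomes invertible, and then analysing $S = (\Id-E)^{-1}-\Id$ through the identity $S = E + E^2 + ESE$ together with the Hilbert--Schmidt bound on $S$ and Melrose's push-forward theorem. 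The composition $GS$ is then shown to stay in the stated kernel classes. Replacing this by a small-norm Neumann series is a gap, not a simplification.

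\textbf{The transitional face $\bfz$ and the metric-cone model.} You describe the matching across the transition as removing the difference between $G_b$ and $G_{sc}$ by "order-improving iteration" because "both constructions invert the same non-semiclassical, energy-$1$ model operator." What actually drives the whole construction is that the normal operator of $\tilde P_\hbar$ at the transitional face $\bfz$ is exactly $\Delta_{\mathfrak C} - (1+\imath 0)$, the Laplacian on the product metric cone $\mathfrak C$ at energy one, whose resolvent is known explicitly (Guillarmou--Hassell--Sikora; Theorem~\ref{thm : resovlent on metric cones}). This model is inverted to kill the error at $\bfz$ in \emph{all four} regimes, not just to patch an overlap; and it is also what fixes the vanishing orders in the off-diagonal regimes: the normal operator at $\lb_0\cup\bfz$ (resp. $\rb_0\cup\bfz$) has an explicit kernel built from Bessel and Hankel functions $\mathrm J_{\nu_j}$, $\mathrm{Ha}^{(1)}_{\nu_j}$, and the asymptotics \eqref{eqn : Bessel}--\eqref{eqn : Hankel} produce the $n/2$-vanishing at $\lb,\rb$ and the $(n-1)/2$ (resp. $(n-1-\mathbf e)/2$) vanishing at $\lb_0$ (resp. $\rb_0$). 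Your "uniqueness of the outgoing solution" heuristic is not wrong, but it does not by itself produce these orders; without the metric-cone normal operator you cannot derive them.

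On the oscillation mechanism in the $sc$-regime your intuition is close to the paper's: the subprincipal symbol $-\imath x e(x,y)\tau$ of $P_{sc}$ enters the transport equations along $V_l$, $V_r$ and their sum, producing symbol factors of the form $\rho_{\lf}^{-\varepsilon_l/2}\rho_{\rf}^{-\varepsilon_r/2}s^{-(\varepsilon_l+\varepsilon_r)/2}$. One point you should make precise: $\varepsilon = xe(x,y)$ is a \emph{function}, not a constant, so repeated differentiation produces iterated $\log$ terms in the lower-order corrections; the loss is quantified by $\mathbf e = \sup_{\mathcal K}\varepsilon$ only at the level of orders, and the stability hypothesis \eqref{eqn : stability} is what keeps $\mathbf e < n-1$ so that the Hilbert--Schmidt estimates in the Fredholm step close.
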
 The precise statement of the resolvent kernel will be given in Theorem \ref{thm : full main theorem}. The geometry of $M_{b, 0}$ and $M_{sc, b, 0}$ will be described in Section \ref{sec : blow-up space}. The involved microlocal machinery  will all be defined in Section \ref{sec : abstract b calculus}-\ref{sec : Legendrian distributions on conic manifolds}.
It is also worth pointing out that the stability condition \eqref{eqn : stability}, i.e. $\sup_{\mathcal{K}} xe(x, y) = \mathbf{e}  << n-1$,  is essential to dominate the additional oscillations near $\lf$, $\rf$, and $L^\sharp$.  Otherwise it will go out of control and produce excess singularities.

Such a resolvent is inspired by a model problem, the resolvent construction at finite energies on non-compact metric cones, due to Guillarmou-Hassell-Sikora \cite{Guillarmou-Hassell-Sikora-TAMS2013}. The non-compact metric cone $\mathfrak{C} = [0, \infty] \times \mathcal{D}$ features a global metric $$\mathfrak{g} = d\mathfrak{x}^2 + \mathfrak{x}^2 \mathfrak{h}(y, dy)$$ with $\mathfrak{h}$ independent of $\mathfrak{x} \in [0, \infty)$ (in contrast with our local non-product case without infinity). Then the Friedrichs extension of the Laplacian on $\mathfrak{C}$ reads $$\Delta_{\mathfrak{C}} = - \partial_\mathfrak{x}^2 - \frac{(n - 1)}{\mathfrak{x}} \partial_{\mathfrak{x}} + \frac{\Delta_{\mathfrak{h}}}{\mathfrak{x}^2}.$$ The inhomogeneous $e$ term vanishes in $\Delta_{\mathfrak{C}}$ due to the productness of $(\mathfrak{C}, \mathfrak{g})$. The resolvent kernel $(\Delta_{\mathfrak{C}} - 1 \pm 0 \imath)^{-1}$ is a full $b$-pseudodifferential operator near the small end $\{\mathfrak{x}=0, \mathfrak{x}'=0\}$, and a $sc$-pseudodifferential operator plus intersecting Legendrian distributions associated with geometric Legendrians $L$ and diffractive Legendrians $L^\sharp$ near the big end $\{\mathfrak{x}=\infty, \mathfrak{x}'=\infty\}$.  See Theorem \ref{thm : resovlent on metric cones} for a full description.

The Legendrian structure at the big end of $\mathfrak{C}^2$ is developed from the geodesic flow at infinity on asymptotically conic manifolds, introduced by Melrose-Zworski \cite{Melrose-Zworski} for the scattering matrix, and further formulated by Hassell-Vasy \cite{Hassell-Vasy-JAM, Hassell-Vasy-AnnFourier} and Hassell-Wunsch \cite{Hassell-Wunsch} for the resolvent and spectral measure. (Intersecting) Legendrian distributions on the cotangent bundle restricted to the boundary are the contact analogue of (intersecting) Lagrangian distributions on the cotangent bundle restricted in the interior of the space, pioneered by H\"ormander \cite{Hormander-Acta-1971}, Duistermaat-H\"ormander \cite{Duistermaat-Hormander-Acta-1972} and Melrose-Uhlmann \cite{Melrose-Uhlmann-CPAM-1979}.

It is useful to note that the productness of $\mathfrak{h}$ grants the Laplacian $\Delta_{\mathfrak{C}}$ homogeneity in $x$.  Taking advantage of this, one can get the microlocal structure of the semiclassical resolvent $(\hbar^2 \Delta_{\mathfrak{C}} - 1 \pm \imath 0)^{-1}$ on metric cones, which, at the small end, agrees with our result. This indeed implies that the Legendrian structure of the semiclassical resolvent over the cone tip, without the additional oscillations in the product case, is related to the conic singularities over infinity of non-compact metric cones. This observation is essential in the present paper to understand the semiclassical conic singularities over the cone tip on non-product conic manifolds.

In addition, the Legendrian structure in the $sc$-regime characterises the singularities of the conic geometry and diffractions at the cone tip. The Legendrian pair $(L, L^\sharp)$ corresponds to the geometric propagation and diffractive propagation, in terms of the language of Melrose-Wunsch \cite{Melrose-Wunsch}. Microlocally,  'diffraction' refers
to the splitting of singularities (waves). In the conic geometry, it is well-known that the incoming waves split, at the cone tip, into geometric propagating waves and diffractive propagating waves. The wavefront sets of such two beams of waves are indeed inherited from  the geometric Legendrian $L$ and the diffractive Legendrian $L^\sharp$ respectively.

The resolvent on this class of local non-product conic manifolds, but away from the spectrum, has been considered by Loya \cite{Loya} and lately by Hintz \cite{Hintz, Hintz2}. In the classical case, Loya introduced the cone calculus to construct the resolvent $(\Delta_{\mathcal{C}} - \lambda^2)^{-1}$ for $|\arg \lambda^2| > \theta$ and $|\lambda| < C$. The cone calculus plays the role of $b$-calculus and $sc$-calculus discussed above and establishes a 'global' elliptic pseudodifferential calculus for the cone Laplacian away from the spectrum. This idea has been further developed by Hintz for the complex power of the semiclassical resolvent $(\hbar^2 \Delta_{\mathcal{C}} + 1)^{-1}$, which were used to investigate the propagation of semiclassical regularity and high energy estimates for potential scattering. The semiclassical resolvent, living also in the stretched double space of Melrose-S\'a Barreto type,  exhibits only polyhomogeneous conormal singularities due to the elliptic nature (away from the spectrum), in contrast with the Legendrian propagation in light of the vanishing boundary principal symbols (near the spectrum). Combining Hintz's work and the present paper together will give a complete description of the semiclassical resolvent on conic manifolds. 

Beyond Melrose's framework,  Schulze's cone calculus \cite{Schulze91, Schulze94, Schulze98} also deals with the differential analysis on manifolds with conic singularities. Schrohe-Seiler \cite{Schrohe-Seiler2005, Schrohe-Seiler2018},   utilizing this machinery, constructed a resolvent of the cone Laplacian in the off-spectrum case, which was applied to the study of the heat equation and the porous medium equation on conic manifolds.

\begin{center}\begin{figure} 
		\includegraphics[width=0.6\textwidth]{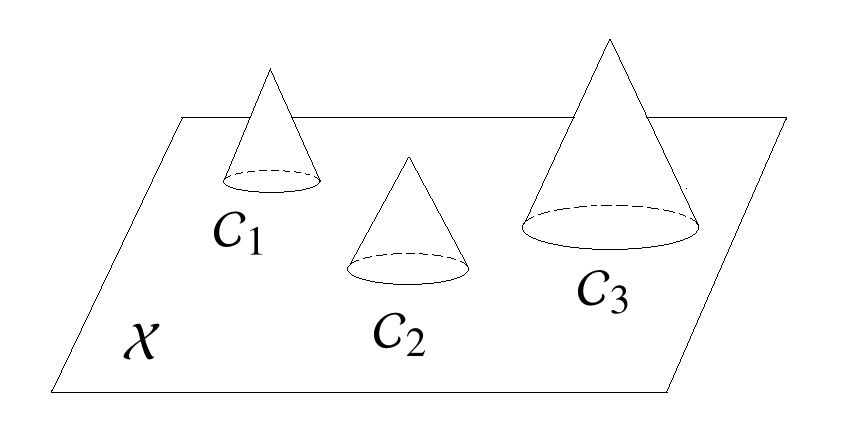}\caption{\label{fig : polycone}The non-compact manifold $\mathcal{X}$ with multiple conic singularities}\end{figure} \end{center}

\subsection{The Schr\"odinger equation}

Consider the Schr\"odinger equation on a non-compact manifold $\mathcal{X}$ with multiple conic singularities as in Figure \ref{fig : polycone}. We assume that $\mathcal{X}$ is an $n$-manifold satisfying \begin{itemize}
	\item the boundary $\partial \mathcal{X}$ has a finite number of components $\{\partial_1\mathcal{X}, \cdots \partial_m\mathcal{X}\}$,
	\item for each $i = 1, \cdots, m$, there exists an open neighbourhood $\mathcal{C}_i$ of $ \partial_i\mathcal{X}$ which is a conic manifold with a single cone tip $ \partial_i\mathcal{X}$ and endowed with a non-product conic metric $g_i = dx_i^2 + x_i^2 h_i (x_i, y, dy)$, 
	\item $\{\mathcal{C}_1, \cdots, \mathcal{C}_m\}$ are pairwise disjoint, 
	\item for each $i = 1, \cdots, m$, there exists a compact subset $\mathcal{K}_i \subset \mathcal{C}_i$ as well as a compact ball $\mathbb{B}_i \subset \mathbb{R}^n$  such that  $\mathcal{X} \setminus \bigcup_{i=1}^m\mathcal{K}_i  $ is isometric to $\mathbb{R}^n \setminus \bigcup_{i=1}^m \mathbb{B}_i$ and \eqref{eqn : stability} holds for the metric $h_i$ in $\mathcal{K}_i$,
	\item no geometric geodesic passes through three cone tips, where geometric geodesics are those missing the cone tips and their local limits.
\end{itemize}

The Schr\"odinger equation on $\mathcal{X}$ with a Cauchy data $f\in L^2(\mathcal{X})$ reads
 \begin{equation}\label{eqn : Schrodinger equation}\left\{\begin{array}{ll}\imath \partial_t u(t, z) + \Delta_{\mathcal{X}} u(t, z) = 0, & t > 0\\
u(t, z) = f(z), & t = 0\end{array}\right.,\end{equation}  where $\Delta_{\mathcal{X}}$ is the Friedrichs extension of the Laplacian on $\mathcal{X}$.
Using the precise semiclassical resolvent in Theorem \ref{thm : full main theorem}, we can show
\begin{theorem}\label{thm : Strichartz} 
	For any Schr\"{o}dinger admissible pair $(q, r)$, i.e. $(q, r)$ satisfies $$ \mbox{$r \geq 2$, $q \geq 2$, $(q, r) \neq (2, \infty)$, }\, \frac{2}{q} + \frac{n}{r} = \frac{n}{2},$$  the solution to \eqref{eqn : Schrodinger equation} on $\mathcal{X}$ satisfies the short time Strichartz estimates, i.e.  \begin{equation}\label{eqn : Strichartz without loss} 
	\|u\|_{L^q((0,1]; L^{r}( \mathcal{X}))} \lesssim \|f\|_{H^{\mathbf{k}}(\mathcal{X})}  
 \end{equation}where we use the usual notations of $L^2$-Sobolev spaces and space-time norms \begin{eqnarray*}  &H^{k}(\mathcal{X}) = \displaystyle \bigg\{f \in L^2(\mathcal{X}) : \sum_{l = 0}^{[k]} \|D^l f\|_{L^2(\mathcal{X})} + \Big(\int_{\mathcal{X}^2} \frac{|f(z) - f(z')|^2}{|z-z'|^{ 2[k] + n}} \,dg_{\mathcal{X}^2}(z, z')  \Big)^{1/2} < \infty\bigg\},&\\ &\displaystyle \|u(t, z)\|_{L^q(\mathcal{T}; L^{r}( \mathcal{X}))} =   \bigg(\int_{\mathcal{T}}\Big(\int_{\mathcal{X}} |u(t, z)|^r dg_{\mathcal{X}}(z) \Big)^{q/r} dt\bigg)^{1/q}, &
  \end{eqnarray*}
and the constant $\mathbf{k}$ is defined by
 \begin{eqnarray}
  \label{def : bf k}
   \displaystyle \mathbf{k} &=&     \frac{\mathbf{e}}{q(n + \mathbf{e}/2)},  \\ \label{def : bf e} \displaystyle \mathbf{e} &=& \max_{i=1, \cdots, m}\bigg\{\sup_{(x_i, y) \in \mathcal{K}_i} \Big( x_ie_i(x_i, y) \Big) \bigg\},  
 \end{eqnarray}
 where $e_i(x, y)$ is defined by $h_i(x_i, y, dy)$ in $ \mathcal{K}_i$ as in \eqref{def : e} and the same constant $\mathbf{e}$ is defined  in \eqref{eqn : stability} for the case of single cone tip.
\end{theorem}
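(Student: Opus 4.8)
The plan is to derive the Strichartz estimates \eqref{eqn : Strichartz without loss} from the precise structure of the semiclassical resolvent in Theorem \ref{thm : full main theorem} by a $TT^*$-type argument, following the by-now standard scheme of Guillarmou--Hassell--Sikora and Zhang but tracking the extra $\hbar$-losses produced by the non-product oscillations. First I would perform a Littlewood--Paley decomposition, writing $u(t) = e^{it\Delta_{\mathcal{X}}} f = \sum_{j \geq 0} e^{it\Delta_{\mathcal{X}}} \varphi_j(\sqrt{\Delta_{\mathcal{X}}}) f$ where $\varphi_j$ localizes at frequency $2^j = \hbar^{-1}$; since the claimed estimate loses $\mathbf{k}$ derivatives, it suffices (by square-function/Minkowski arguments for $q,r \geq 2$, away from the forbidden endpoint) to prove a uniform-in-$j$ frequency-localized estimate of the form $\|e^{it\Delta_{\mathcal{X}}}\varphi_j(\sqrt{\Delta_{\mathcal{X}}})f\|_{L^q((0,1];L^r)} \lesssim 2^{j\mathbf{k}}\|f\|_{L^2}$, which by $TT^*$ reduces to an operator-norm bound $L^{r'} \to L^r$ on the microlocalized propagator.

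Next I would pass from the propagator to the resolvent via the spectral measure: the kernel of $e^{it\Delta_{\mathcal{X}}}\varphi_j^2(\sqrt{\Delta})$ is, up to harmless terms, $\int e^{it\lambda^2}\varphi_j^2(\lambda)\, dE_{\sqrt{\Delta_{\mathcal{X}}}}(\lambda)$, and the spectral measure is the boundary jump $\frac{1}{2\pi i}\bigl((\Delta_{\mathcal{X}} - (\lambda+i0)^2)^{-1} - (\Delta_{\mathcal{X}} - (\lambda-i0)^2)^{-1}\bigr)$. After rescaling to $\hbar = \lambda^{-1} \sim 2^{-j}$ this is exactly the object whose Schwartz kernel Theorem \ref{thm : full main theorem} describes on $M_{b,0}$ and $M_{sc,b,0}$. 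The key dispersive estimate I would extract is a pointwise kernel bound: for each of the regimes ($b$, $sc$, and the two mixed regimes) the kernel of the microlocalized spectral measure is $O(\hbar^{-n})$ times an oscillatory factor, with the only degradation over the product case coming from the oscillations near $\lf$, $\rf$, $L^\sharp$, which — because of the stability hypothesis $\mathbf{e} \ll n-1$ and the stated vanishing order $(n-1-\mathbf{e})/2$ at $\lf, \rf$ — cost at most a factor $\hbar^{-\mathbf{e}/2}$ uniformly. Interpolating the resulting $L^1 \to L^\infty$ dispersive bound $\|e^{it\Delta}\varphi_j^2\|_{L^1\to L^\infty} \lesssim 2^{jn}(2^{-2j} + |t|)^{-n/2} \cdot 2^{j\mathbf{e}/2}$-type statement (integrated against $t$ over $(0,1]$ in the microlocal decomposition, using stationary phase in the Legendrian variables exactly as in the metric cone model, Theorem \ref{thm : resovlent on metric cones}) with the trivial $L^2 \to L^2$ bound and feeding it into the Keel--Tao machinery produces the frequency-localized Strichartz estimate with loss $2^{j\mathbf{k}}$, where $\mathbf{k} = \mathbf{e}/(q(n+\mathbf{e}/2))$ emerges from optimizing the $t$-integration exponent against the $\hbar^{-\mathbf{e}/2}$ loss.

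Two ingredients specific to the multi-cone setting of $\mathcal{X}$ (as opposed to the single local cone $\mathcal{C}$ treated in Theorem \ref{thm : resolvent}) must be addressed. Away from all the $\mathcal{K}_i$ the manifold is isometric to $\RR^n$ minus balls, so the propagator there is governed by the free/obstacle resolvent and contributes no loss; near each cone tip one uses the single-cone resolvent of Theorem \ref{thm : full main theorem}. The transition between cones along geometric geodesics is handled by the hypothesis that no geometric geodesic passes through three cone tips, which guarantees that after at most two diffractions a wave escapes to the Euclidean region, so the composition of at most two single-cone diffractive propagators (each already controlled) suffices; this is the point where one must carefully compose Legendrian distributions associated to $L$ and $L^\sharp$ across different cones and check that the $\hbar$-powers add rather than compound catastrophically.

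I expect the main obstacle to be the uniform control of the oscillatory Legendrian kernel near the diffractive Legendrian $L^\sharp$ and near $\lf,\rf$ in the non-product case: one must show that the additional oscillations determined by the error function $e$, together with the stated $\hbar$-vanishing orders, combine under stationary phase to give exactly the $\hbar^{-\mathbf{e}/2}$ loss and no worse, uniformly in the position of the critical points of the phase. This requires a careful non-degeneracy analysis of the Legendrian phase functions with the $e$-perturbation, and it is precisely here that the stability condition $\mathbf{e} \ll n-1$ is used to prevent the critical points from colliding and the loss from blowing up — matching the remark after Theorem \ref{thm : resolvent}. Once that pointwise kernel estimate is in hand, the rest is a routine, if lengthy, application of the $TT^*$ and Keel--Tao framework.
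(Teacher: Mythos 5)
Your high-level strategy for the single-cone local estimate is in the right spirit: extract pointwise dispersive kernel bounds from the semiclassical resolvent via Stone's formula and the spectral measure, then feed them into Keel--Tao and account for the $\mathbf{e}$-loss via a change of admissibility exponent and Sobolev embedding, which is where the formula $\mathbf{k} = \mathbf{e}/(q(n + \mathbf{e}/2))$ indeed comes from. But two parts of your proposal have genuine gaps.

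\textbf{Phase-space microlocalization is missing.} You propose a Littlewood--Paley decomposition at frequency $2^j$ followed by a direct pointwise dispersive bound on the frequency-localized spectral measure kernel, including near the diffractive Legendrian $L^\sharp$. This is not enough: near the conic intersection $L \cap L^\sharp$ the spectral measure is an intersecting Legendrian distribution with conic points, and a global pointwise bound of the required strength does not hold there; this is not just a matter of controlling the $e$-oscillations by the stability condition. The paper's proof gets around this by conjugating $dE_{\sqrt{\Delta_{\mathcal{C}}}}(\lambda)$ by a $\lambda$-dependent partition of unity of \emph{pseudodifferential operators} $\{Q_\alpha(\lambda)\}$ whose microsupports $\mathcal{I}_j$ are chosen in the scattering cotangent bundle so that each conjugated piece $Q_j(\lambda) dE\, Q_j^\ast(\lambda)$ is microlocally supported on $L \setminus L^\sharp$ only, away from the conic points, and the elliptic piece $Q_0$ is supported off the characteristic variety entirely. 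Only after this phase-space localization do the stationary-phase bounds \eqref{eqn : estimates for a}--\eqref{eqn : estimates for b} hold, yielding the dispersive exponent $t^{-n/2 - \mathbf{e}/4}$. You would need to add this conjugation step (Proposition \ref{prop : microlocalized spectral measure}) before any kernel estimate can be extracted.

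\textbf{The global gluing across cone tips is wrong.} You propose to compose single-cone diffractive propagators, invoking ``no geometric geodesic passes through three cone tips'' to bound the number of diffractions. This misattributes the hypothesis: that condition is precisely what Baskin--Wunsch use to establish the high-energy cutoff resolvent estimate \eqref{eqn : L2 resolvent estimates}, from which the local smoothing estimate \eqref{eqn : local smoothing} follows. The paper avoids any propagator composition entirely. It takes a smooth partition of unity $\{\chi_i\}$ subordinate to the cone neighbourhoods and the Euclidean exterior, splits $u_i = u_i' + u_i''$ with $u_i''$ solving the inhomogeneous equation with source $[\Delta_{\mathcal{X}}, \chi_i]u$, and controls $u_i''$ by Duhamel, the Christ--Kiselev lemma, and the local smoothing / dual local smoothing estimates. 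This route never needs to compose Legendrian distributions from different cones. Composing diffractive propagators would require controlling the full Legendrian calculus across cones, which is both considerably harder and not what the hypothesis grants you; as written this step of your plan is unlikely to close.

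Apart from these two points, the Keel--Tao interpolation framework and the observation that the Sobolev embedding converts $(n/2 + \mathbf{e}/4)$-admissible estimates into $n/2$-admissible estimates with loss $\mathbf{k}$ are both correct and match the paper.
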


Strichartz estimates   have been proved in a variety of product type conic manifolds in \cite{Ford, BFHM, BMW, Zhang-Zheng-Schrodinger}. Nevertheless, their approaches all rely on the explicit expression of the Schr\"odinger propagator in terms of Bessel and Hankel functions. In the meantime, it is also possible to take advantage of the homogeneity in $\mathfrak{x}$ of $\Delta_{\mathfrak{C}}$ on product metric cone $\mathfrak{C}$ to prove Strichartz estimates   by using Guillarmou-Hassell-Sikora's resolvent construction at finite energies. However, these methods all break down on non-product cones due to the presence of the error term $e$ in $\Delta_{\mathcal{C}}$ . Hence Strichartz estimates on non-product type conic manifolds were still unknown.
Theorem \ref{thm : Strichartz} indeed solves this open problem by utilizing the $b$-calculus and $sc$-calculus.  

The proof of Theorem  \ref{thm : Strichartz} integrates several existing techniques, which appeared for example in \cite{Keel-Tao,  Staffilani-Tataru-CPDE-2002,  Guillarmou-Hassell-Sikora, BMW, Hassell-Zhang}. The strategy  involves the $TT^\ast$ argument and Keel-Tao endpoint method for \eqref{eqn : Strichartz without loss} locally on each $\mathcal{C}_i$ as well as (dual) local smoothing for \eqref{eqn : Strichartz without loss} globally on $\mathcal{X}$. Specifically, we break Theorem  \ref{thm : Strichartz} into the following steps: \begin{itemize}
	\item obtain the structure of the spectral measure $dE_{\sqrt{\Delta_{\mathcal{C}_i}}}(\lambda)$ on each $\mathcal{C}_i$ by applying Stone's formula to the semiclassical resolvent,
	\item (micro)localize the spectral measure around the diagonal of $\mathcal{C}_i^2$  by conjugation of (microlocal) smooth cut-offs on $T^\ast \mathcal{C}_i$,
	\item prove dispersive and energy estimates on $\mathcal{C}_i$ for the cut-off of the Schr\"odinger propagator $e^{\imath t \Delta_{\mathcal{C}_i}}$, which, through the endpoint method,  leads to the local Strichartz estimates on each $\mathcal{C}_i \subset \mathcal{X}$,
	\item prove Strichartz estimates on $\mathcal{X}$ by combining the local estimates on $\mathcal{C}_i$ and the local smoothing estimates on $\mathcal{X}$.
\end{itemize}
The key ingredient in this program, carrying the information from the conic geometry, is the microlocal structure of the resolvent/spectral measure on  $\mathcal{C}$ for large spectral parameters. 
 
We also remark that   \eqref{eqn : Strichartz without loss} is indeed lossless when $\mathbf{e} = 0$ (i.e. $e(x, y) \leq 0$). Otherwise, the loss $\mathbf{k}$ appears to be inevitable. This is because the resolvent $\tilde{P}_\hbar^{-1}$ exhibits extra growth near $\lf \cup \rf \subset \{\hbar = 0\}$ as pointed out in the main theorem. Consequently, this produces an excess energy growth  $\lambda^{\mathbf{e}/2}$ as $\lambda \rightarrow \infty$  for the spectral measure $dE_{\sqrt{\Delta_{\mathcal{C}_i}}}(\lambda)$ in Proposition \ref{prop : microlocalized spectral measure}, and subsequently an excess time growth $t^{ - \mathbf{e}/4}$ as $t \rightarrow 0$ for $e^{\imath t \Delta_{\mathcal{C}_i}}$ in Proposition \ref{prop : microlocalized propagator}. Then the power of time in dispersive estimates is no longer the usual $t^{-n/2}$ but $t^{-n/2 - \mathbf{e}/4}$. To compensate this larger bound, the Cauchy data thus must bring more regularities.

Alternatively, one can adapt the framework in exterior domains formulated by  Burq-G\'{e}rard-Tzvetkov \cite{BGT-2004} and Burq \cite{Burq-Acta-1998, Burq-Duke2004} to show Strichartz estimates on $\mathcal{X}$ as follows: 
\begin{itemize}
\item establish the high energy cut-off resolvent estimates, 
\begin{equation}\label{eqn : L2 resolvent estimates}\|\chi (\Delta_{\mathcal{X}} - (\lambda \pm \imath 0)^2)^{-1}\chi\|_{L^2(\mathcal{X}) \rightarrow L^2(\mathcal{X})} \lesssim |\lambda|^{-1}, \quad \mbox{$|\lambda| > 1$,}\end{equation} 
for any $\chi \in C_c^\infty(\mathcal{X})$,
\item make use of the resolvent estimates  \eqref{eqn : L2 resolvent estimates}   to show the local smoothing estimates for the solution to Schr\"odinger equations,  
\begin{equation}\label{eqn : local smoothing}
\int_0^1 \|\chi u(t, z)\|^2_{\mathcal{D}^{1/2}(\mathcal{X})}\,dt \lesssim \|f\|_{L^2(\mathcal{X})},
\end{equation}
where $\mathcal{D}^{s/2}(\mathcal{X})$ is the Friedrichs domain of $\Delta_{\mathcal{X}}^{s/2}$, which is the Sobolev space $H^s(\mathcal{X})$ away from the boundary and locally a weighted $b$-Sobolev space $x^{-n/2 + s} H_b^s(\mathcal{X})$ near the boundary,\footnote{Here $x$ is a local boundary defining function and the $b$-Sobolev space $H_b^s(\mathcal{X})$ will be defined in \eqref{eqn : b-Sobolev}. See \cite[Section 3]{Melrose-Wunsch} for details.}
\item prove the Strichartz estimates by utilizing \eqref{eqn : local smoothing} and interpolation.
\end{itemize}
On $\mathcal{X}$, Baskin-Wunsch \cite{Baskin-Wunsch-JDG} established \eqref{eqn : L2 resolvent estimates} as well as \eqref{eqn : local smoothing}. However, this program  only yields Strichartz estimates with a $1/q$ loss, even if $\mathbf{e} = 0$. This is far less accurate than \eqref{eqn : Strichartz without loss}, since the resolvent estimates \eqref{eqn : L2 resolvent estimates} do not contain the complete information of the semiclassical resolvent on each $\mathcal{C}_i$. For example the Legendrian structure described in Theorem \ref{thm : full main theorem} is not addressed by the resolvent estimates. It is the propagation along the geometric Legendrian that prompts the various oscillations of the resolvent / spectral measure, and then  leads to the dispersive estimates \eqref{eqn : dispersive estimates}.

 


\subsection*{Outline} To begin with, the multi-stretched spaces $M_{b, 0}$ and $M_{sc, b, 0}$ to accommodate the resolvent kernel will be defined in Section \ref{sec : blow-up space}. The abstract microlocal machinery, including the $b$-calculus, the $sc$-calculus, and Legendrian distributions, will be reviewed Section \ref{sec : abstract b calculus} and Section \ref{sec : abstract sc calculus} and then applied to the multi-stretched spaces in Section \ref{sec : b calculus on $M_{b, 0}$} and Section \ref{sec : Legendrian distributions on conic manifolds} respectively. To alleviate technicality, we only present what will be needed in the resolvent construction. The reader is referred to \cite{Melrose-APS, Melrose-SC, Melrose-Zworski, Hassell-Vasy-JAM, Hassell-Vasy-AnnFourier, Hassell-Wunsch} for a complete and detailed presentation. With these microlocal tools, the local parametrices in the four regimes will be constructed in Section \ref{sec : b parametrix}-\ref{sec : sc-b parametrix} respectively. Subsequently, Section \ref{sec : true resolvent} will obtain the true resolvent. In the end, we establish Strichartz estimates.

\subsection*{Acknowledgement}
The author is supported by NSFC grant 11701094 and EPSRC grant EP/R001898/1. The author  would like to extend his gratitude to the following people. Nicolas Burq suggested the author prove Strichartz estimates without loss by using semiclassical estimates for the resolvent, which was one of the original motivations to construct the semiclassical resolvent in this paper.  Andrew Hassell taught the author the general philosophy of Melrosian calculus extensively used in this project, referred the author to \cite{Guillarmou-Hassell-Sikora-TAMS2013} for the model problem on non-compact metric cones, and explained to the author a few details in \cite{Hassell-Vasy-JAM, Hassell-Vasy-AnnFourier}. Peter Hintz shared the author privately with his article \cite{Hintz}. Jared Wunsch suggested the study of Schr\"odinger equations on conic manifolds and had numerous illuminative discussions with the author over this project. Part of the work was done during the author's visits to Universit\'e Paris-Sud and Northwestern University. The author would like to thank their hospitality and support. The author is also indebted to anonymous referees for their careful reading and insightful comments.

\begin{center}\begin{figure}
\includegraphics[width=0.6\textwidth]{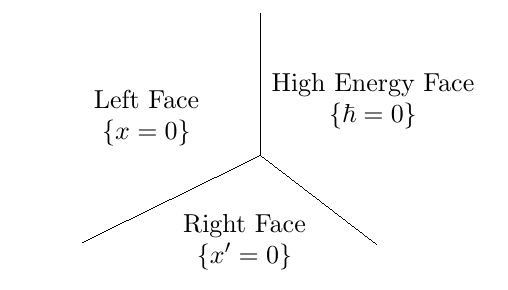}\caption{\label{fig: original}The space $M = \mathcal{C}^2 \times [0, 1)$}\end{figure}

 \end{center}

\section{The stretched spaces} \label{sec : blow-up space}

\subsection{The real blow-up}

We wish to understand the microlocal structure of the resolvent of $P_\hbar$, expressed in \eqref{eqn : semiclassical conversion}, with a parameter $\hbar \in [0, 1)$. The resolvent kernel $P_\hbar^{-1}$ is normally viewed as living in the double space $M = \mathcal{C}^2 \times [0, 1)$, with local coordinates $(x, y, x', y', \hbar)$ near the boundaries. See Figure \ref{fig: original}.

However, the resolvent kernel is singular near the boundaries of $M$, as the $b$-calculus and $sc$-calculus suggests, even if $\mathcal{C} = \mathbb{R}^n$.  For example, the resolvent kernel on $\mathbb{R}^3$, given explicitly by $$(\Delta_{\mathbb{R}^3} - \lambda^2)^{-1}(z, z', \pm \lambda) = \frac{e^{\pm \imath \lambda |z-z'|}}{4\pi|z- z'|},$$ could be singular when $|z-z'| \rightarrow 0$ and $|\lambda| \rightarrow \infty$ simultaneously. 

To remedy this and express the asymptotic limit of the kernel explicitly, Melrose pioneered the use of real blow-up with respect to a $p$-submanifold. Given a manifold $X$ with corners, we say $Y$ is a (product type) $p$-submanifold if for every point $y\in Y$, there exist local coordinates $\{(x_1, \cdots, x_r, y', y'')\}$ of $X$ such that
\begin{itemize}
	\item $x_i \geq 0$ are boundary defining functions,
	\item $y' \in \mathbb{R}^{s'}$ and $y'' \in \mathbb{R}^{s''}$,
	\item $\dim X = r + s' + s''$,
	\item $Y$ is locally parametrized  by $\{x_1 = \cdots = x_q = 0, y'=0\}$ where $q \leq r$.
\end{itemize}
The real blow-up of $X$ with respect to $Y$ is a new manifold with an extra boundary hypersurface, $$[X; Y] = (X \setminus Y) \cup SN_+ Y,$$ where $Y$ is replaced by the inward pointing spherical normal bundle $SN_+ Y$. The manifold $[X; Y]$ is a desingularized version of $X$, in the sense that the fractions, which appear in the resolvent kernel, such as $x_i/\varrho$ for $1 \leq i \leq q$ and $y'/\varrho$  where $\varrho = (\sum_{i=1}^q x_i^2 + |y'|^2)^{1/2}$, lift to smooth functions in $[X; Y]$. 

\begin{center}\begin{figure}
		\includegraphics[width=0.6\textwidth]{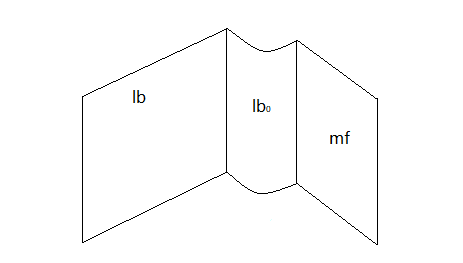}\caption{\label{fig: stretchedsingle}The stretched single space $\mathcal{C}_b$}\end{figure} \end{center}
\subsection{The stretched single space}

The differential operator $P_\hbar$ in \eqref{eqn : semiclassical conversion} is defined on the single space $\mathcal{C} \times [0,1)$. The motivation computation suggests one can convert $P_\hbar$ to a $b$-operator $P_b$ in \eqref{eqn : b-operator} and a $sc$-operator $P_{sc}$ in \eqref{eqn : scattering-operator} by changing variable $x$ to $r = x/\hbar$ and $\rho = \hbar/x$ respectively.

In terms of the real blow-up, this variable changing amounts to blowing up the corner of $\mathcal{C} \times [0,1)$. In other words, the operator $P_b$ and the operator $P_{sc}$ are defined on the stretched single space $$\mathcal{C}_b = [\mathcal{C} \times [0,1); \partial \mathcal{C} \times \{0\}].$$ We denote by $\beta_\hbar$ the blow-down map corresponding to such real blow-up.      Then the boundary of $\mathcal{C}_b$ consists of $\lb_0, \lb, \mf$, where \begin{eqnarray*}
	\lb_0 &=& \overline{\beta_\hbar^{-1}(\partial \mathcal{C} \times \{0\})}\\
	\lb &=& \overline{\beta_\hbar^{-1} (\partial \mathcal{C} \times [0, 1))}\\
	\mf &=& \overline{\beta_\hbar^{-1} (\mathcal{C} \times \{0\})}.
\end{eqnarray*}

There is a natural atlas for $\mathcal{C}_b$.
\begin{itemize}
		\item In an open neighbourhood of $\mf \cap \lb_0$ away from $\lb$, we have local coordinates 
	$(\rho, y, \hbar)$, where $\hbar$ is the total defining function of $\lb_0 \cup \mf$ and $\hbar/\rho, \rho$ locally define $\lb_0, \mf$ respectively.	
	\item In an open neighbourhood of $\lb$ away from $\mf$, we have local coordinates 
	$(r, y, \hbar)$, where $\hbar r$ is the total defining function of $\lb_0 \cup \lb$ and $\hbar, r$ locally define $\lb_0, \lb$ respectively.
	\item In an open neighbourhood away from $\lb \cup \lb_0$, we have local coordinates $(z, \hbar)$, where $z$ is in the interior of $\mathcal{C}$. 
	
\end{itemize}

To see the $b$-structure\footnote{The abstract $b$-calculus will be reviewed in Section \ref{sec : abstract b calculus}.} of $P_b$ and the $sc$-structure\footnote{The abstract $sc$-calculus will be reviewed in Section \ref{sec : abstract sc calculus}.} of $P_{sc}$, we consider the space $\mathcal{V}_b(\mathcal{C})$ of $b$-vector fields on $\mathcal{C}$ (which are tangent to $\partial \mathcal{C}$) $$\mathcal{V}_b(\mathcal{C}) = \mathrm{span} \{x \partial_x, \partial_y\} \quad \mbox{near $\partial \mathcal{C}$}$$ and the vector fields on $\mathcal{C}\times [0, 1)$, which are tangent to $\partial \mathcal{C}$ and vanish at the high energy face $\mathcal{C} \times \{0\}$,   $$\hbar \mathcal{V}_b(\mathcal{C} )   = \mathrm{span} \{\hbar x \partial_x, \hbar \partial_y\}, \quad \mathcal{V}_b([0, 1)) =   \mathrm{span} \{\hbar \partial_\hbar\} \quad \mbox{near $\partial \mathcal{C}\times [0, 1)$}.$$
We define the space $\mathcal{V}_{b, sc}(\mathcal{C}_b)$ of vector fields on $\mathcal{C}_b$ as 
$$\mathcal{V}_{b, sc}(\mathcal{C}_b) = \rho_{\lb_0}^{-1}  ( \mathcal{P}_{\mathcal{C} \times [0,1) \rightarrow \mathcal{C}} \circ \beta_\hbar)^\ast (\hbar \mathcal{V}_{b}(\mathcal{C})) + \mathcal{V}_b([0, 1)),$$ where $\rho_\bullet$ denotes the boundary defining function of the boundary hypersurface $\bullet$ and $\mathcal{P}_{\mathcal{C} \times [0,1) \rightarrow \mathcal{C}}$ is the canonical projection from $\mathcal{C} \times [0,1)$ to $\mathcal{C}$. 
With such a space of vector fields on $\mathcal{C}_b$, we further introduce the compressed tangent bundle ${}^{b, sc}T \mathcal{C}_b$ as a bundle  over  $\mathcal{C}_b$ the smooth sections of which are $\mathcal{V}_{b, sc}(\mathcal{C}_b)$, the compressed cotangent bundle ${}^{b, sc}T^\ast \mathcal{C}_b$ as the dual bundle of ${}^{b, sc}T \mathcal{C}_b$, the compressed density bundle ${}^{b, sc}\Omega(\mathcal{C}_b)$ as the line bundle the smooth sections of which are the wedge products of a basis of sections for ${}^{b, sc}T^\ast \mathcal{C}_b$.

We see the following local properties: 
\begin{itemize}
	\item near $\lb \subset \mathcal{C}_b$ away from $\mf$,

\begin{itemize}
	\item   we have the local coordinates $(r, y, \hbar)$ and $\rho_{\lb_0} = \hbar$,
	\item   $\mathcal{V}_{b, sc}(\mathcal{C}_b)$ is locally spanned by  
$$r\partial_r, \quad  \partial_y,  \quad \hbar \partial_\hbar,$$ 
\item 
the operator $P_b$ is then an elliptic combination of the sections of ${}^{b, sc}T \mathcal{C}_b$,
\item a basis of sections of ${}^{b, sc}T^\ast \mathcal{C}_b$ is locally given by the following singular $1$-forms, $$\frac{dr}{r}, \quad dy, \quad \frac{d\hbar}{\hbar},$$

\item the compressed density takes the form $$\bigg|\frac{drdyd\hbar}{r\hbar}\bigg|;$$

\end{itemize}

\item near $\mf \subset \mathcal{C}_b$ away from $\mathrm{lb}$,

\begin{itemize}
	\item  we have the local coordinates $(\rho, y, \hbar)$ and $\rho_{\lb_0} = \hbar/\rho$
	\item    $\mathcal{V}_{b, sc}(\mathcal{C}_b)$ is locally spanned by  
$$ \rho^2 \partial_\rho, \quad \rho \partial_y,  \quad \hbar \partial_\hbar,$$  
\item the operator $P_{sc}$ is then an elliptic combination of the sections of ${}^{b, sc}T \mathcal{C}_b$,
\item a basis of sections of ${}^{b, sc}T^\ast \mathcal{C}_b$ is locally given by the following singular $1$-forms, $$\frac{d\rho}{\rho^2}, \quad \frac{dy}{\rho}, \quad \frac{d\hbar}{\hbar},$$
\item the compressed density takes the form $$\bigg|\frac{d\rho dy d\hbar}{\rho^{n+1 }\hbar}\bigg|;$$
\end{itemize}

\item away from $\lb \cup \lb_0$, i.e. a neighbourhood of $(\mathcal{C}\setminus\mathcal{K}) \times [0, 1)$,

\begin{itemize}
	\item  we have the local coordinates $(z, \hbar)$
	\item    $\mathcal{V}_{b, sc}(\mathcal{C}_b)$ is locally spanned by  
	$$ \partial_z,  \quad \hbar \partial_\hbar,$$   
	\item a basis of sections of ${}^{b, sc}T^\ast \mathcal{C}_b$ is locally given by the following singular $1$-forms, $$dz, \quad \frac{d\hbar}{\hbar},$$
	\item the compressed density takes the form $$\bigg|\frac{dz d\hbar}{ \hbar}\bigg|,$$
	\item this region is viewed as a part of $\{\rho < 1\} \subset \mathcal{C}_b$ though there is no $\rho = \hbar/x$ coordinate. 
\end{itemize}

\end{itemize}

\begin{center}
	\begin{figure}
		\includegraphics[width= \textwidth]{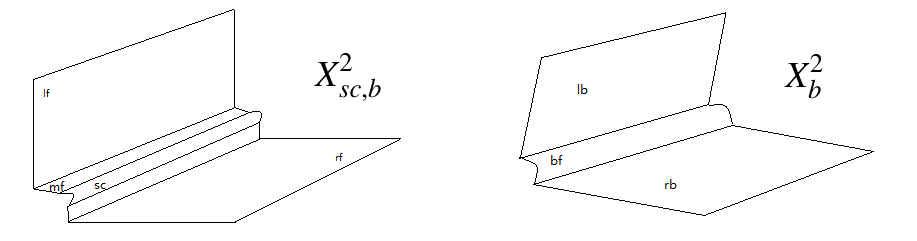}\caption{\label{Xb2}The $sc$-stretched and $b$-stretched double spaces}\end{figure}
\end{center}

\subsection{The multi-stretched double space}

However, the stretched single space $\mathcal{C}_b$ is insufficient to address all singularities of the resolvent, e.g. the diagonal singularity as in the expression of $(\Delta_{\mathbb{R}^3} - \lambda^2)^{-1}$, as well as its interactions with boundary singularities generally in Melrosian calculus on manifolds with ends.

Via the canonical  'left' ('right') projection $\mathcal{P}_L (\mathcal{P}_R) : M \rightarrow \mathcal{C}$, we can pull back $P_\hbar$ to the double space $M$ and view it as a distribution acting on the 'left half' ('right half') of $M$ when we construct the 'right-sided' ('left-sided') parametrix. To avoid confusion,  we shall use the following notations for a differential operator $P$ on the single space and distributions $G$ on the double space  
\begin{itemize}
	\item by $P(f)$ we mean $P$ acting on a function $f$ on the single space, 
	\item by $P \circ G$ we mean $P$ acting on the left half of $G$ on the double space,
	\item  the usual composition of $G_1$ and $G_2$ and the power of $G$ on the double space is denoted by  $G_1 G_2$ and $G G = G^2$. 
\end{itemize}

As we have seen on the single space, the operator $P_\hbar$ turns out to be a $b$-operator $P_b$ and a $sc$-operator $P_{sc}$ near the two ends $\{r < 1\}$ and $\{\rho < 1\}$ respectively. This implies we need to perform blow-ups to accommodate both the $b$-structure and the $sc$-structure.

In an abstract setting, a $b$-differential operator $\mathcal{O}_b$ and a $sc$-differential operator $\mathcal{O}_{sc}$ individually act as a distribution on a manifold $X$ with local coordinates $(x, y)$ near the boundary where $x$ is a boundary defining function. We shall follow the convention that the coordinates for the right copy of the double space are primed. Then the kernels of $\mathcal{O}_{sc}$ and $\mathcal{O}_{b}$ are defined in $X^2$ with two boundary faces $B_l = \{(0, y, z') \in \partial X \times X\}$ and $B_r = \{(z, 0, y') \in X \times \partial X\}$ as well as a corner $C_X = \{(0, y, 0, y') \in \partial X \times \partial X\}$. The resolvent kernels of $\mathcal{O}_{sc}$ and $\mathcal{O}_{b}$, in terms of the $sc$-calculus and $b$-calculus, live in   $X^2_b = [X^2; C_X]$ and $X_{sc, b}^2 = [X^2_b; \{x / x' = 1, y = y'\}]$ respectively. See Figure \ref{Xb2}.

Hence we aim to describe the resolvent kernel of $P_\hbar$ by consolidating such two distinct microlocal structures in $M$. The resolvent exhibits separate singularities as we approach the left face, the right face, and the high energy face respectively. We thus need to perform a series of blow-ups to compare these singularities.

The order of these blow-ups matters on $M$. First, the reduction to \eqref{eqn : blownup-semiclassical-operator} by coordinate changing $r = x/\hbar$ ($r' = x'/\hbar$)  means we blow up the corner between the left (right) face and the high energy face. Second, as $P_b$ for small $r = x/\hbar$ is a $b$-differential operator, we need to blow up the intersection of the left and right faces to employ the $b$-calculus. Third, a blow-up of the diagonal at the high energy faces is required to apply the $sc$-calculus for $P_{sc}$. These blown up faces will affect one another and meet at the codimension three corner $\{x=x'=\hbar=0\} \subset M$.  

It is natural to firstly blow up the corner with the highest codimension,  $$\beta_0 : M_0 = [M; \{x=x'=\hbar=0\}] \longrightarrow M , $$ and then perform blow-ups, \begin{equation}\label{eqn : M_{b, 0}}\beta_{b} : M_{b, 0} = [M_0; \{x=x'=0\}, \{x=\hbar=0\}, \{x'=\hbar=0\}] \longrightarrow M_0.\end{equation}
This two-fold blow-up $\beta_{b, 0} =   \beta_0\circ\beta_b$ is illustrated in Figure \ref{fig: M_{b, 0}}.  We introduce the following notations for the boundary hypersurfaces of $M_{b, 0}$,
\begin{eqnarray*}
\bfz  &=& \overline{\beta_{b, 0}^{-1}(\partial \mathcal{C} \times \partial \mathcal{C} \times \{0\}) }  \\
\mathrm{bf} &=&	\overline{\beta_{b, 0}^{-1}( \partial \mathcal{C} \times \partial \mathcal{C} \times [0, 1)   \setminus  \partial \mathcal{C} \times \partial \mathcal{C} \times \{0\} ) } 	 \\      \lb_0 &=&	\overline{\beta_{b, 0}^{-1}( \partial \mathcal{C} \times  \mathcal{C} \times \{0\}   \setminus  \partial \mathcal{C} \times \partial \mathcal{C} \times \{0\} ) } \\
 \rb_0 &=&	\overline{\beta_{b, 0}^{-1}( \mathcal{C} \times  \partial \mathcal{C} \times \{0\}   \setminus  \partial \mathcal{C} \times \partial \mathcal{C} \times \{0\} ) }  \\
  \mf  &=&	\overline{\beta_{b, 0}^{-1}(   \mathcal{C} \times  \mathcal{C} \times \{0\}    ) }  \\
 \lb &=&	\overline{\beta_{b, 0}^{-1}( \partial  \mathcal{C} \times  \mathcal{C} \times [0, 1)   ) } \\
	  \rb  &=&	\overline{\beta_{b, 0}^{-1}( \mathcal{C} \times  \partial  \mathcal{C} \times [0, 1)   ) }. 
\end{eqnarray*}

Let $\diag_\bullet$ be the diagonal of a double space $\bullet$. The diagonal of the interior of $M$ is lifted via $\beta_{b, 0}^{-1}$ to the diagonal of $M_{b, 0}$ $$\diag_{M_{b, 0}} = \overline{\beta^{-1}_{b, 0}(\{(z, z, \hbar) : z \in \mathcal{C} \setminus \partial \mathcal{C}, \hbar \in [0, 1)\})}.$$ We denote by $\partial_b \diag_{M_{b, 0}}$ and $\partial_{sc} \diag_{M_{b, 0}}$ the intersections of $\diag_{M_{b, 0}}$ with $\mathrm{bf}$ and $\mf$ respectively.
The dashed line in $\mf$ in Figure \ref{fig: M_{b, 0}} stands for $\partial_{sc} \diag_{M_{b, 0}}$.

The multi-stretched double space $M_{b, 0}$ is the suitable double space for the resolvent kernel of the operator $\tilde{P}_\hbar$ defined on the stretched single space. 
On the one hand, the above blow-up scheme $(\beta_0 \circ \beta_{b})^{-1}$ to create the multi-stretched spaces $M_{b, 0}$ from $\mathcal{C}^2 \times [0, 1)$ is designed in accordance with the variable changing performed from $P_\hbar$ to $\tilde{P}_\hbar$. On the other hand, the original differential operator $P_\hbar$, defined on the single space $\mathcal{C} \times [0, 1)$ with a parameter $\hbar$, lifts to $P_b$ and $P_{sc}$ locally defined on the stretched single space $\mathcal{C}_b$. Although there is no canonical projection from the multi-stretched double space $M_{b, 0}$ to the stretched single space $\mathcal{C}_b$, one can prove there exists a left (right) stretched projection $\tilde{\mathcal{P}}_{L} (\tilde{\mathcal{P}}_{R}) : M_{b, 0} \rightarrow \mathcal{C}_b$ by invoking \cite[Lemma 2.7]{Hassell-Mazzeo-Melrose} and the fact \[M_{b, 0} = [[\mathcal{C}_b \times \mathcal{C} ;  \lb_0 \times \partial \mathcal{C}] ; \lb \times \partial \mathcal{C}, \mf \times  \partial \mathcal{C}].\]

In summary, the operator $P_\hbar$ defined on $M$ is lifted, via the left (right) stretched projection $\tilde{\mathcal{P}}_{L} (\tilde{\mathcal{P}}_{R})$, to $\tilde{P}_\hbar$ defined on $M_{b, 0}$, which acts on the left (right) 'half' of $M_{b, 0}$. The operator $\tilde{P}_\hbar$ is further converted to a $b$-operator $P_b$ in \eqref{eqn : b-operator} near $\mathrm{bf}$ for small $r$ $(r')$, whilst it is a $sc$-operator $P_{sc}$ in \eqref{eqn : scattering-operator} near $\mf$ for small $\rho$ $(\rho')$.

In addition,   the $sc$-calculus requires a further blow-up on $M_{b, 0}$ as we shall see. Then we create the face $\mathrm{sc}$ by blowing up $\partial_{sc} \diag_{M_{b, 0}}$ \begin{equation}\label{fig : M_{sc, b, 0}}\beta_{sc} : M_{sc, b, 0} = [M_{b, 0}; \partial_{sc} \diag_{M_{b, 0}}] \longrightarrow M_{b, 0},\end{equation} as in Figure \ref{blown-upspace(new)}. 
 
 We shall mostly live in $M_{b, 0}$ and only visit $M_{sc, b, 0}$ briefly while constructing the $sc$-pseudodifferential parametrix of $P_{sc}$.

 \begin{center}\begin{figure}
 		\includegraphics[width=0.7\textwidth]{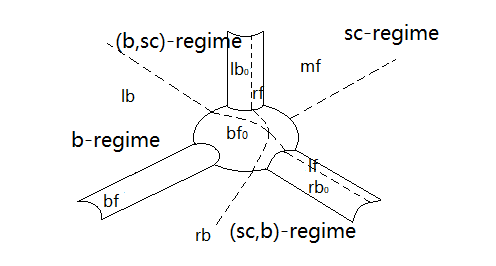}\caption{\label{fig: M_{b, 0}}The space $M_{b, 0} = \beta_b^{-1} \beta_0^{-1} M$}\end{figure} 
 	\begin{figure}
 		\includegraphics[width=0.7\textwidth]{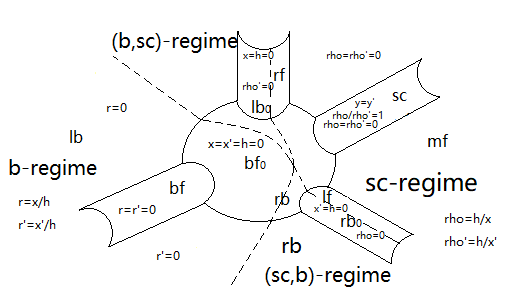}\caption{\label{blown-upspace(new)}The space $M_{sc, b, 0} = \beta_{sc}^{-1} \beta_b^{-1} \beta_0^{-1} M$}\end{figure} \end{center}

\subsection{An atlas of $M_{b, 0}$}

 To separate the distinct behaviours of $\tilde{P}_\hbar$ as the left (right) operator on $M_{b, 0}$, we  decompose the stretched space $M_{b, 0}$ (and $M_{sc, b, 0}$ similarly), by the defining functions $$r = x/\hbar, \quad r' = x'/\hbar,  \quad \rho = \hbar/x,  \quad \rho' = \hbar/x',$$ into the following regimes.  The two intersecting dashed curves in Figure \ref{fig: M_{b, 0}} and Figure \ref{blown-upspace(new)} stand for the borders of the regimes over the boundary faces.

\begin{itemize}

	\item The $b$-regime is defined by $\{r < 1\} \cap \{r' < 1\} \subset M_{b, 0}$, furnished with local coordinates $(r, y, r', y', \hbar)$, and has boundary hypersurfaces:
	\begin{itemize}

		\item $\lb = \overline{\{r= 0, 0 < r' < 1\}}$ is the left boundary in the $b$-calculus for $P_b$,
		
		\item $\rb = \overline{\{0 < r < 1, r' = 0\}}$ is the right boundary in the $b$-calculus for $P_b$,
		
		\item $\mathrm{bf} =  \overline{\{r=0, r'=0\}}$ is the front face in the $b$-calculus for $P_b$.
		
	\end{itemize}

	\item The $(b, sc)$-regime is defined by $\{r < 1\} \cap \{\rho' < 1\} \subset M_{b, 0}$, furnished with local coordinates $(r, y, \rho', y', \hbar)$, and has boundary hypersurfaces:
\begin{itemize}
	\item   $\lb_0 = \overline{\{0 < r < 1, \rho' = 0\}}$,	
	\item   $\lb = \overline{\{0 < \rho' < 1, r=0\}}$.
\end{itemize}

	\item The $sc$-regime is defined by $\{\rho < 1\} \cap \{\rho' < 1\} \subset M_{b, 0}$, furnished with local coordinates $(\rho, y, \rho', y', \hbar)$, and has boundary hypersurfaces:
	
	\begin{itemize}
		\item $\lf  = \overline{\{\rho = 0, 0 < \rho' < 1\}} \subset \mathrm{rb}_0$, defined by $\rho$, acts as the left face in the $sc$-calculus for $P_{sc}$, hence the name $\lf$,
		\item $\rf = \overline{\{\rho' = 0, 0 < \rho < 1\}} \subset \mathrm{lb}_0$, defined by $\rho'$, acts as the right face in the $sc$-calculus for $P_{sc}$, hence the name $\rf$,
		\item $\mf =  \overline{\{\rho=0, \rho'=0\}}$ is the main face, which plays a central role in the discussion of Legendrian structures in the $sc$-calculus.
		
	\end{itemize}

	\item The $(sc, b)$-regime is defined by $\{\rho < 1\} \cap \{r' < 1\} \subset M_{b, 0}$, furnished with local coordinates $(\rho, y, r', y', \hbar)$, and has boundary hypersurfaces:
	
	\begin{itemize}
		\item   $\rb_0 = \overline{\{0 < r' < 1, \rho =0\}}$,	
		\item   $\rb = \overline{\{0 < \rho < 1, r' = 0\}}$.

	\end{itemize}

	\item The transitional face $\bfz$ is defined by  $\overline{\{x^2 + x'^2 + \hbar^2 = 0\}} \subset M_{b, 0}$ and likewise decomposed into four pieces by the defining functions $r, r', \rho, \rho'$. Restricted to $\bfz$, the operator $\tilde{P}_\hbar$ reduces to $\Delta_{\mathfrak{C}} - (1 + \imath 0)$. Therefore, $\bfz$ can be viewed as a stretched double metric cone $\mathfrak{C}^2$, whilst the resolvent kernel $(\Delta_{\mathfrak{C}} - (1 + \imath 0))^{-1}$ on a stretched version of $\mathfrak{C}^2$ is equivalent to the resolvent kernel $\tilde{P}_\hbar^{-1}$ on $\bfz$. This will be discussed in detail in Section \ref{Sec: metric cone resolvent}.
	
	\item The interior of $M_{b, 0}$ is defined by a neighbourhood of $(\mathcal{C}\setminus\mathcal{K})^2 \times [0, 1)$ and furnished with local coordinates $(z, z', \hbar)$. We remark that the interior does have a boundary face in $\{\hbar = 0\}$ but it has nothing to do with the conic singularities of $\mathcal{C}$. We view the interior as a subset of the $sc$-regime as we did for the stretched single space.
	
\end{itemize}


	\subsection{The compressed cotangent bundle and density of $M_{b, 0}$}
	
Understanding the microlocal structure of the resolvent kernel on $M_{b, 0}$, especially the singularities near the boundary, requires a suitable compressed cotangent bundle.

By the left and right stretched projections $\tilde{\mathcal{P}}_{L}$ and $\tilde{\mathcal{P}}_{R}$, we pull back the space $\mathcal{V}_{b, sc}(\mathcal{C}_b)$ of vector fields on the stretched single space $\mathcal{C}_b$ to the space $\mathcal{V}_{b, sc}(M_{b, 0})$ of vector fields on the stretched double space $M_{b, 0}$,
$$\mathcal{V}_{b, sc}(M_{b, 0}) = \tilde{\mathcal{P}}_{L}^\ast (\mathcal{V}_{b, sc}(\mathcal{C}_b)) + \tilde{\mathcal{P}}_{R}^\ast (\mathcal{V}_{b, sc}(\mathcal{C}_b)).$$

This Lie algebra determines the smooth sections of the compressed tangent bundle ${}^{b, sc}TM_{b, 0}$ and then defines the desired compressed cotangent bundle  ${}^{b, sc}T^\ast M_{b, 0}$ as well as the compressed density bundle  ${}^{b, sc}\Omega(M_{b, 0})$.
	


With the atlas of $M_{b, 0}$, we have the following local expressions.

\begin{itemize}

\item In the $b$-regime $\{r < 1\} \cup \{r' < 1\}$, under coordinates $(r, y, r', y', \hbar)$,
\begin{itemize}
\item  the compressed cotangent bundle ${}^{b, sc}T^\ast M_{b, 0}$ is spanned by singular $1$-forms \[\frac{dr}{r}, \quad  \frac{dr'}{r'},  \quad dy, \quad  dy',  \quad \frac{d\hbar}{\hbar},\] and essentially a $b$-cotangent bundle;

\item  the compressed density ${}^{b, sc}\Omega(M_{b, 0})$  thus takes the form, 
  \[ 
 \bigg|\frac{dr dy dr' dy' d\hbar}{rr'\hbar}\bigg|,
   \] and is essentially a $b$-density.

\end{itemize}
\item In the $sc$-regime $\{\rho < 1\} \cap \{\rho' < 1\}$, under coordinates $(\rho, y, \rho', y', \hbar)$,

\begin{itemize}
\item  the compressed cotangent bundle ${}^{b, sc}T^\ast M_{b, 0}$ is spanned by singular $1$-forms \[\frac{d\rho}{\rho^2}, \quad  \frac{d\rho'}{\rho'^2},  \quad \frac{dy}{\rho},  \quad \frac{dy'}{\rho'},  \quad \frac{d\hbar}{\hbar},\] and essentially a $sc$-cotangent bundle;

\item  the compressed density ${}^{b, sc}\Omega(M_{b, 0})$  thus takes the form, 
\[ 
\bigg|\frac{d\rho dy d\rho' dy' d\hbar}{\rho^{n+1}\rho'^{n+1}\hbar}\bigg|,
\] and is essentially a $sc$-density.
\end{itemize}

\item In the $(b, sc)$-regime $\{r < 1\} \cap \{\rho' < 1\}$, under coordinates $(r, y, \rho', y', \hbar)$,

\begin{itemize}
 \item  the compressed cotangent bundle ${}^{b, sc}T^\ast M_{b, 0}$ is spanned by singular $1$-forms \[\frac{dr}{r},  \quad \frac{d\rho'}{\rho'^2}, \quad  dy, \quad  \frac{dy'}{\rho'},  \quad \frac{d\hbar}{\hbar},\]
\item  the compressed density ${}^{b, sc}\Omega(M_{b, 0})$  thus takes the form, \[ 
	\bigg|\frac{dr dy d\rho'dy'  d\hbar}{r \rho'^{n+1} \hbar}\bigg|.
  \]

\end{itemize}

\item In the $(sc, b)$-regime $\{\rho < 1\} \cap \{r' < 1\}$, under coordinates $(\rho, y, r', y', \hbar)$,

\begin{itemize}
	\item    the compressed cotangent bundle ${}^{b, sc}T^\ast M_{b, 0}$ is spanned by singular $1$-forms \[\frac{d\rho}{\rho^2}, \quad \frac{dr'}{r'},  \quad \frac{dy}{\rho},  \quad dy',  \quad \frac{d\hbar}{\hbar},\]
	\item 	 the compressed density ${}^{b, sc}\Omega(M_{b, 0})$  thus takes the form,  \[
 \bigg|\frac{ d\rho dy dr' dy' d\hbar}{\rho^{n+1}r' \hbar}\bigg|.
  \]

\end{itemize}

\item The transitional region, $\bfz$, can be viewed as the space where the resolvent kernel of $\Delta_{\mathfrak{C}}$ lives, which will be discussed in detail in Section \ref{Sec: metric cone resolvent}.

\end{itemize}

\section{The abstract $b$-calculus}\label{sec : abstract b calculus}

In this section, we shall review the abstract $b$-calculus. The notations in this section will be self-contained and independent of the conic manifold $\mathcal{C}$.

\subsection{The small $b$-calculus}

Suppose $X$ is an $n$-manifold which near any boundary point has a coordinate chart $\{(x, y) : x \in [0, 1), y \in \mathbb{R}^{n-1}\}$. The space $\mathcal{V}_b(X)$ of $b$-vector fields is the collection of smooth sections of $T X$ tangent to the boundary $\partial X$. Then the $b$-vector fields are smooth sections of the $b$-tangent bundle ${}^b T X$, which has a local frame $\{x\partial_x, \partial_y\}$ near the boundary. The $b$-cotangent bundle ${}^b T^\ast X$ is the dual bundle of ${}^b T X$.

The space $\mathrm{Diff}_b^m (X)$ of $m$th order $b$-differential operators consists of finite sums of $m$-fold products of $b$-vector fields. Near $\partial X$, a $b$-differential operator $A \in \mathrm{Diff}_b^m (X)$ acts on $b$-half densities  $$f \bigg|\frac{dx dy}{x}\bigg|^{1/2} \in C^\infty(X; {}^b\Omega^{1/2}),$$ and takes the explicit form, $$A = \sum_{\alpha + |\beta| \leq m} a_{\alpha\beta}(x, y) (x\partial_x)^\alpha \partial_y^\beta, \quad \mbox{$\alpha \in \mathbb{N}_0$ and $\beta \in \mathbb{N}_0^{n-1}$}.$$

The $b$-differential operators can be generalized to $b$-pseudodifferential operators. The Schwartz kernel of a $b$-pseudodifferential operator is a distribution living in the $b$-stretched space  $$X^2_b = \beta_b^{-1} (X^2) = [X^2; \partial X \times \partial X]$$ with three boundary faces
\begin{align*}
&\mathrm{bf} = \overline{\beta^{-1}_b (\partial X \times \partial X)}\\
&\mathrm{lb} = \overline{\beta^{-1}_b (\partial X \times X^\circ)}\\
&\mathrm{rb} = \overline{\beta^{-1}_b (X^\circ \times \partial X)}.\end{align*} and  conormal to the $b$-diagonal $\diag_b = \overline{\beta_b^{-1} (\diag^\circ)}$. To be concrete,
\begin{definition}
The space $\Psi^m_b(X; {}^b\Omega^{1/2})$ is the space of operators with a kernel $\kappa |d\mu_b|^{1/2}$ living in $X_b^2$ such that
\begin{itemize}
\item $\kappa$ is a distribution, conormal of degree $m$ to $\diag_b$, smooth up to $\mathrm{bf}$, and rapidly vanishing at $\mathrm{lb}$ and $\mathrm{rb}$.
\item $|d\mu_b|^{1/2}$ is the lift of $\pi_l^\ast {}^b\Omega^{1/2} \otimes \pi_r^\ast {}^b\Omega^{1/2}$ to $X_b^2$, where $\pi_l$ and $\pi_r$ are the projections from $X_b^2$ down to a single space $X$.
\end{itemize}
\end{definition}

In analogy with classical pseudodifferential operators,  $b$-pseudodifferential operators have a well-defined symbol map $$\Psi_b^m(X; {}^b\Omega^{1/2}) \longrightarrow S^m({}^bT^\ast X)/S^{m-1}({}^bT^\ast X),$$ where $S^m(\cdot)$ is the classical symbol space of degree $m$. This is extended to a short exact sequence
$$0 \rightarrow \Psi_b^{m-1}(X; {}^b\Omega^{1/2}) \hookrightarrow \Psi_b^{m}(X; {}^b\Omega^{1/2}) \rightarrow S^m({}^bT^\ast X) \rightarrow 0.$$

\subsection{The full $b$-calculus}

Since the small $b$-calculus only addresses the singularities of the resolvent near $\diag_b$, the full $b$-calculus is introduced to describe the boundary behaviours of the true resolvent.

First of all, the boundary singularities are measured in terms of $b$-Sobolev spaces and polyhomogeneous conormal distributions to boundaries. The $b$-Sobolev space of order $m$ is defined by $b$-differential operators as \begin{equation}\label{eqn : b-Sobolev}H_b^m(X; {}^b\Omega^{1/2}) = \{u \in L^2(X; {}^b\Omega^{1/2}) : \mathrm{Diff}_b^{m}(X; {}^b\Omega^{1/2}) u \subset L^2(X; {}^b\Omega^{1/2})\}.\end{equation}

The space of conormal distributions of degree $(\alpha_{\mathrm{bf}}, \alpha_\lb, \alpha_\rb)$ to $(\mathrm{bf}, \lb, \rb)$  on $X^2_b$, denoted by $\mathcal{A}^{(\alpha_{\mathrm{bf}}, \alpha_\lb, \alpha_\rb)}(X_b^2; {}^b\Omega^{1/2})$, consists of distributions such that $$\{u : \mathcal{V}_b u \subset \rho_{\mathrm{bf}}^{\alpha_{\mathrm{bf}}} \rho_\lb^{\alpha_\lb} \rho_\rb^{\alpha_\rb} L^\infty(X_b^2; {}^b\Omega^{1/2})\} \quad \mbox{with $(\alpha_{\mathrm{bf}}, \alpha_\lb, \alpha_\rb)\in \mathbb{R}^3$},$$ where $\rho_{\mathrm{bf}}$, $\rho_\lb$ and $\rho_\rb$ are defining functions for $\mathrm{bf}$, $\lb$ and $\rb$ respectively.

The polyhomogeneity for a boundary $\bullet$ is measured by the index set $\mathcal{E}_\bullet$, which is a discrete subset of $\mathbb{C} \times \mathbb{N}_0$ such that  \begin{itemize}
\item $(z_j, k_j) \in \mathcal{E}_\bullet, |(z_j, k_j)| \rightarrow \infty \Rightarrow \Re z_j \rightarrow \infty$,
\item $(z, k) \in \mathcal{E}_\bullet \Rightarrow (z, l) \in \mathcal{E}_\bullet, l \in \mathbb{N}_0, 0 \leq l \leq k$,
\item $(z, k) \in \mathcal{E}_\bullet \Rightarrow (z + j, k) \in \mathcal{E}_\bullet, j \in \mathbb{N}_0$.
\end{itemize}
We denote $\Re \mathcal{E}_\bullet = \min\{\Re z : \exists (z, k) \in \mathcal{E}_\bullet\}$ for $\mathcal{E}_\bullet$.

A collection $\mathcal{E}$ of index sets for each boundary hypersurface is called the index family.
The space $\mathcal{A}_{phg}^{(\mathcal{E}_{\mathrm{bf}}, \mathcal{E}_\lb, \mathcal{E}_\rb)}(X_b^2; {}^b\Omega^{1/2})$ of polyhomogeneous conormal distributions to $\mathrm{bf}$, $\lb$ and $\rb$ consists of distributions having the following local expressions
\begin{eqnarray*}  u \sim \sum_{(z_{\mathrm{bf}}, k_{\mathrm{bf}}) \in \mathcal{E}_{\mathrm{bf}}} \rho_{\mathrm{bf}}^{z_{\mathrm{bf}}} (\log \rho_{\mathrm{bf}})^{k_{\mathrm{bf}}}  a_{(z_{\mathrm{bf}}, k_{\mathrm{bf}})}\quad \mbox{with $ a_{(z_{\mathrm{bf}}, k_{\mathrm{bf}})} \in C^\infty(X^2_b; {}^b\Omega^{1/2})$ }, \quad \mbox{near $\mathrm{bf}$};
	\\   u \sim \sum_{(z_\lb, k_\lb) \in \mathcal{E}_\lb} \rho_\lb^{z_\lb} (\log \rho_\lb)^{k_\lb} a_{(z_\lb, k_\lb)}\quad \mbox{with $a_{(z_\lb, k_\lb)} \in C^\infty(X^2_b; {}^b\Omega^{1/2})$ }, \quad \mbox{near $\lb$};\\
  u \sim \sum_{(z_\rb, k_\rb) \in \mathcal{E}_\rb} \rho_\rb^{z_\rb} (\log \rho_\rb)^{k_\rb} a_{(z_\rb, k_\rb)}\quad \mbox{with $a_{(z_\rb, k_\rb)} \in C^\infty(X^2_b; {}^b\Omega^{1/2})$ }, \quad \mbox{near $\rb$}.\end{eqnarray*}
To measure the polyhomogeneity of the product and composition of two conormal distributions, the operations of addition and extended union for two index sets $\mathcal{E}_1$ and $\mathcal{E}_2$ are defined to be
\begin{align*}
\mathcal{E}_1+\mathcal{E}_2 &= \{(z_1 + z_2, k_1 + k_2) | (z_1, k_1) \in \mathcal{E}_1, (z_2, k_2) \in \mathcal{E}_2\}\\
\mathcal{E}_1 \, \overline{\cup} \, \mathcal{E}_2 &= \mathcal{E}_1 \cup \mathcal{E}_2 \cup \{(z, k) : \mbox{$\exists$ $(z, k_1) \in \mathcal{E}_1$ and $(z, k_2) \in \mathcal{E}_2$ with $k = k_1 + k_2 + 1$}\}.
\end{align*} To measure the least power which can occur, the infimum of an index set $\mathcal{E}_\cdot$ is defined by \[\inf \mathcal{E}_\cdot = \min \{\Re z : (z, 0) \in \mathcal{E}_\cdot\}.\]

The space $\Psi_b^{m, \mathcal{E}}(X_b^2; {}^b\Omega^{1/2})$ of full $b$-pseudodifferential operators with $\mathcal{E} = (\mathcal{E}_{\lb}, \mathcal{E}_{\rb}, \mathcal{E}_{\mathrm{bf}})$ is the collection of the sums of small $b$-pseudodifferential operators $\Psi_b^{m}(X_b^2; {}^b\Omega^{1/2})$ and the operators with a polyhomogeneous conormal kernel in $\mathcal{A}_{phg}^{(\mathcal{E}_{\lb}, \mathcal{E}_{\rb}, \mathcal{E}_{\mathrm{bf}} )}(X_b^2; {}^b\Omega^{1/2})$. 

Second, the boundary behaviour of the resolvent of $A$ involves the indicial operator  $\mathcal{I}(A)$,\footnote{It is also called normal operator. In general, the normal operator $\mathcal{N}_\bullet(\cdot)$ is the restriction of $\cdot$ to the face $\bullet$.  When the indicial family is further involved, we call it indicial operator, to be consistent with the convention in \cite{Melrose-APS}. Otherwise, it is named normal operator. } which is the restriction of $A$ to $\partial X$. It has a local expression, $$\mathcal{I}(A) = \sum_{\alpha + |\beta| = m} a_{\alpha\beta}(0, y) (x\partial_x)^\alpha \partial_y^\beta.$$

In the same spirit with the Fourier transform approach to evolution equations, the inversion of $\mathcal{I}(A)$, via the Mellin transform $\mathcal{M}$,
\[\mathcal{M} (u)(\lambda)  = \int_0^\infty x^{-i\lambda} u(x, y) \frac{dx}{x},\] reduces to the indicial family $\mathcal{I}(A, \lambda)$,  
\begin{align*}  
  \mathcal{I}(A, \lambda) &= \mathcal{M}(\mathcal{I}(A))(\lambda) = \sum_{\alpha + |\beta| = m} a_{\alpha\beta}(0, y) (\imath\lambda)^\alpha \partial_y^\beta. \end{align*} 
The Mellin transform gives an isomorphism 
\[x^\alpha L^2(\mathbb{R}_+, \frac{dx}{x}) \longrightarrow L^2(\{\Im \lambda = \alpha\}, d\lambda),\]  for any $\alpha \in \mathbb{R}$. The inverse Mellin transform corresponds to integration along $\{\Im \lambda = \alpha\}$, \[\mathcal{M}^{-1}(v)(x)= \frac{1}{2\pi} \int_{\{\Im \lambda = \alpha\}}  x^{\imath \lambda} v(\lambda, y)d\lambda.\]

To address the invertibility of the indicial operator, we define the boundary spectrum,
\begin{align*}
\mathrm{spec}_b (\mathcal{I}(A)) &= \Big\{\lambda \in \mathbb{C} : \mbox{$\mathcal{I}(A, \lambda)$ is not invertible on $C^\infty(\partial X)$}\Big\}, \\
\mathrm{Spec}_b (\mathcal{I}(A)) &= \Big\{(\lambda, k) : \mbox{the inverse of $\mathcal{I}(A, \cdot)$ has a pole of order $k+1$ at $\lambda$} \Big\}.
\end{align*}


Through the Mellin transform, the invertibility of $\mathcal{I}(A, \lambda)$ amounts to the invertibility of $\mathcal{I}(A)$ on weighted $b$-Sobolev spaces.
\begin{proposition}The indicial operator $\mathcal{I}(A)$ of $A \in \mathrm{Diff}_b^m(X)$ is an isomorphism between $x^\alpha H_b^j(X; {}^b\Omega^{1/2})$ and $x^\alpha H_b^{j - m}(X; {}^b\Omega^{1/2})$ for all $j$ if and only if $\alpha \in \mathbb{R}$, $\alpha \neq - \Im \lambda$ for any $\lambda \in \mathrm{spec}_b (\mathcal{I}(A))$.\end{proposition}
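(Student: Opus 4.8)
The plan is to reduce the statement, via the Mellin transform, to the invertibility of the holomorphic family $\mathcal{I}(A, \lambda)$ on $C^\infty(\partial X)$, and then to exploit the isomorphism $\mathcal{M} : x^\alpha L^2(\mathbb{R}_+, dx/x) \to L^2(\{\Im \lambda = \alpha\}, d\lambda)$ already recorded in the excerpt. First I would localize: since $X$ is compact up to boundary (or at least the relevant phenomenon is local near $\partial X$), it suffices to treat $A$ in a product collar $[0,1)_x \times \partial X$, where $\mathcal{I}(A)$ is by definition a $b$-operator on the half-line with operator coefficients in $\mathrm{Diff}^m(\partial X)$, invariant under $x \mapsto tx$. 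Away from the collar the statement is the standard elliptic theory on the closed manifold $\partial X$ tensored with trivial $x$-behaviour, so the content is concentrated in the collar.

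Next I would apply the Mellin transform in $x$. Because $\mathcal{I}(A)$ is dilation-invariant, conjugating by $\mathcal{M}$ turns it into multiplication by the indicial family $\mathcal{I}(A, \lambda) = \sum_{\alpha+|\beta|=m} a_{\alpha\beta}(0,y)(\imath\lambda)^\alpha \partial_y^\beta$, a family of elliptic differential operators of order $m$ on $\partial X$ depending holomorphically on $\lambda \in \mathbb{C}$. The isomorphism $x^\alpha H_b^j \to x^\alpha H_b^{j-m}$ is then identified — using the compatibility of $\mathcal{M}$ with both the weight and the $b$-Sobolev scale (one expands $H_b^j$ along the line $\{\Im\lambda = \alpha\}$ with the elliptic parameter-dependent norms $\|u\|_j^2 \sim \int_{\{\Im\lambda=\alpha\}} \|\mathcal{I}(A,\lambda)u\|_{H^{j-m}(\partial X)}^2\,|d\lambda|$, or equivalently with $\langle\lambda\rangle$-weighted Sobolev norms on $\partial X$) — with the statement that multiplication by $\mathcal{I}(A,\lambda)^{-1}$ is bounded uniformly along the horizontal line $\{\Im \lambda = \alpha\}$. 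For each fixed $\lambda$, ellipticity of $\mathcal{I}(A,\lambda)$ on the closed manifold $\partial X$ gives a Fredholm operator of index zero, so invertibility for that $\lambda$ is exactly the condition $\lambda \notin \mathrm{spec}_b(\mathcal{I}(A))$. Thus $\alpha \neq -\Im\lambda$ for all $\lambda \in \mathrm{spec}_b(\mathcal{I}(A))$ is precisely the condition that no point of $\mathrm{spec}_b$ lies on the line $\{\Im\lambda = \alpha\}$, making $\mathcal{I}(A,\lambda)$ invertible pointwise there.

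The pointwise bound must be upgraded to a uniform bound along the line, and this is the main obstacle. One resolves it in two regimes. For $|\Re\lambda|$ large on the line $\{\Im\lambda = \alpha\}$, the operator $\mathcal{I}(A,\lambda)$ becomes elliptic \emph{with large parameter} $|\lambda|$ (the top-order part $(\imath\lambda)^\alpha\partial_y^\beta$ dominates), so Agmon--Douglis--Nirenberg / Shubin parameter-dependent elliptic estimates give $\|\mathcal{I}(A,\lambda)^{-1}\|_{H^{j-m}\to H^j} \lesssim \langle\lambda\rangle^{-m}$ uniformly — in particular there are no elements of $\mathrm{spec}_b$ far out on any horizontal line, so $\mathrm{spec}_b(\mathcal{I}(A))$ meets each such line in a finite set, and by the discreteness of $\mathrm{spec}_b$ (holomorphy of $\lambda\mapsto\mathcal{I}(A,\lambda)^{-1}$ as a meromorphic family, by analytic Fredholm theory) the hypothesis $\alpha\neq-\Im\lambda$ excludes these entirely. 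For $|\Re\lambda|$ bounded, $\lambda$ ranges over a compact segment of the line on which $\mathcal{I}(A,\lambda)$ is invertible, and continuity of $\lambda \mapsto \mathcal{I}(A,\lambda)^{-1}$ in operator norm gives a uniform bound there. Combining the two regimes yields the uniform bound, hence boundedness of $\mathcal{M}^{-1} \circ (\mathcal{I}(A,\lambda)^{-1}\cdot) \circ \mathcal{M}$ on the weighted $b$-Sobolev spaces, which is the desired inverse. Conversely, if $\alpha = -\Im\lambda_0$ for some $\lambda_0 \in \mathrm{spec}_b(\mathcal{I}(A))$, a kernel (or cokernel) element of $\mathcal{I}(A,\lambda_0)$ produces, via $x^{\imath\lambda_0}$ times that element cut off to the collar, an element of the kernel or an obstruction to surjectivity of $\mathcal{I}(A)$ on $x^\alpha H_b^j$ — here one checks $x^{\imath\lambda_0}\phi(y) \in x^\alpha H_b^\infty$ locally since $|x^{\imath\lambda_0}| = x^{-\Im\lambda_0} = x^\alpha$ and all $b$-derivatives preserve this weight — so the condition is also necessary.
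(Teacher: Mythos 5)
The paper states this proposition without proof (it is a standard result cited from Melrose's book on the $b$-calculus), and your forward direction follows exactly the standard Mellin-transform argument there: conjugate $\mathcal{I}(A)$ to multiplication by the indicial family $\mathcal{I}(A,\lambda)$ on the line $\{\Im\lambda = \alpha\}$, get pointwise invertibility from ellipticity on $\partial X$, and upgrade to a uniform operator-norm bound by splitting into the parameter-elliptic regime $|\Re\lambda| \gg 1$ and a compact segment. That part is sound.

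The converse direction, however, has a concrete error. You claim that if $\lambda_0 \in \mathrm{spec}_b(\mathcal{I}(A))$ with $\Im\lambda_0 = -\alpha$, then $x^{\imath\lambda_0}\phi(y)$ (with $\phi \in \ker \mathcal{I}(A,\lambda_0)$), cut off to the collar, lies in $x^\alpha H_b^j$ and so obstructs invertibility. But $x^{\imath\lambda_0}\phi$ is \emph{not} in $x^\alpha L^2_b$: with $|x^{\imath\lambda_0}| = x^\alpha$ one has $|x^{-\alpha}\,x^{\imath\lambda_0}\phi| = |\phi(y)|$, and $\int_0^1 |\phi(y)|^2\,\frac{dx}{x}\,dy$ diverges. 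The critical weight $\alpha = -\Im\lambda_0$ sits exactly on the borderline of $x^\alpha L^2_b$, so the homogeneous solution just misses the space; the remark that ``all $b$-derivatives preserve this weight'' does not rescue the zeroth-order membership. The failure of invertibility at the critical weight is not witnessed by a genuine kernel element but by the unboundedness of $\mathcal{I}(A,\lambda)^{-1}$ along the contour. The standard fixes are: (i) build a Weyl sequence, taking Mellin data concentrating near $\lambda_0$ on $\{\Im\lambda = \alpha\}$, to show the a priori estimate $\|\mathcal{I}(A)u\| \gtrsim \|u\|$ fails (so the range is not closed / there is no bounded inverse); or (ii) compare the inverses at $\alpha \pm \epsilon$ by contour shifting and note that the nonzero residue of $\mathcal{I}(A,\lambda)^{-1}$ at $\lambda_0$ is incompatible with a continuous inverse at $\alpha$ itself. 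Replacing your last sentence with one of these arguments would close the gap.
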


Finally, the composition of the full $b$-pseudodifferential operators will be useful in the construction of the parametrix. See \cite[Theorem 4.20]{Albin}. \begin{proposition} 
For two full $b$-pseudodifferential operators on $X_b^2$, \begin{eqnarray*}
	A \in \Psi_b^{m_1, \mathcal{E}}(X^2_b, {}^b\Omega^{1/2}) &&\mbox{with $\mathcal{E} = (\mathcal{E}_\lb, \mathcal{E}_\rb, \mathcal{E}_{\mathrm{bf}})$}, \\
	B \in \Psi_b^{m_2, \mathcal{F}}(X^2_b, {}^b\Omega^{1/2}) &&\mbox{with $\mathcal{F} = (\mathcal{F}_\lb, \mathcal{F}_\rb, \mathcal{F}_{\mathrm{bf}})$}, 
\end{eqnarray*}    satisfying   $\Re (\mathcal{E}_{\rb} + \mathcal{F}_{\lb}) > 0$, the composition operator $A \circ B$ lies in the space $$\Psi_b^{m_1 + m_2, \mathcal{G}}(X^2_b, {}^b\Omega^{1/2})  \quad \mbox{with $\mathcal{G} = (\mathcal{G}_\lb, \mathcal{G}_\rb, \mathcal{G}_{\mathrm{bf}})$}$$ where the index family $\mathcal{G}$ is given by \begin{align*}
	 \mathcal{G}_{\lb} &= (\mathcal{E}_{\mathrm{bf}} + \mathcal{F}_{\lb})\overline{\cup} \mathcal{E}_{\lb}, \\ \mathcal{G}_{\rb} &= (\mathcal{E}_{\rb} + \mathcal{F}_{\mathrm{bf}}) \overline{\cup} \mathcal{F}_{\rb}, \\ \mathcal{G}_{\mathrm{bf}}  &= (\mathcal{E}_{\mathrm{bf}} + \mathcal{F}_{\mathrm{bf}}) \overline{\cup} (\mathcal{E}_{\lb} + \mathcal{F}_{\rb} ).
\end{align*} 

\end{proposition}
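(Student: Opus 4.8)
The plan is to realize the composition as a pushforward on the $b$-stretched triple space and then apply Melrose's pullback and pushforward theorems for polyhomogeneous conormal distributions. Let $X^3_b$ denote the $b$-stretched triple product, with its three stretched projections $\pi_{12},\pi_{13},\pi_{23}:X^3_b\to X^2_b$ which on the interior forget the third, second and first factor respectively; with the $b$-half-density normalisation these are $b$-fibrations, and the composition of kernels is
$$ A\circ B \;=\; (\pi_{13})_{*}\bigl(\pi_{12}^{*}A\cdot\pi_{23}^{*}B\bigr), $$
the half-density bundles pairing up so that $(\pi_{13})_{*}$ makes sense. Since the right-hand side is bilinear, I would split $A=A_{0}+A_{1}$ and $B=B_{0}+B_{1}$, where $A_{0}\in\Psi^{m_1}_b(X^2_b,{}^b\Omega^{1/2})$ and $B_{0}\in\Psi^{m_2}_b(X^2_b,{}^b\Omega^{1/2})$ are small and $A_{1},B_{1}$ have polyhomogeneous conormal kernels with index families $\mathcal{E}$ and $\mathcal{F}$, and treat the four products $A_iB_j$ separately.

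For $A_{0}\circ B_{0}$ this is the composition theorem in the \emph{small} $b$-calculus: the two factors lift to $X^3_b$ as distributions conormal to the lifts of $\diag_b$ from the left and the right pair, these lifts meet the fibres of $\pi_{13}$ transversally along the triple $b$-diagonal, and the transversal composition followed by $(\pi_{13})_{*}$ (which integrates out exactly the transversal direction) yields a distribution conormal of order $m_1+m_2$ to $\diag_b$, smooth up to $\mathrm{bf}$ and rapidly vanishing at $\lb$ and $\rb$; thus $A_0\circ B_0\in\Psi^{m_1+m_2}_b\subset\Psi^{m_1+m_2,\mathcal{G}}_b$. I would cite \cite[Chapter 5]{Melrose-APS} and \cite[Theorem 4.20]{Albin} for the triple-space construction and this step. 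For the three remaining products, the lifts $\pi_{12}^{*}A_i$ and $\pi_{23}^{*}B_j$ are polyhomogeneous conormal on $X^3_b$ by the $b$-pullback theorem — any surviving diagonal conormality occurs only in the mixed terms and is transversal to the $\pi_{13}$-fibre, so it is integrated away and affects only the order (which stays $\leq m_1+m_2$), not the index family — and their product is again polyhomogeneous conormal, the index set on each boundary hypersurface of $X^3_b$ being the sum of the pulled-back index sets of the two factors. Now apply Melrose's pushforward theorem to the $b$-fibration $\pi_{13}$: the only boundary hypersurface of $X^3_b$ carried into the interior of $X^2_b$ is the one on which the middle copy alone degenerates, where the product has order $\mathcal{E}_{\rb}+\mathcal{F}_{\lb}$ (the right boundary of the left pair meeting the left boundary of the right pair), and the hypothesis $\Re(\mathcal{E}_{\rb}+\mathcal{F}_{\lb})>0$ is exactly the integrability condition ensuring convergence of the fibre integral; the pushforward is then polyhomogeneous conormal with index set over a face $\bullet\subset X^2_b$ equal to the extended union, over all boundary hypersurfaces of $X^3_b$ mapped onto $\bullet$ by $\pi_{13}$, of their index sets.

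It remains to do the face bookkeeping on $X^3_b$. Writing out the hypersurfaces by which coordinates degenerate and computing their images under $\pi_{13}$: the faces mapped onto $\lb$ are the one where only the left copy degenerates (index set $\mathcal{E}_{\lb}$) and the one where the left and middle copies degenerate together (index set $\mathcal{E}_{\mathrm{bf}}+\mathcal{F}_{\lb}$); these two meet, so the pushforward theorem gives $\mathcal{G}_{\lb}=(\mathcal{E}_{\mathrm{bf}}+\mathcal{F}_{\lb})\,\overline{\cup}\,\mathcal{E}_{\lb}$. Symmetrically the faces mapped onto $\rb$ are the one where only the right copy degenerates ($\mathcal{F}_{\rb}$) and the one where the middle and right copies degenerate together ($\mathcal{E}_{\rb}+\mathcal{F}_{\mathrm{bf}}$), giving $\mathcal{G}_{\rb}=(\mathcal{E}_{\rb}+\mathcal{F}_{\mathrm{bf}})\,\overline{\cup}\,\mathcal{F}_{\rb}$; and the faces mapped onto $\mathrm{bf}$ are the one where left and right degenerate while the middle stays interior ($\mathcal{E}_{\lb}+\mathcal{F}_{\rb}$) and the triple face where all three degenerate ($\mathcal{E}_{\mathrm{bf}}+\mathcal{F}_{\mathrm{bf}}$), giving $\mathcal{G}_{\mathrm{bf}}=(\mathcal{E}_{\mathrm{bf}}+\mathcal{F}_{\mathrm{bf}})\,\overline{\cup}\,(\mathcal{E}_{\lb}+\mathcal{F}_{\rb})$. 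The mixed products $A_0\circ B_1$ and $A_1\circ B_0$ involve strictly fewer contributing faces (the small factor forces rapid vanishing or smoothness on some of them), so they land in $\Psi^{m_1+m_2,\mathcal{G}}_b$ as well, and summing the four pieces proves the claim.

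\textbf{Main obstacle.} The genuinely substantive input is geometric rather than computational: constructing $X^3_b$, verifying that $\pi_{13}$ (and $\pi_{12},\pi_{23}$) are $b$-fibrations with the exact boundary-hypersurface assignments listed above, and checking that the diagonal-conormal parts lift transversally to the $\pi_{13}$-fibres so that the pullback and pushforward theorems apply cleanly and the small-calculus part produces precisely order $m_1+m_2$. Once this triple-space picture is in place, the index-set computation is a finite bookkeeping exercise and the integrability condition $\Re(\mathcal{E}_{\rb}+\mathcal{F}_{\lb})>0$ reads off automatically as the convergence requirement for the one face mapped interior.
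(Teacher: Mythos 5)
The paper itself does not prove this proposition but cites it from \cite[Theorem 4.20]{Albin}, whose proof is exactly the argument you reconstruct: realize $A\circ B$ as $(\pi_{13})_{*}(\pi_{12}^{*}A\cdot\pi_{23}^{*}B)$ on the $b$-stretched triple space, invoke Melrose's pullback and pushforward theorems for polyhomogeneous conormal distributions, and read off the index families from the boundary-hypersurface bookkeeping under $\pi_{13}$, with the integrability hypothesis $\Re(\mathcal{E}_{\rb}+\mathcal{F}_{\lb})>0$ coming from the one face pushed into the interior. Your proposal is correct and coincides with the standard proof, including the correct identification of which hypersurfaces of $X^3_b$ map to $\lb$, $\rb$, $\mathrm{bf}$ and the resulting extended unions.
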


\section{The $b$-calculus on $M_{b, 0}$}\label{sec : b calculus on $M_{b, 0}$}

\subsection{The $b$-pseudodifferential operator on $M_{b, 0}$}

We now  apply the abstract $b$-calculus to the $b$-regime of $M_{b, 0}$, where $r < 1$ and $r' < 1$. The relevant boundary hypersurfaces here are $\mathrm{bf}$, $\lb$, $\rb$ and $\bfz$.

Suppose $m \in \mathbb{R}$ and $\mathcal{E} = (\mathcal{E}_{\lb}, \mathcal{E}_{\rb}, \mathcal{E}_{\mathrm{bf}}, \mathcal{E}_{\bfz})$ is the index family for the boundary. The space $\Psi_b^{m, \mathcal{E}}(M_{b, 0}; {}^{b, sc}\Omega^{1/2})$ of $b$-pseudodifferential operators in the $b$-regime is defined to be the collection of the sums of small $b$-pseudodifferential operators in $\Psi_b^{m}(M_{b, 0}; {}^{b, sc}\Omega^{1/2})$ and operators with a polyhomogeneous conormal kernel in $\mathcal{A}_{phg}^{ \mathcal{E} }(M_{b, 0}; {}^{b, sc}\Omega^{1/2})$ supported in the $b$-regime, where \begin{itemize}
	\item $\Psi_b^{m}(M_{b, 0}; {}^{b, sc}\Omega^{1/2})$ is the space of ${}^{b, sc}\Omega^{1/2}$-valued distributions conormal of degree $m$ to the diagonal $\diag_{M_{b, 0}}|_{b\mathrm{-regime}}$, uniformly up to $\mathrm{bf}$ and $\bfz$;
	\item $\mathcal{A}_{phg}^{ \mathcal{E} }(M_{b, 0}; {}^{b, sc}\Omega^{1/2})$  is the space of ${}^{b, sc}\Omega^{1/2}$-valued polyhomogeneous conormal distributions with respect to the index family $\mathcal{E}$ for $\lb$, $\rb$, $\mathrm{bf}$ and $\bfz$.
\end{itemize}

Adapting the abstract $b$-calculus, we have analogous composition formulas.
\begin{proposition}\label{prop : phg mapping}
	For two full $b$-pseudodifferential operators in the $b$-regime, \begin{eqnarray*}
		A \in \Psi_b^{m_1, \mathcal{E}}(M_{b, 0}, {}^{b, sc}\Omega^{1/2}) &&\mbox{with $\mathcal{E} = (\mathcal{E}_\lb, \mathcal{E}_\rb, \mathcal{E}_{\mathrm{bf}}, \mathcal{E}_{\bfz})$}, \\
		B \in \Psi_b^{m_2, \mathcal{F}}(M_{b, 0}, {}^{b, sc}\Omega^{1/2}) &&\mbox{with $\mathcal{F} = (\mathcal{F}_\lb, \mathcal{F}_\rb, \mathcal{F}_{\mathrm{bf}}, \mathcal{F}_{\bfz})$}, 
	\end{eqnarray*}    satisfying   $\Re (\mathcal{E}_{\rb} + \mathcal{F}_{\lb}) > 0$, the composition operator $A B$ lies in the space $$\Psi_b^{m_1 + m_2, \mathcal{G}}(M_{b, 0}, {}^{b, sc}\Omega^{1/2})  \quad \mbox{with $\mathcal{G} = (\mathcal{G}_{\mathrm{bf}}, \mathcal{G}_\lb, \mathcal{G}_\rb, \mathcal{G}_{\bfz})$}$$ where the index family $\mathcal{G}$ is given by \begin{align*}
		\mathcal{G}_{\lb} &= (\mathcal{E}_{\mathrm{bf}} + \mathcal{F}_{\lb})\overline{\cup} \mathcal{E}_{\lb}, \\ \mathcal{G}_{\rb} &= (\mathcal{E}_{\rb} + \mathcal{F}_{\mathrm{bf}}) \overline{\cup} \mathcal{F}_{\rb}, \\ \mathcal{G}_{\mathrm{bf}}  &= (\mathcal{E}_{\mathrm{bf}} + \mathcal{F}_{\mathrm{bf}}) \overline{\cup} (\mathcal{E}_{\lb} + \mathcal{F}_{\rb} ), \\ \mathcal{G}_{\bfz}  &= \mathcal{E}_{\bfz} + \mathcal{F}_{\bfz}.
	\end{align*} 
	
\end{proposition}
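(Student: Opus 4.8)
The plan is to reduce Proposition~\ref{prop : phg mapping} to the abstract composition result for the full $b$-calculus stated earlier in the excerpt, by checking that the $b$-regime of $M_{b, 0}$, together with its compressed half-density bundle ${}^{b, sc}\Omega^{1/2}$, is genuinely a ``$b$-situation'' in disguise and carrying along the extra boundary face $\bfz$. First I would observe that, as already noted in the atlas discussion, in the $b$-regime the coordinates $(r, y, r', y', \hbar)$ are honest coordinates and the compressed cotangent bundle ${}^{b, sc}T^\ast M_{b, 0}$ is essentially the $b$-cotangent bundle for the manifold with corners whose boundary hypersurfaces there are $\lb$, $\rb$, $\mathrm{bf}$, and $\bfz$; likewise the compressed half-density $|dr\,dy\,dr'\,dy'\,d\hbar/(r r' \hbar)|^{1/2}$ is a $b$-half-density. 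Crucially, $\hbar$ is (up to a smooth nonvanishing factor) a defining function for $\bfz$ inside the $b$-regime, and $r, r'$ are (up to such factors) defining functions for $\lb, \rb$, while $r r'$ is a defining function for $\mathrm{bf}$ — so $\diag_{M_{b,0}}|_{b\text{-regime}}$ is a $b$-diagonal meeting only $\mathrm{bf}$ and $\bfz$, and operators in $\Psi_b^{m,\mathcal{E}}(M_{b,0};{}^{b,sc}\Omega^{1/2})$ are precisely full $b$-pseudodifferential operators in the abstract sense, with a fourth ``inert'' boundary face $\bfz$ playing the role of an interior-type parameter direction.

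Next I would carry out the composition. The operators $A, B$ are supported in the $b$-regime; since $A$ vanishes rapidly at $\lb$ and $\rb$ modulo the polyhomogeneous parts, and the same for $B$, the composition $AB$ is supported in the $b$-regime, so the pushforward/pullback integrals defining $AB$ take place entirely within the triple-space construction of the abstract $b$-calculus over this chart. The hypothesis $\Re(\mathcal{E}_{\rb} + \mathcal{F}_{\lb}) > 0$ is exactly the convergence condition needed for the fibre integral over the variable being contracted (the ``middle'' copy of $\mathcal{C}_b$), guaranteeing the composite kernel is well defined and again lies in the $b$-calculus. Then I would simply invoke the abstract composition proposition (the one cited from \cite[Theorem 4.20]{Albin}) to read off the three index sets $\mathcal{G}_{\lb}, \mathcal{G}_{\rb}, \mathcal{G}_{\mathrm{bf}}$ and the order $m_1 + m_2$ verbatim.

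It remains to track the new face $\bfz$. Because $\bfz$ is transverse to the directions along which the composition is performed — it meets $\mathrm{bf}$, $\lb$, $\rb$ but the fibre integral is in a direction tangent to $\bfz$ — the $b$-triple-space has a face lying over $\bfz \times \bfz$ that maps submersively to $\bfz$ under both stretched projections, so the polyhomogeneous expansions at $\bfz$ of $A$ and $B$ simply multiply. This is the analogue of the product-of-interior-asymptotics phenomenon and gives $\mathcal{G}_{\bfz} = \mathcal{E}_{\bfz} + \mathcal{F}_{\bfz}$, with no extended union since no two expansions are being superimposed at a single face from different ``legs'' of the composition. Concretely, writing the $\bfz$-asymptotics of the kernels and using that on the relevant triple-space face one defining function for the lift of $\bfz$ pulls back from each factor, the leading orders add; I would make this precise by choosing $\rho_{\bfz}$ compatibly on $M_{b,0}$ and on the triple space and expanding.

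The main obstacle — really the only non-bookkeeping point — is verifying that the triple-space machinery of the abstract $b$-calculus applies cleanly in the presence of the extra face $\bfz$, i.e.\ that the $b$-stretched triple product of the $b$-regime is still a manifold with corners on which the three projections to the double spaces are b-fibrations, and that $\bfz$ does not create any unexpected blow-up or collision with $\mathrm{bf}$ under these projections. I expect this to follow from the iterated-blow-up description $M_{b,0} = [[\mathcal{C}_b \times \mathcal{C};\ \lb_0 \times \partial\mathcal{C}];\ \lb\times\partial\mathcal{C},\ \mf\times\partial\mathcal{C}]$ recorded earlier, together with the commutation-of-blow-ups and b-fibration lemmas of Melrose (and the projection lemma \cite[Lemma 2.7]{Hassell-Mazzeo-Melrose} already used to construct $\tilde{\mathcal{P}}_L, \tilde{\mathcal{P}}_R$), so that $\bfz$ behaves exactly like the parameter face $\{\hbar = 0\}$ in a parametrized $b$-calculus. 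Once that is in place, the stated index family for $\mathcal{G}$ is immediate.
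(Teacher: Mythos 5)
The paper offers no proof of Proposition~\ref{prop : phg mapping}; it is stated with the single remark ``Adapting the abstract $b$-calculus,'' so your proposal supplies the argument the paper leaves implicit. Your strategy is the right one: recognize the $b$-regime of $M_{b,0}$ (with faces $\lb$, $\rb$, $\mathrm{bf}$, $\bfz$) as an abstract $b$-double space carrying a fourth, \emph{parametric} boundary face $\bfz$, invoke the cited composition theorem for the index sets at $\lb$, $\rb$, $\mathrm{bf}$, and observe that because the fibre integral defining $AB$ is over the middle $(r'', y'')$-variables with $\hbar$ held fixed, the polyhomogeneous expansions of $A$ and $B$ at $\bfz = \{\hbar = 0\}$ simply multiply, giving $\mathcal{G}_{\bfz} = \mathcal{E}_{\bfz} + \mathcal{F}_{\bfz}$ with no extended union. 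This is sound.

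One technical slip worth flagging: you assert that $rr'$ is a defining function for $\mathrm{bf}$ in the $b$-regime. It is not. As the paper itself records at the end of Section~7, $rr'$ lifts to $\rho_{\mathrm{bf}}^2\,\rho_{\lb}\,\rho_{\rb}$, so it vanishes to order $2$ at $\mathrm{bf}$ and also at $\lb$ and $\rb$. Near $\mathrm{bf}\cap\lb$, for instance, the projective coordinates are $(\tilde r = r/r', y, r', y', \hbar)$ with $\tilde r$ defining $\lb$ and $r'$ defining $\mathrm{bf}$; only then does one see $rr' = \tilde r\,(r')^2$. This error does not undermine your argument — the composition formula goes through the $b$-triple space and the push-forward theorem, which require only that the three projections are $b$-fibrations, not explicit formulae for boundary defining functions — but you should be aware of it since the same factor $rr' = \rho_{\mathrm{bf}}^2\rho_{\lb}\rho_{\rb}$ is used in the paper to translate vanishing orders of $r^{-1}G_b r'^{-1}$ into those of $G_b$.
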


\subsection{The indicial operator at $\mathrm{bf}$}

We next consider the invertibility of the indicial operator of $P_b$ at $\mathrm{bf}$.

Recall from \eqref{eqn : b-operator} that $P_b$ is a second order $b$-differential operator and takes the form $$-(r\partial_r)^2 - r\tilde{e} (r\partial_r) + \Delta_h -  r \tilde{e}(1 - n/2) - r^2 (1 + \imath 0) +  (1 - n/2)^2.$$
Near $\lb \cap \mathrm{bf}$, we adopt coordinates $$\{(\tilde{r}, r', y, y', \hbar) : \tilde{r} = r/r'\}.$$ Here $\tilde{r}$, $r'$ and $\hbar$ are the boundary defining functions for $\lb$, $\mathrm{bf}$ and $\bfz$ respectively. Then the blowup $\beta_b^{-1}$ lifts $P_b$ to $$-(\tilde{r}\partial_{\tilde{r}})^2 - \tilde{r}r'\tilde{e} (\tilde{r}\partial_{\tilde{r}}) + \Delta_h -  \tilde{r}r' \tilde{e}(1 - n/2) - (\tilde{r}r')^2 (1 + \imath 0) +  (1 - n/2)^2.$$
The indicial operator $\mathcal{I}(P_b)$ and the indicial family $\mathcal{I}(P_b, \lambda)$  read \begin{eqnarray*}&&\mathcal{I}(P_b) = - (\tilde{r}\partial_{\tilde{r}})^2 + \Delta_{h_0} +  (1 - n/2)^2.\\&& \mathcal{I}(P_b, \lambda) = \mathcal{M} (\mathcal{I}(P_b)) = \lambda^2 + \Delta_{h_0}  +  (1 - n/2)^2,\end{eqnarray*} where $h_0$ denotes $h(0, y, dy)$. Moreover, these operators lift, through the stretched projection $\tilde{\mathcal{P}}_L$, to the $b$-regime of $M_{b, 0}$. They are $\hbar$-independent so that the face $\bfz$ is irrelevant for the inverse of $\mathcal{I}(P_b)$ and   $\mathcal{I}(P_b, \lambda)$. 

Since $\Delta_{h_0}$ is the Laplacian on the compact manifold $(\mathcal{Y}, h_0)$, the resolvent family $(\Delta_{h_0} - \tau)^{-1}$ only has simple poles $$\sigma_0^2, \sigma_1^2, \sigma_2^2, \cdots \quad \mbox{with} \quad 0 = \sigma_0^2 < \sigma_1^2 < \sigma_2^2 < \cdots.$$ 
Then the indicial family $\mathcal{I}(P_b, \lambda)$ is invertible on $C^\infty(\partial \mathcal{C})$ if and only if $-\lambda^2 - (1 - n/2)^2$ is NOT an eigenvalue of $\Delta_{h_0}$. Hence, we define the indicial roots $$\lambda_j =  \imath ((1 - n/2)^2 + \sigma^2_j)^{1/2},$$ and the boundary spectrum
\begin{eqnarray*}
\mathrm{spec}_b (\mathcal{I}(P_b)) =  \{\pm \lambda_j \in \mathbb{C} \}, &&
\mathrm{Spec}_b (\mathcal{I}(P_b)) =  \{(\pm \lambda_j, 0)  \}.
\end{eqnarray*}

By adapting the proof from \cite[(5.84)]{Melrose-APS}, we obtain the inverse of the indicial operator $\mathcal{I}(P_b)$, which is the inverse Mellin transform of the indicial family at $\mathrm{bf}$.
\begin{proposition}\label{prop : inverse indicial operators}
  The kernel of $\mathcal{I}(P_b)^{-1}$,  modulo $\Psi_b^{-2}(M_{b, 0}; {}^{b, sc}\Omega^{1/2})$,  lies in the space, \begin{multline}\label{eqn : index set pm}\mathcal{A}_{phg}^{(\mathcal{E}_\lb, \mathcal{E}_\rb, \mathcal{E}_{\mathrm{bf}}, \mathcal{E}_{\bfz})}(M_{b, 0}; {}^{b, sc}\Omega^{1/2}), \\ \mbox{with $\mathcal{E}_\lb = \mathcal{E}_\rb = \{(\Im \lambda_j, 0) \}$ and  $\mathcal{E}_{\mathrm{bf}} = \mathcal{E}_{\bfz} = \mathbb{N}_0$}.\end{multline} 
  \end{proposition}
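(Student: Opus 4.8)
The plan is to compute $\mathcal{I}(P_b)^{-1}$ explicitly via the inverse Mellin transform of $\mathcal{I}(P_b,\lambda)^{-1}$ and then read off the polyhomogeneous expansion at each boundary face of $M_{b,0}$ in the $b$-regime. First I would decompose into spherical harmonics on $(\mathcal{Y}, h_0)$: writing $\Delta_{h_0}\phi_j = \sigma_j^2\phi_j$ with eigenfunctions $\{\phi_j\}$ forming an orthonormal basis of $L^2(\mathcal{Y})$, the indicial family acts diagonally, $\mathcal{I}(P_b,\lambda)^{-1} = \sum_j (\lambda^2 + \sigma_j^2 + (1-n/2)^2)^{-1}\,\phi_j \otimes \overline{\phi_j}$. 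The poles in $\lambda$ occur exactly at $\lambda = \pm\lambda_j = \pm\imath((1-n/2)^2 + \sigma_j^2)^{1/2}$, all simple, which is why $\mathrm{Spec}_b(\mathcal{I}(P_b)) = \{(\pm\lambda_j, 0)\}$ and gives the log-free structure of the expansion.

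Next I would apply $\mathcal{M}^{-1}$ along a contour $\{\Im\lambda = c\}$ with $0 < c < \Im\lambda_0$, following the argument around \cite[(5.84)]{Melrose-APS}. Using coordinates $(\tilde r, r', y, y', \hbar)$ near $\lb \cap \mathrm{bf}$ with $\tilde r = r/r'$, the kernel of $\mathcal{I}(P_b)^{-1}$ acts in the $\tilde r$ variable, and contour-shifting picks up residues. Shifting $c \to +\infty$ past the poles at $\lambda_j$ (for $\tilde r < 1$) produces, by the residue theorem, a sum over $j$ of terms $\tilde r^{\imath\lambda_j}\,\phi_j(y)\overline{\phi_j(y')}$ times an explicit constant, i.e.\ a polyhomogeneous expansion at $\lb$ ($\{\tilde r = 0\}$) with exponents $\imath\lambda_j = \Im\lambda_j \cdot(-1)\cdot\imath^2$... more precisely with real parts $\{(\Im\lambda_j, 0)\}$, and the symmetric computation for $\tilde r > 1$ gives the expansion at $\rb$ with the same index set $\mathcal{E}_\lb = \mathcal{E}_\rb = \{(\Im\lambda_j, 0)\}$. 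Away from the poles the symbolic behavior of $(\lambda^2 + \cdots)^{-1}$ as $|\Re\lambda| \to \infty$ is that of a classical order $-2$ symbol, which accounts for the conormal diagonal singularity of order $-2$ — this is the part we mod out, landing in $\Psi_b^{-2}(M_{b,0};{}^{b,sc}\Omega^{1/2})$. At $\mathrm{bf}$ ($\{r' = 0\}$) and $\bfz$ ($\{\hbar = 0\}$) the operator $\mathcal{I}(P_b)$ is exactly smooth (indeed $\hbar$-independent), so the kernel is smooth there, giving $\mathcal{E}_{\mathrm{bf}} = \mathcal{E}_{\bfz} = \mathbb{N}_0$; one must check that the residue terms, which a priori are supported near $\lb$ or $\rb$, do extend smoothly (with the claimed vanishing) up to these faces, which follows from the product structure of the expansion in the projective coordinates.

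The main obstacle I anticipate is the bookkeeping near the corner $\lb \cap \mathrm{bf}$ (and $\rb \cap \mathrm{bf}$): one must verify that the Mellin contour argument, carried out fiberwise in $\tilde r$ with $r'$ as a parameter, glues to give a genuine polyhomogeneous conormal distribution on $M_{b,0}$ with the stated \emph{joint} index family, rather than just separate one-variable expansions — i.e.\ that there is no interaction producing extra logarithmic terms or worse exponents at the corner. Since all indicial roots are simple and $\mathcal{I}(P_b)$ is smooth (polynomial) in $r'$ and constant in $\hbar$, this should go through by the standard Melrose machinery, but it is the step requiring the most care. A secondary point is confirming the diagonal singularity is precisely order $-2$ and no worse, so that subtracting a small $b$-operator of that order is legitimate; this is immediate from ellipticity of $\mathcal{I}(P_b,\lambda)$ as a $\lambda$-pseudodifferential operator on $\mathcal{Y}$.
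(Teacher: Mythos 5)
Your proposal follows the same route the paper implicitly takes (the paper simply cites \cite[(5.84)]{Melrose-APS}): diagonalize $\mathcal{I}(P_b,\lambda)$ over the eigenbasis of $\Delta_{h_0}$, observe that the inverse indicial family is a meromorphic family with simple poles at $\pm\lambda_j$, and read off the polyhomogeneous expansion at $\lb$ and $\rb$ by shifting the inverse Mellin contour. The identification of the corner bookkeeping at $\lb\cap\mathrm{bf}$ as the delicate point, and the observation that $\mathcal{I}(P_b)$ is genuinely constant in $r'$ and $\hbar$ so that $\mathcal{E}_{\mathrm{bf}}=\mathcal{E}_{\bfz}=\mathbb{N}_0$ is immediate, are both correct and match the standard argument.

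There is, however, a concrete sign error in the contour shift. With the paper's convention $\mathcal{M}^{-1}(v)(\tilde r)=\tfrac{1}{2\pi}\int_{\Im\lambda=\alpha}\tilde r^{\imath\lambda}v\,d\lambda$, one has $|\tilde r^{\imath\lambda}|=\tilde r^{-\Im\lambda}$, so decay as $\tilde r\to 0$ (the $\lb$ face) forces you to move the contour to $\Im\lambda\to-\infty$, not $+\infty$. Doing so you collect residues at the poles $-\lambda_j$ (which have $\Im(-\lambda_j)=-\Im\lambda_j<0$), and each contributes $\tilde r^{\imath(-\lambda_j)}=\tilde r^{\Im\lambda_j}$, giving $\mathcal{E}_\lb=\{(\Im\lambda_j,0)\}$. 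As written, you shift $c\to+\infty$ past the poles $\lambda_j$, which would produce residues $\tilde r^{\imath\lambda_j}=\tilde r^{-\Im\lambda_j}$ — terms that \emph{blow up} at $\lb$. The algebra you wrote, $\imath\lambda_j=\Im\lambda_j\cdot(-1)\cdot\imath^2$, is also off: since $\lambda_j=\imath\,\Im\lambda_j$, one has $\imath\lambda_j=-\Im\lambda_j$, not $+\Im\lambda_j$. The final stated index set is right, but only because you wrote down the known answer; the derivation sketched would yield its negation. The fix is simply to shift downward past $-\lambda_j$ for $\lb$ and upward past $+\lambda_j$ for $\rb$ (the two computations are symmetric because $\mathrm{spec}_b(\mathcal{I}(P_b))$ is symmetric under $\lambda\mapsto-\lambda$).
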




\section{The abstract $sc$-calculus} \label{sec : abstract sc calculus}

In this section, we shall review the abstract scattering calculus and Legendrian distributions. The notations in this section will be self-contained and independent of the conic manifold $\mathcal{C}$. 
 
\subsection{The $sc$-pseudodifferential calculus}

Let $X$ be an $n$-manifold with boundary. It is furnished with local coordinates $z$ in the interior and $(x, y)$ near the boundary with a boundary defining function $x$. 


Consider the Lie algebra,
$$\mathcal{V}_{sc}(X) = \{V | V = xW, \mbox{$W$ is a $C^\infty$ vector field tangent to $\partial X$}\}.$$ Near $\partial X$, this Lie algebra, smoothly spanned by the vector fields $x^2 \partial_x$ and $x\partial_{y}$, gives smooth sections of the $sc$-tangent bundle ${}^{sc}TX$. Corresponding to this, the $sc$-cotangent bundle ${}^{sc}T^\ast X$, near $\partial X$, is generated  by $dx/x^2$ and $dy/x$. In other words, a $sc$-cotangent vector, near $\partial X$, takes the form $$d(f/x) = \tau dx/x^2 + \mu dy/x$$ for some smooth function $f$.

The $sc$-differential operators of order $k$ are the sums of products of at most $k$ $sc$-vector fields. They act on $sc$-half densities, which, near the boundary, take the form $$f \Big|\frac{dxdy}{x^{n+1}}\Big|^{1/2} \in C^\infty(X; {}^{sc}\Omega^{1/2}).$$ The space of such operators is denoted by $\mathrm{Diff}_{sc}^k(X; {}^{sc}\Omega^{1/2})$.

We extend this to $sc$-pseudodifferential operators by viewing their Schwartz kernels as conormal distributions on certain double spaces. The appropriate double space to accommodate the $sc$-pseudodifferential operators is the two-fold blown-up space $$X_{sc, b} = [X_b^2; \partial  \diag_{X^2_b}] = [[X^2; (\partial X)^2]; \partial  \diag_{X^2_b}],$$ where $\diag_{X^2_b}$ is the lift of the diagonal of $X^2$ to $X_b^2$. The boundary hypersurfaces and the diagonal $\diag_{X^2}$ of $X^2$ lift via the two-fold blow-up   as follows,
\begin{eqnarray*}\partial X \times X &\longrightarrow& \lf, \\
X \times  \partial X &\longrightarrow& \rf, \\
( \partial X \times  \partial X ) \setminus \partial \diag_{X^2} &\longrightarrow& \mf, \\
\partial \diag_{X^2} &\longrightarrow& \mathrm{sc}, \\
\diag_{X^2} &\longrightarrow& \diag_{X^2_{sc, b}},\end{eqnarray*} where $\diag_{X^2_{sc, b}}$ denotes the diagonal of $X^2_{sc, b}$.

If we view the kernel of a $sc$-differential operator on $X$ as a distribution on $X_{sc, b}^2$, then it is microlocally supported on and conormal to $\diag_{X^2_{sc, b}}$. In the same spirit,
\begin{definition}
The space $\Psi_{sc}^{k, 0}(X; {}^{sc}\Omega^{1/2})$ of $sc$-pseudodifferential operators of degree $k$ consists of all operators with a kernel $\kappa |d\mu_{sc}|^{1/2}$, such that \begin{itemize}
\item $\kappa$ defined on $X_{sc, b}^2$ is a distribution, conormal of degree $k$ to $\diag_{X^2_{sc, b}}$, smooth up to $\mathrm{sc}$, rapidly decreasing at $\lf$, $\rf$, $\mf$,
\item $|d\mu_{sc}|^{1/2}$ is the lift of $\pi_l^\ast {}^{sc}\Omega^{1/2} (X) \otimes \pi_r^\ast {}^{sc}\Omega^{1/2} (X)$ to $X_b^2$, where $\pi_l$ is the projection from $X^2_b$ to the left space $X$ and $\pi_r$ is the projection from $X^2_b$ to the right space $X$.
\end{itemize} Furthermore, we define the space $\Psi_{sc}^{k, l}(X; {}^{sc}\Omega^{1/2}) = x^l \Psi_{sc}^{k}(X; {}^{sc}\Omega^{1/2})$.\end{definition}

There are two symbol maps defined for $\Psi^{k, 0}_{sc}(X; {}^{sc}\Omega^{1/2})$. One is the usual symbol in the interior of $X$ with respect to ${}^{sc}T^\ast X$, $$\sigma^k_{int}: \Psi_{sc}^{k, 0}(X, {}^{sc}\Omega^{1/2}) \longrightarrow S^k({}^{sc}T^\ast X) / S^{k-1}({}^{sc}T^\ast X),$$ where $S^k(\cdot)$ is the classical symbol space of degree $k$. This symbol map yields a short exact sequence,
\begin{align*}0 \longrightarrow \Psi^{k-1, 0}_{sc} (X; {}^{sc}\Omega^{1/2}) \longrightarrow \Psi^{k, 0}_{sc} (X; {}^{sc}\Omega^{1/2})  \longrightarrow S^k({}^{sc}T^\ast X) / S^{k-1}({}^{sc}T^\ast X) \longrightarrow 0.\end{align*}
In the meantime, the other symbol map, $$\sigma_{\partial}: \Psi^{k, 0}_{sc}(X, {}^{sc}\Omega^{1/2}) \longrightarrow S^k({}^{sc}T^\ast_{\partial X} X),$$ is defined as the restriction of the full symbol to $\partial X = \{x=0\}$. This is well-defined due to the fact $[\mathcal{V}_{sc}(X), \mathcal{V}_{sc}(X)] \subset x \mathcal{V}_{sc}(X)$. It likewise yields a short exact sequence,
\begin{align*}
0 \longrightarrow \Psi^{k, 1}_{sc} (X, {}^{sc}\Omega^{1/2}) \longrightarrow \Psi^{k, 0}_{sc} (X; {}^{sc}\Omega^{1/2})  \longrightarrow S^k({}^{sc}T^\ast_{\partial X}  X) / S^{k-1}({}^{sc}T^\ast_{\partial X} X) \longrightarrow 0.\end{align*}

 If one radially compactifies the fibre of ${}^{sc}T^\ast X$, then the two symbol maps combined give a joint symbol of $P \in \mathrm{Diff}_{sc}^{k}(X; {}^{sc}\Omega^{1/2}) / \mathrm{Diff}_{sc}^{k-1}(X; {}^{sc}\Omega^{1/2})$, $$j_{sc}^k(P) = \bigg(\frac{\sigma^k_{int}(P)}{|\xi|^k} \bigg|_{{}^{sc} S^\ast X}, \sigma_{\partial}(P)\bigg) \in S^k (C_{sc}(X)),$$ where $C_{sc}(X) = {}^{sc} S^\ast X \cup {}^{sc}T^\ast_{\partial X} X$. The joint symbol map $j_{sc}^k$ extends from $\mathrm{Diff}_{sc}^{k}(X; {}^{sc}\Omega^{1/2})$ to $\Psi_{sc}^{k, 0}(X; {}^{sc}\Omega^{1/2})$ multiplicatively, \[j_{sc}^{k_1}(P_1) \cdot j_{sc}^{k_2}(P_2) = j_{sc}^{k_1 + k_2}(P_1 P_2),\] but also admits a short exact sequence, 
\begin{align*}
	0 \longrightarrow \Psi^{k-1, 1}_{sc} (X, {}^{sc}\Omega^{1/2}) \longrightarrow \Psi^{k, 0}_{sc} (X; {}^{sc}\Omega^{1/2})  \longrightarrow S^k(C_{sc}(X)) / S^{k-1}(C_{sc}(X)) \longrightarrow 0.\end{align*}

As in the classical theory, the ellipticity at a point in $C_{sc}(X)$ of a $sc$-pseudodifferential operator $A$ means its joint symbol $j_{sc}^k(A)$ does not vanish, whilst the characteristic variety $\Sigma(A)$ of $A$ is the zero set of $j_{sc}^k(A)$. For example, the interior symbol of the $sc$-differential opeartor $$- (x^2 \partial_x)^2 + (n - 1) x^3 \partial_x + x^2 \Delta_h - 1$$  is obviously elliptic but the boundary symbol vanishes in a subset of ${}^{sc}T^\ast_{\partial X} X$. To understand the resolvent of such an operator,   we will need the notion of the $sc$-wavefront set of a tempered distribution $u$ on $X$, $${}^{sc}\mathrm{WF}(u) = \{q \in C_{sc}(X) \, | \, \mbox{$\chi u \notin \dot{C}^\infty(X)$ for any $\chi \in \Psi_{sc}^{0, 0}(X)$ elliptic at $q$}\}.$$

In the classical propagation of singularities of hyperbolic equations, the wavefront set of the fundamental solution consists of the bicharacteristic flow curves, which comprise a Lagrangian of the cotangent bundle. If we write the smooth sections of ${}^{sc}T^\ast X$ as \[\tau \frac{dx}{x^2} + \mu \frac{dy}{x},\] then ${}^{sc}T^\ast X$ is a symplectic manifold with a non-degenerate $2$-form, \[\omega = d\tau \wedge \frac{dx}{x^2} + d\mu \wedge \frac{dy}{x} - \mu \cdot dy \wedge \frac{dx}{x^2}.\] The symplectic form $\omega$ on ${}^{sc}T^\ast X$ induces  a contact structure on ${}^{sc}T_{\partial X}^\ast X$ with a non-degenerate $1$-form $$\chi = \iota_{x^2 \partial_x} \omega = -d\tau + \mu \cdot dy.$$ The integral curves of the Hamilton vector field determined by the contact form are called the bicharacteriscs as well. For $sc$-pseudodifferential operators, we have a variant of the theorem of microlocalization and propagation of singularities. \begin{proposition} Suppose $A \in \Psi^{k, 0}_{sc}(X, {}^{sc}\Omega^{1/2})$ has a real boundary symbol.
For a tempered distribution $u$ on $X$ but smooth in the interior, \begin{eqnarray*}
\mathrm{WF}_{sc}(Au) \subset \mathrm{WF}_{sc}(u)
\end{eqnarray*} and $\mathrm{WF}_{sc}(u) \setminus \mathrm{WF}_{sc}(Au)$ is a union of maximally extended bicharacteristics of $A$ inside $\Sigma(A)  \setminus \mathrm{WF}_{sc}(Au)$.
\end{proposition}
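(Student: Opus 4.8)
The statement is the standard microlocalization-and-propagation-of-singularities result for $sc$-pseudodifferential operators with real boundary symbol, and the plan is to transplant the classical Hörmander argument to the scattering contact setting, using the two symbol maps $\sigma^k_{int}$ and $\sigma_\partial$ and the joint symbol $j^k_{sc}$ already introduced. First I would dispose of the inclusion $\mathrm{WF}_{sc}(Au)\subset\mathrm{WF}_{sc}(u)$: if $q\notin\mathrm{WF}_{sc}(u)$, pick $\chi\in\Psi^{0,0}_{sc}(X)$ elliptic at $q$ with $\chi u\in\dot C^\infty(X)$; by the symbol calculus one can construct $\psi\in\Psi^{0,0}_{sc}(X)$ elliptic at $q$ with $\mathrm{WF}'_{sc}(\psi)$ contained in the elliptic set of $\chi$, so $\psi A=A'\chi+R$ with $A'\in\Psi^{k,0}_{sc}$ and $R$ smoothing, whence $\psi A u\in\dot C^\infty(X)$ and $q\notin\mathrm{WF}_{sc}(Au)$. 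The interior part is just the classical statement; the only new content is carrying it uniformly up to $\partial X$, which is exactly what the joint symbol short exact sequence guarantees.

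The heart of the matter is the propagation assertion. The approach is a positive-commutator/energy estimate along bicharacteristics. Fix $q_0$ in $\Sigma(A)\setminus\mathrm{WF}_{sc}(Au)$ and the maximal bicharacteristic $\gamma$ through it; I want to show that if one point of $\gamma$ lies off $\mathrm{WF}_{sc}(u)$ then so does the whole of $\gamma$ (modulo $\mathrm{WF}_{sc}(Au)$). The plan is to build a test operator $B\in\Psi^{*,0}_{sc}(X)$ whose joint symbol $b$ is supported in a small conic neighbourhood of the relevant arc of $\gamma$ and is monotone along the Hamilton flow of $j^k_{sc}(A)$ — one constructs $b$ as a product of a cutoff transverse to $\gamma$ and a weight increasing along the flow, exactly as in Hörmander, but now working on $C_{sc}(X)={}^{sc}S^\ast X\cup{}^{sc}T^\ast_{\partial X}X$ with the flow being the interior Hamilton flow in one piece and the contact Hamilton flow of $\chi=-d\tau+\mu\cdot dy$ on the boundary piece. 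Then one computes $\mathrm{Im}\langle Au, B^\ast B u\rangle$ (equivalently studies the commutator $[A^\ast,B^\ast B]$), uses that $A$ has \emph{real} boundary symbol so that the commutator's principal joint symbol is $H_{j^k_{sc}(A)}(b^2)$ up to one order lower, and absorbs the error terms (those supported where $b$ is not yet known to control $u$, plus the $Au$ contribution which is controlled since $\gamma\cap\mathrm{WF}_{sc}(Au)=\varnothing$) into the main positive term by the sharp Gårding inequality for the $sc$-calculus. A continuity/connectedness argument in the flow parameter then propagates the regularity along the whole maximal bicharacteristic.

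The main obstacle I anticipate is the bookkeeping at the corner structure of $C_{sc}(X)$ and at the transition between the interior ${}^{sc}S^\ast X$ and the boundary ${}^{sc}T^\ast_{\partial X}X$: the Hamilton vector field of $j^k_{sc}(A)$ must be shown to be tangent to ${}^{sc}T^\ast_{\partial X}X$ (this is precisely the relation $[\mathcal{V}_{sc}(X),\mathcal{V}_{sc}(X)]\subset x\mathcal{V}_{sc}(X)$, which makes $\sigma_\partial$ well-defined and makes the contact flow the restriction of the ambient flow), and the test-operator construction must be arranged so that its joint symbol is a legitimate element of $S^*(C_{sc}(X))$ across both components with the correct homogeneity in the fibre variables. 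Once the commutant is built consistently on all of $C_{sc}(X)$, the estimate itself is the usual one; so the real work is geometric — understanding the scattering bicharacteristic flow near $\partial X$ — rather than analytic. I would organize the proof by first recording the tangency of $H_{j^k_{sc}(A)}$, then the test-operator lemma, then the commutator estimate, then the connectedness argument, and finally noting that "maximally extended" bicharacteristics are exactly the ones not truncated by $\mathrm{WF}_{sc}(Au)$.
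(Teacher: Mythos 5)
The paper does not prove this proposition: it appears in Section 5, which is explicitly a review of the abstract $sc$-calculus, and the result is simply quoted from the literature (Melrose's scattering calculus, cf.\ the references to \cite{Melrose-SC} and \cite{Hassell-Vasy-AnnFourier} given at the start of that section). Your sketch reconstructs the standard positive-commutator argument one finds in those sources, and in outline it is correct: microlocality via symbol calculus, then propagation along the contact Hamilton flow of $\sigma_\partial$ via a monotone commutant and sharp G\r{a}rding, with the reality of the boundary symbol ensuring the commutator picks up $H_{j^k_{sc}(A)}(b^2)$ at leading order. The one point worth tightening, since it is where the scattering calculus genuinely departs from the classical interior theory, is that the boundary symbol $\sigma_\partial$ is a \emph{full} symbol on ${}^{sc}T^\ast_{\partial X}X$ (not a principal-symbol class), so the propagation at the boundary face is driven by the Hamilton vector field of the contact form $\chi=-d\tau+\mu\cdot dy$ acting on $\sigma_\partial(A)$ rather than by a quotient symbol; you gesture at this but a careful write-up would need to distinguish the two symbolic weights (conormal order and boundary decay) that together make up the joint symbol, and track both filtrations through the commutator estimate. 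As a blind reconstruction of what the cited references do, your proposal is faithful.
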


\subsection{The Legendrian distributions} 

Due to the non-ellipticity of the $sc$-Laplacian in a subset of $T^\ast_{\partial X} X$, it is necessary to formulate a theory of Lagrangian distributions over the boundary, i.e. Legendrian distributions. We review some details from \cite[Section 2]{Hassell-Vasy-JAM} and \cite[Section 2.2]{Hassell-Vasy-AnnFourier}.

In contact geometry, an isotropic submanifold is a submanifold of a contact manifold, on which the contact form vanishes, and a Legendrian is a maximal isotropic submanifold. Suppose $\mathscr{X}$ is a suitable  double space  on which the resolvent kernel is defined, $F$ is an isotropic submanifold of ${}^{sc}T^\ast_{\partial\mathscr{X}} \mathscr{X}$ such that the Hamiltonian vector field is nowhere tangent to $F$, the union of bicharacteristics passing through $F$ forms a Legendrian $L$. The classical theorem of propagation of singularities tells us that the resolvent has to be microlocally supported on the Legendrian $L$, i.e. it is a Legendrian distribution.

\begin{definition}On an $n$-manifold $X$ furnished with local coordinates $ (x, y) $ near $\partial X$, a Legendrian distribution $u \in I^m(X; L; {}^{sc}\Omega^{1/2})$ locally takes the form $$u = x^{m + n/4 - k/2} (2\pi)^{-k/2-n/4} \int_{\mathbb{R}^k} e^{\imath \phi(y, v) / x} a(x, y, v) dv,$$ where \begin{itemize} \item the phase function $\phi$ is non-degenerate, i.e. \[\mbox{$d(\partial_{v_i} \phi)$ are linearly independent at $\{(y, v) : d_{v} \phi=0\}$ for $i = 1, \cdots, k$,}\] \item $\phi$ locally parametrizes the Legendrian $L$, $$L = \{(y, d_{(x, y)} (\phi(y, v)/x)) : d_v \phi(y, v) = 0\},$$ \item the amplitude $a$ is a compactly supported smooth section of the $sc$-half density bundle.\end{itemize}\end{definition}

Suppose $\lambda$ is a set of functions in the $(y, v)$-space such that $( d_v \phi, \lambda)$ form local coordinates on $\{(y, v) : d_{v} \phi=0\}$. The symbol of $u$ is defined by $$\sigma^m_{(x, y), \phi}(u) = a(0, y, v)|_{\{d_v \phi = 0\}} \bigg| \frac{\partial( d_v \phi, \lambda)}{\partial (y, v)} \bigg|^{-1/2} |d\lambda|^{1/2}.$$  The symbol is neither coordinate invariant nor phase function invariant, but changes by the following transition functions
\begin{equation} \label{symbol transition} \left.\begin{array}{l}
\sigma^m_{(x', y'), \phi}(u)  = \sigma^m_{(x, y), \phi}(u) \big(\frac{x'}{x}\big)^{n/4 - m} e^{- \imath ((x'/x)  \mu \cdot \partial_{x'} y - \tau \partial_{x'}  (x'/x) )|_{x = 0} },\\
\sigma^m_{(x, y), \psi}(u) =  \sigma^m_{(x, y), \phi}(u) e^{\imath \pi (\mathrm{sign} d_{vv}^2 \psi - \mathrm{sign} d_{vv}^2 \phi)/4}.
\end{array}\right.\end{equation} These two transition functions define a line bundle $E \otimes M$. The symbol map of degree $m$ is invariantly defined $$\sigma^m : I^m(X, L; {}^{sc}\Omega^{1/2}) \longrightarrow C^\infty(L;  \Omega^{1/2}(L) \otimes S^{[m]}),$$ where $S^{[m]} = |N^\ast \partial X|^{m-n/4} \otimes E \otimes M$ and the bundle $N^\ast \partial X$ is given by differentials of functions vanishing at (each) boundary hypersurface. See \cite[Section 12-13]{Melrose-Zworski} and \cite[Section 4.6]{Hassell-Wunsch} for more information of this symbol space and generalizations to manifolds with corners.

This symbol map gives an exact sequence,
$$0 \longrightarrow I^{m + 1}(X, L; {}^{sc}\Omega^{1/2}) \longrightarrow I^{m}(X, L; {}^{sc}\Omega^{1/2})  \longrightarrow C^\infty(L;  {}^{sc}\Omega^{1/2}(L) \otimes S^{[m]}) \longrightarrow 0, $$ and the symbol calculus of $u \in I^{m}(X; L; {}^{sc}\Omega^{1/2})$ with a $sc$-pseudodifferential operator $P \in \Psi_{sc}(X; {}^{sc}\Omega^{1/2})$ with a real principal symbol vanishing on $L$,
\begin{equation}(-\imath \mathcal{L}_{H_p} - \imath (1/2 + m - n/4) \partial_\tau p + p_{\mathrm{sub}}) \sigma^m(u), \label{eqn : symbol calculus bicharacteristic}\end{equation}
where $H_p$ denotes the Hamilton vector field of the principal symbol of $P$ and $p_{\mathrm{sub}}$ is the boundary subprincipal symbol of $P$. Explicitly, if the full left symbol of a $sc$-pseudodifferential operator $P$ takes the form $$p(y, \tau, \mu) + x p_1(y, \tau, \mu) + O(x^2)$$ in the scattering cotangent bundle with local coordinates $\{(x, y, \tau, \mu)\}$,  its boundary subprincipal symbol is written as the following expansion in $x$, $$p_{\mathrm{sub}} = p_1 + \frac{\imath}{2} \bigg( \frac{\partial^2 p}{\partial y_i \partial \mu_i} - (n - 1)\frac{\partial p}{\partial \tau} + \mu_i \frac{\partial^2 p}{\partial \mu_i \partial \tau} \bigg).$$
For instance, the full left symbol of $P_{sc}$ defined in \eqref{eqn : scattering-operator} reads $$\tau^2 + h^{ij}\mu_i\mu_j + \rho (n- 1 - x e(x, y))\imath \tau - 1,$$ where this $x$ is the defining function in $\mathcal{C}$. If we adopt coordinates $\{(\rho, y, \tau, \mu)\}$, it follows that the boundary subprincipal symbol is equal to \begin{equation}\label{eqn : subprincipal symbol}  - \imath x e(x, y)\tau + \imath(\frac{ \partial h^{ij}}{\partial y_i}\mu_j).\end{equation} When $\mu = 0$, the second term vanishes and then the subprincipal symbol is just equal to the first term $- \imath x e(x, y)\tau$.

Intersecting Legendrian distributions are used to describe the resolvent near the intersection of the propagating Legendrian with other Legendrians which appear in the microlocal structure of the resolvent. The Legendrian pair accommodates the $sc$-wavefront set of the resolvent. Assume $\tilde{L} = (L_0, L_1)$ is a pair of Legendrians intersecting cleanly at $\partial L_1$. Then near a point $q \in \partial L_1$ there is a unique local phase function $\phi(y, v, s) = \phi_0(y, v) + s \phi_1(y, v, s)$ (up to a diffeomorphism) parameterizing the pair \begin{eqnarray*}
L_0 &=& \{(y, d_{(x, y)}(\phi/x)) | s=0, d_v \phi = 0\}\\
L_1 &=& \{(y, d_{(x, y)}(\phi/x)) | s \geq 0, d_s \phi = 0, d_v \phi = 0\}.
\end{eqnarray*} \begin{definition}The space $I^m(X; \tilde{L}; {}^{sc}\Omega^{1/2})$ consists of the sums of elements in $$I^m(X; L_1 \setminus \partial L_1; {}^{sc}\Omega^{1/2}) + I^{m+1/2}(X; L_0; {}^{sc}\Omega^{1/2}),$$ and distributions in a coordinate patch $\{(x, y)\}$ near $\partial X$ with an expression
$$x^{m+n/4-(k+1)/2}\int_0^\infty \,ds \int_{\mathbb{R}^k} e^{\imath \phi(y, v, s)} a(x, y, v, s) dv,$$ where $a$ is a compactly supported smooth $sc$-half density.\end{definition}

 The symbol of $u \in I^m(X; \tilde{L}; {}^{sc}\Omega^{1/2})$ is a smooth section of a bundle over $L_0 \cup L_1$. Let $\rho_1$ be a boundary defining function for $\partial L_1 = L_0 \cap L_1$ within $L_0$, and $\rho_0$ a boundary defining function for $\partial L_1$ within $L_1$.  On $L_0$ but away from $L_1$, the symbol of $u$ lies in the space
$$\rho_1^{-1} C^\infty (\Omega^{1/2}(L_0) \otimes S^{[m + 1/2]}(L_0)) = \rho_1^{-1/2} C^\infty(\Omega_b^{1/2}(L_0 \setminus \partial L_1) \otimes S^{[m + 1/2]}(L_0)), $$
and the symbol of $u$ on $L_1$ takes values in the space
$$ C^\infty(\Omega^{1/2}(L_1) \otimes S^{[m]}(L_1)) = \rho_0^{1/2} C^\infty (\Omega_b^{1/2}(L_1) \otimes S^{[m]}(L_1)).$$ Here $\Omega^{1/2}(L_{\cdot})$ is the half density bundle over the Legendrian $L_{\cdot}$ and  $\Omega_b^{1/2}(L_{\cdot})$ is the $b$-half density bundle near intersections of the Legendrian pairs.  
Then the symbol space $C^\infty(\tilde{L}; {}^b\Omega^{1/2} (\tilde{L}) \otimes S^{[m]}(\tilde{L}))$ for $I^m(X; \tilde{L}; {}^{sc}\Omega^{1/2})$  consists of smooth section pairs $(a, b)$ such that
\begin{equation}\label{eqn : symbol map R}\left.\begin{array}{ll}
   a \in \rho_1^{-1/2} C^\infty(\Omega_b^{1/2}(L_0 \setminus \partial L_1) \otimes S^{[m + 1/2]}(L_0)) & \mbox{on $L_0 \setminus \partial L_1$}\\
  b \in \rho_0^{1/2} C^\infty(\Omega_b^{1/2}(L_1) \otimes S^{[m]}(L_1)) & \mbox{on $L_1$} \\
   \rho_1^{1/2} b = e^{\imath \pi/4} (2\pi)^{1/4} \rho_0^{-1/2} a  & \mbox{at $L_0 \cap L_1$}\end{array}\right..\end{equation}
This symbol map likewise yields an exact sequence,
\begin{multline}\label{eqn : exact sequence for intersecting Legendrian}0 \longrightarrow I^{m + 1}(X; \tilde{L}; {}^{sc}\Omega^{1/2})  \longrightarrow I^{m}(X; \tilde{L}; {}^{sc}\Omega^{1/2})  \longrightarrow C^\infty(\tilde{L}; \Omega_b^{1/2}(\tilde{L}) \otimes S^{[m]}) \longrightarrow 0.\end{multline}

More care is needed to deal with a special type of Legendrian pair, which has a conic singularity at the intersection. We say that $\tilde{L} = (L, L^\sharp)$ is a conic Legendrian pair, if \begin{itemize}
\item $L^\sharp$ is a projectable Legendrian, which means that the projection map ${}^{sc}T^\ast X|_{L^\sharp} \rightarrow \partial X$ is a diffeomorphism,
\item $\tau \neq 0$ on  $L^\sharp$, which implies that $L^\sharp$ can be parametrized by the phase function $1$,
\item $L$ is an open Legendrian such that $\bar{L} \setminus L$ is contained in $L^\sharp$,
\item $\bar{L}$ has at most a conic singularity at $L^\sharp$, which means that $\hat{L} = \beta_\sharp^{-1} \bar{L}$ is transverse to $\beta^{-1}_\sharp \{x=0, \mu=0\}$, where the blowdown map $\beta_\sharp$ defines the blown-up space $[{}^{sc}T^\ast X; \{x=0, \mu=0\}].$
\end{itemize}

A local parametrization of $\tilde{L}$ near $q\in \bar{L} \cap L^\sharp$ is a function $\phi(y, v, s) = 1 + s\psi(y, v, s)$ defined in a neighbourhood of $q' = (y_0, v_0, 0)$ in $\partial X \times \mathbb{R}^k \times [0, \infty)$ satisfying

\begin{itemize}
	\item $1$ parametrizes $L^\sharp$ near $q$, 
	\item $d_v \phi = 0$ at $q'$, 
	\item $q= (y, d_{(x, y)}(\phi/x))(q')$, 
	\item $\phi$ satisfies the non-degeneracy hypothesis
$$\mbox{$ds$, $d\psi$, and $\displaystyle d\bigg(\frac{\partial \psi}{\partial v_i}\bigg)$ are linearly independent at $q'$, $1 \leq i \leq k$},$$
\item $\hat{L}$ has the expression near $q$ in terms of coordinates $\{(x/|\mu|, y, \tau, |\mu|, \hat{\mu})\}$,
$$\hat{L} = \{(0, y, -\phi, sd_y \psi, \widehat{d_y \psi})\, |\, d_s \phi = 0, d_v \psi = 0, s \geq 0\}.$$\end{itemize}

Then we define Legendrians associated with this local parametrization. 
\begin{definition}A Legendrian distribution of order $(m, p)$ associated to $(L, L^\sharp)$ is a $sc$-half density of the form $u = u_0 + (\sum_{i=1}^N u_i)\nu$, where \begin{itemize}
\item $\nu$ is a smooth $sc$-half density,
\item $u_0$ is a Legendrian distribution lying in $$I^m_c(X; L; {}^{sc}\Omega^{1/2}) + I^p(X; L^\sharp; {}^{sc}\Omega^{1/2}),$$ where the subscript $c$ indicates that the wavefront set is contained in the interior of the Legendrian $L$,
\item $u_j$, near the boundary, has an expression
$$\int_0^\infty ds \int e^{i\phi_j(y,v,s)/x} a_j(y, v, x/s, s)\bigg(\frac{x}{s}\bigg)^{m+n/4-(k+1)/2}s^{p+n/4-1}dv,$$
where $\phi_j$ locally parametrizes $(L, L^\sharp)$ and $a_j \in C^\infty(X \times \mathbb{R}^k \times [0,\infty) \times [0,\infty)),$ with compact support in $v$, $x/s$ and $s$.
\end{itemize}
The space of such Legendrian distributions is denoted by $I^{m, p}(X; \tilde{L}; {}^{sc}\Omega^{1/2})$.\end{definition}
The symbol map is defined on $\hat{L}$ and yields an exact sequence
\begin{multline*}
0 \longrightarrow I^{m+1, p}(X; \tilde{L}; {}^{sc}\Omega^{1/2}) \longrightarrow I^m(X; \tilde{L}; {}^{sc}\Omega^{1/2}) \\ \longrightarrow s^{p-m} C^\infty(\hat{L}, \Omega^{1/2}_b(\hat{L}) \otimes S^{[m]}(\hat{L}) ) \longrightarrow 0.\end{multline*}

\section{The fibred $sc$-calculus on $M_{b, 0}$} \label{sec : Legendrian distributions on conic manifolds}

We now come back to the $sc$-regime of the space $M_{b, 0}$ where the resolvent kernel of $P_{sc}$ lives. Similar to the resolvent of the $sc$-Laplacian, the resolvent kernel of $P_{sc}$ consists of $sc$-pseudodifferential operators and Legendrian distributions. To understand this microlocal structure, we apply to $M_{b, 0}$ the $sc$-calculus and Legendrian machinery.

\subsection{The $sc$-pseudodifferential operators on $M_{sc, b, 0}$}

The appropriate stretched double space for the $sc$-pseudodifferential calculus is $M_{sc, b, 0}$ instead of $M_{b, 0}$ as required in the abstract $sc$-calculus.

 The space $\Psi_{sc}^k(M_{sc, b, 0}; {}^{b, sc}\Omega^{1/2})$  is defined to be the collection of ${}^{b, sc}\Omega^{1/2}$-valued distributions in the $sc$-regime of $M_{sc, b, 0}$, which are conormal of degree $k$ to $\diag_{M_{sc, b, 0}}$, smooth up to $\mathrm{sc}$ and $\bfz$, rapidly decreasing at $\lf, \rf, \mf$.

\subsection{The fibred $sc$-bundles on $M_{b, 0}$} 

Dealing with the Legendrian structure near the corners in the $sc$-regime requires working on the finer structure of boundary fibrations. We shall see that these fibrations are trivial at $\bfz$ but non-trivial at $\mf$, $\lf$ and $\rf$ such that the regions near the corners $\mf \cup \lf$ and $\mf \cup \rf$ are of interest. In the meantime, since $\lf$ and $\rf$ are away from the diagonal conormal bundle and the fibration is trivial at $\bfz$, we can blow down the $\mathrm{sc}$ face and return to $M_{b, 0}$. 

Now we are concerned about four boundary faces $\mf, \lf, \rf, \bfz$ of $M_{b, 0}$.  But it suffices to discuss the boundary fibration and Legendrian structures away from $\rf$ as the structure away from $\lf$ is analogous. In addition, we shall denote by $\phi_\bullet$ the fibration at the face $\bullet$, by $Z_\bullet$ the base of $\phi_\bullet$, by $F_\bullet$ the fibre of $\bullet$, i.e. \[\phi_\bullet : \bullet \longrightarrow Z_\bullet.\]

Recall the local expressions of $\mathcal{V}_{b, sc }(M_{b, 0})$ in the $sc$-regime away from $\rf$. 
\begin{itemize}
	\item The region near $\lf \cap \mf \cap \bfz$ admits local coordinates $$(\tilde{\rho}, \rho', y, y', x),$$ where $x, \rho', \tilde{\rho}=\rho/\rho'$ are the boundary defining functions of $\bfz, \mf, \lf$ respectively. The Lie algebra $\mathcal{V}_{b, sc }(M_{b, 0})$ thus lifts to
 \[\mathrm{span} \{\tilde{\rho}^2\rho' \partial_{\tilde{\rho}}, \rho'^2 \partial_{\rho'}, \rho \partial_y, \rho' \partial_{y'}, x \partial_x \} \quad \mbox{near $\lf \cap \mf \cap \bfz$}.\]  

\item The region near the interior of $\lf$ has local coordinates $$(\rho, y, z', x)\quad \mbox{where $z'$ is in the interior of $\mathcal{C}$}.$$ The Lie algebra $\mathcal{V}_{b, sc }(M_{b, 0})$ is then locally expressed by
\[\mathrm{span} \{ \rho^2  \partial_{\rho},  \rho \partial_y,  \partial_{z'}, x \partial_x \} \quad \mbox{near the interior of $\lf$}.\]  
\end{itemize}

We are interested in the restrictions of $\mathcal{V}_{b, sc }(M_{b, 0})$ to the boundary hypersurfaces and the induced fibrations.
\begin{itemize}
	\item The Lie algebra $\mathcal{V}_{b, sc }(M_{b, 0})$ restricted to $\mf$ is simply spanned by $x \partial_x$. This implies the fibration over $\mf$ reads \[\phi_\mf : \mf \longrightarrow Z_\mf, \,  (\tilde{\rho}, y, y', x) \longmapsto (\tilde{\rho}, y, y'),\] and the base $Z_{\mf}$ can be identified with $\mf \cap \bfz$.

\item The Lie algebra $\mathcal{V}_{b, sc }(M_{b, 0})$ restricted to $\lf$ takes the form \[\left\{\begin{array}{ll}\mathrm{span} \{ \rho'^2 \partial_{\rho'},   \rho' \partial_{y'}, x \partial_x \} & \mbox{near $\lf \cap \mf \cap \bfz$}\\
	\mathrm{span} \{  \partial_{z'}, x \partial_x \} & \mbox{near the interior of $\lf$}\end{array} \right..\]
This yields the fibration at $\lf$, which \begin{eqnarray*}
		\phi_\lf : \lf \longrightarrow Z_\lf, \,
\left\{ \begin{array}{ll} (\rho', y, y', x) \longmapsto y & \mbox{near $\lf \cap \mf \cap \bfz$}\\
 (y, z', x) \longmapsto y & \mbox{near the interior of $\lf$}\end{array}	\right..
\end{eqnarray*}
The base $Z_\lf$ of the fibration $\phi_\lf$
 is a compact manifold without boundary,  and the fibres of $\phi_\lf$ intersect $\mf$ transversally. 
 
\item The Lie algebra $\mathcal{V}_{b, sc }(M_{b, 0})$ restricted to $\bfz$ reduces to \[\mathrm{span} \{\tilde{\rho}^2\rho' \partial_{\tilde{\rho}}, \rho'^2 \partial_{\rho'}, \rho \partial_y, \rho' \partial_{y'}\} \quad \mbox{near $\lf \cap \mf \cap \bfz$}.\]  This gives a trivial fibration at $\bfz$,  which does not really matter in the fibred $sc$-calculus.
\end{itemize}

Denote $\Phi = \{\phi_\mf, \phi_\lf, \phi_\rf\}$ and $\tilde{M}_{b, 0} = M_{b, 0} \setminus (\lf \cup \rf)$. The Lie algebra $\mathcal{V}_{s\Phi}(M_{b, 0})$ of the fibred $sc$-vector fields, with respect to the stratification $\Phi$ and total boundary defining function $\rho$ of $\lf \cup \mf$, consists of smooth vector fields $V$ such that
\begin{itemize}
\item $V \in \mathcal{V}_b(M_{b, 0})$,
\item $V\rho \in \rho^2 C^\infty(M_{b, 0})$,
\item $V$ is tangent to the fibres of $\Phi$.
\end{itemize}

The Lie algebra $\mathcal{V}_{s\Phi}(M_{b, 0})$ generates the differential operator algebra $\mathrm{Diff}_{s\Phi}(M_{b, 0})$, which are smooth sections of the fibred $sc$-bundle ${}^{s\Phi}TM_{b, 0}$. Its dual bundle is denoted by ${}^{s\Phi}T^\ast M_{b, 0}$. The wedge product of the basis of ${}^{s\Phi}T^\ast M_{b, 0}$ determines the fibred $sc$-density bundle ${}^{s\Phi}\Omega(M_{b, 0})$. One can write down the expression of ${}^{s\Phi}T^\ast M_{b, 0}$ and ${}^{s\Phi}\Omega(M_{b, 0})$ in local coordinates.
\begin{itemize}
\item Near $\mf \cap \lf \cap \bfz$, we have local coordinates $(\rho', \tilde{\rho}, y, y', x)$. The basis of ${}^{s\Phi}T^\ast M_{b, 0}$ is given by $$\bigg\{\frac{d\rho}{\rho^2}, \frac{d\tilde{\rho}}{\rho}, \frac{dy}{\rho}, \frac{dy'}{\rho'}, \frac{dx}{x}\bigg\} \quad \mbox{or} \quad \bigg\{\frac{d\rho}{\rho^2},  \frac{d\rho'}{\rho'^2},  \frac{dy}{\rho}, \frac{dy'}{\rho'}, \frac{dx}{x}\bigg\}.$$
      For $q \in {}^{s\Phi}T^\ast M_{b, 0}$, we write \begin{multline*}q = \tau \frac{d\rho}{\rho^2} + \eta \frac{d\tilde{\rho}}{\rho} + \mu \cdot \frac{dy}{\rho} + \mu' \cdot \frac{dy'}{\rho'} + \xi \frac{dx}{x}\\ \mbox{or} \quad q = \tilde{\tau} \frac{d\rho}{\rho^2} + \tau' \frac{d\rho'}{\rho'^2} + \mu \cdot \frac{dy}{\rho} + \mu' \cdot \frac{dy'}{\rho'} + \xi \frac{dx}{x},\end{multline*}
               where $\tau = \tilde{\tau} + \tilde{\rho}\tau'$ and $\eta = - \tau'$. The fibred $sc$-density ${}^{s\Phi}\Omega(M_{b, 0})$ locally reads \[  \bigg|\frac{d\rho d\tilde{\rho} dy dy' dx}{\rho^{n+2}\rho'^{n-1}x} \bigg|    \quad \mbox{or} \quad \bigg| \frac{d\rho dy d\rho' dy'dx}{\rho^{n+1} \rho'^{n+1}x}     \bigg|.\]
\item Near the interior of $\lf$, we have local coordinates $\{(\rho, y, z')\}$. The basis of ${}^{s\Phi}T^\ast M_{b, 0}$ is given by
         $$\bigg\{\frac{d\rho}{\rho^2}, \frac{dy}{\rho}, dz', \frac{dx}{x}\bigg\}.$$ For $q \in {}^{s\Phi}T^\ast M_{b, 0}$, we write $$q = \tau \frac{d\rho}{\rho^2} +  \mu \cdot \frac{dy}{\rho} + \zeta' \cdot dz'+ \xi \frac{dx}{x}.$$The fibred $sc$-density ${}^{s\Phi}\Omega(M_{b, 0})$ locally reads \[    \bigg| \frac{d\rho dy dz'  dx}{\rho^{n+1}  x}     \bigg|.\]
\end{itemize}

We are interested in two natural subbundles of ${}^{s\Phi}T_\lf^\ast M_{b, 0}$.
\begin{itemize}
\item The restriction of ${}^{s\Phi}T_\lf^\ast M_{b, 0}$ to each fibre $F_\lf$ of $\lf$ is isomorphic to the $sc$-cotangent bundle of $F_\lf$, denoted by ${}^{b, sc }T^\ast(F_\lf; \lf)$. Recall that the fibre $F_\lf$ is furnished with local coordinates \[\left\{ \begin{array}{ll}(\rho', y', x) & \mbox{near $\lf \cap \mf \cap \bfz$}\\(z', x) & \mbox{near the interior of $\lf$} \end{array} \right..\] The basis of ${}^{b, sc }T^\ast(F_\lf; \lf)$ is given by \[\left\{ \begin{array}{ll}\mathrm{span}\Big\{ \frac{d\rho'}{\rho'^2}, \frac{dy'}{\rho'}, \frac{dx}{x}  \Big\}& \mbox{near $\lf \cap \mf \cap \bfz$}\\\mathrm{span}\Big\{dz', \frac{dx}{x}  \Big\} &\mbox{near the interior of $\lf$}\end{array}\right..\]
\item The quotient bundle, ${}^{s\Phi} T^\ast_\lf M_{b, 0} / {}^{b, sc } T^\ast (F_\lf; \lf)$, is the pull back of a bundle ${}^{s\Phi} N^\ast Z_\lf$ from the base $Z_\lf$ of the fibration $\phi_\lf$, and $${}^{s\Phi}N^\ast Z_\lf = {}^{b, sc} T^\ast_{Z_\lf \times \{0\}} ((Z_\lf)_{y} \times [0, \epsilon)_\rho).$$ The induced map, \begin{multline*}
 \tilde{\phi}_\lf : {}^{s\Phi} T_\lf^\ast M_{b, 0} \rightarrow {}^{s\Phi}N^\ast Z_\lf, \\ \left\{ \begin{array}{ll} (\rho', y, y', x, \tau, \eta, \mu, \mu', \xi) \longmapsto (y, \tau,  \mu)& \mbox{near $\lf \cap \mf \cap \bfz$}\\ (z', y, x, \tau, \mu, \zeta', \xi) \longmapsto (y, \tau,  \mu)&  \mbox{near the interior of $\lf$}\end{array}\right., \end{multline*} has fibres isomorphic to ${}^{b, sc }T^\ast F_\lf$.
\end{itemize}

\subsection{The contact structures on fibred $sc$-bundles}\label{subsec : contact structures}

We next introduce the contact structures and Legendrians in ${}^{s\Phi}T^\ast M_{b, 0}$. The sympletic form of ${}^{s\Phi}T^\ast M_{b, 0}$, denoted by $\omega$, induces the following contact forms on ${}^{s\Phi} T^\ast_\mf M_{b, 0}$, ${}^{s\Phi} N^\ast Z_\lf$ and ${}^{b, sc }T^\ast_{\partial F_\lf}(F_\lf; \lf)$ respectively.   

\begin{itemize}

\item At $\mf$, we use the following local expression  for ${}^{s\Phi}T^\ast M_{b, 0}$  \[\tau \frac{d\rho}{\rho^2} + \eta \frac{d\tilde{\rho}}{\rho} + \mu \cdot \frac{dy}{\rho} + \mu' \cdot \frac{ dy'}{\rho'} + \xi \frac{dx}{x}\]  It follows that the symplectic form $\omega$ takes the form \begin{multline*} d\tau \wedge \frac{d\rho}{\rho^2} + d\eta \wedge \frac{d\tilde{\rho}}{\rho} + d \mu \wedge \frac{dy}{\rho} + d \mu' \wedge \frac{dy'}{\rho'} + d \xi \wedge \frac{dx}{x} \\ - \eta d\tilde{\rho} \wedge \frac{d\rho}{\rho^2} - \mu \cdot dy \wedge \frac{d\rho}{\rho^2} + \mu' \cdot dy' \wedge \frac{d\tilde{\rho}}{\rho} - \frac{\tilde{\rho}\mu'\cdot dy' \wedge d\rho}{\rho^2}.\end{multline*} 
Then $\omega$ 
 induces a contact form $\chi$ on ${}^{s\Phi} T^\ast_\mf M_{b, 0}$, $$\chi = \iota_{\rho^2\partial_\rho} (\omega) = -d\tau + \eta d\tilde{\rho} + \mu \cdot dy + \tilde{\rho} \mu' \cdot dy'.$$

 \item At the corner $\mf \cap \lf$,  the one-form $\chi$ vanishes identically on ${}^{b, sc }T^\ast (F_\lf; \lf)$ but we have a contact form on  ${}^{s\Phi} N^\ast Z_\lf$. In local coordinates, this is $$-d\tau + \mu \cdot dy.$$
\item
At $\partial F_\lf$, we use the following local expression  for ${}^{s\Phi}T^\ast M_{b, 0}$ 
$$\tilde{\tau} \frac{d\rho}{\rho^2} + \tau' \frac{d\rho'}{\rho'^2} + \mu \cdot \frac{dy}{\rho} + \mu' \cdot \frac{dy'}{\rho'} + \xi \frac{dx}{x}.$$
Then ${}^{b, sc }T^\ast_{\partial F_\lf}(F_\lf; \lf)$ admits a contact form of the form $$-d\tau' + \mu' \cdot dy'.$$

\end{itemize}

By incorporating these contact structures, we define Legendrians in ${}^{s\Phi} T^\ast M_{b, 0}$.
\begin{definition}A Legendrian  $L$ of ${}^{s\Phi} T^\ast M_{b, 0}$ is a Legendrian   of ${}^{s\Phi} T^\ast_{\mf} M_{b, 0}$ such that \begin{itemize}
 \item $L$ is transversal to ${}^{s\Phi} T^\ast_{\lf \cap \mf} M_{b, 0}$ and ${}^{s\Phi} T^\ast_{\rf \cap \mf} M_{b, 0}$;
      \item the maps $\tilde{\phi}_\lf$ and $\tilde{\phi}_\rf$ induce fibrations \begin{eqnarray*}\breve{\phi}_\lf : L \cap {}^{s\Phi} T^\ast_{\lf \cap \mf} M_{b, 0} \rightarrow L_1 \\ \breve{\phi}_\rf : L \cap {}^{s\Phi} T^\ast_{\rf \cap \mf} M_{b, 0} \rightarrow L_1',\end{eqnarray*}where $L_1$ and $L_1'$ are Legendrians of ${}^{s\Phi} N^\ast Z_\lf$ and ${}^{s\Phi} N^\ast Z_\rf$;
           \item the fibres of $\breve{\phi}_\lf$ and $\breve{\phi}_\rf$ are Legendrians of ${}^{b, sc} T^\ast_{\partial F_\lf} F_\lf$ and ${}^{b, sc} T^\ast_{\partial F_\rf} F_\rf$.\end{itemize}
\end{definition}

\subsection{The Legendrian distributions on $M_{b, 0}$}\label{subsec : sc-Legendrian distributions}

We shall apply to $M_{b, 0}$ the parametrization of the Legendrians and local expressions of Legendrian distributions. We only focus on the part away from $\rf$, as the part away from $\lf$ is defined analogously.

We will say that a function $\phi(\tilde{\rho}, y, y', v, w)/\rho$ with $v \in \mathbb{R}^k$ and $w \in \mathbb{R}^l$ is a non-degenerate parametrization of $L$ locally near $q \in {}^{s\Phi}T^\ast_{\mf \cap \lf} M_{b, 0} \cap L$ which is given in local coordinates as $(\tilde{\rho}_0, y_0, y_0', x_0, \tau_0, \eta_0, \mu_0, \mu', \xi)$ with $\tilde{\rho}_0 = 0$, if
\begin{itemize}
\item $\phi$ has the form
$$\phi(\tilde{\rho}, y, y', v, w) = \phi_1(y, v) + \tilde{\rho} \phi_2 (\tilde{\rho}, y, y', v, w),$$ such that  $\phi_1, \phi_2$ are smooth in neighbourhoods of $(y_0, v_0) \in \partial \mathcal{C} \times \mathbb{R}^k$ and $q' = (0, y_0, y_0', v_0, w_0) \in ([0, 1)_{\tilde{\rho}} \times (\partial \mathcal{C})^2) \times \mathbb{R}^{k + k'}$  respectively with
$$(0, y_0, y_0', x_0, d_{(\rho, \tilde{\rho}, y, y')}(\phi/\rho)(q'), 0) = q, \quad d_v\phi(q') = 0, \quad d_w \phi_2(q') = 0;$$
\item $\phi$ is non-degenerate in the sense that
$$d_{(y, v)} \frac{\partial \phi_1}{\partial v}, \quad d_{(y', w)} \frac{\partial \phi_2}{\partial w} $$ are independent at $(y_0', v_0)$ and $q'$ respectively;
\item locally near $q$, $L$ is given by
$$L = \{(\tilde{\rho}, y, y', x, d_{(\rho, \tilde{\rho}, y, y')}(\phi/\rho), 0) : d_v \phi = 0,\quad d_w \phi_2 = 0\}.$$
\end{itemize}
Such a phase function does parametrize the three Legendrians affiliated with $L$,
\begin{itemize}
\item $\phi/\rho$ is a non-degenerate phase function for the Legendrian  $L$ in ${}^{s\Phi} T^\ast_{\mathrm{int}(\mf)} M_{b, 0}$;
\item $\phi_1$ parametrizes $L_1$;
\item $\phi_2(0, y, \cdot, v, \cdot)$, for fixed $(y, v) \in \{(y, v) : d_{v}\phi_1 = 0\}$, parametrizes the fibre of $\tilde{\phi}_\lf$.
\end{itemize}
 \begin{remark}  The phase function $\phi$ is indeed independent of $x$ (the boundary defining function of $\bfz$) as well as its dual cotangent variable $\xi$, since the three contact forms introduced above are all independent of $x$ and $\xi$. Consequently, we see $x$ freely varies in $[0, 1)$ and $\xi \equiv 0$ on the Legendrian $L$, i.e. $L$ extends in $x$ smoothly to $\bfz \cap (\mf  \cup \lf)$.  
 \end{remark}

Away from $\lf \cup \rf$, we only have the contact structure in ${}^{b, sc}T^\ast_\mf \tilde{M}_{b, 0}$. The compressed bundle ${}^{b, sc}T^\ast_\mf \tilde{M}_{b, 0}$ is a $sc$-cotangent bundle with respect to $\mf$ which is irrelevant to the fibred $sc$-bundle. The Legendrian distributions are thus defined as in the abstract calculus. Specifically,
\begin{definition}The space $I^m_{sc, c}(\tilde{M}_{b, 0}; L; {}^{b, sc}\Omega^{1/2})$ of Legendrian distribution associated with a Legendrian $L \subset {}^{b, sc}T^\ast_\mf \tilde{M}_{b, 0}$ consists of distributions the form $$u = \rho'^{m + n/2 - k/2} (2\pi)^{-k/2-n/2} \int_{\mathbb{R}^k} e^{\imath \phi(\tilde{\rho}, y, y', v) / \rho'} a(\tilde{\rho}, \rho', y, y', x, v) dv,$$ where the phase function $\phi$ is non-degenerate and locally parametrizes the Legendrian $L$ and  $a \in C^\infty_c(\tilde{M}_{b, 0} \times \mathbb{R}^k; {}^{b, sc}\Omega^{1/2})$.\end{definition}

\begin{remark}Despite $\dim M_{b, 0} = 2n +1$, the contact structures are independent of the extra variable $x$ such that the powers in the oscillatory integral don't take the extra dimension into account, hence the exponent $n/2$ as on $2n$-manifolds.\footnote{In the abstract setting, we live in $n$-manifolds, hence the exponent $n/4$.} But the amplitude $a$ does depend on $x$.\end{remark}
 
The Legendrian distributions associated with the Legendrian $L$ near $\lf$  are defined on the fibred $sc$-bundle ${}^{s\Phi}T^\ast \tilde{M}_{b, 0}$ as follows.
\begin{definition}
A Legendrian distribution $u \in I_{s\Phi}^{m; (r_\lf, \infty, r_{\bfz})}(M_{b, 0}; L; {}^{s\Phi}\Omega^{1/2})$ associated to a Legendrian  $L$ of ${}^{s\Phi}T^\ast M_{b, 0}$ is a distribution in $C^{-\infty} (M_{b, 0}; {}^{s\Phi}\Omega^{1/2})$ taking  the form
$$  u = x^{r_{\bfz}} u_0 + x^{r_{\bfz}} u_1 + \sum_i x^{r_{\bfz}} \omega_i \cdot \nu + \sum_i x^{r_{\bfz}} \omega_i' \cdot \nu ,$$
away from $\rf$, where \begin{itemize}
\item $u_0 \in C^\infty(M_{b, 0}; {}^{b, sc}\Omega^{1/2})$ is a ${}^{b, sc}\Omega^{1/2}$-valued smooth function,
\item $u_1 \in I^m_{sc, c}(\tilde{M}_{b, 0}, L; {}^{b, sc}\Omega^{1/2})$ is a Legendrian distribution associated with $L$ and supported in $\tilde{M}_{b, 0}$,
\item $\nu  \in C^\infty(M_{b, 0}; {}^{s\Phi}\Omega^{1/2})$ is a smooth section of the $s\Phi$-half density bundle,
\item $\omega_i$ is an oscillatory integral supported near the corner $\lf \cap \mf$ and takes the form $$\omega_i(\rho', \tilde{\rho}, y, y', x) = \rho'^{m-(k + k')/2+n/2}\tilde{\rho}^{r_\lf - k/2}   \int_{\mathbb{R}^{k+k'}} e^{\imath \phi_i(\tilde{\rho}, y, y', v, w)/\rho} a_i(\rho', \tilde{\rho}, y, y', x, v, w)\, dvdw,$$ where the amplitude $a_i \in C_c^\infty([0, \epsilon) \times U \times \mathbb{R}^{k+k'})$ with open $U \subset \lf$  and the phase function $\phi_i$ parametrizes the Legendrian $L$ on $U$,
\item  $\omega_i'$ is an oscillatory integral supported in the interior of $\lf$  and takes the form
$$\omega'_i(\rho, y, z', x) = \rho^{r_\lf - k/2 }    \int_{\mathbb{R}^k} e^{\imath \psi_i(y, w) / \rho} a_i(\rho, y, z', x, w) dw,$$
where the amplitude $a_i \in C_c^\infty([0, \epsilon) \times U \times \mathbb{R}^k)$ with open $U \subset \lf$   and the phase function $\psi_i = \phi_i |_{\tilde{\rho}=0}$ parametrizes the Legendrian $L_1$ on $\phi_\lf(U)$.
\end{itemize}The space $I_{s\Phi}^{m; (\infty, r_\rf, r_{\bfz})}(M_{b, 0}; L; {}^{s\Phi}\Omega^{1/2})$ away from $\lf$ is defined in the same manner.
\end{definition}
  

A Legendrian pair with conic points, $(L, L^\sharp)$, in ${}^{s\Phi} T^\ast M_{b, 0}$ consists of two Legendrians $L$ and $L^\sharp$ of ${}^{s\Phi} T^\ast M_{b, 0}$ which form an intersecting pair with conic points in ${}^{b, sc} T^\ast_{\tilde{M}_{b, 0}} \tilde{M}_{b, 0}$ such that the fibrations $\breve{\phi}_\lf$ and $\breve{\phi}^\sharp_\lf$ of $L$ and $L^\sharp$ have the same Legendrian   $L_1$ of ${}^{s\Phi}N^\ast Z_\lf$ as base and  the fibres of $\breve{\phi}_\lf$ and $\breve{\phi}^\sharp_\lf$ are intersecting pairs of Legendrians with conic points of ${}^{b, sc} T^\ast_{\partial F} F$.

One can introduce a parametrization for $(L, L^\sharp)$. First of all, we note that $L^\sharp$ is parametrized by a function on $\mf$ of the form $$\phi_0/\rho = (\phi_1(y) + \tilde{\rho} \phi_2(\tilde{\rho}, y, y'))/\rho$$ near the boundary with $\lf$, where $\phi_1 = 1$ on $L^\sharp$ near $L \cap L^\sharp$. Then $L^\sharp_1$ is parametrized by $\phi_1(y)/\rho$.  The fibres of $\breve{\phi}_\lf^\sharp$ are pairs of Legendrians with conic points $(L(y), L^\sharp(y))$. For fixed $y$, $L^\sharp(y)$ is parametrized by $\phi_2(0, y, y')/x'$. Second,  we perform the blow-up $$[{}^{s\Phi}T^\ast_{\mf}(M_{b, 0}); \{\mu' = 0, \tau' = 0, \mu = 0\}],$$ to desingularize the conic singularity of $(L, L^\sharp)$.  Then we can introduce a non-degenerate parametrization $\phi/\rho$ of $(L, L^\sharp)$, near a point $q \in \partial \hat{L} \cap {}^{s\Phi}T^\ast_{\mf \cap \lf} M_{b, 0}$, where $$\phi(\tilde{\rho}, y, y', s, w) = \phi_1(y) + \tilde{\rho} \phi_2(\tilde{\rho}, y, y') + s\tilde{\rho} \psi(\tilde{\rho}, y, y', s, w)$$
is a smooth function defined on a neighbourhood of $q' = (0, y_0, y_0', 0, w_0)$ with
$$q = (0, y_0, y_0', x, - \phi_1, - \frac{\psi}{|d_{y'} \psi|}, \frac{d_{y} \psi}{|d_{y'} \psi|}, 0, \widehat{d_{y'}\psi}(q'), 0), \quad d_s \phi(q') = 0, \quad d_w \psi(q') = 0,$$
and
the lift $\hat{L}$ of $L$ under the blow-up is given by $$\hat{L} = \{(\tilde{\rho}, y, y', x, -\phi_1, - \frac{\psi}{|d_{y'}\psi|}, \frac{d_{y} \psi}{|d_{y'} \psi|}, s |d_{y'} \psi|, \widehat{d_{y'} \psi}, 0) : d_s \phi = 0, d_w \psi = 0, s \geq 0\}.$$
Note that the phase function for $(L, L^\sharp)$ is again independent of $x$ and $\xi$ due to the contact forms being independent of these variables. Therefore, $(L, L^\sharp)$ extends in $x$ smoothly to $\bfz \cap (\mf  \cup \lf)$.

  Away from $\lf \cup \rf$, there are no conic points. The intersecting Legendrian distributions are defined on ${}^{b, sc} T^\ast \tilde{M}_{b, 0}$  as in the abstract settings. Specifically, \begin{definition}The space $I^{m, p_\sharp}_{sc, c}(\tilde{M}_{b, 0}; \tilde{L}; {}^{b, sc}\Omega^{1/2})$ of intersecting Legendrian distributions, associated with $\tilde{L} = (L, L^\sharp)$ in ${}^{b, sc}T^\ast_\mf \tilde{M}_{b, 0}$, consists of  distributions of the form $$u = u_0 + \sum_{i=1}^N u_i\nu_i,$$ where \begin{itemize}
		\item $\nu_i \in C^\infty(\tilde{M}_{b, 0}; {}^{b, sc}\Omega^{1/2})$ is a ${}^{b, sc}\Omega^{1/2}$-valued smooth function on $\tilde{M}_{b, 0}$,
		\item $u_0$ is a Legendrian distribution lying in $$I^m_{sc, c}(\tilde{M}_{b, 0}; L; {}^{b, sc}\Omega^{1/2}) + I^{p_\sharp}_{sc, c}(\tilde{M}_{b, 0}; L^\sharp; {}^{b, sc}\Omega^{1/2}),$$
		\item $u_i$  has a local expression
		$$\int_0^\infty ds \int_{\mathbb{R}^k} e^{i\phi_i(\tilde{\rho}, y, y',v,s)/\rho'} a_i(\tilde{\rho}, y, y', x, v, \rho'/s, s)\bigg(\frac{\rho'}{s}\bigg)^{m+n/2-(k+1)/2}s^{p_\sharp+n/2-1}dv,$$
		where  $a_i \in C^\infty_c(\tilde{M}_{b, 0} \times \mathbb{R}^k \times [0,\infty)\times [0,\infty))$, $\phi_i$ is non-degenerate and locally parametrizes $(L, L^\sharp)$.
	\end{itemize}
\end{definition}
The intersecting Legendrian distributions with conic points are defined on ${}^{s\Phi} T^\ast M_{b, 0}$ as follows. 
\begin{definition} The space $I_{s\Phi}^{m, {p_\sharp}; (r_\lf, \infty, r_{\bfz})}(M_{b, 0}, \tilde{L}; {}^{s\Phi}\Omega^{1/2})$ associated to a Legendrian pair with conic points $\tilde{L} = (L, L^\sharp)$ of ${}^{s\Phi} T^\ast M_{b, 0}$ consists of distributions $u \in C^{-\infty}(M_{b, 0}; {}^{s\Phi}\Omega^{1/2})$ that takes the form $$u =   u_0 +   u_1 +   u_2 + \sum_j   w_j \cdot \nu,$$  away from $\rf$, where \begin{itemize}
\item $u_0 \in I_{s\Phi}^{{p_\sharp}; (r_\lf, \infty, r_{\bfz})}(M_{b, 0}, L^\sharp; {}^{s\Phi}\Omega^{1/2})$ is a Legendrian distribution associated with $L^\sharp$,
\item $u_1 \in I_{s\Phi}^{m; (r_\lf, \infty, r_{\bfz})}(M_{b, 0}, L; {}^{s\Phi}\Omega^{1/2})$ is a Legendrian distribution associated with $L$,
\item $u_2 \in x^{r_{\bfz}} I_{sc, c}^{m, {p_\sharp}}(\tilde{M}_{b, 0}, \tilde{L}; {}^{b, sc} \Omega^{1/2})$ is an intersecting Legendrian distribution associated with $\tilde{L}$ and compactly supported in $\tilde{M}_{b, 0}$,
\item $\nu \in C^\infty(M_{b, 0}; {}^{s\Phi}\Omega^{1/2})$ is a smooth section of the $s\Phi$-half density bundle,
\item $\omega_j$ is an oscillatory integral of the form, \begin{multline*}\omega_j(\rho', \tilde{\rho}, y, y', x) = \int_{[0, \infty) \times \mathbb{R}^{k'}} e^{\imath \phi_j(\tilde{\rho}, y, y', s, w)/\rho} a_j(\rho', \tilde{\rho}, y, y', x, \rho'/s, s, w) \\ \bigg(\frac{\rho'}{s}\bigg)^{m - (k' + 1)/2 + n/2} s^{{p_\sharp}-1+n/2} \tilde{\rho}^{r_\lf} x^{r_{\bfz}} \, ds dw,\end{multline*}
where the amplitude $a_j \in C_c^\infty([0, \epsilon) \times U \times [0, \infty) \times [0, \infty) \times \mathbb{R}^{k'})$ with $U$  open in $\mf$ and the phase function $\phi_j$ parametrizes the Legendrian  pair $(L, L^\sharp)$ on $U$.\end{itemize}
The space $I_{s\Phi}^{m, {p_\sharp}; (\infty, r_\rf, r_{\bfz})}(M_{b, 0}, \tilde{L}; {}^{s\Phi}\Omega^{1/2})$ away from $\lf$ is defined in the same manner.
 \end{definition}

In the end, we define the 'global' Legendrian distributions, to incorporate the boundary behaviours with respect to both $\lf$ and $\rf$, by \begin{eqnarray*}\lefteqn{I_{s\Phi}^{m; (r_\lf, r_\rf, r_{\bfz})}(M_{b, 0}; L; {}^{s\Phi}\Omega^{1/2})}\\&=&I_{s\Phi}^{m; (r_\lf, \infty, r_{\bfz})}(M_{b, 0}; L; {}^{s\Phi}\Omega^{1/2}) + I_{s\Phi}^{m; (\infty, r_\rf,  r_{\bfz})}(M_{b, 0}; L; {}^{s\Phi}\Omega^{1/2})\\ \lefteqn{I_{s\Phi}^{m, {p_\sharp}; (r_\lf, r_\rf,   r_{\bfz})}(M_{b, 0}, \tilde{L}; {}^{s\Phi}\Omega^{1/2})} \\ &=& I_{s\Phi}^{m, {p_\sharp}; (r_\lf, \infty, r_{\bfz})}(M_{b, 0}, \tilde{L}; {}^{s\Phi}\Omega^{1/2}) + I_{s\Phi}^{m, {p_\sharp}; (\infty, r_\rf,  r_{\bfz})}(M_{b, 0}, \tilde{L}; {}^{s\Phi}\Omega^{1/2}).\end{eqnarray*}
The space of Legendrian distributions on $M_{b, 0}$ admits a short exact sequence \begin{multline} \label{eqn : exact seq for fibred intersecting Legendrian distributions}0 \longrightarrow I_{s\Phi}^{m+1, {p_\sharp}; (r_\lf, r_\rf, r_{\bfz})}(M_{b, 0}, \tilde{L}; {}^{s\Phi}\Omega^{1/2}) \longrightarrow I_{s\Phi}^{m, {p_\sharp}; (r_\lf, r_\rf, r_{\bfz})}(M_{b, 0}, \tilde{L}; {}^{s\Phi}\Omega^{1/2})\\ \longrightarrow \rho_\lf^{r_\lf-m} \rho_\rf^{r_\rf-m} s^{{p_\sharp}-m}   \rho_{\bfz}^{r_{\bfz}} C^\infty(\hat{L}, \Omega_b^{1/2}(\hat{L}) \otimes S^{[m]}(\hat{L})) \longrightarrow 0.  
\end{multline}

\begin{center}
	\begin{figure}
		\includegraphics[width= 0.6\textwidth]{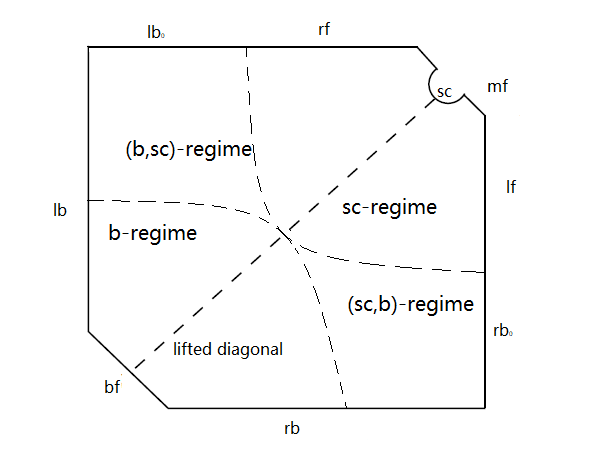}\caption{\label{fig : bf0}The transitional face $\bfz$ and the stretched double metric cones $\mathfrak{C}^2_{sc, b}$}\end{figure} \end{center}
\section{The normal operator at the transitional face} \label{Sec: metric cone resolvent}

If we pull back the differential operators $\tilde{P}_\hbar$, $P_b$ and $P_{sc}$ on $\mathcal{C}$ in \eqref{eqn : blownup-semiclassical-operator}, \eqref{eqn : b-operator} and \eqref{eqn : scattering-operator} to the stretched double spaces $M_{b, 0}$ and $M_{sc, b, 0}$ through  the canonical left (right) projection, the normal operator $\mathcal{N}_{\bfz}(\tilde{P}_\hbar)$ of $\tilde{P}_\hbar$ at $\bfz$ turns out to be $\Delta_{\mathfrak{C}} - (1 + \imath 0)$ where $\Delta_{\mathfrak{C}}$ is the Laplacian on the metric cone $(\mathfrak{C}, \mathfrak{g})$.

In fact, $\Delta_{\mathfrak{C}}$ on $(\mathfrak{C}, \mathfrak{g})$ has the global expression on $\mathfrak{C}$, $$  - \partial_{\mathfrak{x}}^2 - \frac{(n - 1)}{\mathfrak{x}} \partial_{\mathfrak{x}} + \frac{\Delta_{\mathfrak{h}}}{\mathfrak{x}^2}, \quad \mbox{for $0 < \mathfrak{x} < \infty$}.$$  
 By making $\mathfrak{h} = h_0 = h(0, y, dy)$, $r = \mathfrak{x}$ near the small end $\{\mathfrak{x} = 0\}$ and $\rho = 1/\mathfrak{x}$ near the big end $\{\mathfrak{x} = \infty\}$,  we find
\begin{equation}\label{eqn : normal operator at transition}\left.\begin{array}{ll}
		\Delta_{\mathfrak{C}} - (1 + \imath 0) = r^{-1} \mathcal{N}_{\bfz}(P_b) r^{-1} & \mbox{near $\{r=0\}$}\\ 
		\Delta_{\mathfrak{C}} - (1 + \imath 0) = \mathcal{N}_{\bfz}(P_{sc})  & \mbox{near $\{\rho=0\}$}
	\end{array}\right. .\end{equation}


 Therefore, we want to understand the resolvent kernel on the metric cone $(\mathfrak{C}, \mathfrak{g})$ to see the transitional behaviours of the resolvent kernel $R$ at $\bfz$ between the regimes of $M_{b, 0}$ and $M_{sc, b, 0}$.

First of all, we need to work out the appropriate double space to accommodate the resolvent kernel of $\Delta_{\mathfrak{C}} - (1 + \imath 0)$. The equations \eqref{eqn : normal operator at transition} suggest $\Delta_{\mathfrak{C}}$ is a $b$-differential operator near the small end $\{r=0\}$ and a $sc$-differential operator near the big end $\{r=\infty\}$. Hence the resolvent kernel lives in a stretched version of $\mathfrak{C}^2$ to accommodate these features. For this, we simultaneously blow up the two separate corners $C_{\mathfrak{C}^2, 0} = \{r = 0, r' = 0\}$ and $C_{\mathfrak{C}^2, \infty} = \{\rho = \rho' = 0\}$, $$\mathfrak{C}^2_b = [\mathfrak{C}^2; C_{\mathfrak{C}^2, \infty}, C_{\mathfrak{C}^2, 0}].$$ Subsequently, we perform the blow-up at the boundary at $\infty$ of the lifted diagonal $\diag_{\mathfrak{C}^2_b}$ of $\mathfrak{C}^2_b$,   $$\mathfrak{C}^2_{sc, b} = [\mathfrak{C}^2_b; \partial_\infty \diag_{\mathfrak{C}^2_b}]. $$ Figure \ref{fig : bf0} depicts the space $\mathfrak{C}^2_{sc, b}$. The dashed line is the lifted diagonal $\diag_{\mathfrak{C}^2_{sc, b}}$ of $\mathfrak{C}^2_{sc, b}$. The boundary at $\infty$ of the diagonal $\diag_{\mathfrak{C}_b^2}$ lifts to the face $\mathrm{sc}$.

We assert that the space $\mathfrak{C}^2_{sc, b}$ ($\mathfrak{C}^2_{b}$) is isometric to the transitional face $\bfz$ in $M_{sc, b, 0}$ ($M_{b, 0}$). On the one hand, we may view $\mathfrak{C}^2_{sc, b}$ ($\mathfrak{C}^2_{b}$) as the stretched version of the double space of the front face of the stretched single space $\mathcal{C}_b$. Recall that $\mathcal{C}_b = [\mathcal{C} \times [0 ,1); \partial \mathcal{C} \times \{0\} ]$. The lift of $\partial \mathcal{C} \times \{0\}$, i.e. $\lb_0$ or $\rb_0$, is isometric to $\mathfrak{C}$. This face can be parametrized by \[ \overline{\{(r, y) : 0 < r = x/\hbar \leq \infty\}}\] and the constraint $x=0$ reduces the metric $g = dx^2 + x^2 h(x, y, dy)$ multiplied by the rescaling factor $1/\hbar^2$ to a product type metric $\mathfrak{g} = dr^2 + r^2\mathfrak{h}(y, dy)$. It follows $\lb_0 \times \rb_0  \cong \mathfrak{C}^2$. On the other hand, the face $\bfz$ in $M_{sc, b, 0}$ ($M_{b, 0}$) can obtained by blowing up $\lb_0 \times \rb_0$, where the blow-down map is just given by the left and right stretched projections $\tilde{\mathcal{P}}_L$ and $\tilde{\mathcal{P}}_R$ in Section \ref{sec : blow-up space}.

 The three faces created by the two fold blow-ups are viewed as the three boundaries of $\bfz$ with $\mathrm{bf}, \mf, \mathrm{sc}$ respectively, whilst the four side edges of the square $\mathfrak{C}^2$ are linked to the four boundaries of $\bfz$ with $\lb, \rb, \lb_0, \rb_0$ respectively. The dashed curves separate the $b$-regime, the $sc$-regime, the $(b, sc)$-regime and the $(sc, b)$-regime. We name these boundaries of $\mathfrak{C}^2_{sc, b}$ ($\mathfrak{C}^2_{b}$) $$\{\mathfrak{lb}, \mathfrak{rb}, \mathfrak{lb}_0, \mathfrak{rb}_0, \mathfrak{bf}, \mathfrak{mf}, \mathfrak{sc}\},$$ which are isometric to the corners of $M_{sc, b, 0}$ ($M_{b, 0}$), $$\{\lb \cap \bfz, \rb\cap \bfz, \lb_0 \cap \bfz, \rb_0\cap \bfz, \mathrm{bf}\cap \bfz, \mf\cap \bfz, \mathrm{sc}\cap \bfz\}.$$ In addition, we denote, only in the $sc$-regime, \begin{align*}\mathfrak{lf} &= \mathfrak{rb}_0 \cap \text{$sc$-regime} = \overline{\{\rho/\rho' = 0, 1 > \rho' > 0\}}\\  \mathfrak{rf} &= \mathfrak{lb}_0 \cap \text{$sc$-regime} = \overline{\{\rho'/\rho = 0, 1 > \rho > 0\}}\end{align*} to be consistent with the stretched double spaces  $M_{sc, b, 0}$ and $M_{b, 0}$. 

Through this isomorphism, the compressed cotangent bundles ${}^{b, sc} T^\ast \mathfrak{C}_b^2$ and ${}^{b, sc} T^\ast \mathfrak{C}_{sc, b}^2$ are induced from ${}^{b, sc} T^\ast(\bfz)$. These bundles further generate the compressed densities ${}^{b, sc} \Omega(\mathfrak{C}_b^2)$ and ${}^{b, sc}\Omega (\mathfrak{C}_{sc, b}^2)$.

Moreover, there exist induced contact structures and Legendrian submanifolds $\mathfrak{L}_\pm$ and $\mathfrak{L}^\sharp$ over $\mathfrak{lb}_0 \cup \mathfrak{rb}_0 \cup \mathfrak{mf} \subset \mathfrak{C}^2_{b}$. This is inherited through the isometry from $L_\pm$ and $L^\sharp$ over $\lb_0 \cup \rb_0 \cup \mf \subset M_{b, 0}$, since the contact structure on $M_{b, 0}$ is independent of the boundary defining function of $\bfz$ and extends smoothly to $\bfz$. Then Legendrian distributions are analogously defined over $\mathfrak{lb}_0 \cup \mathfrak{rb}_0 \cup \mathfrak{mf} \subset \mathfrak{C}^2_{b}$ as we present in Section \ref{sec : Legendrian distributions on conic manifolds}.

Guillarmou-Hassell-Sikora \cite[Section 5]{Guillarmou-Hassell-Sikora-TAMS2013} proved that the resolvent kernel $( \Delta_{\mathfrak{C}} - (1 + \imath 0) )^{-1}$ is the sum of full $b$-pseudodifferential operators, $sc$-pseudodifferential operators, and Legendrian distributions. Specifically, the resolvent, in terms of our notations, reads  
\begin{theorem}\label{thm : resovlent on metric cones}
	Let $\chi_b$ and $\chi_{sc}$ be smooth bump functions on $\mathfrak{C}$ in a neighbourhood of  $\{r < 1\}$  and $\{\rho < 1\}$  respectively, and also a partition of identity $\chi_b + \chi_{sc} = 1$. The resolvent kernel $R_{\mathfrak{C}}$ of $(\Delta_{\mathfrak{C}} - (1 + \imath 0) )^{-1}$ is a distribution on $\mathfrak{C}^2_{b}$ consisting of, 
	\begin{itemize}
		\item $R_{\mathfrak{C}, b} = \chi_b R_{\mathfrak{C}} \chi_b$ is a distribution supported in the $b$-regime of $\mathfrak{C}^2_{b}$ such that
		\begin{itemize}
			\item $R_{\mathfrak{C}, b}$ is a ${}^{b, sc}\Omega^{1/2}(\mathfrak{C}_b^2)$-valued full $b$-pseudodifferential operator conormal at order $-2$ to the diagonal $\diag_{\mathfrak{C}^2_{b}}$ and polyhomogeneous conormal to $\mathfrak{lb}$ and $\mathfrak{rb}$,
			\item $R_{\mathfrak{C}, b}$ vanishes to order $2$ at $\mathfrak{bf}$, $n/2$ at $\mathfrak{lb}$ and $\mathfrak{rb}$;
		\end{itemize}
		
		\item $R_{\mathfrak{C}, sc} = \chi_{sc} R_{\mathfrak{C}} \chi_{sc}$ is a distribution supported in the $sc$-regime of $\mathfrak{C}^2_{b}$ such that
		
		\begin{itemize}
			\item $R_{\mathfrak{C}, sc}$ is conormal to order $-2$ to the diagonal $\diag_{\mathfrak{C}^2_{sc, b}}$ (with the diagonal at infinity blown up),
			\item $R_{\mathfrak{C}, sc}$, near the big end of $\diag_{\mathfrak{C}^2_{b}}$ in $\mathfrak{C}^2_b$, is an intersecting Legendrian distribution lying in 
			$$I^{-1/2}(\mathfrak{C}^2_{b}; (N^\ast \diag_{\mathfrak{C}^2_b}, \mathfrak{L}^\circ); {}^{b, sc}\Omega^{1/2}(\mathfrak{C}_b^2)),$$ where $\cdot^\circ$ denotes the interior of $\cdot$, 
			\item $R_{\mathfrak{C}, sc}$, near the intersection of $\mathfrak{mf} \cap \mathfrak{lf} $ and $\mathfrak{mf} \cap \mathfrak{rf}$, is intersecting Legendrian distributions lying in 
			$$I^{-1/2, (n - 2)/2; (n-1)/2, (n-1)/2}(\mathfrak{C}^2_{b}; (\mathfrak{L}, \mathfrak{L}^\sharp); {}^{s\Phi}\Omega^{1/2}),$$ 
			\item $R_{\mathfrak{C}, sc}$ vanishes to order $(n-1)/2$ at both $\mathfrak{lf}$ and $\mathfrak{rf}$. 
		\end{itemize}
	\item $R_{\mathfrak{C}, (b, sc)} = \chi_b R_{\mathfrak{C}} \chi_{sc}$ is a distribution supported in the $(b, sc)$-regime of $\mathfrak{C}^2_{sc, b}$ such that
	\begin{itemize}
	\item $R_{\mathfrak{C}, (b, sc)}$ is given explicitly by \begin{equation} \label{eqn : resolvent lb_0}\frac{\pi \imath r}{2\rho'} \sum_{j=0}^\infty \Pi_{\mathrm{E}_j}(y, y')  \mathrm{J}_{\nu_j}(r) \mathrm{Ha}_{\nu_j}^{(1)}(1/\rho')   \bigg|\frac{drdyd\rho'dy' }{r\rho' }\bigg|^{1/2},\end{equation} where $\Pi_{\mathrm{E}_j}$ is the projection on the $j$th eigenspace $\mathrm{E}_j$ of the operator $\Delta_{\mathfrak{h}} + (n/2-1)^2$, $\nu_j^2$ is the corresponding eigenvalue, and $\mathrm{J}_\nu(\cdot), \mathrm{Ha}_\nu^{(1)}(\cdot)$ are standard Bessel and Hankel functions.

		\item $R_{\mathfrak{C}, (b, sc)}$ vanishes to order $n/2$ at $\mathfrak{lb}$, $(n - 1)/2$ at  $\mathfrak{lb}_0$;		\end{itemize}

		\item $R_{\mathfrak{C}, (sc, b)} = \chi_{sc} R_{\mathfrak{C}} \chi_b$  is a distribution supported in the $(sc, b)$-regime of $\mathfrak{C}^2_{sc, b}$ such that	\begin{itemize}
				\item $R_{\mathfrak{C}, (sc, b)}$ is given explicitly by 
				\begin{equation} \label{eqn : resolvent rb_0}	\frac{\pi \imath r'}{2\rho} \sum_{j=0}^\infty \Pi_{\mathrm{E}_j}(y, y')  \mathrm{J}_{\nu_j}(r') \mathrm{Ha}_{\nu_j}^{(1)}(1/\rho)   \bigg|\frac{d\rho dydr'dy' }{\rho r' }\bigg|^{1/2},\end{equation}

			\item $R_{\mathfrak{C}, (sc, b)}$ vanishes to order $n/2$ at $\mathfrak{rb}$, $(n - 1)/2$ at  $\mathfrak{rb}_0$.	\end{itemize}	
	\end{itemize}
Consequently, the resolvent kernel of $\mathcal{N}_{\bfz}(\tilde{P}_\hbar)$ is described as above, if we replace $R_{\mathfrak{C}}$ by $\mathcal{N}_{\bfz}(\tilde{P}_\hbar)$, $\mathfrak{C}^2_{sc, b}$ by $\bfz \subset M_{sc, b, 0}$, and $\mathfrak{C}^2_{b}$ by $\bfz \subset M_{b, 0}$.
\end{theorem}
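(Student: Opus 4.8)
The plan is to reduce the statement entirely to the finite-energy resolvent construction of Guillarmou--Hassell--Sikora \cite{Guillarmou-Hassell-Sikora-TAMS2013} and then transport the result to $\bfz$ via the isometry $\mathfrak{C}^2_{sc,b} \cong \bfz$ (resp.\ $\mathfrak{C}^2_b \cong \bfz$) identified in this section. First I would recall that on the product metric cone $(\mathfrak{C}, \mathfrak{g})$ the Laplacian $\Delta_{\mathfrak{C}}$ is homogeneous of degree $-2$ in $\mathfrak{x}$, so $(\Delta_{\mathfrak{C}} - (1+\imath 0))^{-1}$ is, up to this homogeneity, the spectral-theoretically natural object studied in loc.\ cit.; near the small end $\{r=0\}$ the equations \eqref{eqn : normal operator at transition} identify $\Delta_{\mathfrak{C}} - (1+\imath 0)$ with a conjugate of the $b$-operator $\mathcal{N}_\bfz(P_b)$, and near the big end $\{\rho = 0\}$ with the $sc$-operator $\mathcal{N}_\bfz(P_{sc})$. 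Thus the four pieces $R_{\mathfrak{C},b}$, $R_{\mathfrak{C},sc}$, $R_{\mathfrak{C},(b,sc)}$, $R_{\mathfrak{C},(sc,b)}$ correspond exactly to the four regimes of $\mathfrak{C}^2_b$, and it suffices to (i) invoke the structure theorem from \cite[Section 5]{Guillarmou-Hassell-Sikora-TAMS2013}, (ii) translate their conventions and index conventions into the $\{\mathfrak{lb},\mathfrak{rb},\mathfrak{bf},\mathfrak{mf},\mathfrak{sc},\mathfrak{lf},\mathfrak{rf}\}$ notation fixed above, and (iii) read off the vanishing orders at each face.

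Concretely, I would proceed face by face. In the $b$-regime, separation of variables on $\mathfrak{C}$ expresses the kernel in terms of Bessel functions $\mathrm{J}_{\nu_j}(r)$ and $\mathrm{J}_{\nu_j}(r')$ against the eigenprojections $\Pi_{\mathrm{E}_j}$ of $\Delta_{\mathfrak{h}} + (n/2-1)^2$; the small-$r$ asymptotics $\mathrm{J}_{\nu_j}(r) \sim c_j r^{\nu_j}$ together with the relation $\nu_j = ((n/2-1)^2 + \sigma_j^2)^{1/2}$ (matching the indicial roots $\lambda_j$ of Proposition \ref{prop : inverse indicial operators}, since $\Im\lambda_j = \nu_j$) give decay of order $\nu_j \geq n/2 - 1$ at $\mathfrak{lb}, \mathfrak{rb}$; the extra unit comes from the half-density normalization, yielding order $n/2$, while smoothness of the Bessel product across $r = r'$ gives conormality of order $-2$ to $\diag_{\mathfrak{C}^2_b}$ and vanishing of order $2$ at $\mathfrak{bf}$ (the Laplacian being degree $-2$). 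In the $(b,sc)$- and $(sc,b)$-regimes the kernel is given by the explicit product \eqref{eqn : resolvent lb_0}--\eqref{eqn : resolvent rb_0} of $\mathrm{J}_{\nu_j}$ against the outgoing Hankel function $\mathrm{Ha}^{(1)}_{\nu_j}$; here the stationary-phase asymptotics $\mathrm{Ha}^{(1)}_{\nu_j}(1/\rho') \sim \rho'^{\,1/2} e^{\imath/\rho'}$ as $\rho' \to 0$ gives the vanishing order $(n-1)/2$ at $\mathfrak{lb}_0$ (resp.\ $\mathfrak{rb}_0$), again with the Bessel factor supplying order $n/2$ at $\mathfrak{lb}$ (resp.\ $\mathfrak{rb}$). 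In the $sc$-regime, the outgoing resolvent of $\Delta_{\mathfrak{C}} - 1$ is, by the $sc$-calculus and propagation of singularities, a $sc$-pseudodifferential operator of order $-2$ near the diagonal of $\mathfrak{C}^2_{sc,b}$, and away from the diagonal its $sc$-wavefront set is the union of bicharacteristics through $N^\ast\diag$, i.e.\ the geometric Legendrian $\mathfrak{L}$; the broken bicharacteristics reflecting off the cone tip produce the diffractive Legendrian $\mathfrak{L}^\sharp$ (projectable, parametrized by the phase function $1$), and near $\mathfrak{mf}\cap\mathfrak{lf}$, $\mathfrak{mf}\cap\mathfrak{rf}$ the kernel is an intersecting Legendrian distribution with conic points in the class $I^{-1/2,(n-2)/2;(n-1)/2,(n-1)/2}(\mathfrak{C}^2_b;(\mathfrak{L},\mathfrak{L}^\sharp);{}^{s\Phi}\Omega^{1/2})$ — the orders being exactly those in \cite{Guillarmou-Hassell-Sikora-TAMS2013} for the model cone. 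The vanishing order $(n-1)/2$ at $\mathfrak{lf}$ and $\mathfrak{rf}$ follows from the leading behaviour of the Hankel asymptotics, consistently with the $(b,sc)$/$(sc,b)$ computations.

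Finally, for the concluding sentence: since the contact structures, Legendrian submanifolds $\mathfrak{L}_\pm$, $\mathfrak{L}^\sharp$, and the compressed cotangent/density bundles on $M_{sc,b,0}$ were shown in this section to be \emph{independent of the boundary defining function of} $\bfz$ and to extend smoothly to $\bfz$, the isometry $\mathfrak{C}^2_{sc,b}\cong \bfz$ (resp.\ $\mathfrak{C}^2_b \cong \bfz$) — realized by the left/right stretched projections $\tilde{\mathcal{P}}_L,\tilde{\mathcal{P}}_R$ — carries all the microlocal data verbatim, so $\mathcal{N}_\bfz(\tilde{P}_\hbar)^{-1}$ has the stated structure with the dictionary $R_{\mathfrak{C}} \leftrightarrow \mathcal{N}_\bfz(\tilde{P}_\hbar)^{-1}$, $\mathfrak{C}^2_{sc,b}\leftrightarrow \bfz\subset M_{sc,b,0}$, $\mathfrak{C}^2_b \leftrightarrow \bfz\subset M_{b,0}$. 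The main obstacle I anticipate is not any single estimate but the bookkeeping of conventions: matching the index-set and Legendrian-order conventions of \cite{Guillarmou-Hassell-Sikora-TAMS2013} to those fixed in Sections \ref{sec : abstract sc calculus}--\ref{sec : Legendrian distributions on conic manifolds} (in particular the shift between the exponent $n/4$ in the abstract $n$-manifold calculus and the $n/2$ used here because the contact structures ignore the extra variable $x$), and verifying that the blow-down of $\mathfrak{sc}$ is harmless because $\mathfrak{lf},\mathfrak{rf}$ lie away from the diagonal conormal bundle — i.e.\ checking that the identification of $\bfz$ with $\mathfrak{C}^2_b$ at the level of $M_{b,0}$ (rather than $M_{sc,b,0}$) loses nothing for the $(b,sc)$/$(sc,b)$ pieces.
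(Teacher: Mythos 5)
Your proposal is correct and takes essentially the same route as the paper: the paper's ``proof'' of Theorem~\ref{thm : resovlent on metric cones} is precisely a citation to \cite[Section 5]{Guillarmou-Hassell-Sikora-TAMS2013} translated into the $\{\mathfrak{lb},\mathfrak{rb},\mathfrak{bf},\mathfrak{mf},\mathfrak{sc},\mathfrak{lf},\mathfrak{rf}\}$ notation, combined with the identification of $\mathfrak{C}^2_{sc,b}$ (resp.\ $\mathfrak{C}^2_b$) with $\bfz\subset M_{sc,b,0}$ (resp.\ $M_{b,0}$) developed earlier in that section. Your face-by-face sanity checks via the Bessel/Hankel asymptotics \eqref{eqn : Bessel}--\eqref{eqn : Hankel} and the bookkeeping caveat about the $n/4$ versus $n/2$ normalization are accurate and in fact mirror computations the paper carries out later (in the proofs of Proposition~\ref{prop : b-sc parametrix} and Proposition~\ref{prop : sc-b parametrix}), so they are a reasonable and correct addition but not a departure from the paper's argument.
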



\section{The $b$-regime parametrix}\label{sec : b parametrix}

In this section, we shall construct a parametrix $G_b$ for $P_b$ with a compact remainder $E_b$  in the $b$-regime of $M_{b, 0}$, where the relevant boundary faces include $\lb, \rb, \mathrm{bf}, \bfz$.  More precisely,
\begin{proposition}\label{thm : b-parametrix}There exists a full $b$-pseudodifferential operator $$G_b \in   \Psi_b^{-2, (\hat{\mathcal{E}}_b + 1, \check{\mathcal{E}}_b + 1, \tilde{\mathcal{E}}_b + 2, \mathbb{N}_0)}(M_{b, 0}; {}^b\Omega^{1/2}),$$ in the $b$-regime  such that \begin{equation*}r^{-1} P_b r^{-1}  \circ G_b - \chi_b \Id \chi_b = - E_b \in   \Psi_b^{-\infty, (\infty, \hat{\mathcal{E}}_b + 1, \infty, \infty)}(M_{b, 0}; {}^b\Omega^{1/2}),\end{equation*} where the index sets are defined to be \begin{align}\label{eqn : index set lb}\hat{\mathcal{E}}_b  &= \overline{\bigcup}_{i \in \mathbb{N}_0} (\mathcal{E}_b + i) \\ \label{eqn : index set rb}\check{\mathcal{E}}_b  &= \hat{\mathcal{E}}_b \, \overline{\cup} \, \hat{\mathcal{E}}_b, \\  
\label{eqn : index set bf}\tilde{\mathcal{E}}_b &= \{(0, 0)\} \cup  \Big((\mathbb{N}_0 + 1)\overline{\cup} (\hat{\mathcal{E}}_b  + \check{\mathcal{E}}_b  )\Big) , 	
 \end{align} provided the index set $\mathcal{E}_b = \{(\Im \lambda_j, 0)\}$ as in \eqref{eqn : index set pm}. \end{proposition}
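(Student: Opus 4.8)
The plan is to run the standard three-stage $b$-parametrix construction of \cite[Ch.~4--6]{Melrose-APS}, enlarged to absorb the semiclassical front face $\bfz$ in the spirit of \cite[\S5]{Guillarmou-Hassell-Sikora-TAMS2013}. Since in the $b$-regime the compressed half density ${}^{b,sc}\Omega^{1/2}$ is essentially a $b$-half density (Section \ref{sec : blow-up space}), one works throughout with ${}^b\Omega^{1/2}$. It is convenient to build a full $b$-parametrix $G_b'$ for the genuine $b$-elliptic operator $P_b$ of \eqref{eqn : b-operator} and then set $G_b = r\,G_b'\,r$, using $r^{-1}P_b r^{-1}\circ G_b = r^{-1}(P_b\circ G_b')r$. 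Because $r = x/\hbar$ restricts to a defining function of $\lb$ near $\lb$ and of $\rb$ near $\rb$, while $r\otimes r$ vanishes to order $2$ at $\mathrm{bf}$ and both $r,r'$ are bounded and nonvanishing near $\bfz$, the two factors of $r$ turn index sets $(\hat{\mathcal E}_b,\check{\mathcal E}_b,\tilde{\mathcal E}_b,\mathbb N_0)$ for $G_b'$ into $(\hat{\mathcal E}_b+1,\check{\mathcal E}_b+1,\tilde{\mathcal E}_b+2,\mathbb N_0)$ for $G_b$, and a residual error $E_b'$ for $P_b$ with $\rb$-index $\hat{\mathcal E}_b$ into $E_b$ with $\rb$-index $\hat{\mathcal E}_b+1$, all other faces remaining rapidly vanishing. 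So it suffices to produce $G_b'$.

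First I would construct a symbolic parametrix: $P_b$ is $b$-elliptic uniformly up to $\mathrm{bf}$ and $\bfz$, so inverting its $b$-principal symbol and summing the symbol expansion yields $G_b'^{(1)}\in\Psi_b^{-2}(M_{b,0};{}^b\Omega^{1/2})$, cut off to the $b$-regime by $\chi_b\otimes\chi_b$, with $P_b\circ G_b'^{(1)}-\chi_b\Id\chi_b$ a smoothing $b$-operator, smooth up to $\mathrm{bf}$ and $\bfz$ and rapidly vanishing at $\lb$ and $\rb$. Next I would remove the error at $\mathrm{bf}$. By Proposition \ref{prop : inverse indicial operators} the inverse indicial operator $\mathcal I(P_b)^{-1}$ lies, modulo $\Psi_b^{-2}$, in $\mathcal A_{phg}^{(\mathcal E_b,\mathcal E_b,\mathbb N_0,\mathbb N_0)}$ with $\mathcal E_b = \{(\Im\lambda_j,0)\}$ as in \eqref{eqn : index set pm}, and is a right inverse of $\mathcal N_{\mathrm{bf}}(P_b)$. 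Correcting $G_b'$ by a smooth extension off $\mathrm{bf}$ of $\mathcal I(P_b)^{-1}$ composed with the $\mathrm{bf}$-restricted error improves the error by one order at $\mathrm{bf}$; iterating and Borel-summing in the defining function of $\mathrm{bf}$ makes the error vanish to infinite order there. Composing the corrections through Proposition \ref{prop : phg mapping} at each stage, the $\mathrm{bf}$-index of $G_b'$ becomes the extended-union iterate of $\mathbb N_0+1$ against $\hat{\mathcal E}_b+\check{\mathcal E}_b$ (together with a leading contribution, which once the $\bfz$ step below is incorporated is the $\{(0,0)\}$ coming from the order-$2$ vanishing of the model resolvent), i.e.\ $\tilde{\mathcal E}_b$, while the $\lb$- and $\rb$-indices become $\hat{\mathcal E}_b = \overline{\cup}_i(\mathcal E_b+i)$ and $\check{\mathcal E}_b = \hat{\mathcal E}_b\,\overline{\cup}\,\hat{\mathcal E}_b$, the extra log terms at $\rb$ reflecting the two-sided composition. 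Moreover, since the range of $\mathcal I(P_b)^{-1}$ consists of solutions of the indicial equation at $\lb$ with leading powers $r^{\imath\lambda_j}$, and $P_b$ is a $b$-differential operator whose indicial family annihilates exactly these, applying $P_b$ cancels the $\lb$-leading term of every correction, so after Borel summation the error is rapidly vanishing at $\lb$ as well.

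Finally I would remove the error at $\bfz$. By \eqref{eqn : normal operator at transition}, $\mathcal N_{\bfz}(P_b) = r(\Delta_{\mathfrak C}-(1+\imath 0))r$ is invertible, and by Theorem \ref{thm : resovlent on metric cones} the $b$-regime part of its inverse is a full $b$-pseudodifferential operator vanishing to order $2$ at $\mathrm{bf}\cap\bfz$ and $n/2$ at $\lb\cap\bfz$ and $\rb\cap\bfz$. Using it as a right inverse of $\mathcal N_{\bfz}(P_b)$, correcting, and Borel-summing in the defining function of $\bfz$ makes the error vanish to infinite order at $\bfz$; this is consistent with the previous step at the corner $\mathrm{bf}\cap\bfz$ because the indicial operator of $\Delta_{\mathfrak C}-(1+\imath 0)$ at $\mathfrak{bf}$ is exactly $\mathcal I(P_b)$, its leading order-$2$ vanishing at $\mathfrak{bf}$ supplying the $\{(0,0)\}$ term of $\tilde{\mathcal E}_b$, and its expansion at $\lb\cap\bfz$ again consisting of indicial solutions keeps the error rapidly vanishing at $\lb$. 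Taking $G_b'\in\Psi_b^{-2,(\hat{\mathcal E}_b,\check{\mathcal E}_b,\tilde{\mathcal E}_b,\mathbb N_0)}$ to be the outcome of the three steps, with $P_b\circ G_b'-\chi_b\Id\chi_b = -E_b'\in\Psi_b^{-\infty,(\infty,\hat{\mathcal E}_b,\infty,\infty)}$, and conjugating as in the first paragraph yields the proposition.

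The step I expect to be the main obstacle is the second one: getting the index-set bookkeeping right through the iterated compositions so that the extended unions assemble precisely into \eqref{eqn : index set lb}--\eqref{eqn : index set bf}, and — a little more delicate — verifying that the error is genuinely rapidly vanishing at $\lb$ rather than merely at $\mathrm{bf}$, which uses the one-sidedness of the parametrix together with the indicial nature of every correction term; checking the compatibility of the $\mathrm{bf}$- and $\bfz$-removals at the corner $\mathrm{bf}\cap\bfz$, through the agreement of the two model operators' indicial data, is a further routine but necessary point.
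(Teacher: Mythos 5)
Your overall scheme is the same as the paper's: a three-stage full $b$-parametrix construction (symbolic near the diagonal, indicial operator at $\mathrm{bf}$, normal operator at $\bfz$), with the $r\otimes r'$ conjugation accounting for the $+1,+1,+2,0$ shift in index sets. The $\bfz$-step using $R_{\mathfrak C,b}$ from Theorem \ref{thm : resovlent on metric cones} and the final bookkeeping $rr'\mapsto \rho_{\mathrm{bf}}^2\rho_{\lb}\rho_{\rb}$ also match the paper.

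There is, however, a genuine gap in your treatment of the error at $\lb$. After correcting with (a smooth extension of) $\mathcal{I}(P_b)^{-1}$, the resulting error $E_{b,2}^{(1)}$ is polyhomogeneous at $\lb$ with a nontrivial index set $\mathcal E_b$ — not rapidly vanishing. Your argument that ``applying $P_b$ cancels the $\lb$-leading term of every correction, so after Borel summation the error is rapidly vanishing at $\lb$'' does not close this: the leading-order cancellation (because $\mathcal I(P_b,\lambda_j)=0$ on the relevant modes) only shifts the $\lb$-expansion by one order, and since $\mathcal{E}_b$ is already closed under integer shifts, this does not change the index set at all. Moreover the Borel summation you invoke is in the $\mathrm{bf}$ defining function and has no effect on the $\lb$-order. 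Making the error rapidly vanishing at $\lb$ is a separate, substantive step: one must iteratively solve away the entire formal $\lb$-expansion, which is the content of Melrose's Lemma 5.44 in \cite{Melrose-APS}. The paper invokes that lemma to produce $G_{b,2}^{(2)}$ before going on to remove the $\mathrm{bf}$-error by Neumann iteration; your proof skips it.

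A smaller bookkeeping point: the leading term $\{(0,0)\}$ in $\tilde{\mathcal{E}}_b$ does not come from the $\bfz$-step. It comes from the index set $\mathbb{N}_0$ at $\mathrm{bf}$ of the inverse indicial operator in Proposition \ref{prop : inverse indicial operators}: the first correction $G_{b,2}^{(1)}$ is generically nonvanishing at $\mathrm{bf}$, and the subsequent iterates only contribute the $(\mathbb{N}_0+1)\,\overline\cup\,(\hat{\mathcal E}_b+\check{\mathcal E}_b)$ piece.
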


\begin{proof}

The operator $P_b$ is lifted through the stretched left projection $\tilde{\mathcal{P}}_L$ to the $b$-regime in the stretched double $M_{b, 0}$.
The construction away from $\bfz$ is a direct application of the abstract full $b$-calculus. We follow  the $b$-parametrix construction in \cite[Chapter 4-6]{Melrose-APS}. See also Albin's lecture notes \cite[Section 4-5]{Albin} for more details on the index set of the face $\mathrm{bf}$. Moreover, we need additional correction terms near $\bfz$.

\subsection{Construction near the diagonal}

First of all, we invoke the small $b$-calculus to remove the diagonal singularity. Since $P_b \in \mathrm{Diff}_b^2(M_{b, 0}; {}^{b, sc}\Omega^{1/2})$ is an elliptic $b$-differential operator in the $b$-regime, the routine elliptic construction shows that there exists a small $b$-pseudodifferential operator $G_{b, 1}  \in \Psi_{b}^{-2}(M_{b, 0}; {}^{b, sc}\Omega^{1/2})$ in the $b$-regime such that
$$P_b \circ G_{b, 1}  - \chi_b \Id \chi_b   = -  E_{b, 1}  \in \Psi_b^{- \infty}(M_{b, 0}; {}^{b, sc}\Omega^{1/2}).$$

\subsection{Error at $\mathrm{lb}$}\label{subsec: lb error}

We next wish to find a $G_{b, 2} \in \Psi_b^{-\infty, (\hat{\mathcal{E}}_b, \infty, \mathbb{N}_0, \mathbb{N}_0)}(M_{b, 0}; {}^{b, sc}\Omega^{1/2})$ such that $$P_b \circ G_{b, 2} -  E_{b, 1}   = -  E_{b, 2}  \in     \Psi_b^{-\infty, (\infty, \hat{\mathcal{E}}_b, \mathbb{N}_0 + 1, \mathbb{N}_0)} (M_{b, 0}; {}^{b, sc}\Omega^{1/2}).$$  Through this step, we reduce the remainder to a distribution vanishing at $\mathrm{bf}$ and to infinite order at $\lb$. 

Having the remainder vanishing at $\mathrm{bf}$ amounts to solving the indicial equation, $$\mathcal{I}(E_{b, 1}) = \mathcal{I}(P_b) \circ \mathcal{I}(G_{b, 2}).$$ We solve this equation through the inverse Mellin transform of the inverse of the indicial family $\mathcal{I}(P_b, \lambda)$, which is known to be a polyhomogeneous conormal distribution in Proposition \ref{prop : inverse indicial operators}. 
Then part of $G_2$ is taken to be $$G_{b, 2}^{(1)} = -\frac{1}{2\pi} \int_{-\infty}^\infty s^{\imath \lambda} \mathcal{I}^{-1}(P_b, \lambda) \circ \mathcal{M}(E_{b, 1})(\lambda) \,d\lambda,$$ which satisfies $$P_b \circ G_{b, 2}^{(1)} -  E_{b, 1} = -   E_{b, 2}^{(1)}   \in \Psi_b^{-\infty, (\mathcal{E}_b, \mathcal{E}_b, \mathbb{N}_0 + 1, \mathbb{N}_0)}(M_{b, 0}; {}^{b, sc}\Omega^{1/2}).$$

In addition, the error at $\lb$ can be removed completely by \cite[Lemma 5.44]{Melrose-APS}. In fact, this lemma implies that  there exists $G_{b, 2}^{(2)} \in \mathcal{A}_{phg}^{(\hat{\mathcal{E}}_b, \infty, \mathbb{N}_0 + 1, \mathbb{N}_0)}(M_{b, 0}; {}^{b, sc}\Omega^{1/2})$ such that $$P \circ G_{b, 2}^{(2)} -  E_{b, 2}^{(1)}  = -   E_{b, 2} \in \mathcal{A}_{phg}^{(\infty, \mathcal{E}_b, \mathbb{N}_0 + 1, \mathbb{N}_0)}(M_{b, 0}; {}^{b, sc}\Omega^{1/2}).$$ 
If we write $G_{b, 2} = G_{b, 2}^{(1)} + G_{b, 2}^{(2)}$, the parametrix $G_{b, 1} + G_{b, 2}$ obeys that 
$$P_b \circ (G_{b, 1} + G_{b, 2}) - \chi_b \Id \chi_b = -  E_{b, 2}  \in    \Psi_b^{-\infty, (\infty, \mathcal{E}_b, \mathbb{N}_0 + 1, \mathbb{N}_0)}(M_{b, 0}; {}^{b, sc}\Omega^{1/2}).$$ 

\subsection{Error at $\mathrm{bf}$}\label{subsec: bf error}

We  can iterate this construction to remove the error at $\mathrm{bf}$. By the calculus of full $b$-pseudodifferential operators, the $j$th-power of the remainder is $$  E_{b, 2}^j  \in   \Psi_b^{-\infty, (\infty, \mathcal{E}_{b, j}, \mathbb{N}_0 + j, \mathbb{N}_0)}(M_{b, 0}; {}^{b, sc}\Omega^{1/2}),$$ where  the index sets obey
\begin{eqnarray*} \mathcal{E}_{b, 1} &=& \mathcal{E}_{b}, \\
\mathcal{E}_{b, j+1} - 1 &=& \mathcal{E}_{b, j}\overline{\cup}(\mathcal{E}_{b}-1),\\ 
\lim_{j\rightarrow \infty}\mathcal{E}_{b, j} &=& \hat{\mathcal{E}}_b.\end{eqnarray*}
If it is sufficiently close to $\mathrm{bf}$, say $\chi_b = 1$,  we have $$  \chi_b(\Id -    E_{b, 2} )( \Id  + \sum_{j = 1}^\infty   E_{b, 2}^j)\chi_b   - \chi_b \Id \chi_b \in \bigcap_{j > 0}  \Psi_b^{-\infty, (\infty, \hat{\mathcal{E}}_b, \mathbb{N}_0 + j, \mathbb{N}_0)}(M_{b, 0}; {}^{b, sc}\Omega^{1/2}).$$
By the composition of index sets given Proposition \ref{prop : phg mapping}, the improved parametrix $$G_{b, 1, 2} = (G_{b, 1} + G_{b, 2}) (\chi_b \Id \chi_b +  \sum_{j = 1}^\infty E_{b, 2}^j )$$ lies in a slightly larger space $\Psi_b^{-2, (\hat{\mathcal{E}}_b, \check{\mathcal{E}}_b, \tilde{\mathcal{E}}_b, \mathbb{N}_0)}(M_{b, 0}; {}^{b, sc}\Omega^{1/2})$, whilst the new remainder $E_{b, 1, 2} \in \Psi_b^{-\infty, (\infty, \hat{\mathcal{E}}_b, \infty, \mathbb{N}_0)}(M_{b, 0}; {}^{b, sc}\Omega^{1/2})$ vanishes to infinite order at $\mathrm{bf}$.

\subsection{Error at $\bfz$}

We next construct a correction term $G_{b, 3}$ supported around $\bfz$ in the $b$-regime and lying in $$G_{b, 3} \in \Psi_b^{-2, (\hat{\mathcal{E}}_b, \check{\mathcal{E}}_b, \tilde{\mathcal{E}}_b, \mathbb{N}_0)}(M_{b, 0}; {}^{b, sc}\Omega),$$
 which leads to an improved remainder vanishing  at $\bfz$, \begin{equation*}P_b \circ G_{b, 3} -   E_{b, 1, 2} = - E_{b, 3} \in    \Psi_b^{-\infty, (\infty, \hat{\mathcal{E}}_b, \infty, \infty)}(M_{b, 0}; {}^b\Omega^{1/2}).\end{equation*}

This again amounts to solving the normal operator equation $$\mathcal{N}_{\bfz}(P_b) \circ \mathcal{N}_{\bfz}(G_{b, 3}) = \mathcal{N}_{\bfz}(E_{b, 1, 2}).$$ 
Due to $$\mathcal{N}_{\bfz}(P_b) = \mathcal{N}_{\bfz}(\chi_b r\tilde{P}_\hbar r \chi_b) = \chi_br(\Delta_{\mathfrak{C}} - 1 \pm \imath 0)r\chi_b ,$$ this can be done by using Theorem \ref{thm : resovlent on metric cones}.

If we take $$ G_{b, 3}^{(1)} = \big(r^{-1} R_{\mathfrak{C}, b} r'^{-1}\big)   \mathcal{N}_{\bfz}(E_{b, 1, 2}),$$ it results in a better remainder $E_{b, 3}^{(1)}$, which vanishes at $\bfz$ as well as to infinite order at $\lb$ and $\mathrm{bf}$ and lies in $$E_{b, 3}^{(1)} \in    \Psi_b^{-\infty, (\infty, \hat{\mathcal{E}}_b, \infty, \mathbb{N}_0 + 1)}(M_{b, 0}; {}^{b, sc}\Omega^{1/2}).$$ The calculus of full $b$-pseudodifferential operators, Proposition \ref{prop : phg mapping}, yields that $\mathcal{N}_{\bfz}(G_{b, 3}^{(1)})$ is polyhomogeneous conormal to $\lb$, $\rb$, $\mathrm{bf}$ with respect to index sets $(\hat{\mathcal{E}}_b, \check{\mathcal{E}}_b, \tilde{\mathcal{E}}_b)$ as desired. Repeating this procedure $j$ times yields  parametrices $$ G_{b, 3}^{(j)} \in \Psi_b^{-2, (\hat{\mathcal{E}}_b, \check{\mathcal{E}}_b, \tilde{\mathcal{E}}_b, \mathbb{N}_0 + j - 1)}(M_{b, 0}; {}^{b, sc}\Omega^{1/2})$$ and remainders $$E_{b, 3}^{(j)} \in    \Psi_b^{-\infty, (\infty, \hat{\mathcal{E}}_b, \infty, \mathbb{N}_0 + j)}(M_{b, 0}; {}^{b, sc}\Omega^{1/2}).$$ Then the desired parametrix $G_{b, 3}$ and remainder $E_{b, 3}$ are constructed by $$G_{b, 3} = \sum_{j=1}^\infty G_{b, 3}^{(j)}\quad \mbox{and} \quad E_{b, 3} = \lim_{j \rightarrow \infty} E_{b, 3}^{(j)}.$$

In the end, we obtain a parametrix $G_b$ with a remainder $E_b$ such that \begin{eqnarray*}
	&r^{-1} G_br'^{-1} =  G_{b, 1, 2}  +  G_{b, 3},& 	r^{-1}G_br'^{-1} \in \Psi_b^{-2, (\hat{\mathcal{E}}_b, \check{\mathcal{E}}_b, \tilde{\mathcal{E}}_b, \mathbb{N}_0)}(M_{b, 0}; {}^{b, sc}\Omega^{1/2}),  \\ &r^{-1}  E_b r'^{-1} = E_{b, 3},&  r^{-1} E_b r'^{-1} \in  \Psi_b^{-\infty, (\infty, \hat{\mathcal{E}}_b, \infty, \infty)}(M_{b, 0}; {}^{b, sc}\Omega^{1/2}).
\end{eqnarray*} Then $G_b$ and $E_b$ satisfy the conditions in Proposition \ref{thm : b-parametrix}, since $rr'$ lifts to $\rho_{\mathrm{bf}}^2 \rho_{\lb} \rho_{\rb}$ in the $b$-regime of $M_{b, 0}$.
\end{proof}

\section{The $(b, sc)$-regime  parametrix}
\label{sec : b-sc parametrix}

Recall that the $(b, sc)$-regime is a neighbourhood of $\{r < 1\} \cup \{\rho' < 1\}$ in $M_{b, 0}$ and furnished with local coordinates $(r, y, \rho', y', \hbar)$. It has three boundary hypersurfaces $\lb, \lb_0, \bfz$ with boundary defining functions $r, \rho', \hbar/\rho'$ respectively. The compressed density ${}^{b, sc}\Omega(M_{b, 0})$ locally takes the form $$\bigg|\frac{dr dy d\rho dy'd\hbar}{r\rho'^{n+1}\hbar}\bigg|.$$ 

Since $\tilde{P}_\hbar$ is viewed as a left operator on $M_{b, 0}$ via the stretched left projection $\tilde{\mathcal{P}}_L$, we can convert it, as in the $b$-regime, to a $b$-operator $P_b$ in the $(b, sc)$-regime. The $b$-calculus in the $b$-regime extends trivially to the $(b, sc)$-regime, though only the polyhomogeneous conormal distributions in the full calculus are relevant. We shall construct the following parametrix for $\tilde{P}_\hbar$ in this regime.  

\begin{proposition}\label{prop : b-sc parametrix}
	In the $(b, sc)$-regime of $M_{b, 0}$, there is a parametrix $G_{(b, sc)}$ with a kernel $$G_{(b, sc)} \in \rho_{\lb_0}^{(n-1)/2} \sum_{(z, k) \in \hat{\mathcal{E}}_b + 1} \rho_{\lb}^z (\log \rho_{\lb})^k C^\infty(M_{b, 0}; {}^{b, sc}\Omega^{1/2}),$$ provided the index set $\hat{\mathcal{E}}_b$ in \eqref{eqn : index set lb}, such that \[\tilde{P}_\hbar \circ G_{(b, sc)} - \chi_b \Id \chi_{sc} = - E_{(b, sc)}   \in  \rho_{\lb}^\infty \rho_{\lb_0}^\infty \rho_{\bfz}^\infty   C^\infty(M_{b, 0}; {}^{b, sc}\Omega^{1/2}).\]
\end{proposition}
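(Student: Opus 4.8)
The plan is to mirror the $b$-regime construction of Proposition~\ref{thm : b-parametrix}, but now exploiting the fact that in the $(b,sc)$-regime the operator $\tilde P_\hbar$ (equivalently $r^{-1}P_b r^{-1}$) is, on the one hand, a $b$-differential operator near $\lb$ (small $r$) and, on the other hand, elliptic and $\hbar$-invertible uniformly away from $\lb$, because $\rho' < 1$ keeps us away from the $sc$-singularities that belong to the $sc$-regime. Concretely, I would first note that near $\lb_0$ and in the interior of the $(b,sc)$-regime the operator $\tilde P_\hbar$ has no characteristic points and no conormal diagonal singularity of the problematic type (the diagonal does not meet this regime), so the only singular behaviour to chase is polyhomogeneous conormal growth at the boundary faces $\lb$, $\lb_0$, $\bfz$. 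Thus, unlike the $b$-regime, no small-calculus diagonal parametrix step is needed; the entire construction is carried out within the polyhomogeneous part of the full $b$-calculus together with the normal operator at $\bfz$.

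The key steps, in order, are: (1) Solve away the error at $\lb$ by the indicial/normal operator argument. Since $\tilde P_\hbar$ lifts to $P_b$ near $\lb$ with the same indicial family $\mathcal I(P_b,\lambda)=\lambda^2+\Delta_{h_0}+(1-n/2)^2$ analyzed in Proposition~\ref{prop : inverse indicial operators}, the inverse Mellin transform produces a partial parametrix $G^{(1)}_{(b,sc)}$ polyhomogeneous at $\lb$ with index set governed by $\mathcal E_b=\{(\Im\lambda_j,0)\}$, and iterating (as in the "Error at $\mathrm{bf}$" step of Proposition~\ref{thm : b-parametrix}, invoking \cite[Lemma~5.44]{Melrose-APS} and Proposition~\ref{prop : phg mapping}) pushes the index set up to $\hat{\mathcal E}_b+1$ and the remainder to $\rho_{\lb}^\infty$. (2) Solve away the error at $\lb_0$: restricting to $\lb_0$ gives the normal operator equation at that face. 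By Theorem~\ref{thm : resovlent on metric cones}, the relevant normal operator is the $(b,sc)$-piece $R_{\mathfrak C,(b,sc)}$ of the metric-cone resolvent, whose explicit Bessel–Hankel expression \eqref{eqn : resolvent lb_0} vanishes to order $n/2$ at $\mathfrak{lb}$ and $(n-1)/2$ at $\mathfrak{lb}_0$; composing with the current remainder and using that $\tilde P_\hbar$ restricted there is $\Delta_{\mathfrak C}-(1+\imath0)$ yields a correction $G^{(2)}_{(b,sc)}$ carrying the $\rho_{\lb_0}^{(n-1)/2}$ weight, with remainder now vanishing at $\lb_0$ as well. (3) Solve away the error at $\bfz$: this is the normal operator equation $\mathcal N_{\bfz}(\tilde P_\hbar)\circ\mathcal N_{\bfz}(G_{(b,sc)})=\mathcal N_{\bfz}(E)$, again solved by Theorem~\ref{thm : resovlent on metric cones} via $R_{\mathfrak C,(b,sc)}$ and a geometric-series/Neumann iteration exactly as in the "Error at $\bfz$" step of Proposition~\ref{thm : b-parametrix}, summing the $G^{(j)}_{b,3}$-type corrections. (4) Collect the pieces and track index sets with Proposition~\ref{prop : phg mapping} to land in the stated space $\rho_{\lb_0}^{(n-1)/2}\sum_{(z,k)\in\hat{\mathcal E}_b+1}\rho_{\lb}^z(\log\rho_{\lb})^k C^\infty(M_{b,0};{}^{b,sc}\Omega^{1/2})$ with remainder in $\rho_{\lb}^\infty\rho_{\lb_0}^\infty\rho_{\bfz}^\infty C^\infty$.

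I expect the main obstacle to be Step~(2), reconciling the $b$-structure inherited from $\lb$ with the $sc$-normal-operator structure at $\lb_0$ in a single polyhomogeneous parametrix — in particular, verifying that the two constructions are compatible at the corner $\lb\cap\lb_0$, that the weight $\rho_{\lb_0}^{(n-1)/2}$ emerges precisely (this is where the $(n-1)/2$ vanishing order of $R_{\mathfrak C,(b,sc)}$ at $\mathfrak{lb}_0$ in Theorem~\ref{thm : resovlent on metric cones} must be used, rather than the $(n-1-\mathbf e)/2$ order relevant for the genuine $sc$-regime), and that no diagonal or Legendrian singularity leaks in from the $sc$-regime side. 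A secondary technical point is checking that the half-density conventions — the compressed density $|dr\,dy\,d\rho'\,dy'\,d\hbar/(r\rho'^{n+1}\hbar)|$ versus the metric-cone density in \eqref{eqn : resolvent lb_0} — match up so that the claimed weights at $\lb$ and $\lb_0$ are stated consistently; this is routine bookkeeping but must be done carefully. Everything else follows the template already executed for $G_b$, so the write-up can largely reference Proposition~\ref{thm : b-parametrix} and Theorem~\ref{thm : resovlent on metric cones} rather than repeating the iterations in full.
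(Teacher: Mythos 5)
Your proposal follows essentially the same route as the paper: remove the $\lb$-error via the indicial family (same indicial operator as the $b$-regime, using Proposition~\ref{prop : inverse indicial operators}), remove the error at $\hbar = 0$, and then read off the weight $\rho_{\lb_0}^{(n-1)/2}$ from the Bessel--Hankel asymptotics of the normal operator's resolvent. The one organizational difference worth flagging is that the paper treats $\lb_0$ and $\bfz$ \emph{jointly} rather than as two separate normal-operator steps: since $\hbar$ is the total boundary defining function of $\lb_0 \cup \bfz$ in this regime, and $\lb_0$ plays exactly the role that $\mathrm{bf}$ plays in the $b$-regime, a single power-series iteration in $\hbar$ yields $\hbar^\infty$ vanishing and therefore both $\rho_{\lb_0}^\infty$ and $\rho_{\bfz}^\infty$ at once --- no separate $\bfz$-iteration is required. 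Your split into steps (2) and (3) is not wrong, but it duplicates work: the normal operator at $\lb_0$ alone and the normal operator at $\bfz$ alone both restrict to $\Delta_{\mathfrak{C}} - (1+\imath 0)$ because $\hbar = 0$ forces $\tilde e = 0$, so the two normal-operator problems coincide. Likewise, the paper extracts the precise $(n-1)/2$ vanishing at $\lb_0$ not as an error-removal step but as an \emph{a posteriori} observation about the normal operator's resolvent kernel (via the $J_\nu$ and $\mathrm{Ha}_\nu^{(1)}$ asymptotics), which sharpens the crude polyhomogeneity in $\hbar$ supplied by the index set $\tilde{\mathcal{E}}_b$. The obstacle you flag in Step~(2) --- that the weight is $(n-1)/2$ and not $(n-1-\mathbf e)/2$ --- is indeed the right thing to worry about, and the resolution is exactly the one just mentioned: $\hbar=0$ kills the non-product error $\tilde e$, so the normal-operator resolvent is the genuine product-cone one and carries no $\mathbf e$ loss.
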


\begin{proof}
	Constructing the parametrix in the $(b, sc)$-regime  involves removing errors at $\lb$, $\lb_0$ and $\bfz$ and obtaining the polyhomogeneity or vanishing order at $\lb$ and $\lb_0$.

	\subsection{Boundary error removal}
	
	The parametrices near $\lb$ and $\lb_0$ are similar to the $b$-regime. Since $r < 1$ in this regime, $P_b = r \tilde{P}_\hbar r$ is still a $b$-differential operator but supported away from the diagonal $\diag_{M_{b, 0}}$ and the face $\mathrm{bf}$.
	
	Since the operator $P_b$ in the $(b, sc)$-regime takes the same form with the $b$-regime. The error at $\lb$ can be removed through the indicial family which is similar to Section \ref{subsec: lb error} and Section \ref{subsec: bf error}. But the difference is that the indicial operator/family at $\mathrm{bf}$ is replaced by that at $\lb_0\cup\bfz=\overline{\{\hbar=0\}}$, since $\lb_0$ plays the same role as $\mathrm{bf}$ in the $b$-regime.  Specifically, we find a parametrix $G_{(b, sc), 1}$ in the $(b, sc)$-regime wihich is polyhomogeneous conormal to $\lb$ with an index set $\hat{\mathcal{E}}_b$ such that \[P_b \circ G_{(b, sc), 1} - \chi_{b} \Id \chi_{sc} =  - E_{(b, sc), 1}   \in \rho_{\lb}^\infty \hbar C^\infty(M_{b, 0}; {}^{b, sc}\Omega).\]
	
	Then the standard power series argument removes the error at $\lb_0 \cup \bfz$ completely. It yields that there a parametrix $G_{(b, sc), 2}$   polyhomogeneous conormal to $\lb_0 \cup \bfz$ with respect to $\tilde{\mathcal{E}}_b$ and to $\lb$ with respect to $\hat{\mathcal{E}}_b$ such that \[P_b \circ G_{(b, sc), 2} - E_{(b, sc), 1} =  - E_{(b, sc), 2}  \in \rho_{\lb}^\infty \hbar^\infty C^\infty(M_{b, 0}; {}^{b, sc}\Omega).\]

	From this, the corresponding parametrix for $\tilde{P}_\hbar$ is  \[ G_{(b, sc)} = r  (G_{(b, sc), 1} + G_{(b, sc), 2})  \rho'^{-1}.\]

	\subsection{Vanishing at $\lb_0$}
	The given polyhomogeneity of the parametrix $G_{b, sc}$ in $\hbar$ is crude, since $\hbar$ is the total defining function for $\lb_0 \cup \bfz$. The index set $\tilde{\mathcal{E}}_b$ in \eqref{eqn : index set bf} for $\hbar$, by definition, does not address any vanishing, though $\inf\hat{\mathcal{E}}_b = n/2-1$ does give an $n/2$-vanishing of $G_{b, sc}$ at $\lb$. It remains to work out the exact vanishing order of $G_{(b, sc)}$ at $\lb_0$. This is determined by the resolvent of the normal operator.\footnote{If $\tilde{P}_\hbar$ is written near $\lb_0$ in the form of an asymptotic expansion in $\rho_{\lb_0}$, the normal operator is just the leading terms of such expansion. On the other hand, the resolvent of $\tilde{P}_\hbar$ also has an asymptotic expansion in $\rho_{\lb_0}$, whilst the resolvent of the normal operator is just the leading terms of the resolvent of $\tilde{P}_\hbar$.}

	Since $x =\hbar=0$ at $\lb_0 \cup \bfz$, we see the normal operator of $P_\hbar$ at $\lb_0 \cup \bfz$ is $$\mathcal{N}_{\lb_0 \cup \bfz}(\tilde{P}_\hbar) = - \partial_{r}^2 - \frac{n-1}{r}\partial_r + \frac{\Delta_{h_0}}{r^2}  -  (1 + \imath 0),$$where $h_0 = h(0, y, dy)$ is a metric on $\mathcal{Y}$ and thus the Laplacian $\Delta_{h_0}$ on $(\mathcal{Y}, h_0)$ is independent of $r$. This is equal to $\Delta_{\mathfrak{C}} - (1 + \imath 0)$ and then converted to a $b$-operator
	$$P_{(b, sc)} = r \mathcal{N}_{\lb_0 \cup \bfz}(\tilde{P}_\hbar) r = - (r\partial_{r})^2 + \Delta_{h_0} + (n/2 -1)^2 - r^2(1\pm \imath 0).$$ 
	
	One can apply separation of variables to compute  $(P_{(b, sc)})^{-1}$. This is done in \cite[(5.5) (5.6)]{Guillarmou-Hassell-Sikora-TAMS2013} for the resolvent of $\Delta_{\mathfrak{C}}- (1 + \imath 0)$ given in Theorem \ref{thm : resovlent on metric cones}. But the resolvent kernel of $P_{(b, sc)}$ lives in a $(2n+1)$-manifold. Using this result, we have 
	$$(\mathcal{N}_{\lb_0 \cup \bfz}(\tilde{P}_\hbar))^{-1} = \frac{\pi \imath r}{2\rho'} \sum_{j=0}^\infty \Pi_{\mathrm{E}_j}(y, y')  \mathrm{J}_{\nu_j}(r) \mathrm{Ha}_{\nu_j}^{(1)}(1/\rho')   \bigg|\frac{drdyd\rho'dy'd\hbar}{r\rho'\hbar}\bigg|^{1/2},$$ where $\Pi_{\mathrm{E}_j}$ is the projection on the $j$th eigenspace $\mathrm{E}_j$ of the operator $\Delta_{h_0} + (n/2-1)^2$, $\nu_j^2$ is the corresponding eigenvalue, and $\mathrm{J}_\nu(\cdot), \mathrm{Ha}_\nu^{(1)}(\cdot)$ are standard Bessel and Hankel functions as in \eqref{eqn : resolvent lb_0} and \eqref{eqn : resolvent rb_0}. 
	

	Invoking the asymptotic behaviours of Bessel and Hankel functions, \begin{eqnarray}\label{eqn : Bessel}\mathrm{J}_\nu(r)  &\sim& \frac{1}{\Gamma(\nu + 1)} \bigg(\frac{r}{2}\bigg)^\nu + O(r^{\nu + 1}), \quad \mbox{for $r \rightarrow 0$,}\\ \label{eqn : Hankel}\mathrm{Ha}_\nu^{(1)}(r') &\sim& r'^{-1/2} e^{\imath r' - \imath \nu \pi / 2 + \imath \pi / 4} \sum_{k = 0}^\infty a_k(\nu) r'^{-k}, \quad \mbox{for $r' \rightarrow \infty$,}\end{eqnarray} where the coefficients are given by \begin{eqnarray*}a_0(\nu) = 1, && a_k(\nu) = \frac{\prod_{j = 1}^k (4\nu^2 - (2j - 1)^2)}{k! 8^k} \quad \mbox{for $k \in \mathbb{Z}_+$},\end{eqnarray*} we see that $$  (\mathcal{N}_{\lb_0 \cup \bfz}(\tilde{P}_\hbar))^{-1} \sim r^{n/2}\rho'^{(n-1)/2} e^{\imath /\rho'} \bigg|\frac{drdyd\rho'dy'd\hbar}{r\rho'^{n+1}\hbar}\bigg|^{1/2}.$$ This implies the kernel of $G_{(b, sc)}$ vanishes to order $(n-1)/2$ at $\lb_0$ and to order $n/2$ at $\lb$. \end{proof}

\section{The $sc$-regime parametrix}\label{sec : sc parametrix}

Let us move to the $sc$-regime of $M_{b, 0}$. While using the $sc$-pseudodifferential calculus, we need perform an additional blow-up and stay briefly in $M_{sc, b, 0}$. We construct a parametrix $G_{sc}$ for $P_{sc}$ there. Specifically, \begin{proposition}[$sc$-parametrix]\label{thm : sc-parametrix}There is a distribution $G_{sc}$ in the $sc$-regime such that \begin{align*}P_{sc} \circ G_{sc} - \chi_{sc} \Id \chi_{sc}  = - E_{sc}   \in e^{\imath / \rho'}  \rho_{\rf}^{(n-1)/2 - \varepsilon_r/2} \rho_{\bfz} \rho^\infty_{\mf}\rho^\infty_{\lf} \ C^\infty(M_{b, 0}; {}^{b, sc}\Omega^{1/2}).\end{align*} The kernel of $G_{sc}$, modulo $sc$-pseudodifferential operators in $\Psi^{-2}_{sc} (M_{sc, b, 0}; {}^{b, sc}\Omega^{1/2})$, are Legendrian distributions lying in the class 
\begin{multline*} I_{sc, c}^{-1/2} (M_{b, 0}; (N^\ast \diag_{M_{b, 0}}, L_\pm^\circ); {}^{b, sc}\Omega^{1/2}) \\ +   I_{s\Phi}^{-1/2, (n-2)/2-\mathbf{e}; ((n-1-\mathbf{e})/2, (n-1-\mathbf{e})/2, 0)}(M_{b, 0}; (L_\pm, L^\sharp); {}^{s\Phi}\Omega^{1/2}).\end{multline*} Here we employ the following notations \begin{itemize}\item the diagonals in the $sc$-regime of $M_{b, 0}$ and $M_{sc, b, 0}$ are locally expressed by \begin{align*}\diag_{M_{b, 0}} &=  \{(\tilde{\rho}, \rho', y, y') \in M_{b, 0} : \tilde{\rho} = 1, y = y'\}   \\ \diag_{M_{sc, b, 0}} &= \{ (\mathbf{s}, \rho', \mathbf{y}, y') \in M_{sc, b, 0} : \mathbf{s}  = 0, \mathbf{y} = 0\}    \end{align*} 
	where $ \tilde{\rho} = \rho/\rho'=x'/x$, $\mathbf{s} = (\sigma - 1)/\rho'$ and $\mathbf{y} = (y-y')/\rho'$ are the projective coordinates on $\mf$ and $\mathrm{sc}$,
\item $L= L_\pm$ is the forward/backward geometric Legendrian, $L^\sharp$ is a diffractive Legendrian, and $L^\circ_\pm$ is the interior of $L_\pm$, which are explicitly given in \eqref{eqn : geometric Legendrian} and \eqref{eqn : diffractive Legendrian},
\item $\{ \rho_{\lf}, \rho_{\rf}, \rho_{\mf}\}$ denote the boundary defining functions of $\{\lf, \rf, \mf\}$ respectively, 
\item the error function $\varepsilon = xe(x, y)$, with $e(x,y)$ defined in \eqref{def : e} and $\mathbf{e} = \sup_{(x, y) \in \mathcal{K}}\varepsilon$, produces additional oscillations near $\lf, \rf, L^\sharp$ for the principal symbol of $G_{sc}$, hence the loss $\mathbf{e}$ of the orders of $G_{sc}$,
\item $\bullet_l$ and $\bullet_r$ denote the quantity $\bullet$ in the left and right variables respectively,
\item $\rho'$ on $M_{b, 0}$ is understood as the total boundary defining function of $\mf\cup\rf$. 
\end{itemize}
 \end{proposition}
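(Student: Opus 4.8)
The plan is to construct $G_{sc}$ in three stages, mirroring the resolvent construction of Guillarmou--Hassell--Sikora on metric cones (Theorem \ref{thm : resovlent on metric cones}) but now dragging along the non-trivial boundary subprincipal symbol \eqref{eqn : subprincipal symbol} produced by the error function $e$. First, since the interior and fibre-infinity symbol of $P_{sc}$ in \eqref{eqn : scattering-operator} is elliptic, a standard elliptic construction in the small $sc$-calculus on $M_{sc, b, 0}$ yields $G_{sc}^{(1)} \in \Psi^{-2}_{sc}(M_{sc, b, 0}; {}^{b, sc}\Omega^{1/2})$ with $P_{sc} \circ G_{sc}^{(1)} - \chi_{sc} \Id \chi_{sc} = -E_{sc}^{(1)}$, where $E_{sc}^{(1)}$ is rapidly decaying away from the characteristic variety $\Sigma(P_{sc}) = \{\tau^2 + h^{ij}\mu_i\mu_j = 1,\ \xi = 0\} \subset {}^{b, sc}T^\ast_{\mf} M_{b, 0}$; in particular one may blow $\mathrm{sc}$ back down and regard the remaining work as taking place on $M_{b, 0}$.

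Second, I would propagate this remainder along the bicharacteristic flow of $P_{sc}$ to turn it into an intersecting Legendrian distribution. The conormal bundle $N^\ast \diag_{M_{b, 0}}$ meets ${}^{b, sc}T^\ast_\mf M_{b, 0}$ in an isotropic submanifold whose flow-out under the Hamilton vector field $H_p$ is the geometric Legendrian $L = L_\pm$; because the bicharacteristics are those of the cross-sectional Laplacian $\Delta_{h_0}$, the flow-out develops a conic singularity where $\mu \to 0$ (equivalently $\tau \to \pm 1$), and the limiting set is the projectable diffractive Legendrian $L^\sharp$ parametrized by the phase $1$ --- this is the geometric/diffractive dichotomy of Melrose--Wunsch. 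Working in the fibred $sc$-calculus near $\mf \cap \lf$ and $\mf \cap \rf$, and with ordinary intersecting Legendrian distributions elsewhere, I would solve the symbol transport equation \eqref{eqn : symbol calculus bicharacteristic} order by order, using the exact sequence \eqref{eqn : exact seq for fibred intersecting Legendrian distributions} (and its non-conic analogue) to kill successive obstructions; at each step the obstruction is a smooth section on $\hat L$ and the transport equation is a linear ODE along bicharacteristics with Cauchy data prescribed on $\diag_{M_{b, 0}}$. The place where $e$ enters is the term $-\imath \varepsilon \tau$ in \eqref{eqn : subprincipal symbol}: integrating it along the flow multiplies the leading symbol by a factor behaving, near $\lf$ and $\rf$, like $\rho_\lf^{-\varepsilon_l/2}\rho_\rf^{-\varepsilon_r/2}$ (and like $s^{-\mathbf{e}}$ at $L^\sharp$), together with the $e$-dependent oscillations; this is exactly what lowers the vanishing orders from $(n-1)/2$ and $(n-2)/2$ to $(n-1-\mathbf{e})/2$ and $(n-2)/2 - \mathbf{e}$. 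Summing an asymptotic series of these corrections produces $G_{sc}^{(2)}$ in the advertised intersecting-Legendrian-with-conic-points class, with a remainder that is Schwartz at $\mf$ and $\lf$, vanishes to the stated finite order at $\rf$, and carries only a crude order at $\bfz$.

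Third, I would remove, to the order needed, the error at the transitional face $\bfz$. By \eqref{eqn : normal operator at transition} the normal operator of $P_{sc}$ at $\bfz$ is $\Delta_{\mathfrak{C}} - (1 + \imath 0)$, whose resolvent in the $sc$-regime is described by Theorem \ref{thm : resovlent on metric cones}; moreover, by the remarks in Section \ref{sec : Legendrian distributions on conic manifolds}, the contact forms, the Legendrians $L_\pm, L^\sharp$, and the Legendrian distributions all extend smoothly across $\bfz$. Hence solving the normal operator equation $\mathcal{N}_{\bfz}(P_{sc}) \circ \mathcal{N}_{\bfz}(G_{sc}^{(3)}) = \mathcal{N}_{\bfz}(E_{sc}^{(2)})$ via $G_{sc}^{(3)} = R_{\mathfrak{C}, sc}\, \mathcal{N}_{\bfz}(E_{sc}^{(2)})$ and extending off $\bfz$ gains one power of $\rho_{\bfz}$; the imperfection of $R_{\mathfrak{C}, sc}$ near $\mathfrak{rf}$ is precisely what prevents gaining more, so the final remainder $E_{sc}$ retains the factors $e^{\imath/\rho'}\rho_{\rf}^{(n-1)/2 - \varepsilon_r/2}\rho_{\bfz}$. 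Assembling $G_{sc} = G_{sc}^{(1)} + G_{sc}^{(2)} + G_{sc}^{(3)}$ and matching the half-density factors gives the proposition.

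The main obstacle is the analysis of the $\mathbf{e}$-term in the transport equation near $\lf, \rf$ and across the conic intersection $L \cap L^\sharp$: one must show that, although $-\imath \varepsilon \tau$ degrades the symbol by the factors above, the stability hypothesis \eqref{eqn : stability}, $\mathbf{e} \ll n-1$, keeps the resulting orders $(n-1-\mathbf{e})/2 > 0$ and $(n-2)/2 - \mathbf{e}$ inside the Legendrian symbol spaces of the fibred $sc$-calculus, so that the oscillatory integrals still converge and define tempered distributions, and that the $e$-dependent oscillations generated on the two sides of $L \cap L^\sharp$ are compatible with the matching condition \eqref{eqn : symbol map R}. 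A secondary technical point is to carry the two simultaneous corner structures (at $\mf \cap \lf$ and $\mf \cap \rf$) and the $\bfz$-correction through the same asymptotic summation without destroying polyhomogeneity.
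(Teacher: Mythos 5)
Your proposal follows essentially the same three-stage scheme as the paper: elliptic construction in the small $sc$-calculus on $M_{sc, b, 0}$, propagation of the remainder along the bicharacteristic flow into an intersecting Legendrian with conic points (with the loss of order $\mathbf{e}$ produced by the $-\imath\varepsilon\tau$ term in the boundary subprincipal symbol \eqref{eqn : subprincipal symbol}), and a normal-operator correction at $\bfz$ via Theorem \ref{thm : resovlent on metric cones}. The paper merely unpacks your second stage into separate sub-steps (near $N^\ast\diag_{M_{b,0}}$, along $L_+$, at $L^\sharp$, then at $\mf$ and $\lf$ via \eqref{eqn : standard sc computation}), so structurally you agree.

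One point your sketch glosses over, and which would block the asymptotic summation as you describe it: because $\varepsilon = xe(x,y)$ is a nonconstant function, each application of the second $y$-derivative in $P_{sc}$ to factors like $\rho_\lf^{-\varepsilon_l/2} s^{-(\varepsilon_l + \varepsilon_r)/2}$ produces extra powers of $\log\rho_\lf$ and $\log s$. Consequently the iteration does \emph{not} close in the spaces $I_{s\Phi}^{m,\,p_\sharp;\,(r_\lf, r_\rf, r_{\bfz})}$ as stated; the paper has to enlarge the symbol spaces at each step to allow a $(\log\rho_\lf\log s)^{2k}$ degradation and then observe that these logarithmic factors only affect lower-order terms and remainders, so the \emph{principal} term still lands in the advertised class. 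You would need to insert exactly this modification (or work with polyhomogeneous index families for the Legendrian orders) before claiming that the asymptotic sum of your corrections lies in $I_{s\Phi}^{-1/2,\,(n-2)/2 - \mathbf{e};\,((n-1-\mathbf{e})/2,(n-1-\mathbf{e})/2,0)}$. A secondary small slip: the degradation at $L^\sharp$ enters as $s^{-(\varepsilon_l + \varepsilon_r)/2}$, not $s^{-\mathbf{e}}$; only after taking the supremum $\mathbf{e} = \sup\varepsilon$ does one obtain the uniform order $(n-2)/2 - \mathbf{e}$, and this is where the stability hypothesis \eqref{eqn : stability} is actually used.
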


\begin{proof} 
	
	Due to the scattering nature of $P_{sc}$, the proof exploits the strategy of the resolvent construction for the Laplacian on asymptotically conic manifolds (near the infinity)  by Hassell-Vasy \cite[Section 4]{Hassell-Vasy-AnnFourier}.  However, we must work out  the explicit oscillations near $\lf, \rf, L^\sharp$, caused by $e$, which is the essential feature of non-product conic manifolds as mentioned above.
\subsection{Construction near the diagonal}

We seek $G_{sc, 1}  \in \Psi_{sc}^{-2} (M_{sc, b, 0}; {}^{b, sc}\Omega^{1/2})$ such that
$$P_{sc} \circ G_{sc, 1}  = \chi_{sc} \Id \chi_{sc} - E_{sc, 1} , \quad E_{sc, 1} \in \Psi^{-\infty}_{sc}(M_{sc, b, 0}; {}^{b, sc}\Omega^{1/2}).$$
This means that the remainder term $E_{sc, 1}$ is smooth on $M_{sc, b, 0}$ so that we have solved away the singularity along the diagonal $\diag_{M_{sc, b, 0}}$.

Similar to the $b$-regime, the standard elliptic argument applies here since the interior symbol is non-vanishing. Thus, we first choose a $\tilde{Q}_1 \in \Psi^{-2}_{sc} (M_{sc, b, 0}; {}^{b, sc}\Omega^{1/2})$ with symbol $(\sigma^2(P_{sc}))^{-1}$. Then
$$P_{sc} \circ \tilde{Q}_1 = \chi_{sc}\Id\chi_{sc} - \tilde{E}_1, \quad \tilde{E}_1  \in \Psi_{sc}^{-1}(M_{sc, b, 0}; {}^{b, sc}\Omega^{1/2}).$$ Multiplying $\tilde{Q}_1$ by a finite Neumann series $(\chi_{sc}\Id\chi_{sc} + \tilde{E}_1  + \cdots + \tilde{E}_1^{k - 1})$ yields a remainder in $\Psi_{sc}^{-k}(M_{sc, b, 0}; {}^{b, sc}\Omega^{1/2}).$ Taking an asymptotic limit gives us the desired $G_{sc, 1}$ and $E_{sc, 1}$.

\subsection{Error near the boundary of the diagonal} 

The $\mathrm{sc}$ face will be irrelevant from this stage onwards, and we thus blow down $\mathrm{sc}$ and live in $M_{b, 0}$ hereafter. We then adopt local coordinates $(\tilde{\rho}, \rho', y, y', x)$ in this region of $M_{b, 0}$. The remainder $E_{sc, 1}$ has a singularity at $\partial_{sc} \diag_{M_{b, 0}}$, $$\partial_{sc} \diag_{M_{b, 0}} = \{\rho' = 0, \tilde{\rho}  = 1, y = y'\},$$ and its kernel in a neighbourhood of $\partial_{sc} \diag_{M_{b, 0}}$ within $\mf$ is a smooth function of $\rho$, $(\tilde{\rho} - 1)/\rho$, $(y - y')/\rho$, and $y'$. Therefore it can be expressed, in terms of the Fourier transform, as
$$E_{sc, 1} = \bigg( \int_{\mathbb{R}^n} e^{\imath (  (y-y')\cdot\eta + (\tilde{\rho} - 1)t )/\rho} a_1(\rho', y, x, \eta, t) \,d\eta dt \bigg) \nu, $$ where $\nu \in C^\infty(M_{b, 0}; {}^{b, sc}\Omega^{1/2})$ and $a_1$ is smooth in all variables and Schwartz in $(\eta, t)$. The phase function $(y - y') \cdot \eta + (\tilde{\rho} - 1)t$ parametrizes the Legendrian
$$N^\ast \diag_{M_{b, 0}} = \{y = y', \tilde{\rho} = 1, \mu = - \mu', \tau = - \tau'\}.$$  Therefore, $E_{sc, 1}$ is a Legendrian distribution of order $0$ associated to $N^\ast \diag_{M_{b, 0}}$. To solve away this error inductively, we further write $$E_{sc, 1} = \sum_{k=0}^\infty E_{sc, 1}^{(k)},\quad E_{sc, 1}^{(k)} = \bigg( \int_{\mathbb{R}^n} e^{\imath (  (y-y')\cdot\eta + (\tilde{\rho} - 1)t )/\rho} a_1^{(k)}(\rho', y, x, \eta, t) \,d\eta dt \bigg) \nu, $$ where $E_{sc, 1}^{(k)}$ is a Legendrian distribution of order $k$, and $a_1^{(k)}$ is smooth in all variables and Schwartz in $(\eta, t)$.

Since $\sigma_\partial(P_{sc}) = \tau^2 + h^{ij}\mu_i\mu_j - 1$ vanishes on a codimension one submanifolds of $N^\ast \diag_{M_{b, 0}}$, we consider its (left) Hamilton vector field $V_l$,
$$V_l = 2\tau \tilde{\rho} \frac{\partial}{\partial \tilde{\rho}} + 2 \tau \mu_i \frac{\partial}{\partial \mu_i} - h^{ij}\mu_i\mu_j \frac{\partial}{\partial \tau} + \bigg( \frac{\partial (h^{ij}\mu_i\mu_j)}{\partial \mu_i} \frac{\partial}{\partial y_j} - \frac{\partial (h^{ij}\mu_i\mu_j)}{\partial y_i} \frac{\partial}{\partial \mu_j}\bigg).$$
 The fact that $\tau^2 + h^{ij}\mu_i\mu_j = 1$ on the characteristic variety $\Sigma (P_{sc})$ implies that either $\tau \tilde{\rho} \partial_{\tilde{\rho}}$ or $- h^{ij}\mu_i\mu_j \partial_{\tau}$ in $V_l$ is non-vanishing. Thus $V_l$ is transverse to $N^\ast \diag_{M_{b, 0}}$ at the intersection with $\Sigma(P_{sc})$. We define $L^\circ$ to be the flowout Legendrian emanating from $N^\ast \diag_{M_{b, 0}} \cap \Sigma(P_{sc})$ with respect to $V_l$, and $L_\pm^\circ$ to be the flowout in the positive and negative directions with respect to $V_l$.

We wish to find an intersecting Legendrian distribution $$Q_2^{(0)} \in I_{sc, c}^{-1/2} (\tilde{M}_{b, 0}; (N^\ast \diag_{M_{b, 0}}, L_+^\circ); {}^{b, sc}\Omega^{1/2}),$$ such that \begin{multline}\label{eqn : intersecting Legendrian construction}P_{sc} \circ Q_2^{(0)} - E_{sc, 1}^{(0)} \\ \in I_{sc, c}^1(\tilde{M}_{b, 0}; N^\ast \diag_{M_{b, 0}}; {}^{b, sc}\Omega^{1/2}) +  I_{sc, c}^{3/2} (\tilde{M}_{b, 0}; (N^\ast \diag_{M_{b, 0}}, L_+^\circ); {}^{b, sc}\Omega^{1/2}),\end{multline}
microlocally near $N^\ast \diag_{M_{b, 0}}$. Using the ellipticity of $P_{sc}$ away from $\Sigma(P_{sc})$, we choose $Q_2^{(0)}$ with a symbol on $N^\ast \diag_{M_{b, 0}} \setminus L_+^\circ$ equal to $\sigma_\partial^0(P_{sc})^{-1} \sigma^0(E_{sc, 1}^{(0)} )$. Then the value of the symbol on $L_+^\circ \cap N^\ast \diag_{M_{b, 0}}$ is given by \eqref{eqn : symbol map R}. The symbol of $Q_2^{(0)}$ extends to $L_+$ through the symbol calculus \eqref{eqn : symbol calculus bicharacteristic}, which gives the transport equation
\begin{equation*}\Big(-\imath \mathcal{L}_{H_p}   - \imath (1/2 + m - n/4) \partial_\tau p    + p_{\mathrm{sub}}\Big) \sigma(Q_2^{0}) = 0, \quad \mbox{for $p = \sigma^0_\partial(P_{sc})$}.\label{eqn : transport eqn}\end{equation*}
This ODE  has a unique solution in a sufficiently small neighbourhood of $N^\ast \diag_{M_{b, 0}}$. Namely, the remainder $E_{sc, 1}^{(0)}$ on $L_+^\circ$ has been solved away, and the symbol of $F_1^{(1)} = P_{sc} \circ Q_2^{(0)} - E_{sc, 1}^{(0)}$ on $L_+^\circ$ is of order $-3/2$. Then \eqref{eqn : intersecting Legendrian construction} follows from the exact sequence \eqref{eqn : exact sequence for intersecting Legendrian}.

We can inductively solve away the remainder $$E_{sc, 1}^{(k)} + F_1^{(k)} \in I_{sc, c}^k(\tilde{M}_{b, 0}; N^\ast \diag_{M_{b, 0}}; {}^{b, sc}\Omega^{1/2}) + I_{sc, c}^{k + 1/2}(\tilde{M}_{b, 0}; (N^\ast \diag_{M_{b, 0}}, L_+^\circ); {}^{b, sc}\Omega^{1/2})$$ with an intersecting Legendrian distribution $$Q_2^{(k)} \in I_{sc, c}^{k + 1/2}(\tilde{M}_{b, 0}; (N^\ast \diag_{M_{b, 0}}, L_+^\circ); {}^{b, sc}\Omega^{1/2}),$$ up to a remainder $$F_1^{(k + 1)} \in I_{sc, c}^{k + 1}(\tilde{M}_{b, 0}; N^\ast \diag_{M_{b, 0}}; {}^{b, sc}\Omega^{1/2}) + I_{sc, c}^{k + 3/2} (\tilde{M}_{b, 0}; (N^\ast \diag_{M_{b, 0}}, L_+^\circ); {}^{b, sc}\Omega^{1/2}).$$ Taking the sum  $G_{sc, 2} = \sum_{k=0}^\infty Q_2^{(k)}$ yields a remainder, smooth near $N^\ast \diag_{M_{b, 0}}$. But away from $\partial\diag_{M_{b, 0}}$, we end up with a  remainder, $$E_{sc, 2} = - P_{sc} \circ G_{sc, 2} + E_{sc, 1} \in I_{sc, c}^{1/2}  (\tilde{M}_{b, 0}; L_+^\circ \setminus N^\ast \diag_{M_{b, 0}}; {}^{b, sc}\Omega^{1/2}).$$

\begin{center}\begin{figure}
		\includegraphics[width= \textwidth]{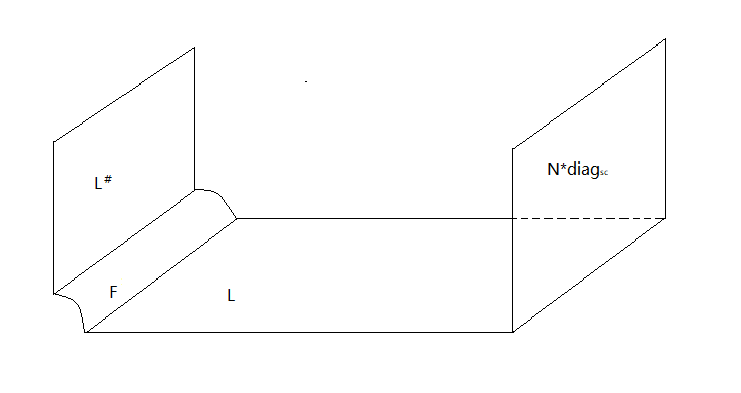}\caption{\label{fig: Legendrian-intersection}The intersecting pair of Legendrians with conic points }\end{figure} \end{center}
\subsection{The bicharacteristic flow}	
\label{subsec : bicharacteristic}

Before solving away the error on $L^\circ_+$, we describe the global structure of $L^\circ_+$ to understand the conic singularities popping up in the closure of $L^\circ_+$.

Recall that $L^\circ$ was defined to be the flowout from $N^\ast\diag_{M_{b, 0}} \cap \Sigma(P_{sc})$ by the Hamilton vector field $V_l$ in left variables $(\rho, y)$. This is the same with the flowout generated by the Hamilton vector field $V_r$ in right variables $(x', y')$.

Melrose-Zworski's calculations (with Hassell-Vasy's modifications \cite[(4.7) (4.9)]{Hassell-Vasy-AnnFourier})  give the following explicit expressions of $L^\circ$   and its closure $L$ under the local coordinates $(\tilde{\rho}, y, y', x, \tau, \tau', \mu, \mu', \xi)$ of ${}^{b, sc}T^\ast_{\mf} M_{b, 0}$.
$$L^\circ =\left\{\begin{array}{l|l}
&  s,s' \in (0, \pi),  \tilde{\rho} = \sin (s)/\sin (s'), \\& \tau = - \cos (s), \tau' =   \cos (s'),
\\ & \mu = \eta(s)\sin(s), \mu'=-\eta(s')\sin(s'),\\ (\tilde{\rho}, y, y', x, \tau, \tau', \mu, \mu', 0)  & h^{ij}\eta_i(s)\eta_j(s) = 1,
\\ & \mbox{$(y(s), \eta(s))$ and $(y'(s'), \eta'(s'))$}\\ &\mbox{ are geodesics in $T^\ast \partial \mathcal{C}$}\\& \mbox{ emanating from $(y_0, \hat{\mu}_0)$ } 
   \end{array}\right\}\cup T_\pm   \cup F_\pm$$
where the vertex sets of the square $[0, \pi]^2$ are defined for $s, s' \in \{0, \pi\}$,\begin{eqnarray*}T_\pm &=& \{(\tilde{\rho}, y_0, y_0, x, \pm 1, \mp 1, 0, 0, 0) \, : \, \sigma \in (0, \infty), y_0 \in \partial \mathcal{C}\},\\ F_\pm &=& \{(\tilde{\rho}, y, y', x, \mp 1, \mp 1, 0, 0, 0) \, : \, \exists\, \mbox{geodesics of length $\pi$ connecting $y, y'$}\}.\end{eqnarray*}
The closure of $L^\circ$ in ${}^{s\Phi}T^\ast M_{b, 0}$  is the set,
\begin{equation}\label{eqn : geometric Legendrian}
L = \left\{\begin{array}{l|l}
 & \mbox{$s,s' \in (0, \pi)$, $\sin^2(s) + \sin^2(s') > 0$, }\\ & \tilde{\rho} = \sin (s)/\sin (s'),
  \tau = - \cos (s), 
  \\&  \tau' =   \cos (s'), 
  \mu = \eta(s)\sin(s), 
  \\ (\tilde{\rho}, y, y', x, \tau, \tau', \mu, \mu', 0) & \mu'=-\eta(s')\sin(s'),  h^{ij}\eta_i(s)\eta_j(s) = 1,
\\  & \mbox{$(y(s), \eta(s))$ and $(y'(s'), \eta'(s'))$} \\&  \mbox{are geodesics in $T^\ast \partial \mathcal{C}$}\\ & \mbox{emanating from $(y_0, \hat{\mu}_0)$ } 
   \end{array}\right\}\cup T_\pm  \cup F_\pm.  \end{equation}The Legendrian $L$ is smooth near $T_\pm$ but has conic singularities around $F_\pm$. See \cite[Section 4.3]{Hassell-Vasy-AnnFourier}

In the coordinates $(y_0, x, \hat{\mu}_0, s, s')$, the vector field $V_l$ is given by $\sin (s') \partial_{s'}$ and $V_r$ is given by $- \sin (s) \partial_s$. The intersection of $L$ and $N^\ast \diag_{M_{b, 0}}$ is given by $\{s = s'\}$. Thus $L_+$ is given by $\{s \leq s'\}$.

There exist Legendrian submanifolds $L^\sharp_\pm$ of ${}^{b, sc} T^\ast_{\mf} \tilde{M}_{b, 0}$, parametrized by \begin{equation}\label{eqn : diffractive Legendrian}L^\sharp_\pm = \{(\tilde{\rho}, y, y', x, \mp 1, \mp 1, 0, 0, 0) \,:\, y, y' \in \partial X, \tilde{\rho} \in [0, \infty] \}.\end{equation} By \cite[Proposition 4.1]{Hassell-Vasy-AnnFourier}, the Legendrian pair $\tilde{L} = (L, L^\sharp_\pm)$, intersecting at $$L \cap L^\sharp_\pm  = F_{\pm},$$ forms a pair of intersecting Legendrians with conic points. See Figure \ref{fig: Legendrian-intersection}.

\begin{remark}
 In contrast to the double scattering $n$-manifolds, $M_{b, 0}$ is a $(2n+1)$-manifold such that ${}^{s\Phi}T^\ast_{\mf}M_{b, 0}$ is a $(4n + 1)$-contact manifold. However, the additional variables $(x, \xi)$ are irrelevant to the contact forms in Section \ref{subsec : contact structures} such that $\xi\equiv0$ on the Legendrians and $x$ complicates nothing but acts as a smooth parameter of Legendrians. It follows that the Legendrian structures described in \cite{Hassell-Vasy-AnnFourier} can be employed verbatim in the present case.
\end{remark}
\subsection{Error at the bicharacteristic}

We next solve away the error  in the interior of $L_+$. This involves solving the transport equation globally on $L$.
What concerns us is the solution of the transport equation near the boundaries of $L$, as the transport equation is smooth and always solvable in the interior of $L$.

To begin with we examine the form of the transport equation at the boundary of $L_+$. It suffices to discuss the situation near $\lf$ as it is analogous for $\rf$.

In a neighbourhood $\{\tilde{\rho} < 2\}$ of $\lf \cap \mf$, we have local coordinates $(y', x, \mu', \tilde{\rho})$ near $T_+$ and $(y', x, \hat{\mu}', \tilde{\rho}, s)$ away from $T_+$.
In either set of coordinates, the Hamilton vector field in left variables $(\tilde{\rho}, y)$ thus takes the form
$$V_l = 2 \tau \tilde{\rho} \partial_{\tilde{\rho}} + Z, \quad \mbox{for some non-vanishing vector fields $Z$}.$$
Recall from \eqref{eqn : subprincipal symbol} that the subprincipal symbol of $P_{sc}$,   is $-\imath \varepsilon\tau + o(|\mu|)$ on $L_+$, where we denote $$\varepsilon = xe(x, y) .$$ By \eqref{eqn : geometric Legendrian}, $\tilde{\rho} = 0$ amounts to $\mu = 0$ in the interior of $L_+$, the subprincipal symbol of $P_{sc}$ is also of the form $-\imath \tau \varepsilon + o(\tilde{\rho})$. 
 Therefore, the transport equation for the symbol of order $-1/2$, with the half density factor removed, takes the form
$$-\imath (\mathcal{L}_{V_l} - n \tau +  \varepsilon_l \tau + o(\tilde{\rho}))  q_3^{(0)}   = 0.$$ Here $\varepsilon_l$ means $\varepsilon$ in left variables.  It reduces to the following ODE,
$$- \imath \tau (\partial_{\tilde{\rho}} + o(1))(\tilde{\rho}^{-n/2 + \varepsilon_l/2} q_3^{(0)}) = 0.$$
This shows that $\tilde{\rho}^{-n/2 + \varepsilon_l/2} q_3^{(0)}$ is smooth near $\lf \cap \mf$.

Near $L^\sharp \cap L_+$, we want to understand the order of $s$, which measures the singularity on $L^\sharp$. Recall that we lift $L_+$ to $\hat{L}_+$ by blowing up $L^\sharp \cap L_+$. By \cite[(4.15)]{Hassell-Vasy-AnnFourier}, the sum of left and right Hamilton vector fields $V_l + V_r$ lifts to $$-2 s\partial_s + o(s),$$ where $W$ is smooth and tangent to the boundary of $\hat{L}_+$. To take advantage of this, we wish to combine the left and right transport equations, since the solution to the left equation also solves the right equation. The right transport operator with respect to $\rho'$ takes the form
$$- \imath (\mathcal{L}_{V_r} - n \tau' +   \varepsilon_r\tau' + o(s)),$$
where $\varepsilon_r$ means $\varepsilon$ in right variables, and the Hamilton vector field in right variables is $$V_r = - 2 \tau' \tilde{\rho} \frac{\partial}{\partial \tilde{\rho}} + 2 \tau' \mu' \frac{\partial}{\partial \mu'} - h' \frac{\partial}{\partial \tau'} + (\frac{\partial h'}{\partial \mu'} \frac{\partial}{\partial y'} - \frac{\partial h'}{\partial y'} \frac{\partial}{\partial \mu'}).$$ In addition, if we change the left variable in $\rho$ to the right variable in $\rho'$, then $q_3^{(0)}$ is replaced by $\tilde{\rho}^{-1/2 - n/2} q_3^{(0)}$ in the right transport equation ($\tilde{\rho}^{-1/2 - n/2}$ is the transition function in \eqref{symbol transition}). It then yields an equation of the form
$$-\imath (\mathcal{L}_{V_r} - n \tau' +  \varepsilon_r \tau' + o(s)) ( \tilde{\rho}^{-1/2 -n/2} q_3^{(0)}  ) = 0.$$
By using the expression of the right Hamilton vector field, we reduce this equation to $$ (V_r + \tau' +   \varepsilon_r \tau' + o(s)) q_3^{(0)}  = 0.$$
Combining it with the left transport equation and using the fact $\tau = \tau' = -1$ on $L^\sharp_+$ gives an equation of the form
$$2 \bigg( s\partial_s - (n - 1)/2 + (\varepsilon_l +   \varepsilon_r)/2   + o(s) \bigg) q_3^{(0)}  = 0.$$  This equation may be rewritten as 
$$(\partial_s  + o(1))(s^{-(n-1)/2 +    (\varepsilon_l +   \varepsilon_r)/2} q_3^{(0)} ) = 0.$$ This, together with the left and right transport equations and  $\tilde{\rho} = \rho_{\lf}$, shows that
\begin{equation}\label{eqn : principal symbol of parametrix at bicharacteristic}q_3^{(0)}  \in \rho_{\lf}^{n/2 - \varepsilon_l /2}\rho_{\rf}^{n/2 -    \varepsilon_r/2} s^{(n-1)/2 -   (\varepsilon_l +  \varepsilon_r)/2} C^\infty({}^{s\Phi}\Omega^{1/2} \otimes S^{[-1/2]}(\hat{L}_+)).\end{equation}

In light of the short exact sequence \eqref{eqn : exact seq for fibred intersecting Legendrian distributions}, the solution $q_3^{(0)} $ to the transport equation, as a principal symbol, determines a Legendrian distribution \begin{equation*}  Q_3^{(0)}\in   I_{s\Phi}^{-1/2, (n-2)/2 - \mathbf{e}; ((n-1-\mathbf{e})/2, (n-1-\mathbf{e})/2, 0)}(M_{b, 0}; \hat{L}_+; {}^{s\Phi}\Omega^{1/2}), \end{equation*} such that
\begin{multline*}P_{sc} \circ Q_3^{(0)} - E_{sc, 2} = - E_2^{(1)}  \\ \in  (\log \rho_{\lf} \log s)^2I_{s\Phi}^{3/2, n/2 - \mathbf{e}; ((n+3-\mathbf{e})/2, (n-1-\mathbf{e})/2, 0)}(M_{b, 0}; (L_+, L^\sharp); {}^{s\Phi}\Omega^{1/2}),\end{multline*}
where the remainder $E_2^{(1)}$ has a principal symbol in the form $$e_2^{(1)} \in \rho_{\lf}^{n/2 - \varepsilon_l/2} \rho_{\rf}^{n/2 -\varepsilon_r/2} s^{ (n-1)/2 -(\varepsilon_l +   \varepsilon_r)/2} (\log \rho_{\lf} \log s)^2 C^\infty({}^{s\Phi}\Omega^{1/2} \otimes S^{[3/2]}(\hat{L}_+))$$
Here we use the notation \footnote{We of course can define the polyhomogeneity for Legendrian distributions with the index family $(\mathcal{E}_{\sharp}, \mathcal{E}_{\lf}, \mathcal{E}_{\rf})$ instead of the vanishing orders $(p_\sharp, r_{\lf}, r_{\rf})$ in Section \ref{subsec : sc-Legendrian distributions}. However, the log terms do not appear in the principal terms given in the main theorem but only in the lower order terms and remainders. So we would introduce Legendrian distributions with log terms only within the proof.} \begin{eqnarray*}  \lefteqn{(\log \rho_{\lf})^{k_\lf} (\log \rho_{\rf})^{k_\lf}   I_{s\Phi}^{m; (r_\lf, r_\rf, r_{\bfz})}(M_{b, 0}; L; {}^{s\Phi}\Omega^{1/2})} \\ &&\quad\quad\quad\quad\quad\quad= 
   \left\{\begin{array}{c} u \end{array} : \begin{array}{l}\sigma^m(u) = (\log \rho_{\lf})^{k_\lf} (\log \rho_{\rf})^{k_\lf}   \sigma^m(\tilde{u}) \\ \tilde{u} \in  I_{s\Phi}^{m; (r_\lf, r_\rf, r_{\bfz})}(M_{b, 0}; L; {}^{s\Phi}\Omega^{1/2}) \end{array}\right\} 
	\\ \lefteqn{(\log \rho_{\lf})^{k_\lf} (\log \rho_{\rf})^{k_\lf} (\log s)^{k_\sharp} I_{s\Phi}^{m, p_\sharp; (r_\lf, r_\rf,   r_{\bfz})}(M_{b, 0}, \tilde{L}; {}^{s\Phi}\Omega^{1/2})} \\ &&\quad\quad\quad\quad\quad\quad= \left\{ \begin{array}{c} u \end{array} : \begin{array}{l}\sigma^m(u) = (\log \rho_{\lf})^{k_\lf} (\log \rho_{\rf})^{k_\lf} (\log s)^{k_\sharp} \sigma^m(\tilde{u}) \\ \tilde{u} \in  I_{s\Phi}^{m, p_\sharp; (r_\lf, r_\rf, r_{\bfz})}(M_{b, 0}; \tilde{L}; {}^{s\Phi}\Omega^{1/2}) \end{array}\right\}. 
\end{eqnarray*}
 Note that $\varepsilon$ is not a constant but a function of $(x, y)$. Hence the quadratic log terms $(\log \rho_{\lf} \log s)^2$ are inevitable when the second $y$-derivative in $P_{sc}$ hits $\rho_{\lf}^{ - \varepsilon_l /2}  s^{  -   (\varepsilon_l +  \varepsilon_r)/2}$ of the symbol of $Q_3^{(0)}$.

 For any $k \in \mathbb{Z}_+$, we want to find a Legendrian distribution \begin{align*}Q_3^{(k)}\in   (\log \rho_{\lf} \log s)^{2k}   I_{s\Phi}^{k - 1/2, n/2-1 - \mathbf{e}; ((n-1- \mathbf{e})/2, (n-1- \mathbf{e})/2, 0)}(M_{b, 0}; \hat{L}_+; {}^{s\Phi}\Omega^{1/2}),\end{align*} which solves the left equation above up to a remainder in \begin{multline*}P_{sc} \circ Q_3^{(k)} - E_2^{(k)} = -E_2^{(k+1)} \\ \in  (\log \rho_{\lf} \log s)^{2(k+1)}   I_{s\Phi}^{k + 3/2, n/2- \mathbf{e}; (n+3- \mathbf{e})/2, ((n-1- \mathbf{e})/2, 0)}(M_{b, 0}; (L_+, L^\sharp); {}^{s\Phi}\Omega^{1/2}).\end{multline*} 

This is done by solving a left transport equation of the form
$$- 2 \imath \tau \Big( \tilde{\rho} \partial_{\tilde{\rho}} + (-n/2 + k) +   \varepsilon_l/2 + o(\tilde{\rho})  \Big) q_3^{(k)} = e_2^{(k)}.$$
The right hand side $e_2^{(k)}$ is the principal symbol of $E_2^{(k)}$. Now we want the remainder $E_2^{(k+1)}$ to be of order $k + 3/2$ at $L_+$ and order $(n + 3 -\mathbf{e})/2$ at  $\lf$, this implies \begin{eqnarray*}e_2^{(k)} &\in& \tilde{\rho}^{n/2 -   \varepsilon_l/2 - k + 1 } (\log \tilde{\rho})^{2k} C^\infty({}^{s\Phi}\Omega^{1/2} \otimes S^{[k+3/2]}(\hat{L}_+))\\ q_3^{(k)} &\in& \tilde{\rho}^{n/2 -  \varepsilon_l/2 - k} (\log \tilde{\rho})^{2k} C^\infty({}^{s\Phi}\Omega^{1/2} \otimes S^{[k-1/2]}(\hat{L}_+)).\end{eqnarray*} By combining this with the right transport equation and using the vector field $V_l + V_r$, we have an ODE of the form
\begin{multline*}- 2 \imath \tau' \bigg( - s \partial_s +    (\varepsilon_l +  \varepsilon_r)/2  + (- \frac{n-1}{2} + k) + o(s)\bigg) q_3^{(k)} \\ \in s^{(n+1 - \varepsilon_l - \varepsilon_r)/2 - k} \rho_{\lf}^{(n -   \varepsilon_l)/2 - k + 1} \rho_{\rf}^{(n-   \varepsilon_r)/2 - k} (\log \rho_{\lf} \log s)^{2(k+1)} C^\infty({}^{s\Phi}\Omega^{1/2} \otimes S^{[k+3/2]}(\hat{L}_+))
.\end{multline*} We remark that there is one more order for $s$ in this ODE than $e_2^{(k)}$ given in the symbol map. It is because $V_l + V_r$ vanishes on $L^\sharp$ such that the sum of the right hand side terms of the two equations lies in $$  (\log \rho_{\lf} \log s)^{2k}    I_{s\Phi}^{k + 1/2, n/2 + 1 - \mathbf{e}; ((n+3 - \mathbf{e})/2, (n-1 - \mathbf{e})/2, 0)} (M_{b, 0}; (L_+, L^\sharp); {}^{s\Phi}\Omega^{1/2}).$$ The solution obeys $$q_3^{(k)} \in s^{(n-1-\varepsilon_l -  \varepsilon_r)/2-k} \rho_{\lf}^{(n- \varepsilon_l)/2-k }  \rho_{\rf}^{(n-   \varepsilon_r)/2 - k}    (\log \rho_{\lf} \log s)^{2k} C^\infty({}^{s\Phi}\Omega^{1/2} \otimes S^{[k-1/2]}(\hat{L}_+))$$ as is required for the parametrix.

 By asymptotically summing these correction terms, we obtain an approximation $G_{sc, 3} = \sum_{k=0}^\infty Q_3^{(k)}$ to the resolvent such that \begin{multline*}P_{sc} \circ G_{sc, 3} - E_{sc, 2} = - E_{sc, 3}    \\ \in   (\log \rho_{\lf} \log s)^{\infty} I^{\infty, n/2 + 1 - \mathbf{e}; ((n+3 - \mathbf{e})/2, (n-1 - \mathbf{e})/2, 0)} (M_{b, 0}, (L_+, L^\sharp); {}^{s\Phi}\Omega^{1/2}).\end{multline*}  This means that the error at the scattering wavefront set at $L_+$ has been solved away.

In addition to removing the error at $L_+$, we can further make the remainder $E_3$ vanish to infinite order at $\mf$. Near $\mf \cap \rf$, we have the boundary defining functions   $\rho_{\mf} = \rho$ and $\rho_{\rf} = \rho'/\rho$, whilst $\rho'$ is the total boundary defining function. By definition, the intersecting Legendrian distribution $G_{sc, 3}$, in this region, takes the form
 $$  e^{\imath / \rho'} \rho'^{(n - 1)/2- \varepsilon_r/2}       \sum_{k \geq 0} \rho'^{k} ( \log  s)^{2k} I_{s\Phi}^{-n/4 - k + \mathbf{e}, n/4 -1/2 - k}(M_{b, 0}; (L_{y'}, L^\sharp); {}^{s\Phi}\Omega^{1/2}).$$ Here $L_{y'}$ is the fibre Legendrian of ${}^{b, sc}T^\ast(F_\rf; \rf)$ with smooth parameters $y'$. In the meantime, the remainder, also near $\mathrm{rf} \cap \mathrm{mf}$, takes the form $$E_{sc, 3} \sim e^{\imath /\rho'} e^{\imath / \rho} \rho^{(n + 3)/2- \varepsilon_l/2} \rho'^{(n-1)/2- \varepsilon_r/2}      C^\infty(M_{b, 0}; {}^{s\Phi}\Omega^{1/2}).$$
 On the other hand, it is obvious that the application of $P_{sc}$ to a function in $(\rho, y)$ will increase the vanishing order in $\rho$. In particular, \begin{align}\label{eqn : standard sc computation}P_{sc} (  e^{\imath / \rho} \rho^{(n - 1)/2 - \varepsilon_l/2}   ) =   e^{\imath / \rho} \rho^{(n + 3)/2 - \varepsilon_l/2} C^\infty(M_{b, 0};  {}^{s\Phi}\Omega^{1/2}).\end{align}   Such an error can be solved away by a term of the form  $$e^{\imath /\rho'} e^{\imath / \rho} \rho^{(n + 1)/2- \varepsilon_l/2} \rho'^{(n-1)/2- \varepsilon_r/2}     C^\infty(M_{b, 0};  {}^{s\Phi}\Omega^{1/2}).$$ We can repeat this procedure to make the remainder vanish in $\rho$ to infinite order. Since $\rho$ is locally the boundary defining function of $\mf$, we have removed the error at $\mf$.

\subsection{Error at $L^\sharp$}

This step is to solve away the error at $L^\sharp$, i.e. where $\tilde{\rho} = 0$ and $\tau = 1,$ if we live near $\lf \cap \mf$. This amounts to solving the transport equation
$$  \bigg( -\imath\tilde{\rho} \partial_{\tilde{\rho}} - \imath (1/2 + n/2 + k - (2n)/4) -\imath  \varepsilon_l/2 + o(\tilde{\rho} ) \bigg) q_4^{(k)} = e_3^{(k)}, \quad \mbox{for $k \in \mathbb{Z}_+,$}$$ as the order of $e_3^{(k)}$ at $L^\sharp$ is $n/2 + k - \mathbf{e}$. For $k=1$,  the short exact sequence of Legendrian distributions yields the symbol $e_3^{(1)}$ of $E_3$, which takes the form $$  \tilde{\rho}^{1/2 + \varepsilon_l/2}       \rho^\infty_{\mf} C^\infty(L^\sharp).$$ Then one can find a solution to the transport equation, $$q_4^{(1)} \in   \tilde{\rho}^{-3/2 + \varepsilon_l/2}        \rho^\infty_{\mf} C^\infty(L^\sharp),$$ such that the Legendrian distribution $$Q_4^{(1)} \in   \rho^\infty_{\mf}    I_{s\Phi}^{n/2 + 1 -\mathbf{e}; ((n-1 -\mathbf{e})/2, (n-1-\mathbf{e})/2, 0)} (M_{b, 0}; L^\sharp; {}^{s\Phi}\Omega^{1/2}).$$ This reduces the remainder to $$E_3^{(2)} \in  \rho^\infty_{\mf}   I_{s\Phi}^{n/2 + 2 -\mathbf{e}; ((n+3-\mathbf{e})/2, (n-1-\mathbf{e})/2, 0)}(M_{b, 0}; L^\sharp; {}^{s\Phi}\Omega^{1/2}).$$ Inductively, if the remainder $E_3^{(k)}$ satisfies $$E_3^{(k)} \in    \rho^\infty_{\mf}     I_{s\Phi}^{n/2 + k- \mathbf{e}; ((n+3 - \mathbf{e})/2, (n-1- \mathbf{e})/2, 0)}(M_{b, 0}; L^\sharp; {}^{s\Phi}\Omega^{1/2}),$$ then one can find a solution to the transport equation $$q_4^{(k)} \in \tilde{\rho}^{-1/2 - k + \varepsilon_l/2} \rho^\infty_{\mf}  C^\infty (M_{b, 0}; L^\sharp; {}^{s\Phi}\Omega^{1/2}).$$ This gives a Legendrian distribution $$Q_4^{(k)} \in   \rho^\infty_{\mf}  I_{s\Phi}^{n/2 + k - \mathbf{e}; ((n-1- \mathbf{e})/2, (n-1- \mathbf{e})/2, 0)} (M_{b, 0}; L^\sharp; {}^{s\Phi}\Omega^{1/2}),$$ which reduces the remainder to \[ E_3^{(k+1)} \in    \rho^\infty_{\mf}     I_{s\Phi}^{n/2 + k + 1- \mathbf{e}; ((n + 3- \mathbf{e})/2, (n - 1- \mathbf{e})/2, 0)} (M_{b, 0}; L^\sharp; {}^{s\Phi}\Omega^{1/2}).\] 

Taking an asymptotic sum yields a parametrix $G_{sc, 4} = \sum Q_4^{(k)}$ and a remainder $E_{sc, 4}$ lying in
$$ e^{\imath / \rho'} \rho_{\lf}^{(n-1)/2 - \varepsilon_l/2} \rho_{\rf}^{(n-1)/2 - \varepsilon_r/2}    \rho^\infty_{\mf}   C^\infty(M_{b, 0}; {}^{b, sc}\Omega^{1/2}).$$  

In addition, the error at $\lf$ can be removed by an analogous argument using \eqref{eqn : standard sc computation} as for $\mf$. Then the remainder is reduced to $$E_{sc, 4} \in e^{\imath / \rho'}  \rho_{\rf}^{(n-1)/2 - \varepsilon_r/2}   \rho_{\lf}^{\infty} \rho^\infty_{\mf}   C^\infty(M_{b, 0}; {}^{b, sc}\Omega^{1/2}).$$  

\subsection{Error at $\bfz$}

 The error at $\mathrm{bf}_0$ can be solved away by a standard normal operator argument. In fact, the normal operator $\mathcal{N}_{\mathrm{bf}_0}(P_{sc})$ is just $\Delta_{\mathfrak{C}} - (1 + \imath 0)$. The parametrices we have constructed actually include the region near $\mathrm{bf}_0$ but it might not agree with $R_{\mathfrak{C}, sc}$ at $\mathrm{bf}_0 \cap (\mathrm{mf} \cup \mathrm{lf} \cup \mathrm{rf})$ due to the non-uniqueness of diffractive waves. Therefore, we further take $$\mathcal{N}_{\mathrm{bf}_0}(G_{sc, 5}) =  \big(R_{\mathfrak{C}, sc} - \mathcal{N}_{\mathrm{bf}_0}(G_{sc, 1} + G_{sc, 2} + G_{sc, 3} + G_{sc, 4})\big) \circ \mathcal{N}_{\mathrm{bf}_0}(E_{sc, 4}),$$ which is a Legendrian distribution associated only with the diffractive Legendrian $L^\sharp$.
The restriction of the remainder $E_{sc, 4}$ to $\mathrm{bf}_0$ is of the form, $$\mathcal{N}_{\mathrm{bf}_0}(E_{sc, 4}) \in e^{\imath / \rho'}  \rho_{\mathrm{rf}}^{(n-1)/2}  \rho_{\mathrm{lf}}^\infty \rho_{\mathrm{mf}}^\infty    C^\infty(\mathrm{bf}_0, {}^{sc}\Omega^{1/2}).$$
Then $\mathcal{N}_{\mathrm{bf}_0}(G_{sc, 5})$ solves the normal operator equation with a remainder $E_{sc}$ vanishing at $\mathrm{bf}_0$, whilst it lies in the space 
$$       
I_{s\Phi}^{(n-2)/2; ((n-1)/2, (n-1)/2, 0)}(M_{b, 0};   L^\sharp; {}^{s\Phi}\Omega^{1/2}),$$ which is even better than existing parametrices due to $\varepsilon|_{\bfz} = 0$.

Finally, the desired parametrix is just \[G_{sc} = G_{sc, 1} + G_{sc, 2} + G_{sc, 3} + G_{sc, 4}  + G_{sc, 5}\] with a remainder of the form $$E_{sc} \in e^{\imath / \rho'}  \rho_{\rf}^{(n-1)/2 - \varepsilon_r/2}  \rho_{\bfz}  \rho_{\lf}^{\infty} \rho^\infty_{\mf}   C^\infty(M_{b, 0}; {}^{b, sc}\Omega^{1/2}).$$  
\end{proof}

\section{The $(sc, b)$-regime parametrix}\label{sec : sc-b parametrix}

Recall that the $(sc, b)$-regime is a neighbourhood of $\{r' < 1\} \cup \{\rho < 1\}$ in $M_{b, 0}$ and furnished with local coordinates $(\rho, y, r', y', \hbar)$. It has three boundary hypersurfaces $\rb, \rb_0, \bfz$ with boundary defining functions $r', \rho, \hbar/\rho$ respectively, where the face $\rb_0$ is renamed $\lf$ in the $sc$-regime since it is defined by the left boundary defining function $\rho$. The compressed density ${}^{b, sc}\Omega(M_{b, 0})$ locally takes the form $$\bigg|\frac{d\rho dy dr' dy'd\hbar}{\rho^{n+1}r'\hbar}\bigg|.$$ 

The operator $\tilde{P}_\hbar$, as a left operator on $M_{b, 0}$, is also a $sc$-operator in the $(sc, b)$-regime, like $P_{sc}$ in the $sc$-regime. The face $\rb_0$ in this regime is viewed as the interior of $\lf$ in the $sc$-regime. The $sc$-calculus for Legendrian distributions extends to the whole $\rb_0$ indeed. We can construct the following parametrix for $\tilde{P}_\hbar$ in this regime.  

\begin{proposition}\label{prop : sc-b parametrix}
	In the $(sc, b)$-regime of $M_{b, 0}$, there is a parametrix $G_{(sc, b)}$ lying in     \[\rho_{\rb}^{n/2} I_{s\Phi}^{n/2 + 1 - \mathbf{e}; ((n-1-\mathbf{e})/2, \infty, 0)}(M_{b, 0}; L^\sharp|_{\rb_0}; {}^{s\Phi}\Omega^{1/2}).\]  such that \[\tilde{P}_\hbar \circ G_{(sc, b)} - \chi_b \Id \chi_{sc} = - E_{(sc, b)}     \in    \rho_{\bfz} \rho_{\rb}^{n/2}  \rho_{\rb_0}^{\infty}      C^\infty(M_{b, 0}; {}^{b, sc}\Omega^{1/2}).\]
\end{proposition}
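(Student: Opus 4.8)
The plan is to treat the $(sc,b)$-regime as the ``far'' regime of $M_{b,0}$, in which the left end lies in the scattering zone $\{\rho<1\}$ while the right end lies deep in the $b$-zone $\{r'<1\}$ near the cone tip. These two ends are so widely separated that neither the conormal singularity at $\diag_{M_{b, 0}}$ nor the geometric Legendrian $L=L_\pm$ reaches into this regime: by the bicharacteristic analysis of Section \ref{subsec : bicharacteristic}, the only part of the scattering wavefront set that survives here is the diffractive Legendrian $L^\sharp$ of \eqref{eqn : diffractive Legendrian}, and it survives only along $\rb_0$, which is the scattering face of the left operator $\tilde{P}_\hbar = P_{sc}$ (whence the alias $\lf$). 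So $G_{(sc, b)}$ should be a pure $L^\sharp|_{\rb_0}$-Legendrian distribution carrying an explicit $\rho_{\rb}^{n/2}$ prefactor, assembled from two mechanisms already used elsewhere in the paper: the normal-operator solve at $\bfz$, as in the $(b,sc)$-regime of Proposition \ref{prop : b-sc parametrix}, and the diffractive transport equation on $L^\sharp$, as in the $sc$-regime of Proposition \ref{thm : sc-parametrix}. Since $\diag_{M_{b, 0}}$ is absent from this regime, the truncated identity only contributes in the overlap with the neighbouring $sc$- and $b$-regimes; in the interior of the $(sc,b)$-regime the task reduces to approximating the null space of $\tilde{P}_\hbar$ with the correct boundary asymptotics.

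I would carry this out in three stages. \emph{Stage 1: the normal operator at $\bfz$.} Here $\bfz=\{x=0\}$ in the coordinates $(\rho, y, r', y', \hbar)$ and the error function $\varepsilon = x e(x,y)$ vanishes there, so $\mathcal{N}_{\bfz}(\tilde{P}_\hbar) = \Delta_{\mathfrak{C}} - (1+\imath 0)$; I would take a first approximation $G_1$ with $\mathcal{N}_{\bfz}(G_1)$ equal to the $(sc,b)$-piece $R_{\mathfrak{C},(sc,b)}$ of the metric-cone resolvent from Theorem \ref{thm : resovlent on metric cones}, given explicitly by \eqref{eqn : resolvent rb_0}. By the same computation as at the end of Proposition \ref{prop : b-sc parametrix} with left and right interchanged, the Bessel asymptotics \eqref{eqn : Bessel} with least index $\nu_0 = n/2-1$ together with the compressed density $|d\rho\, dy\, dr'\, dy'\, d\hbar / (\rho^{n+1} r' \hbar)|^{1/2}$ pin down the vanishing order $n/2$ at $\rb$, while the Hankel asymptotics \eqref{eqn : Hankel} with $1/\rho \to \infty$ produce the oscillation $e^{\imath/\rho}$ and vanishing order $(n-1)/2$ at $\rb_0$, consistent with $\varepsilon|_{\bfz}=0$. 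The residual error is $O(\rho_{\bfz})$, coming from the $\varepsilon$-term and the higher order part of $\tilde{P}_\hbar - \mathcal{N}_{\bfz}(\tilde{P}_\hbar)$, and it is not yet vanishing at $\rb_0$ away from $\bfz$. \emph{Stage 2: the diffractive transport equation at $\rb_0$.} Removing this residual error is, face by face, a transport equation along $L^\sharp|_{\rb_0}$ of the same form as the ``Error at $L^\sharp$'' step of Section \ref{sec : sc parametrix}: by \eqref{eqn : subprincipal symbol} the boundary subprincipal symbol of $P_{sc}$ on $L^\sharp$ is $-\imath \varepsilon_l \tau$, which forces the leading symbol to carry the oscillatory weight $\rho_{\rb_0}^{-\varepsilon_l/2}$, hence the loss $\mathbf{e}$ and vanishing order $(n-1-\mathbf{e})/2$ at $\rb_0$; iterating the transport solution and summing asymptotically places the parametrix in
\[
\rho_{\rb}^{n/2}\, I_{s\Phi}^{\,n/2 + 1 - \mathbf{e};\, ((n-1-\mathbf{e})/2,\, \infty,\, 0)}\big(M_{b,0};\, L^\sharp|_{\rb_0};\, {}^{s\Phi}\Omega^{1/2}\big),
\]
the middle index $\infty$ recording the absence of an $\rf$-face in this regime, and reduces the error to one vanishing to infinite order at $\rb_0$. \emph{Stage 3: re-matching at $\bfz$.} Because the diffracted wave is not unique, Stage 2 may have perturbed the behaviour at $\bfz$, so, exactly as with $G_{sc,5}$ in the proof of Proposition \ref{thm : sc-parametrix}, I would add a final $L^\sharp$-Legendrian correction, namely the difference from $R_{\mathfrak{C},(sc,b)}$ composed with the restriction of the current remainder to $\bfz$; this carries no loss because $\varepsilon|_{\bfz}=0$. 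Summing the three stages yields $G_{(sc,b)}$ in the asserted class, with a remainder in $\rho_{\bfz}\, \rho_{\rb}^{n/2}\, \rho_{\rb_0}^{\infty}\, C^\infty(M_{b,0}; {}^{b,sc}\Omega^{1/2})$.

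The main obstacle is not the transport equations, which are the same ODEs along $L^\sharp$ already solved in Proposition \ref{thm : sc-parametrix}, but two compatibility points at the corner $\rb_0 \cap \bfz$. First, one must verify rigorously that the geometric Legendrian $L$ and the diagonal genuinely do not intrude into the $(sc,b)$-regime, so that $G_{(sc,b)}$ really is a pure $L^\sharp$-Legendrian distribution with no further wavefront content; this is a statement about which bicharacteristics of $P_{sc}$ can reach the face $\rb$, and it follows from the global structure of the flow described in Section \ref{subsec : bicharacteristic}. Second, the $\varepsilon_l$-oscillation $\rho_{\rb_0}^{-\varepsilon_l/2}$ generated in Stage 2 must be reconciled with the product-cone model at $\bfz$, where $\varepsilon \equiv 0$ and no such oscillation occurs: here the stability hypothesis \eqref{eqn : stability}, $\mathbf{e} \ll n-1$, is precisely what guarantees $(n-1-\mathbf{e})/2 > 0$, so that the Legendrian distribution stays well-defined up to $\rb_0$ and matches $R_{\mathfrak{C},(sc,b)}$ continuously as $x \to 0$. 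Once these are in place, the assembly in Stage 3 is routine.
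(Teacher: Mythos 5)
Your proposal is correct and follows essentially the paper's argument: extend the diffractive transport-equation solve and the normal-operator matching from the $sc$-regime (the $G_{sc,4}$ and $G_{sc,5}$ steps of Proposition \ref{thm : sc-parametrix}) to the interior of $\lf\subset\rb_0$, and read off the $\rho_{\rb}^{n/2}$ prefactor from the explicit Bessel--Hankel expansion of $(\mathcal{N}_{\bfz}(\tilde P_\hbar))^{-1}$ as in \eqref{eqn : resolvent rb_0}. The only real difference is the order of presentation -- the paper does the boundary error removal at $\rb_0$ and $\bfz$ first and the $\rb$ vanishing-order computation last (while emphasizing that the error at $\rb$ cannot be solved away, only fixed at order $n/2$) -- but the ingredients and conclusions are identical.
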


\begin{proof}
We shall first remove completely the error at $\rb_0$ and then employ the normal operator at $\bfz$ to obtain the vanishing order of the paramerix at $\rb$.

	\subsection{Boundary error removal}
Noting that	$\lf\subset\rb_0 = \overline{\{\rho=0\}}$, one can extend the construction of $G_{sc, 4}$ and $G_{sc, 5}$  to  the $(sc, b)$-regime near $\rb_0$. In fact, the face $\rb_0$ in the $(sc, b)$-regime can be regarded as the interior of $\lf$ (away from $\mf$) in the $sc$-regime. The only change of the argument would be the use of coordinates near the interior $\lf$ away from $\mf$  rather than those near the corner $\mf \cap \lf$. This yields a parametrix $G_{(sc, b)}$ which is of the form \[I_{s\Phi}^{n/2 + 1 - \mathbf{e}; ((n-1-\mathbf{e})/2, \infty, 0)}(M_{b, 0}; L^\sharp|_{\rb_0}; {}^{s\Phi}\Omega^{1/2}).\]
Then we remove the error at $\rb_0$ by using the computation \eqref{eqn : standard sc computation} and have the remainder vanishing at $\rb_0$ and $\bfz$ in exactly the same way as for $\lf$ and $\bfz$ in the $sc$-regime.

	\subsection{Vanishing order at $\rb$}	
Since $x$ is the boundary defining function of $\bfz$, the normal operator  $\mathcal{N}_{\bfz}(\tilde{P}_{\hbar})$   is   equal to \[- (\rho^2 \partial_{\rho})^2 + (n-1)\rho^3 \partial_{\rho} + \rho^2 \Delta_{h_0} - (1 + \imath 0).\]

This is equal to $\Delta_{\mathfrak{C}} - (1 + \imath 0)$ near the infinity of the metric cone $\mathfrak{C}$. The resolvent of $\mathcal{N}_{\bfz}(\tilde{P}_{\hbar})$ can be calculated readily by separation of variables. Given in \cite[(5.5) (5.6)]{Guillarmou-Hassell-Sikora-TAMS2013}, it reads \[	(\mathcal{N}_{\bfz}(\tilde{P}_\hbar))^{-1} = \frac{\pi \imath r'}{2\rho} \sum_{j=0}^\infty \Pi_{\mathrm{E}_j}(y, y')  \mathrm{J}_{\nu_j}(r') \mathrm{Ha}_{\nu_j}^{(1)}(1/\rho)   \bigg|\frac{d\rho dydr'dy'd\hbar}{\rho r'\hbar}\bigg|^{1/2}.\] 
 Using again the asymptotic behaviours \eqref{eqn : Bessel} and \eqref{eqn : Hankel} of Bessel and Hankel functions, we obtain that $$  (\mathcal{N}_{\bfz}(\tilde{P}_\hbar))^{-1} \sim \rho^{(n-1)/2}r'^{n/2} e^{\imath /\rho} \bigg|\frac{d\rho dydr'dy'd\hbar}{\rho^{n+1}r'\hbar}\bigg|^{1/2}.$$ This implies $G_{(b, sc)}$ is taken to vanish to order $n/2$ at $\rb$, to match the parametrix at $\bfz$ as well as that in the $b$-regime.\footnote{Be aware that one cannot solve away the error at $\rb$!} 
\end{proof}



\section{The true resolvent}\label{sec : true resolvent}

Through Proposition \ref{thm : b-parametrix}, Proposition \ref{prop : b-sc parametrix}, Proposition \ref{thm : sc-parametrix}, and Proposition \ref{prop : sc-b parametrix},
we have constructed the parametrices $G_{b}$, $G_{(b, sc)}$, $G_{sc}$, and $G_{(sc, b)}$ with the remainders $E_{b}$, $E_{(b, sc)}$, $E_{sc}$, and $E_{(sc, b)}$ in the $b$-regime, the ${(b, sc)}$-regime, the ${sc}$-regime, and the ${(sc, b)}$-regime respectively.  They satisfy
  \begin{multline*}\tilde{P}_\hbar \circ (G_{b}+ G_{(b, sc)} + G_{sc} + G_{(sc, b)})  -   \Id = - \tilde{E} = - E_{b} - E_{(b, sc)} - E_{sc} - E_{(sc, b)}, \\ E_{b} + E_{(b, sc)} \in     \rho_{\lb_0}^{\infty}   \Psi_b^{-\infty, (\infty, \hat{\mathcal{E}}_b + 1, \infty, \infty)} (M_{b, 0}; {}^{b, sc}\Omega^{1/2} ),\quad \mbox{near $\{r < 1\}$},\\   E_{sc} + E_{(sc, b)} \in \rho_{\bfz} 
e^{\imath / \rho'}  \rho_{\rf}^{(n-1- \varepsilon_r)/2} \rho_{\rb}^{n/2}  \rho^\infty_{\lf} \rho^\infty_{\mf}   C^\infty (M_{b, 0}; {}^{b, sc}\Omega^{1/2} ), \quad{\mbox{near $\{\rho < 1\}$}}.\end{multline*}

From this we obtain the true resolvent. 

\begin{theorem}\label{thm : full main theorem}
The resolvent kernel $R$ of $P_\hbar$ is a distribution on $M_{b, 0}$ consisting of
\begin{itemize}
\item $R_b = \chi_b R \chi_b$ is supported near the $b$-regime  and satisfies
\begin{itemize}
\item $R_b$ is  conormal of order $-2$ to  $\diag_{M_{b, 0}}$,
\item $R_b$ is polyhomogeneous conormal to $\lb, \rb, \mathrm{bf}, \bfz$  and lies in the space  $$\mathcal{A}^{(\hat{\mathcal{E}}_b + 1, \check{\mathcal{E}}_b + 1, \tilde{\mathcal{E}}_b + 2, \mathbb{N}_0)}(M_{b, 0}; {}^{b, sc}\Omega^{1/2}),$$ where the index sets $\hat{\mathcal{E}}_b, \check{\mathcal{E}}_b, \tilde{\mathcal{E}}_b$ are defined by \eqref{eqn : index set lb}, \eqref{eqn : index set rb}, \eqref{eqn : index set bf} respectively, 
\item $R_b$ vanishes to order $2$ at $\mathrm{bf}$, but also to order $n/2$ at $\lb$ and $\rb$;
\end{itemize}

\item $R_{(b,sc)} = \chi_{b} R \chi_{sc}$ is supported near the $(b, sc)$-regime such that \begin{itemize}
	\item $R_{(b,sc)}$ vanishes to order $n/2$ at $\lb$ and to order $(n-1)/2$ at $\lb_0$;
\end{itemize}

\item $R_{sc} = \chi_{sc} R \chi_{sc}$ is  supported near the $sc$-regime and satisfies
\begin{itemize}
\item $R_{sc}$ is conormal of order $-2$ to  $\diag_{M_{sc, b, 0}}$,
\item $R_{sc}$, near $\diag_{M_{b, 0}} \cap \mf$, is intersecting Legendrian distributions associated with the diagonal conormal bundle and  geometric Legendrian $L$, lying in the space $$I_{sc, c}^{-1/2} (\tilde{M}_{b, 0}; (N^\ast \diag_{M_{b, 0}}, L^\circ); {}^{b, sc}\Omega^{1/2}),$$ 
\item $R_{sc}$, near $(\lf \cap \mf) \cup (\rf \cap \mf)$, is intersecting Legendrian distributions associated with the pair of geometric Legendrian $L$ and diffractive Legendrian $L^\sharp$ explicitly given in \eqref{eqn : geometric Legendrian} and \eqref{eqn : diffractive Legendrian}, lying in the space
$$      I_{s\Phi}^{-1/2, (n-2)/2 - \mathbf{e}; ((n-1- \mathbf{e})/2, (n-1- \mathbf{e})/2, 0)}(M_{b, 0}; (L, L^\sharp); {}^{s\Phi}\Omega^{1/2}), $$ where $\mathbf{e}$ is defined in \eqref{eqn : stability}, 
\item The loss of order $\mathbf{e}$ is caused by the additional oscillations, $$\rho_{\lf}^{- \varepsilon_l/2} \rho_{\rf}^{-  \varepsilon_r/2} \rho_{\sharp}^{- (\varepsilon_l +  \varepsilon_r)/2},\quad \mbox{near $\lb_0, \rb_0, L^\sharp$},$$   of the principal symbol of $R_{sc}$ in \eqref{eqn : principal symbol of parametrix at bicharacteristic}. It occurs     (i.e. $e \not\equiv 0$)  if and only if the metric is of non-product type (i.e. the boundary metric $h(x, y, dy)$ is dependent of $x$), 
\item $R_{sc}$ vanishes  to $(n-1-\mathbf{e})/2$ order at  $\lf$ and $\rf$;
\end{itemize}

\item $R_{(sc,b)} = \chi_{sc} R \chi_{b}$ is supported near the $(sc, b)$-regime such that
\begin{itemize}
	\item $R_{(sc,b)}$ vanishes to order $n/2$ at $\rb$ and to order $(n-1-\mathbf{e})/2$ at  $\rb_0$.\end{itemize}

\end{itemize}
\end{theorem}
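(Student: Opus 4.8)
The plan is to assemble Theorem \ref{thm : full main theorem} directly from the four local parametrices of Propositions \ref{thm : b-parametrix}, \ref{prop : b-sc parametrix}, \ref{thm : sc-parametrix} and \ref{prop : sc-b parametrix} together with a Fredholm argument to pass from parametrix to true resolvent. First I would patch the four local parametrices into a single global parametrix $G = G_b + G_{(b,sc)} + G_{sc} + G_{(sc,b)}$ on $M_{b,0}$ (using the partition of unity $\chi_b + \chi_{sc} = 1$ and the fact that the four regimes cover $M_{b,0}$), so that $\tilde{P}_\hbar \circ G - \Id = -\tilde{E}$ with $\tilde{E}$ in the spaces displayed just before the theorem statement. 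The key point here is that the index sets and vanishing orders at each boundary face match across the regime borders (the dashed curves), which was arranged in the construction precisely by solving the normal operator equation at $\bfz$ against $R_{\mathfrak{C}}$ from Theorem \ref{thm : resovlent on metric cones}; I would check compatibility face-by-face at $\lb$, $\rb$, $\mathrm{bf}$, $\bfz$, $\mf$, $\lf$, $\rf$, $\lb_0$, $\rb_0$.

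Next I would upgrade the parametrix to the true resolvent. The remainder $\tilde{E}$ is $O(\rho_{\bfz})$ (plus rapid vanishing at $\lf,\mf$, finite positive vanishing at $\rf,\rb$, and lying in $\rho_{\lb_0}^\infty \Psi_b^{-\infty}$ near the $b$-end), hence a compact operator on the natural $L^2$ (or weighted $b$-Sobolev) space attached to $\hbar \in (0,1)$; moreover $\tilde E$ can be taken with arbitrarily large (indeed infinite, except at $\rf\cup\rb\cup\bfz$) decay by pushing the construction one more step. Since $\tilde{P}_\hbar$ with the incoming ($+\imath 0$) condition has trivial $L^2$-kernel and cokernel — this is where the no-conjugate-points hypothesis and limiting absorption at large $r$ enter — a Neumann series $R = G \circ (\Id - \tilde{E})^{-1} = G \circ \sum_{k\ge 0}\tilde{E}^k$ converges and gives the true resolvent $R = \tilde{P}_\hbar^{-1}$. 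The composition stability statements (Proposition \ref{prop : phg mapping} for the $b$-regime, the analogous Legendrian composition results behind Proposition \ref{thm : sc-parametrix}, and the standard $sc$-composition) guarantee that $G\circ \tilde E^k$ lies in the same class as $G$ itself, with no growth of index sets because $\tilde E$ vanishes to infinite order at every face where $G$ carries a nontrivial index set except $\bfz,\rf,\rb$, and there the positive vanishing orders of $\tilde E$ only improve things. Therefore $R$ inherits exactly the microlocal description of $G$, which is what the four bullet points of the theorem assert.

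Finally I would read off the stated regularity and vanishing orders. In the $b$-regime, conormality of order $-2$ to $\diag_{M_{b,0}}$ and polyhomogeneity with index family $(\hat{\mathcal{E}}_b+1,\check{\mathcal{E}}_b+1,\tilde{\mathcal{E}}_b+2,\mathbb{N}_0)$ come straight from Proposition \ref{thm : b-parametrix} once we undo the conjugation $r^{-1}(\cdot)r^{-1}$ (which contributes the shift, since $rr'$ lifts to $\rho_{\mathrm{bf}}^2\rho_\lb\rho_\rb$), and the vanishing orders $2$ at $\mathrm{bf}$, $n/2$ at $\lb,\rb$ follow from $\inf\hat{\mathcal{E}}_b = n/2 - 1$ and $\inf\tilde{\mathcal{E}}_b = 0$ with the shift. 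In the $sc$-regime, the Legendrian structure associated with $(N^\ast\diag_{M_{b,0}}, L^\circ)$ near the diagonal and with $(L,L^\sharp)$ near $(\lf\cup\rf)\cap\mf$, the loss $\mathbf{e}$ from the oscillatory factors $\rho_\lf^{-\varepsilon_l/2}\rho_\rf^{-\varepsilon_r/2}\rho_\sharp^{-(\varepsilon_l+\varepsilon_r)/2}$ in \eqref{eqn : principal symbol of parametrix at bicharacteristic}, and the $(n-1-\mathbf{e})/2$ vanishing at $\lf,\rf$ are exactly the content of Proposition \ref{thm : sc-parametrix}; the $(b,sc)$- and $(sc,b)$-regime statements come from Propositions \ref{prop : b-sc parametrix} and \ref{prop : sc-b parametrix}. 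The main obstacle is the Fredholm/invertibility step: one must verify that $\Id - \tilde{E}$ is genuinely invertible in the relevant function space — equivalently that no resonance sits on the real axis for the incoming resolvent — and that the Neumann series composition genuinely closes within the Legendrian classes near $\mf$, $\lf$, $\rf$ without creating new wavefront components; the stability hypothesis $\mathbf{e} \ll n-1$ is what keeps the oscillatory exponents from destroying the conormal/Legendrian bookkeeping in these compositions.
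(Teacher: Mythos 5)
Your overall scheme---patch the four local parametrices into $G$, note the remainder $\tilde E$, invert $\Id - \tilde E$, and transport the microlocal structure of $G$ to $R = G(\Id - \tilde E)^{-1}$---is the right skeleton and does match the paper's structure at that level. However, the step ``a Neumann series $R = G\circ\sum_{k\ge0}\tilde E^k$ converges'' is a genuine gap, and it is exactly the point the paper has to work hardest on. Compactness of $\tilde E$ does not make $\sum_k\tilde E^k$ converge; that requires $\|\tilde E\|<1$, which is not available and cannot be arranged by pushing the construction ``one more step.'' The invertibility of $\Id-\tilde E$ is also not automatic: even though the incoming problem has no $L^2$ kernel, the particular parametrix $G$ might fail to make $\Id - \tilde E$ invertible. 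The paper handles this by adding a finite-rank correction $G_{sc,p}=\sum_i\phi_i\langle\rho^{2l}\psi_i,\cdot\rangle$ to force invertibility of $\Id - E$ on a weighted $L^2$ space, a step your argument omits entirely.

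Once invertibility is arranged, the paper does \emph{not} use a Neumann series to show that $S=(\Id-E)^{-1}-\Id$ stays in the correct polyhomogeneous/Legendrian class. Instead it uses the algebraic identity $S = E + E^2 + ESE$: the terms $E$ and $E^2$ are controlled by the composition calculus (Proposition~\ref{prop : phg mapping} and its scattering analogues), while $ESE$ is handled by pushing forward to $M$, exploiting that $S$ is Hilbert--Schmidt on $\langle r\rangle^{-l}L^2(\mathcal C)$ uniformly in $\hbar$, and applying Cauchy--Schwarz to show $ESE$ has the same conormal/oscillatory structure as $E$. Your claim that ``$G\circ\tilde E^k$ lies in the same class as $G$ itself, with no growth of index sets'' quietly assumes this closure, but the real point is that $S$ is a priori only a Hilbert--Schmidt kernel, and one must show \emph{a posteriori} that it lies in $\mathcal D_b+\mathcal D_{sc}$; the identity $S = E + E^2 + ESE$ is the device that makes this possible without summing a series. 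Finally, the paper's last step uses the push-forward theorem for $b$-fibrations (\cite[Theorem 5]{pushforward}) to show $GS$ stays in the parametrix class; your appeal to ``standard composition results'' is in spirit correct but does not name this crucial ingredient.
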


\begin{proof}
By standard Fredholm theory, obtaining the true resolvent from parametrices will be done by understanding the inversion of $\Id - \tilde{E}$ and its composition with the parametrices.

First of all,  $\Id - \tilde{E}$ is not necessarily invertible in the $sc$-regime even though $\tilde{E}$ is a compact operator. 
 Hassell-Vasy \cite[Section 6.1]{Hassell-Vasy-AnnFourier} showed that this issue can be fixed by adding to the parametrix a finite rank perturbation.  The remainder $\tilde{E}$ is a Hilbert-Schmidt kernel on the weighted $L^2$ space of the $sc$-regime, say $\rho^l \chi_{sc} L^2(\mathcal{C}; {}^{b, sc}\Omega^{1/2})$  uniformly in $\hbar$ for any $l > (1 + \mathbf{e})/2$ in virtue of the stability condition \eqref{eqn : stability}.\footnote{Recall that $\rho = \hbar/x$ and $\chi_{sc}$ is a cut-off function on $\mathcal{C}$ supported near $\{\rho < 1\}$.}  One can choose $\psi_i$ and $\phi_i$ supported near $\{\rho \leq 1\} \subset \mathcal{C}$ such that $\{\rho^l \psi_i\}$ spans the null space of $\Id - \tilde{E}$  and $\{P_{sc} (\phi_i)\}$ spans the supplementary subspace to the range. Then we correct $G_{sc}$ by adding a finite rank perturbation $G_{sc, p} =\sum_i \phi_i \langle \rho^{2l} \psi_i, \cdot \rangle$, where $\langle \cdot, \cdot \rangle$ denotes the $L^2$-inner product. This results in an improved remainder $E$ such that $$\Id - E =\Id - \tilde{E} + \sum (P_{sc} (\phi_i)) \langle \rho^{2l} \psi_i, \cdot\rangle.$$ is invertible on $\rho^l \chi_{sc} L^2(\mathcal{C}; {}^{b, sc}\Omega^{1/2})$. Moreover, this correction term does not change the structure of the resolvent.
 
Since $\Id - E$ is invertible now,  we can analyse the inverse $$\Id + S = (\Id - E)^{-1}.$$
 The identity  $$\Id = (\Id - E) (\Id + S) = (\Id + S) (\Id - E) $$ implies $S$ is also Hilbert-Schmidt and $$  S = E + E^2 + ESE.$$

 Recall that the remainder $E$ lies in the space $\mathcal{D}_b + \mathcal{D}_{sc}$, where \[ \left.\begin{array}{ll}
   \mathcal{D}_b =   \rho_{\lb_0}^{\infty}  \mathcal{A}^{(\infty, 
    \hat{\mathcal{E}}_b + 1, \infty, \infty)}(M_{b, 0}; {}^{b, sc}\Omega^{1/2}), & \mbox{near $\{r < 1\}$},  \\ \mathcal{D}_{sc} = \rho_{\bfz} 
 	e^{\imath / \rho'} 
 	\rho_{\rb}^{n/2} \rho_{\rf}^{(n-1)/2 -  \varepsilon_r/2} 
 	 \rho^\infty_{\lf} \rho^\infty_{\mf}  C^\infty(M_{b, 0}; {}^{b, sc}\Omega^{1/2}), & \mbox{near $\{\rho < 1\}$}.  \end{array} \right.  \] The term $E^2$ lies in the composition space $$(\mathcal{D}_b + \mathcal{D}_{sc}) \circ (\mathcal{D}_b + \mathcal{D}_{sc}).$$
On the one hand, we see $\mathcal{D}_b \circ \mathcal{D}_b \subset \mathcal{D}_b$ and $\mathcal{D}_{sc} \circ \mathcal{D}_{sc} \subset \mathcal{D}_{sc}$   simply  by the calculus of polyhomogeneous conormal distributions.
 On the other hand, it follows from the vanishing properties that 
  the cross terms obey $\mathcal{D}_b \circ \mathcal{D}_{sc} \subset \mathcal{D}_{sc}$ and $\mathcal{D}_{sc} \circ \mathcal{D}_b \subset \mathcal{D}_b$. Therefore $- E + E^2$ also lies in $\mathcal{D}_b + \mathcal{D}_{sc}$.
 	
 	To analyse $ESE$, it is convenient to blow down $M_{b, 0}$ to $M$.  The  boundary defining functions of boundary faces of $M_{b, 0}$ will be pushed forward to  
 	\begin{eqnarray*}	(\beta_{b, 0})_{\ast} \rho_{\lf}\rho_{\mf} = \langle r \rangle^{-1},
 	&	(\beta_{b, 0})_{\ast} \rho_{\rf}\rho_{\mf} =  \langle r' \rangle^{-1}, & (\beta_{b, 0})_{\ast} \rho_{\rf}  \rho_{\lf} \rho_{\bfz}  = \hbar \end{eqnarray*} in the $sc$-regime and the $(sc, b)$-regime, 
 \begin{eqnarray*}
 (\beta_{b, 0})_{\ast}	\rho_{\lb}\rho_{\mathrm{bf}} = r/\langle r \rangle, 
 \quad	(\beta_{b, 0})_{\ast}\rho_{\rb}\rho_{\mathrm{bf}} = r'/\langle r' \rangle,
 \quad (\beta_{b, 0})_{\ast} \rho_{\lb_0}    \rho_{\bfz}  = \hbar \end{eqnarray*} in the $b$-regime and the $(b, sc)$-regime, where we denote $r = x/\hbar$, $r' = x'/\hbar$ and the Japanese bracket $\langle \cdot \rangle = (1 + \cdot^2)^{1/2}$.
 The push-forward  ${\beta_{b, 0}}_\ast (E) (z, z', \hbar)$ thus lies in the space 
  \begin{equation}\label{eqn : final error space}
  \hbar^\infty  \bigg(\frac{r}{\langle  r \rangle}\bigg)^\infty  \langle r\rangle^{-\infty}  e^{\imath \langle r' \rangle} \langle r'\rangle^{-(n-1)/2 + \varepsilon_r/2} \sum_{(z, k) \in \hat{\mathcal{E}}_b + 1} \bigg(\frac{r'}{\langle r'\rangle}\bigg)^{z} \bigg\langle\log \bigg(\frac{r'}{\langle r'\rangle}\bigg)\bigg\rangle^k    C^\infty (M).   \end{equation} This distribution is weighted square integrable in the right variables, that is in the space $\langle r' \rangle^{-l} L^2(\mathcal{C})$. 
  
Since $S$ is Hilbert-Schmidt on $\langle r \rangle^{-l} L^2(\mathcal{C})$, the kernel of $S(z'', z''', \hbar)$ has to lie in the space $$\langle r'' \rangle^{-l} \langle r''' \rangle^l L^2(M) \quad \mbox{uniformly as $\hbar \rightarrow 0$}, $$ where   $r'' = x''/\hbar,  r''' = x'''/\hbar, z'' = (x'', y''), z''' = (x''', y''')$.

The term ${\beta_{b, 0}}_\ast (ESE)(z, z', \hbar)$ has the expression 
$$   \int_{\mathcal{C}^2}  \Big({\beta_{b, 0}}_\ast(E)(z, z'', \hbar)\Big) \Big(S(z'', z''', \hbar)\Big) \Big({\beta_{b, 0}}_\ast (E)(z''', z', \hbar)\Big)  dg(z'') dg(z''').$$ By Cauchy-Schwartz, this integral takes the form \eqref{eqn : final error space}, i.e. $ESE$ lies in $\mathcal{D}_b + \mathcal{D}_{sc}$. Then $S$ in turn lies in $\mathcal{D}_b + \mathcal{D}_{sc}$.

Finally, the desired true resolvent is \begin{eqnarray*}R = G (\Id + S) \quad \mbox{where $G =	G_{sc} + G_{sc, p} + G_{(sc, b)} + G_b + G_{(b, sc)}$}.
\end{eqnarray*} This is distributions on $M_{b, 0}$,\footnote{Although the $sc$-pseudodifferential part of $G_{sc}$ is defined on $M_{sc, b, 0}$, it doesn't affect the composition with $S$, since $S$ vanishes to infinite order at $\mf$ on $M_{b, 0}$.} whilst $S$ is a distribution on $M$.  To calculate their compositions, we introduce the projection maps from the triple space $M_{b, 0} \times \mathcal{C}$, \begin{align*}\mathcal{P}_G (M_{b, 0} \times \mathcal{C}) &= \beta_{b, 0} (M_{b, 0}) ,\\ \mathcal{P}_S (M_{b, 0} \times \mathcal{C}) &= \beta_\hbar \circ \tilde{\mathcal{P}}_R (M_{b, 0}) \times \mathcal{C},\end{align*} 
where $\beta_\hbar$ is the blow-down map from $\mathcal{C}_b$ to $\mathcal{C} \times [0, 1)$ and $\tilde{\mathcal{P}}_R$ is the stretched right projection from $M_{b, 0}$ to $\mathcal{C}_b$.
In local coordinates, we write $$(z, z'', \hbar) \in \mathcal{P}_G (M_{b, 0} \times \mathcal{C})\quad \mbox{and} \quad(z'', z', \hbar)  \in \mathcal{P}_S (M_{b, 0} \times \mathcal{C}).$$

We pull back $G$ and $S$ to the triple space $M_{b, 0} \times \mathcal{C}$. The pullback $\mathcal{P}_G^\ast (G(z, z'', \hbar))$  does not change as they are irrelevant to the variable from $\mathcal{C}$. But $\mathcal{P}^\ast_S(S(z'', z', \hbar))$ will lift to a distribution of the form
$$\rho_{\mf}^\infty\rho_{\mathrm{bf}}^\infty  \rho_{\rb}^{\infty}\rho_{\rb_0}^\infty    e^{\imath \langle r' \rangle} \langle r'\rangle^{-(n-1)/2 + \varepsilon_r} \sum_{(z, k) \in \hat{\mathcal{E}}_b + 1} \bigg(\frac{r'}{\langle r'\rangle}\bigg)^{z} \bigg\langle\log \bigg(\frac{r'}{\langle r'\rangle}\bigg) \bigg\rangle^k    C^\infty (M_{b, 0} \times \mathcal{C}).$$ 

The composition $G   S (z, z', \hbar)$ on $M_{b, 0}$ has the expression $$\int_{\mathcal{C}} \mathcal{P}_G^\ast \big(G(z, z'', \hbar)\big) \mathcal{P}_S^\ast \big(S(z'', z', \hbar)\big)  dg(z'').$$ This actually makes a $b$-fibration (in the sense of Melrose \cite{pushforward}) from $M_{b, 0} \times \mathcal{C}$ to $M_{b, 0}$ by projecting off the middle variables of $\mathcal{C}^3$ (or the right variables of $M_{b, 0}$). Since the polyhomogeneity and oscillations in $r'$ agree with the behaviour of $G$ in right variables, the push-forward theorem \cite[Theorem 5]{pushforward} guarantees $G   S$ still lies in the space we have described in the theorem.\footnote{The error term $G S$ is actually better than $G$. Since the infinite order vanishing at $\rf$ of $\mathcal{P}^\ast_S S$, we conclude that $G S$ does not have the Legendrian structure over $\rf$. But this is not important here.} 
\end{proof}

\section{Application to Schr\"odinger equations}




\subsection{The Schr\"odinger propagator on $\mathcal{C}$}

Now let us consider the Schr\"odinger propagator $e^{\imath t \Delta_{\mathcal{C}}}$ on a non-product conic manifold $\mathcal{C}$ with a single cone tip, which is viewed as a local neighbourhood of $\mathcal{X}$. By the spectral theorem of projection valued form, the Schwartz kernel of $e^{\imath t \Delta_{\mathcal{C}}}$ reads, $$e^{\imath t  \Delta_{\mathcal{C}}} = \int_{\mathrm{spec}(\Delta_{\mathcal{C}})} e^{\imath t \lambda^2} dE_{\sqrt{\Delta_{\mathcal{C}}}}(\lambda),$$
where $E_{\bullet}(\lambda)$ is the spectral projection corresponding to a self-adjoint operator $\bullet$, $dE_{\bullet}(\lambda)$ is called the spectral measure of $\bullet$, and
this equation means $$\langle e^{\imath t  \Delta_{\mathcal{C}}}\psi, \psi
\rangle = \int_{\mathrm{spec}(\Delta_{\mathcal{C}})} e^{\imath t \lambda^2} \, d \langle E_{\sqrt{\Delta_{\mathcal{C}}}}(\lambda)
\psi, \psi\rangle.$$

Stone's formula relates the spectral measure to the resolvent of $\Delta_{\mathcal{C}}$, $$dE_{\sqrt{\Delta_{\mathcal{C}}}}(\lambda) = \frac{\lambda}{\imath \pi}\Big(\big(\Delta_{\mathcal{C}} - (\lambda + \imath 0)^2\big)^{-1} - \big(\Delta_{\mathcal{C}} - (\lambda - \imath 0)^2\big)^{-1}\Big) d\lambda.$$ By combining this together with $\Delta_{\mathcal{C}} - (\lambda \pm \imath 0)^2  = \lambda^2 P_\hbar(\pm \imath 0)$, we see $$dE_{\sqrt{\Delta_{\mathcal{C}}}}(\lambda) = \frac{1}{\imath \pi}\Big(P_{\hbar}^{-1}(+ \imath 0) - P_{\hbar}^{-1}(- \imath 0)\Big) \frac{d\lambda}{\lambda}.$$

 For finite spectral parameters, Loya \cite[Theorem 6.1]{Loya} proved that $(\Delta_{\mathcal{C}} - (\lambda \pm \imath 0)^2)^{-1}$ is meromorphic in $\mathbb{C}$ and has only finite rank singularities. Then the spectral measure $dE_{\sqrt{\Delta_{\mathcal{C}}}}(\lambda)$ is also meromorphic and has only finite rank singularities. When the spectral parameter goes to infinity, we have constructed the resolvent  $P^{-1}_{\hbar}(\pm \imath 0)$, as in Theorem \ref{thm : full main theorem}. The subtraction in Stone's formula cancels out the diagonal singularities. Therefore the spectral measure is distributions defined on $M_{b, 0}$, satisfying
\begin{itemize}
	\item 
	$\chi_{b} dE_{\sqrt{\Delta_{\mathcal{C}}}}(\lambda)$, supported in the $b$-regime and $(b, sc)$-regime, is a conormal distribution to $\lb$ and $\rb$, vanishing at $\lb_0$, $\rb_0$, and $\bfz$;
	\item $\chi_{sc} dE_{\sqrt{\Delta_{\mathcal{C}}}}(\lambda)$, supported in the $sc$-regime and $(sc, b)$-regime, is intersecting Legendrian distributions with conic points, microlocally supported in the Legendrian pair $\tilde{L}$, vanishing at $\lf$ and $\rf$.
\end{itemize}

Hence it is convenient to decompose the Schr\"odinger propagator into cut-off operators, \begin{eqnarray*}
e^{\imath t \Delta_{\mathcal{C}}}_{\mathrm{low}} &=& \int \chi_{\{\lambda < 2\}} e^{\imath t \lambda^2} dE_{\sqrt{\Delta_{\mathcal{C}}}}(\lambda),  \\
e^{\imath t \Delta_{\mathcal{C}}}_{\mathrm{high}} &=& \int \chi_{\{\lambda > 1\}} e^{\imath t \lambda^2} dE_{\sqrt{\Delta_{\mathcal{C}}}}(\lambda),
\end{eqnarray*}
where $\chi_{\{\lambda > 1\}}$ and $\chi_{\{\lambda < 2\}}$ are smooth cut-off functions supported in $ (1, \infty)$ and  $[0, 2)$ and satisfying $\chi_{\{\lambda < 2\}} + \chi_{\{\lambda > 1\}} = 1$ for any $\lambda \geq 0$.

It follows that  the low energy cut-off $e^{\imath t \Delta_{\mathcal{C}}}_{\mathrm{low}}$ is a bounded operator both from $L^2(\mathcal{C})$ to $L^2(\mathcal{C})$ and $L^1(\mathcal{C})$ to $L^\infty(\mathcal{C})$ uniformly in $t$. 

The high energy cut-off $e^{\imath t \Delta_{\mathcal{C}}}_{\mathrm{high}}$ is more involved. We encounter two hurdles. On the one hand, the spectral measure will have the conic singularities near $L^\sharp \cap L$. On the other hand, the expression of the Schr\"odinger propagator transforms the spectral parameter $\lambda$ to the time parameter $t$. Unlike the resolvent and the spectral measure, one has to employ the quadratic scattering cotangent bundle to describe the Schr\"odinger propagator in the phase space as in \cite{Wunsch-Duke1999, Hassell-Wunsch-annals, Hassell-Wunsch}. 

However, understanding this finer structure does not seem necessary for our purpose. Instead, we shall adopt the classical   $TT^\ast$ approach and Keel-Tao's endpoint method to establish Strichartz estimates.  This can be achieved by 	(micro)localizing the spectral measure around the diagonal as in \cite{Guillarmou-Hassell-Sikora, Hassell-Zhang}.\footnote{Apart from removing conic singularities, the microlocalization on asymptotically conic manifolds in \cite{Guillarmou-Hassell-Sikora, Hassell-Zhang} also aims to avoid the potential conjugate points near the infinity. However, such an issue does not arise near the cone tip, since the geodesics are all simple here.}

\subsection{Microlocalization and the microsupport}The microlocalization refers to the refined localization on cotangent bundles by introducing smooth cut-offs also on fibre variables not only physical variables.  

The microsupport is useful to formulate the microlocalization by the conjugation. The microsupport of a distribution is defined by negating the right fibre variables of its wavefront set. Specifically, given a distribution $u$ in the $sc$-regime, we say $q' = (z, z', \zeta, \zeta')$ lies in the microsupport $\mathrm{WF}_{sc}'(u)$ if and only if $q = (z, z', \zeta, -\zeta') $ lies in the wave front set $\mathrm{WF}_{sc}(u)$. 

For a Legendrian distribution $U$ associated with a Legendrian $L$, its $sc$-wavefront set $\mathrm{WF}_{sc}(U)$ is contained in $L$. Then the $sc$-microsupport $\mathrm{WF}_{sc}'(U)$ is contained in $L' = \{q' : q \in L\}$.

For a $sc$-pseudodifferential operator $Q$, its $sc$-wavefront set $\mathrm{WF}_{sc}(U)$ is contained in the diagonal conormal bundle $\{(\rho, y, \rho, y, \tau, \mu, -\tau, -\mu)\}$. Since both left and right projections to the single space are  diffeomorphisms, \begin{eqnarray*}
&\pi_L& :  (\rho, y, \rho', y', \tau, \mu, \tau', \mu')   \rightarrow (\rho, y, \tau, \mu)\\ &\pi_R& :  (\rho, y, \rho', y', \tau, \mu, \tau', \mu')    \rightarrow (\rho', y', \tau', \mu'),
\end{eqnarray*} we can view the $sc$-microsupport $\mathrm{WF}_{sc}'(Q)$ as a subset of  $\{(\rho, y, \tau, \mu)\}$ in the left space, and the $sc$-microsupport  $\mathrm{WF}_{sc}'(Q^\ast)$ as a subset of  $\{(\rho', y', \tau', \mu')\}$ in the right space where the fibre variables are negated.

Conjugating Legendrian distributions $U$ by a $sc$-pseudodifferential operator $Q$, we can microlocalize the Legendrian distributions, in terms of the microsupport of $Q$ and $U$,
 \begin{eqnarray}\label{eqn : microlocalizing left} \mathrm{WF}'_{sc}(QU) &\subset& \pi_L^{-1} \mathrm{WF}'_{sc}(Q) \cap \mathrm{WF}_{sc}'(U)\\ \label{eqn : microlocalizing right} \mathrm{WF}'_{sc}(UQ^\ast) &\subset& \pi_R^{-1} \mathrm{WF}'_{sc}(Q^\ast) \cap \mathrm{WF}_{sc}'(U)\\
 \label{eqn : microlocalization}
\mathrm{WF}_{sc}'(QUQ^\ast) &\subset& \pi_L^{-1} \mathrm{WF}_{sc}'(Q)    \cap \mathrm{WF}_{sc}'(U) \cap \pi_R^{-1} \mathrm{WF}_{sc}'(Q^\ast).
\end{eqnarray}

\subsection{The conjugated spectral measure} 

Consider the conjugation of the spectral measure, $Q_\alpha(\lambda) dE_{\sqrt{\Delta_{\mathcal{C}}}}(\lambda) Q_\alpha^\ast(\lambda),$ by a family of pseudodifferential operators $\{Q_\alpha(\lambda)\}_\alpha$.
In light of \eqref{eqn : microlocalization}, we can view this as the desired (micro)localized spectral measure by choosing the support of every $Q_\alpha(\lambda)$ sufficiently small in the physical space as well as in the phase space. Consequently, $Q_\alpha(\lambda) dE_{\sqrt{\Delta_{\mathcal{C}}}}(\lambda) Q_\alpha^\ast(\lambda)$ will have to be supported in a neighbourhood of the diagonal with $\mf \cap \diag_{M_{b, 0}}$ removed (which is cancelled out in Stone's formula). In particular, we can make them supported away from the conic points in the $sc$-regime. This significantly simplifies the microlocal structure in the $sc$-regime, as $Q_j(\lambda) dE_{\sqrt{\Delta_{\mathcal{C}}}}(\lambda) Q_j^\ast(\lambda)$ is a Legendrian distribution  microlocally supported only on the geometric Legendrian $L$. 

Specifically, we shall prove that
\begin{proposition}\label{prop : microlocalized spectral measure}
	For $\lambda > 1$, there exists a finite collection of pseudodifferential operators $\{Q_\alpha(\lambda)\}_{\alpha \in \{b, tr, 0, 1, \cdots, N\}}$   such that it makes a finite partition of identity on $L^2(\mathcal{C})$, $$\Id = Q_b(\lambda) + Q_{tr}(\lambda) + Q_0(\lambda) + \sum_{j = 1}^N Q_j(\lambda),$$  and the kernel of each conjugated spectral measure on $M = \mathcal{C}^2 \times [0,1)$ takes the form $$Q_\alpha(\lambda)dE_{\sqrt{\Delta_{\mathcal{C}}}}(\lambda)Q_\alpha^\ast(\lambda)(z, z') = \lambda^{n-1 + \mathbf{e}/2} \bigg(\sum_{\pm} e^{\pm \imath \lambda d_{\mathcal{C}}(z, z')} a_\pm(z, z', \lambda) + b(z, z', \lambda) \bigg),$$ where $\mathbf{e}$ is defined in \eqref{eqn : stability}, $d_{\mathcal{C}}(\cdot, \cdot)$ is the distance function on $\mathcal{C}$ and the amplitudes satisfy the size estimates \begin{eqnarray}\label{eqn : estimates for a}
	|\partial_\lambda^k a_\pm(\lambda, z, z')| &\lesssim&  \lambda^{-k} (1 + \lambda d_{\mathcal{C}}(z, z'))^{-(n-1)/2},\\ \label{eqn : estimates for b}|\partial_\lambda^k b(\lambda, z, z')| &\lesssim&  (1 + \lambda d_{\mathcal{C}}(z, z'))^{-K}, \quad \mbox{for any $K \in \mathbb{N}$}.
	\end{eqnarray}
\end{proposition}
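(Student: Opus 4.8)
The strategy is to construct the microlocal partition $\{Q_\alpha(\lambda)\}$ from cut-offs in the scattering cotangent bundle ${}^{b,sc}T^\ast M_{b,0}$ over the $sc$-regime, then to read off the stated amplitude estimates directly from the Legendrian structure of $dE_{\sqrt{\Delta_{\mathcal{C}}}}(\lambda)$ established in Theorem \ref{thm : full main theorem} (via Stone's formula). First I would build the operators: away from the boundary (the $b$-regime and $(b,sc)$-regime) the spectral measure is already a smooth conormal distribution rapidly vanishing at $\lb_0,\rb_0,\bfz$, so a single cut-off $Q_b(\lambda)$, a microlocal cut-off to a small neighbourhood of $\diag_{M_{b,0}}$ in the interior, absorbs this part; its conjugated spectral measure is then a classical semiclassical oscillatory integral whose phase is $\lambda d_\mathcal{C}(z,z')$ (this is exactly where the no-conjugate-points hypothesis on $\mathcal{C}$ is used — $d_\mathcal{C}$ is smooth off the diagonal) and whose symbol obeys \eqref{eqn : estimates for a} by stationary phase with the $\lambda^{n-1}$ normalization coming from Stone's formula and the order of the spectral measure. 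The operator $Q_{tr}(\lambda)$ is supported near the transitional face $\bfz$; there the normal operator is $\Delta_\mathfrak{C}-(1+\imath 0)$, and the spectral measure on the metric cone from Theorem \ref{thm : resovlent on metric cones} (through the explicit Bessel--Hankel formulas \eqref{eqn : resolvent lb_0}, \eqref{eqn : resolvent rb_0} and asymptotics \eqref{eqn : Bessel}, \eqref{eqn : Hankel}) yields the same oscillatory form. Finally $Q_0(\lambda),\dots,Q_N(\lambda)$ are microlocal cut-offs in ${}^{s\Phi}T^\ast_\mf M_{b,0}$ chosen so small (in base and fibre) that, by the microsupport inclusions \eqref{eqn : microlocalization}, each conjugated spectral measure is supported in a small piece of $L=L_+\cup L_-$ disjoint from the conic points $F_\pm$ where $L$ meets $L^\sharp$; $Q_0(\lambda)$ handles the region near $\diag\cap\mf$ (the intersecting Legendrian with $N^\ast\diag$), and the $Q_j(\lambda)$, $j\geq1$, cover the rest of $L\setminus\bigl(\overline{L^\sharp}\cup T_\pm\bigr)$.

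The second step is the amplitude bound on each of the $Q_j(\lambda)$ pieces. On the interior of $L_\pm$ the Legendrian distribution $dE_{\sqrt{\Delta_{\mathcal{C}}}}(\lambda)$ is, by the symbol exact sequence \eqref{eqn : exact seq for fibred intersecting Legendrian distributions} and the transport equation analysis of Section \ref{sec : sc parametrix}, a one-dimensional oscillatory integral whose phase parametrizes $L_\pm$. The key point is that, under the projection $\beta_{b,0}$ back to $M=\mathcal{C}^2\times[0,1)$ with $\lambda=\hbar^{-1}$, the Legendrian $L_\pm$ is precisely the graph of $\pm d(e^{\pm\imath\lambda d_\mathcal{C}(z,z')})$ — that is, the phase of the half-wave propagator, which is exactly why $e^{\pm\imath\lambda d_\mathcal{C}}$ appears in the statement. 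The symbol order $-1/2$ of the Legendrian, combined with the half-density normalization and the weight $\langle r\rangle,\langle r'\rangle$ conversions from $M_{b,0}$ to $M$ (namely $\rho_\lf\rho_\mf\mapsto\langle r\rangle^{-1}$, $\rho_\rf\rho_\mf\mapsto\langle r'\rangle^{-1}$ as recorded in the proof of Theorem \ref{thm : full main theorem}), produces the decay $(1+\lambda d_\mathcal{C}(z,z'))^{-(n-1)/2}$ in \eqref{eqn : estimates for a}: each extra power of $\langle r\rangle^{-1}\langle r'\rangle^{-1}\sim(\lambda d_\mathcal{C})^{-1}$ when one is away from the small end corresponds to one step down the symbol expansion, and integrating out the single Legendrian parameter by stationary phase costs a further $(\lambda d_\mathcal{C})^{-1/2}$, giving $n/2+1/2-1=(n-1)/2$ powers in total in the generic geodesic region. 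The $\lambda$-derivatives in \eqref{eqn : estimates for a} are controlled because differentiating in $\lambda$ either brings down a factor $d_\mathcal{C}(z,z')$ from the phase (harmless after the decay is accounted for) or lowers the symbol order by one, i.e. costs $\lambda^{-1}$. The term $b(z,z',\lambda)$ collects everything that is not on $L$ after the microlocalization: the contributions from the error terms $S$ and $E$ in Theorem \ref{thm : full main theorem} (which vanish to infinite order at $\mf,\lf,\rf$), the tails of the conormal singularity at $N^\ast\diag$ handled by non-stationary phase, and the parts of $Q_b,Q_{tr}$ away from the half-wave front set. All of these are Schwartz (or conormal of arbitrarily negative order) jointly in $\lambda d_\mathcal{C}(z,z')$, giving \eqref{eqn : estimates for b}.

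The crucial subtlety — and the origin of the weight $\lambda^{n-1+\mathbf{e}/2}$ rather than the usual $\lambda^{n-1}$ — is the additional oscillation $\rho_\lf^{-\varepsilon_l/2}\rho_\rf^{-\varepsilon_r/2}\rho_\sharp^{-(\varepsilon_l+\varepsilon_r)/2}$ of the principal symbol of $R_{sc}$, i.e. the loss of order $\mathbf{e}$ at $\lf$ and $\rf$ recorded in Theorem \ref{thm : full main theorem} and \eqref{eqn : principal symbol of parametrix at bicharacteristic}. Because $\rho_\lf$ and $\rho_\rf$ both push forward under $\beta_{b,0}$ to quantities comparable to $\lambda^{-1}$ times a bounded factor (more precisely $\rho_\lf\rho_\mf\mapsto\langle r\rangle^{-1}$ with $r=\lambda x$), the extra negative power $-\varepsilon_r/2$ with $\varepsilon_r=x'e(x',y')\leq\mathbf{e}$ translates, in the worst case and after taking the supremum over the compact set $\mathcal{K}$, into an extra growth $\lambda^{\mathbf{e}/2}$ overall; this is uniform precisely because of the stability condition \eqref{eqn : stability}, $\mathbf{e}\ll n-1$, which keeps the vanishing orders $(n-1-\mathbf{e})/2$ at $\lf,\rf$ positive and the whole construction well-defined. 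Thus one must be careful to factor out $\lambda^{n-1+\mathbf{e}/2}$ globally and verify that what remains genuinely satisfies the clean bounds \eqref{eqn : estimates for a}--\eqref{eqn : estimates for b} with \emph{no} residual $\varepsilon$-dependent powers of $d_\mathcal{C}$ hidden in the amplitude; I expect checking this bookkeeping — tracking how the non-constant function $\varepsilon=xe(x,y)$ and the associated logarithmic terms in the parametrix interact with the $\lambda$-differentiations and with the passage from $M_{b,0}$-weights to $(1+\lambda d_\mathcal{C})$-weights — to be the main obstacle. The remaining ingredient, the existence of the partition $\Id=Q_b+Q_{tr}+Q_0+\sum_j Q_j$ with the prescribed small microsupports, is standard once one has a locally finite cover of the compactified phase space $C_{sc}$ over the $sc$-regime, using that $\mathcal{C}$ is compact near the cone tip and has no conjugate points away from it, so only finitely many $Q_j$ are needed.
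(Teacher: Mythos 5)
The proposal has the right high‐level strategy (microlocalise the spectral measure in phase space using the Legendrian description from Theorem \ref{thm : full main theorem}, then read off the decay), but the assignment of the pieces to the two terms $a_\pm$ and $b$ is wrong, and the hardest part of the estimate is missing.

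First, the roles of $Q_b$, $Q_{tr}$, $Q_0$. In the paper's proof these are \emph{all} absorbed into the $b$-term and none of them produces an oscillatory $a_\pm$-contribution. The $Q_b$-piece is a position-space cut-off to the $b$-regime $\{r<\epsilon\}\cap\{r'<\epsilon\}$, where $\lambda d_{\mathcal C}\lesssim r+r'$ is bounded; there is nothing to gain from a phase function there, and the decay in \eqref{eqn : estimates for b} is automatic from the polyhomogeneity at $\lb,\rb$. The $Q_{tr}$-piece lives where $r,r',\rho,\rho'\gtrsim\epsilon$, again a region where $\lambda d_{\mathcal C}$ is bounded, and the spectral measure there is just a smooth Riemannian half density $\lambda^{n-1}f\,|dgdg'|^{1/2}d\lambda$; the Bessel–Hankel analysis of the metric cone, which you invoke for $Q_{tr}$, is never used in the proposition. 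And the paper's $Q_0$ is not a cut-off near $\diag\cap\mf$; it is the microlocal cut-off to the \emph{elliptic} region $\{|\sigma_\partial(P_{sc})|\geq\epsilon/2\}$, and the reason $Q_0\,dE\,Q_0^\ast$ is a $b$-term is the commutator/microsupport argument with $T_1+T_2+T_3+T_4$ and \eqref{eqn : microlocalizing left}--\eqref{eqn : microlocalizing right}, which shows the kernel vanishes to infinite order at $\lf$, $\rf$, $\mf$. Only the $Q_j$'s with $j\geq1$, microlocalised to small $\tau$-intervals inside the characteristic variety and (crucially) disjoint from $(L^\sharp)'\cap L'$, contribute to $a_\pm$. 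Your $Q_b$ ``microlocal cut-off near $\diag_{M_{b,0}}$ in the interior, absorbing the $b$- and $(b,sc)$-regimes'' conflates disjoint regions of $M_{b,0}$.

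Second, and more seriously, the decay rate $(1+\lambda d_{\mathcal C})^{-(n-1)/2}$ is not obtained by ``integrating out a single Legendrian parameter by stationary phase.'' Away from the intersection $\partial_0 L = L\cap N^\ast\diag_{M_{b,0}}$ the Legendrian projects diffeomorphically to the base (this is where simple geodesics are used, not for $Q_b$), the phase is literally $\pm\lambda d_{sc}$ with \emph{no} integral, and the $(n-1)/2$ decay comes from the explicit prefactor $\rho^{(n-1)/2}\lesssim(1+\lambda d_{sc})^{-(n-1)/2}$ in \eqref{eqn : projective type}. Near $\partial_0 L$ the projection $\pi:L\to\mf$ has $\det d\pi$ vanishing to order $n-1$, and the kernel is an $(n-1)$-dimensional oscillatory integral \eqref{eqn : n-1 vanish type} with a phase $\Phi(y,\textbf{w},\textbf{v})$ whose Hessian degenerates like $\textbf{w}_1$ at $\partial_0 L$ (properties \eqref{eqn : derivative of phase}--\eqref{eqn : phase and distance}). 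The required $(n-1)/2$ decay there is \emph{not} a standard one-parameter stationary phase; one needs the three-regime integration-by-parts argument (cases $|\textbf{w}|\lesssim\rho$, $|\textbf{w}|\gtrsim\rho$ with $|\textbf{w}|\gtrsim|\textbf{w}_1|$, and $|\textbf{w}|\sim|\textbf{w}_1|\gtrsim\rho$). This is where the proposal genuinely leaves a gap: without the lemma on $\pi$ and the degenerate phase, the arithmetic $n/2+1/2-1=(n-1)/2$ you quote is not a derivation.

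The part of the proposal that is essentially on target is the bookkeeping for the $\lambda^{\mathbf e/2}$ loss: the extra factor $\tilde\rho^{-\varepsilon_l/2}$ (equivalently $\rho_\lf^{-\varepsilon_l/2}$, etc.) in the principal symbol pushes forward to a power of $x'$ (not $\lambda^{-1}$ directly), and in the $sc$-regime $x',x>1/\lambda$, so the supremum over $\mathcal K$ in \eqref{eqn : stability} gives the uniform bound $\lambda^{\mathbf e/2}$. You state the conclusion correctly, though the intermediate assertion that $\rho_\lf$ pushes forward to ``$\lambda^{-1}$ times a bounded factor'' is not accurate: $\langle r\rangle^{-1}$ ranges over all of $[\lambda^{-1},\epsilon]$ in the $sc$-regime, and the bound comes from the inequality $1/x'<\lambda$, not from $\rho_\lf\sim\lambda^{-1}$.
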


\begin{proof}
We apply the ideas from \cite[Theorem 1.13]{Guillarmou-Hassell-Sikora} and its improvement \cite[Proposition 1.5]{Hassell-Zhang} to our analysis on $M_{b, 0}$.

\subsubsection*{Partition of identity}

Before showing the estimates, we start from elucidating how we find the partition of identity $\{Q_\alpha(\lambda)\}_{\alpha}$. We view these pseudodifferential operators as operators compactly supported on $\mathcal{C}$ with a parameter $\hbar = 1/\lambda$. Then the conjugation $Q_\alpha(\lambda) \bullet Q_\alpha^\ast(\lambda)$ is viewed as a restriction to a neighbourhood of the diagonal in $M_{b, 0}$. The diagonal of $M_{b, 0}$ is a submanifold which intersects the faces $\{\mathrm{bf}, \bfz, \mf\}$. However, $\bfz$ is viewed as the interior since both $0 < r < \infty$ and $0 < r' < \infty$.  

First of all, we divide $\mathcal{C}$, with a parameter $\hbar \in [0, 1)$, into three overlapping parts, \begin{eqnarray*}\mathcal{C}_b(\hbar) &=& \{r = x/\hbar < \epsilon\}\\ \mathcal{C}_{tr}(\hbar) &=& \{r = x/\hbar \geq \epsilon/2\} \cap \{  \rho = \hbar/x \geq \epsilon/2 \}\\ \mathcal{C}_{sc}(\hbar) &=& \{\rho = \hbar/x < \epsilon\}   \end{eqnarray*}for some sufficiently small $\epsilon >0$.\footnote{The interior $\mathcal{C} \setminus \mathcal{K}$ is thought of as a part in $\mathcal{C}_{sc}$.}  Then there exists a partition of identity on $\mathcal{C}$, $$\Id = Q_b(\lambda) + Q_{tr}(\lambda) + Q_{sc}(\lambda),$$ with $0$-th degree  pseudodifferential operators  $Q_b(\lambda)$, $Q_{tr}(\lambda)$, and $Q_{sc}(\lambda)$ supported in $\mathcal{C}_b(\hbar)$, $\mathcal{C}_{tr}(\hbar)$, and $\mathcal{C}_{sc}(\hbar)$ respectively. This leads to the following consequences : 
 \begin{itemize} 
\item the conjugation $Q_b(\lambda) \bullet Q_b^\ast(\lambda)$ on $M_{b, 0}$ is supported in a neighbourhood of $\mathrm{bf}$,
 \item the conjugation $Q_{tr}(\lambda) \bullet  Q_{tr}^\ast(\lambda)$ on $M_{b, 0}$  is supported away from the boundaries,\footnote{The interior of the face $\bfz$ is viewed as the interior!} 
  \item the conjugation $Q_{sc}(\lambda)\bullet Q_{sc}^\ast(\lambda)$ on $M_{b, 0}$  is supported in a neighbourhood of $\mf$.\end{itemize}

Furthermore, we want to refine the decomposition  near $\mf$ to make the conjugation near $\mf$ away from the diffractive Legendrian $L^\sharp$ i.e. remove the conic singularities occur at $L^\sharp \cap L$. To accomplish this, we need to work on the phase space restricted to $\{\rho <  \epsilon\}$. Recall, from \eqref{eqn : geometric Legendrian} and \eqref{eqn : diffractive Legendrian}, that the geometric Legendrian is contained in the pull-back to  ${}^{s\Phi}T^\ast M_{b, 0}$ of the characteristic variety $\Sigma(P_{sc}) \subset {}^{sc}T^\ast \mathcal{C}|_{\{\rho < \epsilon\}}$, $$\Sigma(P_{sc}) = \{(\rho, y, \tau, \mu) : \rho = 0,  \tau^2 + h^{ij} \mu_i \mu_j = 1\},$$ whilst the conic singularities of the resolvent and spectral measure occur on $(\mf\cap \lf) \cup (\mf\cap \rf) \subset M_{b, 0}$, in terms of microsupport, at $$ (L^\sharp)' \cap L' = \{(\tilde{\rho}, y, y', \hbar, \tau, \tau', \mu, \mu', 0) : \mu =  \mu' = 0, \tau = \pm 1, \tau' = \mp 1\}  \subset \mathrm{WF}'_{sc}(dE_{\sqrt{\Delta_{\mathcal{C}}}}),$$ where $\tilde{\rho} = \rho/\rho'$.  Hence we decompose ${}^{sc}T^\ast \mathcal{C}|_{\{\rho < \epsilon\}}$ into the following overlapping subsets \begin{eqnarray*}
\mathcal{I}_{0} &=&	\{(\rho, y, \tau, \mu) : \rho < \epsilon,  \tau^2 + h^{ij} \mu_i \mu_j \leq 1 - \epsilon/2 \,\,\mbox{or}\,\, \tau^2 + h^{ij} \mu_i \mu_j \geq 1 +  \epsilon/2\},\\
\mathcal{I}_j &=& \{(\rho, y, \tau, \mu) : \rho < \epsilon,  1 - \epsilon < \tau^2 + h^{ij} \mu_i \mu_j < 1 + \epsilon, \tau \in I_j \},
\end{eqnarray*} 
where we write $$\{\tau \in \mathbb{R} : 1 - \epsilon < \tau^2 < 1 + \epsilon\} = \bigcup_{j = 1}^N I_j \quad \mbox{with open connected intervals $|I_j| < \epsilon < 1$}.$$ Then we choose a partition of $Q_{sc}(\lambda)$, $$Q_{sc}(\lambda) = Q_0(\lambda) + \sum_{j = 1}^N Q_j(\lambda)$$ such that $Q_0(\lambda)$ and $Q_j(\lambda)$ are  pseudodifferential operators with a microsupport contained in $\mathcal{I}_0$ and $\mathcal{I}_j.$
In virtue of \eqref{eqn : microlocalization}, the gain of such a decomposition is 
 \begin{itemize} 
	\item the conjugation $Q_0(\lambda) \bullet Q_0^\ast(\lambda)$ on $M_{b, 0}$ is microlocally supported away from the characteristic variety of $P_{sc}$ and thus a smooth function,
	\item the microsupport of the conjugation $Q_{j}(\lambda)\bullet Q_{j}^\ast(\lambda)$ on $M_{b, 0}$  is  away from $(L^\sharp)'$, as $|\tau- \tau'| \leq |I_j| < \epsilon$, i.e. $Q_{j}(\lambda)\bullet Q_{j}^\ast(\lambda)$ is microlocally supported away from the diffractive Legendrian $L^\sharp.$\end{itemize}

\subsubsection*{Distance function on $\mathcal{C}$}

Since $\mathcal{C}$ is bounded and features only simple geodesics, the explicit distance function for two points $z=(x, y)$ and $z'=(x',y')$ sufficiently close in $(\mathcal{C}, g)$ reads \begin{eqnarray*}d_{\mathcal{C}}(x, y, x', y') &=& \sqrt{x^2 + x'^2 - 2xx'\cos d_{\mathcal{Y}}(y, y')}\\ 
	&=& \hbar \sqrt{r^2 + r'^2 - 2rr'\cos d_{\mathcal{Y}}(y, y')}\\ 
	&=& \hbar \sqrt{1/\rho^2 + 1/\rho'^2 - 2\cos d_{\mathcal{Y}}(y, y')/(\rho\rho')},\end{eqnarray*} if we use $r = \rho^{-1} = x/\hbar$ and $r' = \rho'^{-1} = x'/\hbar$.   
In the projective coordinates $(\tilde{r} = r/r' = x/x', y, r' = x'/\hbar, y' )$, the distance function lifts to,  $$ d_b(\tilde{r}, y, r', y', \hbar) = \hbar r' \sqrt{\tilde{r}^2 + 1 - 2\tilde{r}\cos d_{\mathcal{Y}}(y, y')},$$ near  $\mathrm{bf}$ in the $b$-regime. 
In the projective coordinates $(\tilde{\rho} = \rho/\rho'=x'/x, y, \rho = \hbar/x, y' )$, the distance function lifts to,  $$ d_{sc}(\rho, \tilde{\rho}, y, y', \hbar) = \frac{\hbar}{\rho} \sqrt{1 + \tilde{\rho}^{2} - 2\tilde{\rho}\cos d_{\mathcal{Y}}(y, y')},$$ near $\mf$ in the $sc$-regime.

\subsubsection*{Conjugation by $Q_{tr}(\lambda)$} This  conjugation $Q_{tr}(\lambda) dE_{\sqrt{\Delta_{\mathcal{C}}}}(\lambda) Q_{tr}^\ast(\lambda)$ is localized  in the interior of $M_{b, 0}$, $$\{r \geq \epsilon/2  \} \cap \{r' \geq \epsilon/2 \} \cap \{\rho \geq \epsilon/2   \} \cap \{\rho' \geq \epsilon/2\} \subset M_{b, 0}.$$ These restrictions guarantee $(1 + \lambda d_\mathcal{C}(z, z'))^{-K}$ is bounded from below. In the meantime, the conjugated spectral measure $Q_{tr}(\lambda) dE_{\sqrt{\Delta_{\mathcal{C}}}}(\lambda) Q_{tr}^\ast(\lambda)$ is a smooth Riemannian half density of the form $$f |r^{n-1} r'^{n-1} dr dy dr' dy'|^{1/2} \frac{d\lambda}{\lambda} = \lambda^{n-1} f  |x^{n-1} x'^{n-1}dxdydx'dy'|^{1/2}d\lambda,$$  for some $f \in C^\infty(M)$. So $Q_{tr}(\lambda) dE_{\sqrt{\Delta_{\mathcal{C}}}}(\lambda) Q_{tr}^\ast(\lambda)$ is of the form $\lambda^{n - 1} b(z, z', \lambda)$ and \eqref{eqn : estimates for b} holds.

\subsubsection*{Conjugation by $Q_b(\lambda)$}  This conjugation $Q_b(\lambda) dE_{\sqrt{\Delta_{\mathcal{C}}}}(\lambda) Q_b^\ast(\lambda)$ is localized in a neighbourhood of $\mathrm{bf}$ in the $b$-regime, $$\{r < \epsilon\} \cap \{r' < \epsilon\} \subset M_{b, 0}.$$ In this region,  $(1 + \lambda d_\mathcal{C}(z, z'))^{-K}$ is likewise bounded from below. The conjugated spectral measure $Q_b(\lambda) dE_{\sqrt{\Delta_{\mathcal{C}}}}(\lambda) Q_b^\ast(\lambda)$ is a distribution on $M_{b, 0}$ polyhomogeneous conormal to $\lb$ and $\rb$. Hence it takes the form 
\begin{eqnarray*}\sum_{(z_l, k_l) \in \hat{\mathcal{E}}_b + 1}  \rho_{\lb}^{z_l}   (\log \rho_{\lb})^{k_l}
    a_{(z_l, k_l)}(\rho_{\lb}, y, r', y', \hbar) \bigg|\frac{d\rho_{\lb} dydr'dy'}{\rho_{\lb}r'}\bigg|^{1/2} \frac{d\lambda}{\lambda} && \mbox{away from $\rb$}, \\
  \sum_{ (z_r, k_r) \in \check{\mathcal{E}}_b + 1}  \rho_{\rb}^{z_r}   (\log \rho_{\rb})^{k_r}
  a_{(z_r, k_r)}(r, y, \rho_{\rb}, y', \hbar) \bigg|\frac{dr dyd\rho_{\rb}dy'}{r\rho_{\rb}}\bigg|^{1/2} \frac{d\lambda}{\lambda} && \mbox{away from $\lb$}. \end{eqnarray*} 
The blow down from $M_{b, 0}$ to $M$  pushes $Q_b(\lambda) dE_{\sqrt{\Delta_{\mathcal{C}}}}(\lambda) Q_b^\ast(\lambda)$ forward to \begin{eqnarray*}\sum_{\substack{(z_l, k_l) \in \hat{\mathcal{E}}_b + 1 \\ (z_r, k_r)\in \check{\mathcal{E}}_b + 1}}  (x/\hbar)^{z_l}   (\log (x/\hbar))^{k_l} (x'/\hbar)^{z_r}   (\log (x'/\hbar))^{k_r}
 \frac{a_{(z_l, k_l, z_r, k_r)}(x, y, x', y', \hbar) }{x^{n/2}x'^{n/2}}  |dg dg' |^{1/2}  \frac{d\lambda}{\lambda}. 
\end{eqnarray*} Then the fact $\inf \hat{\mathcal{E}}_b = \inf \check{\mathcal{E}}_b = n/2-1$ yields that $Q_b(\lambda) dE_{\sqrt{\Delta_{\mathcal{C}}}}(\lambda) Q_b^\ast(\lambda)$ also takes the form $\lambda^{n - 1} b(z, z', \lambda)$ with the amplitude estimates \eqref{eqn : estimates for b}.

\subsubsection*{Conjugation by $Q_0(\lambda)$} This conjugation $Q_0(\lambda) dE_{\sqrt{\Delta_{\mathcal{C}}}}(\lambda) Q_0^\ast(\lambda)$ is localized in a neighbourhood of $\mf$ in the $sc$-regime in the physical space $M_{b, 0}$, $$\{\rho < \epsilon\} \cap \{\rho' < \epsilon\} \subset M_{b, 0}.$$ However, in the phase space level, it is microlocalized only in the elliptic region of $P_{sc}$, $$\{\rho < \epsilon, \rho' < \epsilon, \sigma_{\partial}(P_{sc}) \leq   - \epsilon/2\} \cup \{\rho < \epsilon, \rho' < \epsilon, \sigma_{\partial}(P_{sc}) \geq   \epsilon/2\} \subset {}^{b, sc} T^\ast M_{b, 0},$$ where $\sigma_\partial(P_{sc}) = \tau^2 + h^{ij}\mu_i\mu_j - 1$. Therefore, $Q_0(\lambda) dE_{\sqrt{\Delta_{\mathcal{C}}}}(\lambda) Q_0^\ast(\lambda)$ is a smooth section of the $sc$-half density on $M_{b, 0}$.

We can write the conjugated spectral measure as \begin{eqnarray*}
 Q_0(\lambda) dE_{\sqrt{\Delta_{\mathcal{C}}}}(\lambda) Q_0^\ast(\lambda) &=& T_1 + T_2 + T_3 +T_4\\
T_1 &=&   dE_{\sqrt{\Delta_{\mathcal{C}}}}(\lambda) \\
T_2 &=& - (\Id - Q_0(\lambda)) dE_{\sqrt{\Delta_{\mathcal{C}}}}(\lambda) \\
T_3 &=& -  dE_{\sqrt{\Delta_{\mathcal{C}}}}(\lambda) (\Id - Q_0^\ast(\lambda)) \\T_4 &=&  (\Id - Q_0(\lambda)) dE_{\sqrt{\Delta_{\mathcal{C}}}}(\lambda) (\Id - Q_0^\ast(\lambda)).
\end{eqnarray*} Since $\mathrm{WF}_{sc}(Q_0(\lambda)) \cap \mathrm{WF}_{sc}(dE_{\sqrt{\Delta_{\mathcal{C}}}}(\lambda))=\emptyset$, the microlocalizing property \eqref{eqn : microlocalizing left}  shows that $T_1 + T_2$ and $T_3 + T_4$ vanish to infinite order at $\lf$ and $\mf$, whilst  \eqref{eqn : microlocalizing right} proves that $T_1 + T_3$ and $T_2 + T_4$ vanish to infinite order at $\lf$ and $\mf$. Therefore $Q_0(\lambda) dE_{\sqrt{\Delta_{\mathcal{C}}}}(\lambda) Q_0^\ast(\lambda)$ vanishes to infinite order at $\lf$, $\rf$ and $\mf$. Then it is obvious that $ Q_0(\lambda) dE_{\sqrt{\Delta_{\mathcal{C}}}}(\lambda) Q_0^\ast(\lambda)$ takes the form $\lambda^{n - 1} b(z, z', \lambda)$ with $b$ satisfying \eqref{eqn : estimates for b}.

\subsubsection*{Conjugation by $Q_j(\lambda)$} Every conjugation
 $Q_j(\lambda) dE_{\sqrt{\Delta_{\mathcal{C}}}}(\lambda) Q_j^\ast(\lambda)$ is microlocalized in a neighbourhood of the interior of  $L \setminus L^\sharp$ over $\mf\cup\lf\cup\rf$ in the $sc$-regime.

The geometric Legendrian is the geodesic flow emanating from the intersection of the diagonal conormal bundle $N^\ast \diag_{M_{b, 0}}$ with the characteristic variety of $P_{sc}$  over $\mf$, $$\partial_0 L = \{(\tilde{\rho}, y, \rho', y, \hbar, \tau, \mu, -\tau, -\mu, 0) : \tilde{\rho} = 1, \rho' = 0, \tau^2 + h^{ij}\mu_i\mu_j = 1\}.$$ As a boundary of $L$, $\partial_0 L$ doesn't project diffeomorphically  to the base. Since the geodesics in $\mathcal{C}$ are all simple i.e. the exponential map is a diffeomorphism, $L\setminus \partial_0 L$ projects diffeomorphically to the base. Furthermore,  the proof of \cite[Proposition 6.2, Lemma 6.4, Lemma 6.5]{Guillarmou-Hassell-Sikora} and \cite[Proposition 2.6]{Hassell-Zhang} can be used verbatim to elucidate the Legendrian structure.

\begin{lemma}
	The Legendrian pair $(N^\ast \partial_{sc} \diag_{M_{b, 0}}, L)$ has a clean intersection $\partial_0 L$. The projection $\pi : L \rightarrow \mf$ satisfies that \begin{itemize}
		\item $\pi$ is a diffeomorphism on $L \setminus \partial_0 L$, 
		\item $\det d\pi$ vanishes to order $n-1$ at $\partial_0 L$. 
	\end{itemize}It follows that the geometric Legendrian $L$ is furnished with local coordinates,
\begin{itemize}
	\item  $(\tilde{\rho}, y, y', \hbar)$ away from $\partial_0 L$,
	\item $(y, \textbf{w}_1, \textbf{k}_2, \cdots, \textbf{k}_n, \hbar)$, in a neighbourhood of $\partial_0 L$ with $\partial_0 L$ removed, where $\textbf{w} = (\tilde{\rho} - 1, y - y')$ and $\textbf{k}$ is the dual coordinates on the fibre of ${}^{b, sc}T^\ast_{\mf} M_{b, 0}$. Here we need relabel the components of $\textbf{w}$ and $\textbf{k}$ variables to ensure $d \textbf{k}_1$ vanishes at a point in $\partial_0 L$, as $d\pi$ is not of full rank. 
\end{itemize}
	Consequently, the geometric Legendrian is parametrized,  \begin{itemize}
	\item away from $\partial_0 L$, by the distance function $d_{sc}(\rho, \tilde{\rho}, y, y', \hbar)$ 
	\item in a neighbourhood of $\partial_0 L$ with $\partial_0 L$ removed, by a function $\Phi(y, \textbf{w}, \textbf{v})$ with $\textbf{v} = (\textbf{v}_2, \cdots, \textbf{v}_n) \in \mathbb{R}^{n-1}$ such that \begin{eqnarray} \label{eqn : derivative of phase}
	d_{\textbf{v}_j} \Phi(y, \textbf{w}, \textbf{v}) &=& \textbf{w}_j + O(\textbf{w}_1),\\
	\label{eqn : expansion of phase}\Phi(y, \textbf{w}, \textbf{v}) &=& \sum_{j=2}^n \textbf{v}_j d_{\textbf{v}_j} \Phi(y, \textbf{w}, \textbf{v}) + O(\textbf{w}_1),\\ \label{eqn : nondegeneracy of phase}
	d_{\textbf{v}_j\textbf{v}_k}^2 \Phi(y, \textbf{w}, \textbf{v}) &=& \textbf{w}_1 A_{jk}(y, \textbf{w}, \textbf{v}), \quad \mbox{with a non-degenerate matrix $(A_{jk})_{jk}$,}\\ \label{eqn : phase and distance} \Phi(y, \textbf{w}, \textbf{v})/\rho &=& \pm \lambda d_{sc}(\rho, \tilde{\rho}, y, y', \hbar), \quad \mbox{ when $d_\textbf{v} \Phi = 0$}.
	\end{eqnarray}
\end{itemize}
Finally, the conjugated spectral measure $Q_j(\lambda) dE_{\sqrt{\Delta_{\mathcal{C}}}}(\lambda) Q_j^\ast(\lambda)$ is a Legendrian distribution lying in $ I^{-1/2; ((n-1-\mathbf{e})/2, (n-1-\mathbf{e})/2, 0)}(M_{b, 0}; L \setminus L^\sharp; {}^{b, sc}\Omega^{1/2})$ tensored with $|d\lambda/\lambda|^{1/2}$ and has the microlocal expression, away from $\rf$, \begin{itemize}
		\item when the conjugation $Q_j(\lambda) \bullet Q_j^\ast(\lambda)$ is microlocally supported away from a neighbourhood of $\partial_0 L$, \begin{equation}\label{eqn : projective type}\rho'^{-1/2 + n/2} \tilde{\rho}^{ (n - 1 - \varepsilon_l)/2} e^{\imath \pm \lambda d_{sc}(\rho, \tilde{\rho}, y, y', \hbar)} \tilde{a}(\tilde{\rho}, y, \rho', y', \hbar) \frac{|d\rho d\rho' dy dy'|^{1/2}}{\rho^{(n+1)/2} \rho'^{(n+1)/2}} \frac{d\lambda}{\lambda},\end{equation} where $\tilde{a}(\tilde{\rho}, y, \rho', y', \hbar)$ is a compactly supported smooth function in the $sc$-regime,
	 
		\item when the conjugation $Q_j(\lambda) \bullet Q_j^\ast(\lambda)$ is microlocally supported in a neighbourhood of $\partial_0 L$ with $\partial_0 L$ removed, \begin{equation}\label{eqn : n-1 vanish type}\rho'^{-1/2 - (n-1)/2 + n/2}   \int_{\mathbb{R}^{n-1}} e^{\imath \Phi(y, \textbf{w}, \textbf{v})/\rho} \tilde{a}(\rho, y, \textbf{w}, \textbf{v}, \hbar) d\textbf{v} \frac{|d\rho d\rho' dy dy'|^{1/2}}{\rho^{(n+1)/2} \rho'^{(n+1)/2}}\frac{d\lambda}{\lambda},\end{equation} where $\tilde{a}(\rho, y, \textbf{w}, \textbf{v}, \hbar)$ is a compactly supported smooth function in the $sc$-regime.
	\end{itemize} 
\end{lemma}

 Now we consider the estimates for the conjugated spectral measure of type \eqref{eqn : projective type}. Such an expression  we choose implies that we live in the $sc$-regime near $\mf \cap \lf$, i.e. $\tilde{\rho} = \rho/\rho' \rightarrow 0$. The local expression \eqref{eqn : projective type} further reduces to
 \begin{equation*}  \tilde{\rho}^{-\varepsilon_l/2}\rho^{ (n - 1)/2} e^{\imath \pm \lambda d_{sc}(\rho, \tilde{\rho}, y, y', \hbar)} \tilde{a}(\tilde{\rho}, y, \rho', y', \hbar) \bigg|\frac{d\rho d\rho' dy dy'}{\rho^{n+1} \rho'^{n+1}}\bigg|^{1/2}\frac{d\lambda}{\lambda}.\end{equation*}

  Using the local expression of the distance function $d_{sc}$, we see that the factor $\rho^{(n-1)/2}$ is bounded from above by $$\rho^{(n-1)/2} \lesssim (1 + 1/\rho)^{-(n-1)/2} \lesssim (1 +  \lambda d_{sc}(\rho, \tilde{\rho}, y, y', \hbar))^{-(n-1)/2}.$$ We thus take $a_\pm = \rho^{(n-1)/2} \tilde{a}$, which satisfies the amplitude estimates \eqref{eqn : estimates for a}.

We now blow down $M_{b, 0}$ to $M = \{(z, z', \hbar)\}$ and change $\hbar$ to $\lambda^{-1}$ as we need. It follows that  \eqref{eqn : projective type} is pushed forward to, \begin{equation*}\label{eqn : projective type 1}  \lambda^{n-1} e^{\imath \pm \lambda d_{\mathcal{C}}(z, z')}  x'^{ - \varepsilon_l/2}  \tilde{\tilde{a}}(z, z', \lambda) x^{(n-1)/2} x'^{(n-1)/2}  |dx dx' dy dy'|^{1/2}d\lambda.\end{equation*} The factor $x^{(n-1)/2} x'^{(n-1)/2}  |dx dx' dy dy'|^{1/2}$ gives a Riemannian half density  $|dgdg'|^{1/2}$, and the stability condition \eqref{eqn : stability} implies $x'^{ - \varepsilon_l/2} \leq \lambda^{\mathbf{e}/2}$ in the $sc$-regime. It follows that we get an oscillatory function $\lambda^{n-1 + \mathbf{e}/2} e^{\pm \imath \lambda d_{\mathcal{C}}(z, z')} a_\pm$ up to a smooth function.

The other type \eqref{eqn : n-1 vanish type} also takes the form $\lambda^{n-1} e^{\pm \imath \lambda d_{\mathcal{C}}(z, z')} a_\pm$ but it needs a bit more work due to the complication of the Legendrian structure at the intersection with the diagonal. Recall that we now live in a small neighbourhood of $\partial_0 L$. In a small neighbourhood of $\{\textbf{w}=0\}$, we have $|\textbf{w}|/\rho \sim \lambda d_{\mathcal{C}}(z, z')$. In fact, it is easy to deduce that
\begin{eqnarray*}
	\frac{|\textbf{w}|}{\rho} &=& \frac{1}{\rho}\sqrt{(\frac{\rho}{\rho'} - 1)^2 + (y-y')^2}\\
	&=& \sqrt{\frac{1}{\rho^2} + \frac{1}{\rho'^2} - \frac{2}{\rho\rho'}(1 - \frac{\rho'}{2\rho}(y-y')^2)}
	\\&\sim&\sqrt{\frac{1}{\rho^2} + \frac{1}{\rho'^2} - \frac{2}{\rho\rho'} \cos d_Y(y, y')}\\
	&=& \lambda d_{\mathcal{C}}(z, z')
\end{eqnarray*}

If we blow down to $M$ again,\footnote{Even though we no longer live in the stretched spaces, we still adopt the notations $\rho = 1/(\lambda x)$ and $\textbf{w} = (x'/x - 1, y - y')$ for the sake of convenience.} the distribution \eqref{eqn : n-1 vanish type} is pushed forward to
$$ \lambda^{n-1}    \bigg(\int_{\mathbb{R}^{n-1}} e^{\imath \Phi(y, \textbf{w}, \textbf{v})/\rho} \tilde{a}(\rho, y, \textbf{w}, \textbf{v}, \lambda) d\textbf{v} \bigg) x^{(n-1)/2} x'^{(n-1)/2 } | dx dx' dy dy'|^{1/2}   d\lambda.$$
 We complete the proof of the proposition by doing estimates for the integral in the parentheses in three cases.
\begin{enumerate}[(a)]
\item  $|\textbf{w}|  \lesssim \rho$.  This condition implies $\lambda d_{\mathcal{C}}(z, z') \lesssim 1$, i.e. $(1 + \lambda d_{\mathcal{C}}(z, z'))^{-K}$ is bounded from below by a positive constant. Then the amplitude estimate for $b$ is clearly satisfied when $k=0$. For $k > 0$, it suffices to show $$(\lambda \partial_\lambda)^k \bigg(\int_{\mathbb{R}^{n-1}} e^{\imath \Phi(y, \textbf{w}, \textbf{v})/\rho} \tilde{a}(\rho, y, \textbf{w}, \textbf{v}, \lambda) dv\bigg) \lesssim 1.$$ If the derivative $\lambda \partial_\lambda$ hits the amplitude, it affects nothing. If it acts on the oscillatory term once, by \eqref{eqn : expansion of phase} it will create a factor of $$\Phi/\rho = \textbf{v} \cdot d_\textbf{v} \Phi / \rho + O(\textbf{w}_1/\rho).$$ Noting $$\frac{\textbf{v} \cdot d_\textbf{v}\Phi}{\rho} e^{\imath \Phi/\rho} = - \imath \textbf{v} \cdot d_\textbf{v} e^{\imath \Phi/\rho},$$ we perform integration by parts and get a finite integral.

\item $|\textbf{w}| \gtrsim  \rho$ and $|\textbf{w}| \gtrsim |\textbf{w}_1|$. In light of \eqref{eqn : derivative of phase}, we have $d_{\textbf{v}_i} \Phi \neq 0$. Choose $i$ such that $|d_{\textbf{v}_i} \Phi| \gtrsim |\textbf{w}|$. When $k = 0$, by using $$e^{\imath \Phi/\rho} = \bigg(\frac{\rho d_{\textbf{v}_j}}{\imath d_{\textbf{v}_j} \Phi}\bigg)^k e^{\imath \Phi/\rho},$$ we make integration by parts in $\textbf{v}$ and obtain a finite integration, multiplied by the factor $(\rho/|\textbf{w}|)^K$ for any $K \in \mathbb{Z}_+$. This is bounded from above by $(1 + |\textbf{w}|/\rho)^{-K}$. If we blow down to $M$, this quantity is equivalent to $ (1 + \lambda d_{\mathcal{C}}(z, z'))^{-K}$. When $k > 0$, the derivatives are treated similarly to Case (a).

\item $|\textbf{w}| \sim |\textbf{w}_1|$ and $|\textbf{w}_1| \geq \rho$. It suffices to prove for $k \in \mathbb{N}$, $$
 (\lambda\partial_\lambda)^k \bigg(   \int_{\mathbb{R}^{n-1}} e^{\imath \omega \tilde{\Phi}} a(z, z', \textbf{v}, \lambda) dv \bigg)  
 \lesssim  \omega^{-(n-1)/2},$$ where $\omega =  \lambda x |\textbf{w}_1|$ and $\tilde{\Phi} = |\textbf{w}_1|^{-1}(\Phi(x'/x, y, y', \textbf{v}) \mp x^{-1} d_\mathcal{C}(z, z'))$. Under \eqref{eqn : phase and distance} and \eqref{eqn : nondegeneracy of phase}, this oscillatory integral estimates were proved by  a finer integration by parts argument. See \cite[4-3]{Hassell-Zhang}.

\end{enumerate}

\end{proof}

\subsection{Local Strichartz estimates on $\mathcal{C}$}

The exponent pair $(\mathrm{q}, \mathrm{r})$ is said to be sharp $\mathbf{\sigma}$-admissible if it satisfies $$\mathrm{q}, \mathrm{r} \geq 2, \quad (\mathrm{q}, \mathrm{r}, \mathbf{\sigma}) \neq (2, \infty, 1), \quad \frac{1}{\mathrm{q} } + \frac{\sigma}{ \mathrm{r}} = \frac{\mathbf{\sigma}}{2}.$$  In particular, $(\mathrm{q}, \mathrm{r})$ is said to be Schr\"odinger admissible if it is sharp $n/2$-admissible as  mentioned in the introduction.

Using Proposition \ref{prop : microlocalized spectral measure}, we can establish   Strichartz estimates on $\mathcal{C}$, \begin{equation}
\|e^{\imath t \Delta} f\|_{L^{q}([0,1); L^{\tilde{r}}(\mathcal{C}))} \lesssim \|f\|_{L^2(\mathcal{C})}, \label{eqn : local Strichartz}
\end{equation}for any sharp $(n/2 + \mathbf{e}/4)$-admissible pair $(q, \tilde{r})$. 

This can be done by showing the following dispersive estimates and energy estimates for  conjugated Schr\"odinger propagators from  corresponding conjugated spectral measure, \begin{proposition}\label{prop : microlocalized propagator}Suppose  $\{Q_\alpha(\lambda)\}_\alpha$ is a finite partition of identity as in Proposition \ref{prop : microlocalized spectral measure} and write
	$$U_\alpha(t) = \int_{\lambda > 1}e^{\imath t \lambda^2}Q_\alpha(\lambda) dE_{\sqrt{\Delta_{\mathcal{C}}}}(\lambda).$$ Then the  Schr\"odinger propagator conjugated by $Q_{\alpha}(\lambda)$ features the factorization, \begin{equation}\label{eqn : factorization}
	\int_{\lambda > 1}e^{\imath t \lambda^2}Q_\alpha(\lambda) dE_{\sqrt{\Delta_{\mathcal{C}}}}(\lambda) Q_\alpha^\ast(\lambda) = U_\alpha(t) U_\alpha^\ast(t),\end{equation}    and  uniformly obeys energy estimates
	\begin{equation}\label{eqn : energy estimates}
	\|U_\alpha(t)f  \|_{L^2(\mathcal{C})} \lesssim \|f\|_{L^2(\mathcal{C})},
	\end{equation}	
	as well as dispersive estimates for small times $0< t < 1$,
	\begin{equation}\label{eqn : dispersive estimates}
	\|U_\alpha(t_1)U_\alpha^\ast(t_2) f\|_{L^\infty(\mathcal{C})} \lesssim |t_1 - t_2|^{-n/2 - \mathbf{e}/4} \|f\|_{L^1(\mathcal{C})}. 
	\end{equation}
\end{proposition}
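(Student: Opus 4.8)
The plan is to derive Proposition~\ref{prop : microlocalized propagator} directly from the structure of the conjugated spectral measure in Proposition~\ref{prop : microlocalized spectral measure}, following the $TT^\ast$ philosophy of \cite{Guillarmou-Hassell-Sikora, Hassell-Zhang}. First I would establish the factorization \eqref{eqn : factorization}. Writing $\mathrm{spec}(\Delta_{\mathcal{C}}) = [0, \infty)$ and using that $dE_{\sqrt{\Delta_{\mathcal{C}}}}(\lambda) = dE_{\sqrt{\Delta_{\mathcal{C}}}}(\lambda) \, dE_{\sqrt{\Delta_{\mathcal{C}}}}(\lambda)$ holds in the sense that $E_{\sqrt{\Delta_{\mathcal{C}}}}(\lambda) E_{\sqrt{\Delta_{\mathcal{C}}}}(\mu) = E_{\sqrt{\Delta_{\mathcal{C}}}}(\min(\lambda, \mu))$, the orthogonality of the spectral projections at distinct energies gives, for a fixed cut-off $Q_\alpha(\lambda)$ supported near $\{\lambda>1\}$,
$$ U_\alpha(t_1) U_\alpha^\ast(t_2) = \int_{\lambda > 1} e^{\imath (t_1 - t_2)\lambda^2} Q_\alpha(\lambda)\, dE_{\sqrt{\Delta_{\mathcal{C}}}}(\lambda)\, Q_\alpha^\ast(\lambda), $$
which is \eqref{eqn : factorization} when $t_1 = t_2 = t$; this is the standard observation that the spectral measure is its own square. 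The energy estimate \eqref{eqn : energy estimates} then follows immediately, since $U_\alpha(t) U_\alpha^\ast(t)$ is a spectral multiplier bounded on $L^2$ with norm $\lesssim \sup |\chi_{\{\lambda>1\}} Q_\alpha Q_\alpha^\ast| \lesssim 1$ by the uniform boundedness of $Q_\alpha(\lambda)$ on $L^2(\mathcal{C})$ in $\lambda$, together with the spectral theorem.

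The substantive part is the dispersive estimate \eqref{eqn : dispersive estimates}. Here I would insert the pointwise description of the kernel of $Q_\alpha(\lambda) dE_{\sqrt{\Delta_{\mathcal{C}}}}(\lambda) Q_\alpha^\ast(\lambda)$ from Proposition~\ref{prop : microlocalized spectral measure}, namely
$$ \lambda^{n-1+\mathbf{e}/2}\Big( \sum_{\pm} e^{\pm \imath \lambda d_{\mathcal{C}}(z, z')} a_\pm(z, z', \lambda) + b(z, z', \lambda)\Big), $$
with the amplitude bounds \eqref{eqn : estimates for a} and \eqref{eqn : estimates for b}, so that the kernel of $U_\alpha(t_1) U_\alpha^\ast(t_2)$ becomes
$$ \int_{1}^{\infty} e^{\imath (t_1 - t_2)\lambda^2} \lambda^{n-1+\mathbf{e}/2}\Big( \sum_{\pm} e^{\pm \imath \lambda d_{\mathcal{C}}(z, z')} a_\pm(z, z', \lambda) + b(z, z', \lambda)\Big)\, d\lambda. $$
Setting $s = t_1 - t_2$ and estimating this as an oscillatory integral in $\lambda$ with (possibly degenerate) phase $s\lambda^2 \pm \lambda d_{\mathcal{C}}$ reduces to the one-dimensional stationary phase / van der Corput estimate: the $b$-term has no stationary point problems because it is Schwartz-localized where $\lambda d_{\mathcal{C}} \lesssim 1$ and the extra $\lambda$-derivatives in \eqref{eqn : estimates for b} allow arbitrarily many integrations by parts, giving a bound $\lesssim |s|^{-N}$ for any $N$, which is harmless for small time; and the $a_\pm$ terms have a non-degenerate second derivative $2s$ in $\lambda$ from the quadratic part, so a classical $T T^\ast$-type argument (e.g.\ the scaling $\lambda \mapsto |s|^{-1/2}\lambda$ together with the decay $(1+\lambda d_{\mathcal{C}})^{-(n-1)/2}$) yields a bound $\lesssim |s|^{-1/2} \cdot |s|^{-(n-1)/2 - \mathbf{e}/4} = |s|^{-n/2-\mathbf{e}/4}$, uniformly in $(z, z')$. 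The exponent $n/2 + \mathbf{e}/4$ is exactly the sum of the $1/2$ from the quadratic stationary phase and the $(n-1+\mathbf{e}/2)/2$ coming from the power $\lambda^{n-1+\mathbf{e}/2}$ and the radial decay of $a_\pm$. Multiplying the $L^\infty$ kernel bound by $\|f\|_{L^1}$ gives \eqref{eqn : dispersive estimates}.

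To finish, I would assemble \eqref{eqn : local Strichartz}: energy estimate \eqref{eqn : energy estimates} plus dispersive estimate \eqref{eqn : dispersive estimates} feed into the Keel--Tao endpoint machinery \cite{Keel-Tao}, which yields $\|U_\alpha(t) g\|_{L^q([0,1); L^{\tilde r})} \lesssim \|g\|_{L^2}$ for every sharp $(n/2+\mathbf{e}/4)$-admissible pair $(q, \tilde r)$, including the endpoint since $n/2 + \mathbf{e}/4 > 1$. Summing over the finite partition $\{Q_\alpha(\lambda)\}_\alpha$ and using $\sum_\alpha Q_\alpha = \mathrm{Id}$ together with the $L^2$-boundedness of each $Q_\alpha$ recovers the Strichartz estimate for $e^{\imath t \Delta_{\mathcal{C}}}_{\hi}$; the low-energy piece $e^{\imath t\Delta_{\mathcal{C}}}_{\lo}$ is handled by the already-noted $L^1 \to L^\infty$ and $L^2\to L^2$ bounds, again via Keel--Tao. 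The main obstacle I anticipate is not in this assembly but in being precise about the degenerate stationary phase estimate for the $a_\pm$ term when $\lambda d_{\mathcal{C}}(z,z')$ is comparable to $1$ and the spatial variable sits near $\partial_0 L$ (the diagonal intersection), where the amplitude is itself only conormal rather than classical; there one must interleave the $\mathbf{v}$-integration byparts from the Legendrian parametrization \eqref{eqn : n-1 vanish type} with the $\lambda$-integration, exactly as in \cite[Section~4]{Hassell-Zhang}, to keep the constants uniform. Apart from that technical point, every ingredient is either quoted from Proposition~\ref{prop : microlocalized spectral measure} or is a standard oscillatory-integral/$TT^\ast$ manipulation.
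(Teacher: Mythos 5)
The paper's own proof of this Proposition is a single sentence: it defers entirely to \cite[Proposition 5.1, Lemma 5.3, Proposition 6.1]{Hassell-Zhang}, to be ``employed verbatim.'' Your proposal instead spells out the $TT^\ast$ skeleton, which is a genuinely more detailed account of what that citation contains; the overall strategy (factorisation via spectral orthogonality, energy bound from $L^2$ boundedness of the cutoffs, dispersive bound by stationary phase on the localized spectral measure) is the right one. Two points, however, need attention.

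First, the factorisation. Your derivation of $U_\alpha(t_1) U_\alpha^\ast(t_2) = \int e^{\imath(t_1 - t_2)\lambda^2}Q_\alpha(\lambda)\,dE_{\sqrt{\Delta_{\mathcal{C}}}}(\lambda)\,Q_\alpha^\ast(\lambda)$ from $E(\lambda)E(\mu)=E(\min(\lambda,\mu))$ is correct and is the identity that actually gets used. But your remark that ``this is \eqref{eqn : factorization} when $t_1=t_2=t$'' contradicts your own formula: putting $t_1=t_2$ kills the phase, giving $\int Q_\alpha\,dE\,Q_\alpha^\ast$, not $\int e^{\imath t\lambda^2}Q_\alpha\,dE\,Q_\alpha^\ast$. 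The displayed \eqref{eqn : factorization} is recovered with $t_1=t$, $t_2=0$ (and the proposition's use of $U_\alpha(t)U_\alpha^\ast(t)$ should be read the same way); you should have flagged this rather than asserting a substitution that does not match.

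Second, and more substantively, the dispersive estimate. Your exponent accounting — $1/2$ from stationary phase plus $(n-1+\mathbf{e}/2)/2$ from the $\lambda^{n-1+\mathbf{e}/2}$ weight and the decay of $a_\pm$ — silently assumes that after rescaling $\lambda=\mu/|s|^{1/2}$ the remaining integral $\int e^{\imath(\mu^2\pm\mu D)}\mu^{n-1+\mathbf{e}/2}(1+\mu D)^{-(n-1)/2}\,d\mu$ is uniformly bounded in $D = d_{\mathcal{C}}(z,z')/|s|^{1/2}$. It is not: at the stationary point $\mu_0\sim D$ the amplitude contributes $\mu_0^{n-1+\mathbf{e}/2}(\mu_0 D)^{-(n-1)/2}\sim D^{\mathbf{e}/2}$, so the bound you would actually obtain from \eqref{eqn : estimates for a} alone is $d_{\mathcal{C}}(z,z')^{\mathbf{e}/2}\,|s|^{-n/2-\mathbf{e}/2}$, not $|s|^{-n/2-\mathbf{e}/4}$. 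Closing this gap requires the finer structure behind Proposition~\ref{prop : microlocalized spectral measure}: the $\lambda^{\mathbf{e}/2}$ factor there is an \emph{upper bound} for the actual $\lambda$-independent amplitude $\tilde\rho^{-\varepsilon_l/2}$, and one must track the support constraint $\rho,\rho'<\epsilon$ (which forces $\tilde\rho\gtrsim\hbar$ on the support), the $x$-dependence of $\varepsilon_l=xe(x,y)$, and the interplay with the microlocal cutoffs $Q_j(\lambda)$. This is precisely the content of \cite[Proposition 6.1]{Hassell-Zhang}; it does not follow from the quoted pointwise amplitude estimate by a generic van der Corput argument. You do anticipate needing to interleave the $\mathbf{v}$-integration by parts from \eqref{eqn : n-1 vanish type} with the $\lambda$-integral near $\partial_0 L$, but that is a separate technicality; the uniformity problem arises already away from the diagonal, near $\lf$ and $\rf$, and is not addressed by your sketch.
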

Given Proposition \ref{prop : microlocalized spectral measure}, the proof of \eqref{eqn : factorization}, \eqref{eqn : energy estimates} and \eqref{eqn : dispersive estimates} has little to do with the conic geometry but is purely an analysis argument. We can employ the proof for \cite[Proposition 5.1, Lemma 5.3, Proposition 6.1]{Hassell-Zhang} verbatim. 

Given \eqref{eqn : energy estimates}  and \eqref{eqn : dispersive estimates} as well as the  energy and dispersive estimates for $e^{\imath t \Delta_{\mathcal{C}}}_{\mathrm{low}}$, Keel-Tao endpoint method \cite[Theorem 1.2]{Keel-Tao} leads to that  \begin{eqnarray*}\|U_\alpha(t) f\|_{L^q([0,1); L^{\tilde{r}}(\mathcal{C}))} &\lesssim& \|f\|_{L^2(\mathcal{C})}  \quad \mbox{for each $\alpha$,}\\  \|e^{\imath t \Delta_{\mathcal{C}}}_{\mathrm{low}} f\|_{L^q([0,1); L^{\tilde{r}}(\mathcal{C}))} &\lesssim& \|f\|_{L^2(\mathcal{C})}.\end{eqnarray*}  Since $\{Q_\alpha(\lambda)\}_\alpha$ is a finite partition of identity,  \eqref{eqn : local Strichartz} results from $$e^{\imath t \Delta_{\mathcal{C}}} = e^{\imath t \Delta_{\mathcal{C}}}_{\mathrm{high}} + e^{\imath t \Delta_{\mathcal{C}}}_{\mathrm{low}} \quad \mbox{and} \quad e^{\imath t \Delta_{\mathcal{C}}}_{\mathrm{high}} = \sum_\alpha U_\alpha(t).$$

Let $W^{k, p}(\mathcal{C})$ be the usual $p$-Sobolev space on $\mathcal{C}$, that is $$W^{k, p}(\mathcal{C}) = \bigg\{f \in L^p(\mathcal{C}) : \sum_{l = 0}^{[k]} \|\partial^l f\|_{L^p(\mathcal{C})}   + \Big(\int_{\mathcal{C}^2} \frac{|f(z) - f(z')|^p}{|z-z'|^{ [k]p + n}} \,dg(z) dg(z')\Big)^{1/p} < \infty\bigg\}.$$   The classical Sobolev embedding theorem can be extended to $\mathcal{C}$ (See \cite[Theorem 5.4]{Adams}), that is $$W^{k_1, p_1}(\mathcal{C}) \subset W^{k_2, p_2}(\mathcal{C}), \quad \mbox{for}  \quad\frac{1}{p_1} - \frac{k_1}{n} = \frac{1}{p_2} - \frac{k_2}{n}.$$ This allows us to rewrite 
 \eqref{eqn : local Strichartz} as  Schr\"odinger admissible Strichartz estimates with a loss. Suppose $(q, r)$ is Schr\"odinger admissible. It follows that
 $$\frac{n/2 + \mathbf{e}/4}{\tilde{r}} - \frac{n/2}{r}   = \frac{\mathbf{e}}{8}.$$  Then it is immediate from Sobolev embeddings and $(n/2 + \mathbf{e}/4)$-admissibility that  \begin{equation}
 \|e^{\imath t \Delta} f\|_{L^{q}([0,1); W^{-\mathbf{k}, r}(\mathcal{C}))} \lesssim \|f\|_{L^2(\mathcal{C})}, \label{eqn : local Strichartz with loss}
 \end{equation} with $\mathbf{k}$ defined in \eqref{def : bf k}.

\subsection{Global Strichartz estimates on $\mathcal{X}$}

\begin{proof}[Proof of Theorem \ref{thm : Strichartz}]

To employ local estimates \eqref{eqn : local Strichartz}, we choose a finite partition of unity $1 = \sum_{i = 0}^m \chi_i$ on $\mathcal{X}$ such that \begin{itemize}
	\item each $\chi_i$ for $i = 1, \cdots, m$, is a smooth function compactly supported within $\mathcal{C}_i$ and equal to $1$ on $\mathcal{K}_i$,
	\item $\chi_0$ is a smooth function supported within $\mathcal{X} \setminus \bigcup_{i=1}^m \mathcal{K}_i$ and equal to $1$ on $\mathcal{X} \setminus \bigcup_{i=1}^m \mathcal{C}_i$.
\end{itemize}

Now we denote $u_i = \chi_i u$ for $i=0, 1, \cdots, m$. Applying the Schr\"odinger operator to each $u_i$ gives the Cauchy problem for inhomogeneous Schr\"odinger equations, $$\left\{\begin{array}{l}
\imath \partial_t u_i(t, z) + \Delta_{\mathcal{X}} u_i(t, z) = v_i(t, z)
\\u_i(0, z) = \chi_i f(z) 
\end{array}\right.,$$ where $v_i(t, z) = [\Delta_{\mathcal{X}}, \chi_i] u(t, z)$. 

We write $u_i = u_i' + u_i'',$ such that 
\begin{itemize}\item  $u_i'$ is the solution to 
	
	$$\left\{\begin{array}{l}
	\imath \partial_t u_i'(t, z) + \Delta_{\mathcal{X}} u_i'(t, z) = 0
	\\u_i'(0, z) = \chi_i f(z) 
	\end{array}\right.;$$

\item $u_i''$ is the solution to

$$\left\{\begin{array}{l}
\imath \partial_t u_i''(t, z) + \Delta_{\mathcal{X}} u_i''(t, z) = v_i(t, z)
\\u_i''(0, z) = 0
\end{array}\right..$$
\end{itemize}

By \eqref{eqn : local Strichartz with loss} and Strichartz estimates on $\mathbb{R}^n$, we know, $$\|e^{\imath t \Delta_{\mathcal{X}}} u_i'\|_{L^q([0,1); W^{- \mathbf{k}, r}(\mathcal{X}))} \lesssim \|\chi_i f\|_{L^2(\mathcal{X})},$$ where $(q, r)$ is Schr\"odinger admissible.

Then it remains to deal with $u_i''$. Duhamel's principle yields that  $$u_i''(t, z) = \int_0^t e^{-\imath (t-s) \Delta_{\mathcal{X}}} v_i(s, z) ds = e^{-\imath  \Delta_i} \int_0^t e^{\imath s \Delta_i} v_i(s, z) ds,$$ 
where $\Delta_0 = \Delta_{\mathbb{R}^n}$ and $\Delta_i = \Delta_{\mathcal{C}_i}$ for $i = 1, \cdots, m$.
By  \eqref{eqn : local Strichartz} and Strichartz estimates on $\mathbb{R}^n$, it suffices to show $$\bigg\|\int_0^t e^{\imath s \Delta_i} v_i(s, z) ds\bigg\|_{L^2(\mathcal{X})} \lesssim \|f\|_{L^2(\mathcal{X})}.$$ Christ-Kiselev's lemma \cite[Theorem 1.2]{Christ-Kiselev}
further reduces this inequality to $$\bigg\|\int_0^1 e^{\imath s \Delta_i} v_i(s, z)\,ds \bigg\|_{L^2(\mathcal{X})} \lesssim \|f\|_{L^2(\mathcal{X})}.$$
 Now we need the local smoothing estimates \eqref{eqn : local smoothing} and dual local smoothing estimates $$
 \bigg\|\int_0^1 e^{\imath s \Delta_{\mathcal{X}}} \chi F(s, z) \,ds\bigg\|_{L^2(\mathcal{X})} \lesssim \|F\|_{L^2(\mathbb{R}, \mathcal{D}^{-1/2}(\mathcal{X}))},$$ as $[\Delta_{\mathcal{X}}, \chi_i]$ is a differential operator of order 1 supported within $\bigcup_{j=1}^m\mathcal{C}_j \setminus \mathcal{K}_j^\circ$.  
Consequently, we obtain  $$\bigg\|\int_{\mathbb{R}} e^{\imath s \Delta_i} v_i(s, z)\,ds \bigg\|_{L^2(\mathcal{X})} \lesssim \|v_i\|_{L^2(\mathbb{R}, \mathcal{D}^{-1/2}(\mathcal{X}))} .$$
In addition, $v_i$ satisfies $$\|v_i\|_{L^2(\mathbb{R}, \mathcal{D}^{-1/2}(\mathcal{X}))} \lesssim \|u_i\|_{L^2(\mathbb{R}, \mathcal{D}^{1/2}(\mathcal{X}))}.$$ Therefore the proof of the theorem will be finished by a final application of local smoothing estimates to $u_i$
$$\|u_i\|_{L^2(\mathbb{R}, \mathcal{D}^{1/2}(\mathcal{X}))} \lesssim \|f\|_{L^2(\mathcal{X})}.$$
\end{proof}

\bibliographystyle{abbrv}
\bibliography{main}

\end{document}